\documentclass[11pt]{article}

    \usepackage{amsthm}
    \usepackage{amsmath}
    \usepackage{amssymb}
    \usepackage[margin=1in]{geometry}
    \usepackage{enumerate}
    \usepackage{hyperref}
    \usepackage{mathrsfs}
    \usepackage{color}
    \usepackage{bm}
    \usepackage{mathtools}
    \usepackage{dsfont}
\title{Sparse power spectrum for passive scalars under Pierrehumbert flow}
\author{Alex Blumenthal\thanks{A.B. was supported by National Science Foundation grant DMS-2237360 and a Sloan Fellowship.
} ~and William Cooperman\footnote{W.C. was supported by National Science Foundation grant DMS-2303355.
}}
\date{\today}

    \newtheorem{theorem}{Theorem}
    \newtheorem{corollary}[theorem]{Corollary}
    \newtheorem{lemma}[theorem]{Lemma}
    \newtheorem{proposition}[theorem]{Proposition}
    
    \newtheorem{letteredtheorem}{Theorem}
    \newtheorem{letteredcorollary}[letteredtheorem]{Corollary}

    \theoremstyle{definition}
    
    \newtheorem{definition}[theorem]{Definition}
    \newtheorem{remark}[theorem]{Remark}
    
    \newtheorem{claim}[theorem]{Claim}
    
    \newtheorem{setting}{Setting}

\newcommand{\C}{\mathbb{C}}

\newcommand{\E}{\mathbb{E}}

\newcommand{\N}{\mathbb{N}}
\renewcommand{\P}{\mathbb{P}}
\newcommand{\R}{\mathbb{R}}

\newcommand{\Z}{\mathbb{Z}}

\newcommand{\Fc}{\mathcal{F}}

\newcommand{\Lc}{\mathcal{L}}

\newcommand{\Gc}{\mathcal{G}}

\newcommand{\Sc}{\mathcal{S}}

\newcommand{\dist}{\operatorname{dist}}

\newcommand{\T}{\mathbb T}
\newcommand{\Cc}{\mathcal C}

\newcommand{\dif}{\,\mathrm{d}}
\newcommand{\LL}{\mathcal{L}}
\newcommand{\supp}{\operatorname{supp}}
\newcommand{\uo}{{\underline{\omega}}}

\begin{document}

\maketitle

\begin{abstract}
We study the long-time Fourier distribution of driven passive scalars transported by an incompressible alternating shear flow (the Pierrehumbert flow) subjected to IID random phase shifts, in the regime of strong shear-strength. With a novel and self-contained proof, we establish (i) a quantitative version of the cumulative Batchelor's law;  (ii) quantitative negative regularity mixing estimates; and (iii) a strong form of `sparsity' -- unevenness in the distribution of Fourier mass in radial sectors in frequency space. Our method uses the strong shear-strength assumption to handle a purely deterministic scalar driving term.

\bigskip
% \noindent \textbf{Keywords:} Passive scalar advection, Batchelor's Law \\

\noindent \textbf{2020 Mathematics Subject Classification:} Primary 35Q49, 76F20; Secondary 37D25, 37A25, 76F25.
% the Pierrehumbert flow, a randomly phase-shifted alternating shear flow on $\T^2$, in the large-shear regime. We first establish quantitative negative-Sobolev mixing estimates, both averaged and quenched, with rates that improve with the shear amplitude. For deterministic scalar forcing---either impulsive in time or continuous in time---we use these estimates to construct unique stationary laws and prove a cumulative form of Batchelor's law: the Fourier mass below wavenumber $R$ grows logarithmically with $R$. We then show that this spectrum is strongly anisotropic. At appropriate temporal phases, all but a vanishing fraction of the cumulative Fourier mass is confined to a narrow cone of wavenumbers. Thus angular sparsity, previously observed in pulsed-diffusion models, also occurs for a spatially smooth continuous-time flow.

% \begin{itemize}
%     \item 
% \end{itemize}

\end{abstract}

\section{Introduction}\label{sec:intro}
    %!TEX root = main.tex

Let $u = u(t, x)$ be a possibly time-dependent incompressible velocity field on the periodic box $\T^d$, and let $\varphi = \varphi(t, x)$ solve, for $\kappa \geq 0$, the advection-diffusion equation
\begin{equation}\label{eq:introDrivenPassiveScalar}
    \partial_t \varphi + u \cdot \nabla \varphi = \kappa \Delta \varphi + S \, , \qquad \varphi(0,\cdot) = \varphi_0(\cdot) \,.
\end{equation}
The driving term $S = S(t, x)$ is taken to be spatially regular and mean-free.
We are concerned with the long-time behavior of solutions to \eqref{eq:introDrivenPassiveScalar} in a subset of the Batchelor regime (see, e.g., \cite{batchelor1959small, bedrossianBatchelorSpectrumPassive2020,shraiman2000scalar,falkovich2001particles}) where $0 \leq \kappa \ll 1$ and the velocity field $u$ is spatially regular.

When the Lagrangian flow of $u$ is sufficiently chaotic -- corresponding, in the fluid context, to shear-straining throughout a large portion of state space -- solutions to \eqref{eq:introDrivenPassiveScalar} exhibit \emph{passive scalar turbulence}, characterized by a hierarchy of small-scale structures \cite{shraiman2000scalar,falkovich2001particles}. Roughly speaking, this hierarchy can be viewed as a kind of non-equilibrium `conveyor belt' of Fourier mass: scalar mass is injected at large scales through the driving term $S(t, x)$, carried from large to small scales by the shear-straining of $u$ through the transport term, and finally dissipated by the $\kappa \Delta$ term at spatial scales $\approx \sqrt{\kappa}$.
In this context, \emph{Batchelor's law} (\cite{batchelor1959small}) is the characteristic power-spectral law
\begin{align}\label{eq:batchelorModewiseIntro}
| \hat \varphi(k)|^2 \approx \frac{1}{|k|^{d}} \, , \qquad 1 \ll |k| \ll \frac{1}{\sqrt{\kappa}} \, ,
\end{align}
where $k \in \Z^d$ and $\hat \varphi(k)$ is the $k$-th spatial Fourier coefficient of $\varphi$.

Versions of \eqref{eq:batchelorModewiseIntro} have been established for a wide class of velocity fields, including solutions to stochastic Navier-Stokes with spatially smooth, white-in-time forcing \cite{bedrossianBatchelorSpectrumPassive2020, cooperman2024exponential}. For instance, when $S(t, x)$ is white in time and $u(t, x)$ comes from the stochastic Navier--Stokes equations at a fixed Reynolds number, a \emph{radial} version of Batchelor's law holds\footnote{with the caveat that \eqref{eq:radialBatchelor1} can fail at a small set of `exceptional' radii $R$; see \cite{cooperman2026fourier} for details.}:
\begin{align}\label{eq:radialBatchelor1}
    \sum_{|k| \in [R, R + h]} | \hat \varphi(k)|^2 \approx \frac{1}{R} \, , \qquad 1 \ll R \ll \frac{1}{\sqrt{\kappa}} \, ,
\end{align}
where $h > 0$ is a fixed annular width \cite{cooperman2026fourier}. 

Equation~\eqref{eq:radialBatchelor1} sums over annuli of wavenumbers of roughly the same amplitude, and the $1/R$ scaling law is uniform across the inertial range of scales $\ll \sqrt{\kappa}^{-1}$. This equation is not incompatible, however, with a deeply uneven concentration of Fourier mass within an individual radial shell -- it is allowed, for instance, for Fourier mass to become confined to a narrow sector of wavenumbers. This point was explored in \cite{blumenthal2026sparsity}, which established that for certain pulsed-diffusion toy models (see, e.g., \cite{feng2019dissipation}) for which shear-straining is roughly aligned across the fluid domain, Fourier mass can be badly depleted from entire sectors of wavenumbers -- a phenomenon referred to in \cite{blumenthal2026sparsity} as \emph{sparsity}. 

However, the pulsed-diffusion examples given in \cite{blumenthal2026sparsity} -- perturbations of Arnold's cat map -- cannot be realized by continuous-time flows. 
The purpose of this manuscript is to show that a system of strong alternating flows on $\T^2$, subjected to random phase shifts (also known as the Pierrehumbert flow \cite{pierrehumbert1994tracer}) exhibits sparsity when the individual shears have large amplitude. In this regime, shear-straining is well aligned across most, but not all, of the fluid domain, confirming the mechanism of \cite{blumenthal2026sparsity} in this more realistic setting. 
% These models are significantly more relevant to physical fluids because they can be constructed from stationary solutions of the incompressible Euler equations.

The main results presented in this paper are:
\begin{itemize}
    \item[(i)] a novel and self-contained proof of a `cumulative' version of Batchelor's law; and
    \item[(ii)] a proof of a relatively dramatic version of angular sparsity: the Fourier mass outside a very narrow cone of modes is disproportionately small.
\end{itemize}
An advantage of our approach is that it allows us to treat a \emph{deterministic} source term $S(t, x)$, in contrast with the vast majority of previous work on Batchelor's law which requires  white-in-time scalar source. The only other work we are aware of treating deterministic source is the very recent paper of \cite{liss2026batchelor} on passive scalar turbulence for strong-amplitude wedge flows (for which the velocity field is not spatially differentiable). To the authors' awareness, the results presented here are the first to treat passive scalar turbulence with a deterministic source for a smooth velocity field $u$.

% side quests:
%     \begin{itemize}
%     \item in setting of (i), Batchelor for deterministic forcing. Relatively straightforward proof using significantly $H^{-s} \to H^{-s}$ decay of a single iterate of the transfer operator (on average).
%     \item (i) Independent and conceptually simple Fourier-based proof of Batchelor's law for high amplitude scalar.
%     \end{itemize}

The setting is laid out in Section \ref{subsec:setting1}. A preliminary statement of the results is given in Section \ref{subsec:results1}. Some context and previous literature are discussed in Section \ref{subsec:discussion1}, together with a plan for the rest of the paper.

\subsection{Setting}\label{subsec:setting1}

\paragraph{Notation.} Throughout, the $d$-dimensional periodic box $\T^d$ is parametrized by $[0,1)^d$. We largely work with $d = 1, 2$. For $x \in \T^2$ write $x = (x^1, x^2)$, while for $k = (k^1, k^2) \in \Z^2$ we write $e_k(x) = e^{2 \pi i k \cdot x}$ for the usual Fourier basis. 
Given a set $S \subseteq \Z^2$, write $\Pi_S$ for the Fourier projection defined so that $\widehat{\Pi_S \psi} = \mathds{1}_S \hat{\psi}$.
Common sets include the ball \[B_R := \{p \in \Z^2 \mid |p| < R\}\] for $R \geq 1$ and cone \[\mathcal{C}_\theta \coloneq \{(k, j) \in \Z^2 : |j| < \theta |k|\} \] for $\theta > 0$. 
For generalized functions $\varphi$ on $\T^2$, we write $\hat \varphi : \Z^2 \to \C$ for the Fourier series, and 
\begin{equation*}
  \|\varphi\|_{H^{s}} \coloneq \left(\sum_{k \in \Z^2} (1 + |k|^2)^s|\hat{\varphi}(k)|^2\right)^{1/2}
\end{equation*}
for the usual inhomogeneous Sobolev norm of regularity $s \in \R$.
For $r\in\R$, write
\[
  H^r_0:=\{g\in H^r(\T^2):\widehat g(0)=0\}
\]
for the closed subspace of mean-zero distributions, equipped with the same
inhomogeneous Sobolev norm and its induced topology.

% {\color{blue} Convention for the whole paper: big “C” is a sufficiently large constant $\geq 1$, while little “c” is a sufficiently small constant $\leq 1$.   } 

\paragraph{Basic assumptions: vector field.}

Throughout, we fix a $\psi : \T \to \R$, treated as a \emph{shear profile} for the resulting Pierrehumbert flow and assume the following mild nondegeneracy assumptions: 
\begin{itemize}
    \item[(ND)] $\psi \in C^3(\T)$ and 
    \[\{ \psi = 0 \} \cap \{ \psi' = 0\} = \emptyset \, , \qquad \{ \psi'' = 0\} \cap \{ \psi''' = 0 \} = \emptyset \,. \]
\end{itemize}
These are easily satisfied by $\psi = \sin(2 \pi \cdot)$, for instance. 

Next, we impose the following on the random phase shifts for our model: 
\begin{itemize}
    \item[(P)] The phase shifts $\uo = (\omega_i^j)_{i,j}$ are independent and identically distributed, with common density $p \in L^\infty(\T)$. 
\end{itemize}
Let us briefly set down some additional notation for the probabilistic setup. Define $(\Omega, \Fc, \P) = (\Omega_0, \Fc_0, \P_0)^\N$ with $\Omega_0 = \T^2$, where $\P_0(\dif \omega^1, \dif \omega^2) = p(\omega^1) p(\omega^2) \dif \omega^1 \dif \omega^2$ is the law of a pair of phase shifts $(\omega^1, \omega^2)$. Let $\sigma : \Omega \circlearrowleft$ be the left shift on $\Omega$, so that $(\sigma \uo)_{j}^i = \omega_{j + 1}^i$. 

Let $L > 0$, regarded as a \emph{shear amplitude}, to be taken sufficiently large in what follows depending in particular on $\psi$ and $p$. 
The vector fields $u(t, x) = u_\uo(t, x)$ we consider are given by 
\begin{align}\label{eq:velField1}
    \begin{aligned}
        u(t, x) = 
        \begin{cases}
            \begin{pmatrix}
                0 \\ L \psi(x^1 + \omega_i^1)
            \end{pmatrix} & t \in [2i-2,2i-1)\\ 
            \begin{pmatrix}
                L \psi(x^2 + \omega_i^2) \\ 0
            \end{pmatrix} & t \in [2i-1, 2i)
        \end{cases}
    \end{aligned}
\end{align}
where $i = 1, 2, \dots$ counts the number of 2-periodic cycles. For $t \geq 0$ the corresponding Lagrangian flow $\Phi^t = \Phi^t_\uo : \T^2 \to \T^2$ is defined through
% for $t \geq 0$, the (random) Lagrangian flow $\Phi^t = \Phi^t_\uo : \T^2 \to \T^2$ satisfies
\[\frac{d}{dt} \Phi^t(x) = u(t, \Phi^t(x)) \]
and satisfies the composition law\footnote{Note that a single cycle takes two time iterates, hence the mismatch between the elapsed physical time  $2m$ and the number of phase shifts $m$.}
\[\Phi^{2n + 2m}_\uo = \Phi^{2n}_{\sigma^m \uo} \circ \Phi^{2m}_\uo \,. \]

For the model and results below we set $\kappa=0$. For $0 \leq r \leq t$ define 
\[\Phi^{r,t}_\uo := \Phi^t_\uo \circ (\Phi^r_{\uo})^{-1}\]
and write $\mathcal{S}^{r, t}= \mathcal{S}^{r, t}_\uo$ for the homogeneous transport propagator for \eqref{eq:introDrivenPassiveScalar}; by the method of characteristics, 
\[\mathcal{S}^{r, t} \varphi = \varphi \circ (\Phi^{r, t}_\uo)^{-1} \, . \]

% {\color{blue}  comment: $\sigma$ shifts the number of two-timestep `cycles', hence the peculiar notation } 

\paragraph{Basic assumptions: driven passive scalar.} We now turn to two specific settings for the driving term $S(t, x)$ in \eqref{eq:introDrivenPassiveScalar}. 
% Two settings of interest: discrete-time driving and continuous-time driving. 
% In both cases, for simplicity we consider the initial value problem \eqref{eq:introDrivenPassiveScalar} with $\varphi_0 \equiv 0$. 

\begin{setting}[Discrete-time, deterministic driving]\label{set:discDet}
We have that 
\begin{align}
    S(t, x) = \sum_{n = 1}^\infty \delta_{2n}(t) f \, , 
\end{align}
where $\delta_{2n}$ is the standard Dirac delta supported at $t = 2n$, and $f \in H^1_0 \setminus \{ 0 \}$ is a fixed (deterministic) mean-zero function.
\end{setting}

% \begin{setting}[Discrete-time, IID driving]\label{set:discRand}
%     We have that 
% \begin{align}
%     S(t, x) = \sum_{n = 1}^\infty \delta_{2n}(t) f_n 
% \end{align}
% where $f_n := \eta_n f$. Here, $(\eta_n)$ is an IID, square-integrable, mean-zero sequence (independent of the phase shifts $(\omega^i_j)$), and where $f$ is a fixed mean-zero $H^1$ function. 
% \end{setting}

\begin{remark}\label{rmk:rightContinuousIntro}
    Relevant to Setting \ref{set:discDet} is the convention, followed throughout the paper, that $\varphi_t$ is right-continuous in time, i.e., \[\lim_{\tau \to t+} \varphi_\tau = \varphi_t\] for all $t \geq 0$. In particular, in Setting \ref{set:discDet} the time-$t$ scalar $\varphi_t$ is given as 
    \begin{align}
        \varphi_t = \mathcal{S}^{0,t} \varphi_0 + \sum_{n = 1}^{\lfloor t/2\rfloor} \mathcal{S}^{2n, t} f \, , 
    \end{align}
    which differs from the left-limit $\varphi_{t-} := \lim_{\tau \to t-} \varphi_\tau$ only for $t \in 2 \Z$. 
\end{remark}

The next setting addresses continuous-in-time driving. 

\begin{setting}[Continuous-time, deterministic driving]\label{set:contDet}
    We have that 
    \begin{align}
        S(t, x) = f(x)
    \end{align}
    for a fixed $f \in H^1_0$ with
\begin{gather}\label{eq:cont-force-assumptions}
  \|f\|_{H^1}+\|f\|_{L^\infty} < \infty \, , \\ 
  \left\|\int_\T f(x^1,x^2)\dif x^1\right\|_{L^2_{x^2}} +
  \left\|\int_\T f(x^1,x^2)\dif x^2\right\|_{L^2_{x^1}} > 0 \,. 
\end{gather}
In particular, in Setting \ref{set:contDet} the time-$t$ scalar $\varphi_t$ is given by 
\begin{equation}
%   \label{eq:scalar-duhamel}
  \varphi_t = \mathcal{S}^{0, t} \varphi_0 + 
%   \begin{cases}
%     \sum_{i=1}^{\lfloor t/2\rfloor} \mathcal{S}^{2i,t} f & \text{Setting \ref{set:discDet}} \\ 
%   \int_0^t \mathcal{S}^{\tau, t} f \dif \tau & \text{Setting \ref{set:contDet}}
    % \end{cases}
  \int_0^t \mathcal{S}^{\tau, t} S(\tau, x) \dif \tau \, . 
%   \sum_{i=1}^{\lfloor t/2\rfloor} \mathcal{S}^{2i,t} f \, .
\end{equation}
\end{setting}

% \begin{setting}[White-in-time driving]
%     {\color{red}  fill in } 
% \end{setting}

% Settings above are simplest to state and most interesting. see remark \ref{rem:generalizations1} for comments on the most generalized settings we can consider with our methods. 

% \subparagraph{Deterministic driving, discrete-time case.} In this case, we let $\{ f_n\}_{n \geq 1}$ be a sequence of mean-zero, $H^1$ functions $\T^2 \to \R$ with the property that for all $n$, 
% \[F^{-1} \leq \|f_n\|_{L^2} \leq \|f_n\|_{H^1} \leq F \]
% where $F > 0$ is a constant. 

% Thus the forcing profile $f_n$ is delivered at the physical time $2n$; with
% our right-limit convention, this impulse is included in $\varphi_{2n}$.

% {\color{blue}  to modify: just set $f_n$ all equal to some fixed $f$. no need to get too complicated in the introduction.   } 

% Define the (discrete-time) forced scalar by
% \begin{equation}\label{eq:forced-scalar}
%   \begin{cases}
%     \varphi_n = \LL_{\alpha_n, \lambda} (f_{n-1} + \varphi_{n-1}) &\quad \text{for $n \geq 1$,}\\
%     \varphi_0 = 0.
%   \end{cases}
% \end{equation}

% Denote by $\mathcal{S}^{r,t}$ the solution map for the homogeneous operator from time $r$ to $t$, that is,
% \begin{equation}\label{eq:unforced-scalar}
%   \begin{cases}
%     \mathcal{S}^{r,t} g = \LL_{\alpha_t, \lambda} \mathcal{S}^{r, t-1} g &\quad \text{for $t \geq r+1$,}\\
%     \mathcal{S}^{r,r} g = g.
%   \end{cases}
% \end{equation}

\paragraph{Driven scalar Markov chain}

% By linearity,
% \begin{equation}
%   \label{eq:scalar-duhamel}
%   \varphi_t = \mathcal{S}^{0, t} \varphi_0 + 
%   \begin{cases}
%     \sum_{i=1}^{\lfloor t/2\rfloor} \mathcal{S}^{2i,t} f & \text{Setting \ref{set:discDet}} \\ 
%   \int_0^t \mathcal{S}^{\tau, t} f \dif \tau & \text{Setting \ref{set:contDet}}
%   \end{cases}
% %   \int_0^t \mathcal{S}^{\tau, t} S(\tau, x) \dif \tau
% %   \sum_{i=1}^{\lfloor t/2\rfloor} \mathcal{S}^{2i,t} f \, .
% \end{equation}

% {\color{green} Fix $r \in [0,2)$, viewed as a \emph{temporal phase}\footnote{While Pierrehumbert flow is not 2-periodic, due to the random phase shifts, it is ``2-periodic in law'', in that for $t, x$ fixed, the probabilistic law of $u(t, x)$ coincides with that for $u(t + 2, x)$.} for the Pierrehumbert flow.   } 
Consider Setting \ref{set:discDet} for the moment; Setting \ref{set:contDet} is handled similarly.
We take initial data $\varphi_0\in H^{-s}_0$. Incompressibility and the
mean-zero forcing preserve the spatial mean, so $\varphi_t\in H^{-s}_0$
for all $t\geq0$.
Observe that for any $n \geq 0$, 
\[\varphi_{2(n+1)} = \mathcal{S}^{2n, 2(n+1)} \varphi_{2n} + f \,. 
\]
In particular, the probabilistic law of $\varphi_{2(n+1)}$ conditioned on $(\varphi_{2k})_{0 \leq k \leq n}$ depends only on $\varphi_{2n}$, hence $(\varphi_{2n})$ constitutes a Markov chain. A similar argument applies to the left-limiting scalars $(\varphi_{2n-}), \varphi_{2n-} = \lim_{t \to 2n-} \varphi_t$. 
In view of $H^{-s}$-decay, it is natural to view each of $(\varphi_{2n}), (\varphi_{2n-})$ as Markov chains on $H^{-s}_0$. The following addresses \emph{stationary measures} of these chains.

\begin{proposition}\label{prop:statStatesPSA}
    Assume either Setting \ref{set:discDet} or \ref{set:contDet}. Then, for all sufficiently small $s > 0$ and all sufficiently large $L > 0$,
    \begin{itemize}
        \item[(a)] The sequences $(\varphi_{2n}), (\varphi_{2n-})$ constitute Markov chains on $H^{-s}_0$;
        \item[(b)] There exist unique stationary measures $\mu, \mu_-$ on $H^{-s}_0$ corresponding to $(\varphi_{2n}), (\varphi_{2n-})$, respectively; and
        \item[(c)] For any fixed $\varphi_0\in H^{-s}_0$ it holds that
        \begin{align}
            \operatorname{Law}(\varphi_{2n}) \to \mu \, , \qquad \operatorname{Law}(\varphi_{2n-}) \to \mu_-  
        \end{align}
        in the weak-$*$ topology of probability measures on $H^{-s}_0$.
    \end{itemize}
    % There exists $s > 0$ with the property that for all $L$ sufficiently large, the chains $(\varphi_{2n})$ and $(\varphi_{2n-})$, regarded as Markov chains on $H^{-s}$, admit unique stationary measures $\mu, \mu_-$ respectively. 
    % \begin{itemize}
    %     \item[(a)] The
    % \end{itemize}
    % {\color{red}There exists $s > 0$ with the property that for any $L$ sufficiently large, there exists a unique stationary measure $\mu$ for the driven scalar chain $(\varphi_{2n})$.
    % }
    % \begin{itemize}
    %     \item[(a)] 
    % \end{itemize}

    % {\color{red} In either of Settings \ref{set:discDet} or \ref{set:contDet}, view the time-$2n$ solutions $(\varphi_{2n})_{n \geq 0}$ as a Markov chain in $n$. Let $L$ be sufficiently large. Then, there is a unique stationary measure for  } 
\end{proposition}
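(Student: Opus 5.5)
The plan is to reduce everything to a single contraction estimate for the one-cycle transport operator in $H^{-s}_0$, averaged over the phase shifts. \textbf{Key input:} there exist $s_0>0$ and, for each $0<s\le s_0$, constants $C$ and $\gamma\in(0,1)$ with $\gamma\to0$ as $L\to\infty$, such that for all $g\in H^{-s}_0$ and $n\ge0$
\[
\E\|\mathcal{S}^{0,2n}_\uo g\|_{H^{-s}} \le C\gamma^n \|g\|_{H^{-s}}.
\]
I would try to get this from a one-cycle bound $\E\|\mathcal{S}^{0,2}g\|_{H^{-s}}^2\le \gamma^2\|g\|^2_{H^{-s}}$, iterated using independence of the phase shifts across cycles and the cocycle property $\Phi^{2n+2m}_\uo=\Phi^{2n}_{\sigma^m\uo}\circ\Phi^{2m}_\uo$. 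The one-cycle bound itself goes mode by mode, after averaging over the random phase $\omega^1$. For a single vertical shear, the mode $e_k$ with $k=(k^1,k^2)$ and $k^2\neq0$ is mapped to $e^{2\pi i k\cdot x}e^{-2\pi i k^2 L\psi(x^1+\omega)}$. Its Fourier coefficients are oscillatory integrals with large phase $k^2L\psi$. By (ND), stationary-phase/van der Corput estimates give spreading of the mass onto frequencies of size $\gtrsim |k^2|L$, up to small exceptional sets. The modes with $k^2=0$ are untouched by this shear but are then sheared by the horizontal flow; here the condition $\{\psi''=0\}\cap\{\psi'''=0\}=\emptyset$ controls the degenerate stationary points. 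Averaging in $\omega$ (using $p\in L^\infty$) makes distinct output modes nearly orthogonal. This gives $\E|\langle \mathcal{S}^{0,2}e_k,e_j\rangle|^2\lesssim$ small unless $|j|\gtrsim L|k|$, with the bound summable against the $(1+|j|^2)^{-s}$ weights when $s$ is small. I expect this one-cycle estimate to be the main obstacle; if it is not available as a prior lemma, it is where all the work lies.

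\textbf{Part (a).} The Markov property is as described before the proposition. $\varphi_{2(n+1)}=\mathcal{S}^{2n,2n+2}_\uo\varphi_{2n}+F$, where $F=f$ in Setting \ref{set:discDet}, and $F=\int_{2n}^{2n+2}\mathcal{S}^{\tau,2n+2}f\dif\tau$ in Setting \ref{set:contDet}. The map $\mathcal{S}^{2n,2n+2}$ depends only on the fresh shifts $(\omega^1_{n+1},\omega^2_{n+1})$, which are independent of the past. So the chain is generated by iterating i.i.d.\ random affine maps $A_{\omega}g=T_\omega g+F_\omega$. One checks that $\omega\mapsto T_\omega g$ is measurable and that $T_\omega$ is bounded on $H^{-s}_0$: the flow maps are $C^3$ volume-preserving diffeomorphisms, so composition is bounded on $H^{s}$, and by duality on $H^{-s}$. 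The mean is preserved, so the state space is $H^{-s}_0$. The left-limit chain $\varphi_{2n-}$ is the same kind of chain, with the forcing applied before transport, i.e.\ $\varphi_{2(n+1)-}=\mathcal{S}^{2n,2n+2}(\varphi_{2n-}+f)$ in Setting \ref{set:discDet}.

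\textbf{Parts (b) and (c).} I use the standard backward-iteration (Letac/contraction) argument. Consider
\[
\tilde\varphi_n=\sum_{m=0}^{n-1} T_{\omega_1}\cdots T_{\omega_m}F_{\omega_{m+1}},
\]
which has the same law as $\varphi_{2n}$ started from $0$. Each term has the law of $\mathcal{S}^{0,2m}$ applied to $F$ with independent shifts, so by the key estimate its expected $H^{-s}$ norm is at most $C\gamma^m\sup\|F\|_{H^{-s}}$. Here $\|F\|_{H^{-s}}\le\|F\|_{L^2}\le2\|f\|_{L^2}$. The series therefore converges almost surely in $H^{-s}_0$, and its limit $\tilde\varphi_\infty$ has law $\mu$. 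Stationarity follows because $\tilde\varphi_\infty \overset{d}{=} T_{\omega_0}\tilde\varphi_\infty+F_{\omega_0}$ with $\omega_0$ independent.

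For convergence from arbitrary $\varphi_0$, the difference between $\varphi_{2n}$ and the zero-initial-data chain is $\mathcal{S}^{0,2n}\varphi_0$. Its expected norm is at most $C\gamma^n\|\varphi_0\|_{H^{-s}}\to0$, which gives weak-$*$ convergence against bounded Lipschitz test functions, hence (c). Uniqueness follows by applying the same coupling to a stationary initial law $\nu$: the expectation $\E_\nu\|\mathcal{S}^{0,2n}\varphi_0\|\to0$ by dominated convergence, using the key estimate with $\varphi_0$ fixed. So $\nu=\mu$. The same argument applied to the left-limit chain produces $\mu_-$.
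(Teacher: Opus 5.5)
Your argument is correct and is essentially the paper's. The key input you isolate is exactly Theorem~\ref{thm:neg-reg-flexible}, which is already proved, so you do not need the one-cycle bound in plain $H^{-s}$ (the paper gets Theorem~\ref{thm:neg-reg-flexible} by iterating a single-step contraction in the anisotropic norm $H^{-s}_{\theta,\gamma}$ instead). Existence via your backward series matches the paper's $\sum_{j\le 0}\mathcal{S}^{2j,0}b_j$, and uniqueness and attraction via coupling through $\mathcal{S}^{0,2n}(g-h)$ are likewise the paper's. One point to tighten: no moment of a hypothetical stationary $\nu$ is known, so apply dominated convergence to $\E\big[1\wedge\|\mathcal{S}^{0,2n}\varphi_0\|_{H^{-s}}\big]$ rather than to the untruncated norm, as the paper does with the bounded metric $d_s$.
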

% In fact, can show that at long times, the probabilistic law of $\varphi_{2n}$, averaged over phase shift realizations $\uo$, converges to the stationary measure in a suitable sense as $n \to \infty$. Elaborated on in Section \ref{subsec:wrapup3}. 

Proposition \ref{prop:statStatesPSA} is proved in Section \ref{subsec:wrapup3}. 

\begin{remark}\label{rmk:off-timeChain} \ 
    \begin{itemize}
        \item[(a)] Uniqueness of $\mu, \mu_-$ is likely true for all $L > 0$, not just for $L$ sufficiently large, although we do not prove this here. See \cite[Theorem 2.6]{bedrossian2024negative} for a broader perspective on negative regularity mixing and implications for unique stationary measures. 
        % {\color{blue}  uniqueness probably true for all $L$, following \cite{bedrossian2024negative}; here we are only concerned with $L$ large. proof in Section \ref{subsec:wrapup3}. } 
        
        \item[(b)] Let $r \in (0,2)$, thought of as a ``temporal phase'' for the Pierrehumbert flow. It is natural to consider the sequences $(\varphi_{2n+r})$. However, except for $r = 1$, these sequences need not form Markov chains. Nevertheless, it is possible to show the weak-$*$ convergence
        \begin{align}
            \operatorname{Law}(\varphi_{2n+r}) \to \mu_r 
        \end{align}
        for a (unique) Borel probability $\mu_r$ on $H^{-s}_0$ depending on the phase $r$. We set $\mu_0=\mu_2=\mu$. See Section \ref{subsec:wrapup3} for further details.
        
        % {It can be shown (Section \ref{subsec:wrapup3}) that the odd-time sequences $(\varphi_{2n+1}), (\varphi_{2n+1-}), n \geq 0$ also form Markov chains, and Proposition \ref{prop:statStatesPSA} easily implies unique existence of a stationary measures $\mu_{odd}, \mu_{odd,-}$ at these times.   } 
        
    \end{itemize}
\end{remark}

% {\color{red}  comment: form of forcing allows for a full horizontal-then-vertical shear effect between scalar drivings. this is kinda important or else the scalars can cancel each other out-- see discussion below } 

% Assume also that nonzero elements of the forcing sequence are separated by between $2$ and $F$ steps, that is, assume that if $|n-m| = 1$ then either $f_n = 0$ or $f_m = 0$, and for every $n \geq 0$ there is $k \in \{0, \dots, F-1\}$ with $f_{n+k} \neq 0$.

% \subparagraph{Deterministic driving, continuous-time case.}

% {\color{blue}  put down the setting here. use $f$ constant in time for simplicity. 

% comment: we don't do it here but our proof works if the $f$ varies moderately slowly in time. } 

\subsection{Statement of results}\label{subsec:results1}

\paragraph{(I) Undriven passive scalars.}
We begin with statements on small-scale formation for the passive scalar equation 
\begin{align}\label{eq:PSAnoDrivingIntro}
    \partial_t \varphi + u \cdot \nabla \varphi = 0 \, , \quad \varphi(0, \cdot) = \varphi_0
\end{align}
with no scalar driving. 

\begin{letteredtheorem}\label{thm:introNegRegMix}
    There exist $s, L_0, C > 0$, depending only on the density $p$ and shear profile $\psi$, along with universal constants $\mathfrak{c}, \mathfrak{d} > 0$, with the following properties. 
    Let $L \geq L_0$ and let $\varphi_t := \varphi(t, \cdot)$ solve \eqref{eq:PSAnoDrivingIntro} with fixed initial mean-zero $\varphi_0 \in L^2$. Then, 
    \[\E \| \varphi_t\|_{H^{-s}}^2 \leq C L^{\mathfrak{d} - t \mathfrak{c}} \| \varphi_0 \|_{L^2}^2 \,. \]
\end{letteredtheorem}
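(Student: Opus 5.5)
Theorem~\ref{thm:introNegRegMix} should follow from a one-step contraction estimate for the averaged transfer operator on a weighted negative Sobolev space, iterated using independence of the phase shifts. We work with a Fourier weight $w_L(k)$ equivalent to $(1+|k|^2)^{-s}$ up to $L$-dependent constants. The core claim is a single-shear estimate. For a mean-zero $g$ and $\omega$ uniform-in-law with density $p$, let $T_\omega g(x) = g(x^1, x^2 - L\psi(x^1+\omega))$ be the time-one vertical shear, and let $T'_\omega$ be the horizontal one. We aim to show
\[
\E_\omega \|T_\omega g\|_{H^{-s}}^2 \le C L^{-\mathfrak c}\|g\|_{H^{-s}}^2 + (\text{error on modes with } |k^2|\lesssim 1 \text{ or } |k^2|\ll |k^1|/L),
\]
and symmetrically for $T'$. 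After each full cycle, the bad set for one shear is good for the other. This alternation is what makes the composed cycle contract on average.

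To get the single-shear estimate, fix a Fourier mode $e_k$ with $k=(k^1,k^2)$, $k^2\ne 0$. Then $T_\omega e_k = e^{2\pi i k^1 x^1} e^{2\pi i k^2 x^2} e^{-2\pi i k^2 L\psi(x^1+\omega)}$. Its $x^1$-Fourier coefficients are oscillatory integrals $\int e^{-2\pi i(k^2 L\psi(y) + (m-k^1)y)}\dif y$ with large parameter $|k^2|L$. By (ND) (no degenerate critical points beyond order $3$), van der Corput gives a bound on each coefficient of order $(|k^2|L)^{-1/3}$. The output mass therefore spreads over $\gtrsim |k^2|L$ modes $m$ with $|m|\lesssim |k^2|L$. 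Consequently $\|T_\omega e_k\|_{H^{-s}}^2 \lesssim (|k^2|L)^{-2s+}\cdot$(losses), which is much smaller than $|k|^{-2s}$ provided $|k^2|L \gg |k|$.

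For general $g$, cross terms between different $k$ with the same $k^2$ interfere. This is where the random phase $\omega$ is needed. Averaging over $\omega$ with bounded density $p$ decorrelates the $x^1$-translates: after shifting, the cross term carries a factor $\hat p(k^1-k'^1)$-type averaging. I would handle it by a Cauchy--Schwarz/almost-orthogonality argument in $k^1$, or by writing $T_\omega$ as a translate conjugation of $T_0$ and using $\|p\|_{L^\infty}$ to bound $\E_\omega |\cdot|^2$ by $\|p\|_\infty\int_\T|\cdot|^2\dif\omega$, which makes the cross terms orthogonal in $\omega$ via Plancherel in the shift variable.

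For the iteration, consider the cycle operator $\mathcal L_\omega = T'_{\omega^2}T_{\omega^1}$. I would prove $\E\|\mathcal L g\|_{H^{-s}}^2\le C L^{-\mathfrak c}\|g\|_{H^{-s}}^2$ for every mean-zero $g\in H^{-s}$, not just $L^2$ data. Modes with $|k^2|\ge |k|/\sqrt L$ are contracted by the first shear. The remaining modes are nearly horizontal, $|k^1|\approx|k|$; after the first shear their mass sits near the cone of small $|k^2|$, where the second (horizontal) shear contracts them. There is a subtlety: the first shear moves mass from $(k^1,k^2)$ to $(m,k^2)$ with $|m|$ up to $|k^2|L$. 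So I would control the $L^2\to$ spread tails with $s$ small and track $H^{-s}$ growth by at most $L^{\mathfrak d}$ per step, which is harmless since $\|T_\omega\|_{H^{-s}\to H^{-s}}\lesssim L^{s}$ trivially by interpolation between $L^2$ and $H^{-1}$ (the latter via the Lipschitz constant $\sim L$).

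Independence of successive cycles and the cocycle property then give $\E\|\varphi_{2n}\|_{H^{-s}}^2\le (CL^{-\mathfrak c})^n\|\varphi_0\|_{H^{-s}}^2$. Intermediate times are handled by the crude bound $L^{\mathfrak d}$, which absorbs the starting factor and the fractional cycle. The main obstacle is the cycle contraction on arbitrary $H^{-s}$ data, specifically controlling the cross terms and the near-axis modes $k^2\approx0$ (for which $k^2=0$ gives no vertical mixing at all). For these I would first try the explicit decomposition above, and if cross terms resist, replace $H^{-s}$ by a $\omega$-averaged $L^2$ estimate of $\Pi_{B_R}T_\omega g$ for each dyadic $R$.
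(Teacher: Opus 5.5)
Several of your ingredients are the paper's. Bounding $\E_\omega$ by $\|p\|_{L^\infty}\int_\T\cdot\,\dif\omega$ and killing cross terms by orthogonality of $\omega\mapsto e^{2\pi i\ell\omega}$ is exactly how Lemma~\ref{lem:cone-decay} is proved. The bound $\|\Lc_{\omega,L}\|_{H^{-s}\to H^{-s}}\lesssim L^s$ is \eqref{eq:phi-no-grow}. Iterating by independence, with a crude bound on the last partial cycle, is also what the paper does.

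The gap is the step you flag yourself, the cycle contraction, and concretely the claim that the deterministic $L^s$ growth is harmless. The inverse of a shear is again a shear of amplitude $L$, so $\|g\|_{H^{-s}}\leq CL^{s}\|T_\omega g\|_{H^{-s}}$ for every $\omega$. Hence one shear can improve the squared $H^{-s}$ norm by at most a factor $\approx L^{-2s}$, which is exactly the worst-case factor $L^{2s}$ you allow for the other shear of the cycle. Any argument of the form ``one shear contracts this piece, the other shear is bounded deterministically'' therefore gives no contraction in the unweighted norm, for either of your two pieces. Closing a two-shear argument would require an $L$-uniform, phase-averaged bound for the second shear acting on the output of the first. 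In particular you would need a frequency-uniform bound on how much mass the vertical shear sends into a cone $\{|k^1+\ell|<\theta|k^2|\}$ about the $k^2$-axis; this cone contains the region $|k^1+\ell|\lesssim|k^2|/L$ where the subsequent horizontal shear does essentially nothing. Your proposal has no such estimate, and the weight $w_L$ you mention at the outset is never specified or used.

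That cone estimate and the right weight are the missing ideas. The paper's Lemma~\ref{lem:cone-decay} shows that, averaged over the phase, a shear sends only $\lesssim\theta^{\frac{1-2s}{4-2s}}L^{2s-\frac{1-2s}{4-2s}}\|\varphi\|_{H^{-s}}^2$ into $\Cc_\theta$ (after the flip built into $\Lc_{\omega,L}$). Your uniform $(|k|L)^{-1/3}$ van der Corput bound cannot give this. A slice of $\Cc_\theta$ contains $\sim\theta|k|$ modes, and $\theta|k|(|k|L)^{-2/3}$ grows with $|k|$. Nor does that bound imply the output spreads over $\gtrsim|k^2|L$ modes; it only gives $\gtrsim(|k^2|L)^{2/3}$. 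The paper instead:
\begin{itemize}
\item excises $\varepsilon$-neighborhoods of the inflection points of $\psi$;
\item applies the second-derivative test with $|\psi''|\geq c\varepsilon$ elsewhere;
\item controls the excised part by the phase-averaged masking estimate of Lemma~\ref{lem:neg-ortho};
\item optimizes in $\varepsilon$.
\end{itemize}

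It then never composes two shears. It works in the norm $\gamma\|\Pi_{\Cc_\theta}\varphi\|_{H^{-s}}^2+\|\Pi_{\Z^2\setminus\Cc_\theta}\varphi\|_{H^{-s}}^2$ with $\gamma=L^{4s}$ and $\theta=L^{1-\varepsilon/2}$. In this norm a single shear-plus-flip already contracts on average (Proposition~\ref{thm:neg-reg-mix}), for three reasons:
\begin{itemize}
\item mass ending in $\Cc_\theta$ is small by Lemma~\ref{lem:cone-decay};
\item mass starting in $\Cc_\theta$ pays the deterministic $L^{2s}$ but is discounted by $\gamma$, which is where the trivial bound genuinely is harmless;
\item mass starting and ending outside $\Cc_\theta$ contracts deterministically by $\theta^{-2s}$ through the slice-wise Parseval identity \eqref{eq:parsevalType}.
\end{itemize}

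Finally, your one-cycle target in the unweighted norm is stronger than what this argument yields. Changing norms costs $\gamma=L^{4s}$, while one cycle gains only $L^{2(\varepsilon-2)s}$. The theorem tolerates this, since the conversion cost is absorbed into the prefactor $L^{\mathfrak d}$.
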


{
% our value for $\mathfrak{c}$ is quite poor - something around $\mathfrak{c} \approx .0175$. Likely suboptimal, have not made an effort to improve in this paper.  Value $\mathfrak{d}$ is $< 1$.

Theorem \ref{thm:introNegRegMix} is proved in Section \ref{subsec:quenchedNegRegMix}. See Remark \ref{rmk:explicitConsts2} for explicit constants in Theorem \ref{thm:introNegRegMix}. 
} 

We next record a consequence for \emph{almost-sure} $H^{-s}$ decay, also proved in Section \ref{subsec:quenchedNegRegMix}. 

\begin{letteredcorollary}\label{cor:introNegRegMixQuenched}
    Let $0 < \mathfrak{c}' < \frac12\mathfrak{c}$. There exist $C_{\mathfrak{c}'} > 0$ and a random variable $D = D_{\mathfrak{c}', L} \geq 1$, depending on the phase shifts $(\omega^j_i)_{i,j}$, such that 
    \[\E D \leq C_{\mathfrak{c}'} L^{\mathfrak{d}'},\qquad \mathfrak d'=2s+2\mathfrak c'\]
    with the following property. Let $\varphi_t := \varphi(t, \cdot)$ solve \eqref{eq:PSAnoDrivingIntro} with fixed initial mean-zero $\varphi_0 \in H^1$. Then, 
    \[\| \varphi_t\|_{H^{-s}} \leq D L^{- t \mathfrak{c}'} \| \varphi_0\|_{H^1} \,. \]
\end{letteredcorollary}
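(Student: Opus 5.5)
The plan is a Borel--Cantelli-type summation of Theorem \ref{thm:introNegRegMix} over integer times, made uniform over all initial data by working with the transfer operator on a fixed basis, and then extended to non-integer times by interpolation. The key point is that $D$ may depend only on $\uo$, not on $\varphi_0$. We therefore bound the random operator norm $\|\mathcal{S}^{0,t}\|_{H^1\to H^{-s}}$ rather than the decay of a single solution.

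\textbf{Step 1: an operator-norm bound via Fourier modes.} Write $\varphi_0=\sum_{k\neq 0}\hat\varphi_0(k)e_k$. By linearity and Cauchy--Schwarz,
\[
\|\mathcal{S}^{0,t}\varphi_0\|_{H^{-s}}\le \Big(\sum_{k\ne0}|k|^{2}|\hat\varphi_0(k)|^2\Big)^{1/2}\Big(\sum_{k\neq0}|k|^{-2}\|\mathcal{S}^{0,t}e_k\|_{H^{-s}}^2\Big)^{1/2}.
\]
The sum $\sum_k|k|^{-2}$ diverges in dimension two, so this needs a little more regularity. Two fixes are possible:
\begin{itemize}
\item[(i)] use the trivial bound $\|\mathcal{S}^{0,t}e_k\|_{H^{-s}}\le1$ and interpolate against it;
\item[(ii)] use the $H^{-s}$ form rather than the $L^2$ one.
\end{itemize}
The cleaner route is to note that $\|\mathcal{S}^{0,t}e_k\|_{H^{-s}}^2\le \min(1, X_{t,k})$, where Theorem \ref{thm:introNegRegMix} gives $\E X_{t,k}\le CL^{\mathfrak d-t\mathfrak c}$ uniformly in $k$. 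Take the weight $|k|^{-2}$ but trade a small power: split $|k|^{-2}=|k|^{-2-2\epsilon}\,|k|^{2\epsilon}$. Alternatively, first apply $\varphi_0\in H^1$ to get $\|\varphi_0\|_{H^{1-\epsilon}}$-control with summable weights $|k|^{-2-2\epsilon}$ on the second factor.

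Concretely, define
\[
Y_t:=\sum_{k\ne0}|k|^{-2-2\epsilon}\|\mathcal{S}^{0,t}e_k\|_{H^{-s}}^2,
\]
so that $\E Y_t\le C_\epsilon L^{\mathfrak d-t\mathfrak c}$. The price is that $\|\varphi_0\|_{H^{1+\epsilon}}$ appears instead of $\|\varphi_0\|_{H^1}$. To avoid this, interpolate: by $L^2$-isometry of transport and the Sobolev interpolation $\|g\|_{H^{-s}}\le \|g\|_{H^{-s'}}^{\theta}\|g\|_{L^2}^{1-\theta}$, one can apply the argument with a slightly larger $s'$ and lose only a fraction of the rate. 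This is where the constraint $\mathfrak c'<\tfrac12\mathfrak c$ and the form $\mathfrak d'=2s+2\mathfrak c'$ will come from. The factor $\tfrac12$ reflects passing from squared norms to norms, and the $2s$ reflects the interpolation/Sobolev loss. I expect this bookkeeping to be the main obstacle: exactly matching the constants $\mathfrak c'$ and $\mathfrak d'$ requires choosing the interpolation exponents carefully, and it is an open point in this plan whether those exponents land exactly on $\mathfrak d'=2s+2\mathfrak c'$.

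\textbf{Step 2: summation over integer times.} Define
\[
D_0:=\sup_{n\in\N}L^{n\mathfrak c'}Y_n^{1/2}\le\Big(\sum_n L^{2n\mathfrak c'}Y_n\Big)^{1/2}.
\]
By Jensen,
\[
\E D_0\le\Big(\sum_n L^{2n\mathfrak c'}C L^{\mathfrak d-n\mathfrak c}\Big)^{1/2}.
\]
Since $2\mathfrak c'<\mathfrak c$, this geometric series converges for $L\ge L_0$, giving a polynomial bound in $L$.

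\textbf{Step 3: intermediate times.} For $t\in[n,n+1)$, the map $\mathcal{S}^{n,t}$ is a composition of at most two shears of amplitude $L$, hence bounded on $H^{s}$ with norm $\lesssim (CL)^{s}$. By duality, together with measure preservation, it is also bounded on $H^{-s}$ with norm $\lesssim (CL)^s$. Therefore
\[
\|\varphi_t\|_{H^{-s}}\lesssim L^{s}\|\varphi_n\|_{H^{-s}}\le L^{s}L^{\mathfrak c'}\,D_0\,L^{-t\mathfrak c'}\|\varphi_0\|_{H^1}.
\]
Setting $D:=\max(1,C L^{s+\mathfrak c'}D_0)$ makes the claimed inequality hold for all $t\ge0$. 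It then remains to check that $\E D\le C_{\mathfrak c'}L^{\mathfrak d'}$, i.e., to match the exponent $2s+2\mathfrak c'$. The squared-norm accounting (factors of $L^{2s}$ and $L^{2\mathfrak c'}$ at the level of $\E D^2$) suggests defining $D$ through squares, which yields exactly this exponent.
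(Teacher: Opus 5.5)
Your architecture is the paper's: bound the random operator norm $\|\mathcal S^{0,t}\|_{H^1_0\to H^{-s}}$ through the modes $e_k$, sum over discrete times using Jensen, and handle partial cycles deterministically. However, two steps do not go through as written.

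\textbf{The divergent sum.} Your repair of the divergence of $\sum_{k\neq0}|k|^{-2}$ acts on the wrong side. Interpolating the \emph{target} norm $H^{-s}$ between $H^{-s'}$ and $L^2$ still leaves $\|\varphi_0\|_{H^{1+\epsilon}}$ in every factor. Your other variant is inconsistent: pairing against $H^{1-\epsilon}$ control forces, by Cauchy--Schwarz, the non-summable dual weights $|k|^{-2+2\epsilon}$. The regularity has to be lowered on the \emph{domain} side. The paper does this by truncating $\varphi_0$ in frequency at a time-dependent level $M_m=\lceil L^{m\mathfrak c}\rceil$. On $0<|k|\leq M_m$, Cauchy--Schwarz with weights $(1+|k|^2)^{-1}$ costs only $(\log M_m)^{1/2}\lesssim(1+m\log L)^{1/2}$. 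On $|k|>M_m$, the $L^2$ isometry gives $M_m^{-1}\|\varphi_0\|_{H^1}$. Operator interpolation between $H^{1+\epsilon}_0\to H^{-s}$ and $L^2_0\to H^{-s}$ would also work, at the cost of a factor $(1+\epsilon)^{-1}$ in the rate.

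\textbf{The exponent $\mathfrak d'$.} The bound $\E D\leq C_{\mathfrak c'}L^{\mathfrak d'}$ does not follow from your inputs, and the closing remark about defining $D$ through squares does not supply it. Feeding Theorem \ref{thm:introNegRegMix} into your Step 2 as written gives $\E D_0\lesssim L^{\mathfrak d/2}$. Hence $\E D\lesssim L^{s+\mathfrak c'+\mathfrak d/2}$, which with $\mathfrak d=4s+2\mathfrak c$ (Remark \ref{rmk:explicitConsts2}) equals $L^{3s+\mathfrak c+\mathfrak c'}$. This is strictly worse than $L^{2s+2\mathfrak c'}$. The reason is that the prefactor $L^{\mathfrak d}$ in Theorem \ref{thm:introNegRegMix} already contains a partial-cycle loss, and your Step 3 charges one again.

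The missing input is the cycle-time estimate \eqref{eq:bootstrapL2Start2}, $\bigl(\E\|\mathcal S^{0,2m}e_k\|_{H^{-s}}^2\bigr)^{1/2}\leq CL^{-(2m-1)\mathfrak c}$, which carries no $L^{4s}$ factor. Using it only at even times, the paper obtains, for $A_m:=\|\mathcal S^{0,2m}\|_{H^1_0\to H^{-s}}$,
$\E A_m\leq C(1+m\log L)^{1/2}L^{-(2m-1)\mathfrak c}+L^{-m\mathfrak c}$.
Therefore $D_0:=1+\sum_{m\geq1}L^{2m\mathfrak c'}A_m$ satisfies $\E D_0\leq C_{\mathfrak c'}$ uniformly in $L$, exactly because $2\mathfrak c'<\mathfrak c$. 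All of $L^{\mathfrak d'}$ then comes from two sources: the deterministic partial-cycle bound \eqref{eq:deterministFracTime2}, costing $CL^{2s}$, and rounding $t$ down to the previous even time, costing $L^{2\mathfrak c'}$. This gives $D=CD_0L^{2s+2\mathfrak c'}$.
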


{We note that, as in \cite{bedrossianAlmostsureExponentialMixing2019}, one can replace $\| \varphi_0\|_{H^1}$ with $\| \varphi_0 \|_{H^m}, m > 0$ on changing the mixing rate $\mathfrak{c}'$. Similarly, one can arrange for random variable $D$ to admit higher moments at expense of reducing the mixing rate $\mathfrak{c}'$; see Section \ref{subsec:quenchedNegRegMix} for details. } 

\paragraph{(II) Driven passive scalars}
We now turn to a description of statistically stationary solutions to the driven problem \eqref{eq:introDrivenPassiveScalar}. 

% following statements hold in both discrete and continuous-time scalar driving cases. 
% {\color{red}Below, let $\mu$ be as in Proposition \ref{prop:statStatesPSA} in either of settings \ref{set:discDet}, \ref{set:contDet}.}

\begin{letteredtheorem}\label{thmA:batch}
    % {\color{red}  both settings \ref{set:discDet}, \ref{set:contDet} } 
    In either Setting \ref{set:discDet} or \ref{set:contDet},
    there exists $\zeta > 0$ with the following properties. Let $q \geq 1$ and let $L$ be sufficiently large. Let $r \in [0,2)$ be arbitrary, and let $\varphi_{\rm stat, r}$ be a random scalar in $H^{-s}_0$ with distribution $\mu_r$ (Proposition \ref{prop:statStatesPSA}, Remark \ref{rmk:off-timeChain}). Then there exists $C \geq 1$, depending on $p, \psi, f$ and $q$ such that, with $c = C^{-1}$, it holds that
    \begin{align}\label{eq:batchelorIntro}
        c (\log_L R)^q \leq \E \| \Pi_{B_R} \varphi_{\rm stat, r} \|_{L^2}^{2q} \leq C (\log_L R)^q
    \end{align}
    for all $R \geq L^\zeta$.
    Moreover, in Setting \ref{set:discDet}, equation \eqref{eq:batchelorIntro} holds when $\mu$ is replaced by $\mu_-$. 
    In addition, for all $R\geq L^\zeta$,
    \begin{equation}\label{eq:batchelorLowerTailIntro}
      \P\{\|\Pi_{B_R}\varphi_{\rm stat,r}\|_{L^2}^2<c\log_LR\}
      \leq CL^{-c},
    \end{equation}
    also with $\mu_r$ replaced by $\mu_-$ in Setting \ref{set:discDet}.
\end{letteredtheorem}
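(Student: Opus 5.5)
Work in Setting \ref{set:discDet} at phase $r=0$ first. By Proposition \ref{prop:statStatesPSA}, $\varphi_{\rm stat}$ has the law of the almost-sure limit of the series
\[
\varphi_{\rm stat}\overset{d}{=}\sum_{n\ge0}\mathcal S^{-2n,0}f ,
\]
which we realize using a two-sided i.i.d.\ phase sequence. The other phases $r$, the chain $\mu_-$, and Setting \ref{set:contDet} are handled by the same argument. For $\mu_r$ and $\mu_-$ one applies one extra partial propagator, which changes frequencies by at most a factor $L^{O(1)}$; this is absorbed into $R\ge L^\zeta$. In Setting \ref{set:contDet}, $\int_0^2\mathcal S^{\tau,2}f\,\dif\tau$ plays the role of $f$. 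The nondegeneracy \eqref{eq:cont-force-assumptions} guarantees that this effective forcing has a nontrivial component that is not annihilated by the shear averaging.

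The heuristic is the conveyor belt. Each cycle multiplies typical frequencies by a factor $\approx L^{\Theta(1)}$, since the shear gradient is $L\psi'$. So the $n$-th term $g_n:=\mathcal S^{-2n,0}f$ has its $L^2$ mass, equal to $\|f\|_{L^2}^2$ by incompressibility, concentrated at frequencies $|k|\approx L^{\Theta(n)}$. Hence $\Pi_{B_R}$ essentially keeps the $N\approx\log_L R$ most recent terms.

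The plan has three steps.

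\textbf{Step 1: localization of single terms.} Show that
\[
\E\|\Pi_{B_R^c}g_n\|_{L^2}^2\lesssim L^{-c}\quad\text{for } n\le c_1\log_LR ,
\]
and
\[
\E\|\Pi_{B_R}g_n\|_{L^2}^2\lesssim R\,\E\|g_n\|_{H^{-s}}^{2/s}\ \text{(interpolation)}\lesssim L^{-c(n-C\log_LR)}\quad\text{for } n\ge C_1\log_LR .
\]
The first bound follows from $\|g_n\|_{H^1}\le (CL)^{2n}\|f\|_{H^1}$ together with Chebyshev's inequality in frequency. The second uses Theorem \ref{thm:introNegRegMix} via
\[
\|\Pi_{B_R}g\|_{L^2}\le R^{s}\|g\|_{H^{-s}} .
\]
The intermediate range $n\in[c_1,C_1]\log_LR$ contributes $O(\log_LR)$ terms, each of mass at most $\|f\|_{L^2}^2$.

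\textbf{Step 2: approximate orthogonality.} Establish the cross-term bound
\[
|\E\langle\Pi_{B_R}g_n,\Pi_{B_R}g_m\rangle|\lesssim L^{-c|n-m|}.
\]
To prove it, condition on the phases after the earlier of the two times. Transferring $g_m$ back through the common propagator, the pairing reduces to a correlation of $f$ with $\mathcal S^{\cdot}f$ over $|n-m|$ cycles, tested against a projection. Incompressibility makes the unprojected pairing exactly this correlation, which Theorem \ref{thm:introNegRegMix} bounds by $L^{\mathfrak d-c|n-m|}\|f\|_{H^1}^2$. The commutator error of $\Pi_{B_R}$ with the propagator is small away from the two ends of the index window, by Step 1.

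Combining Steps 1 and 2 gives $\E\|\Pi_{B_R}\varphi_{\rm stat}\|_{L^2}^2\approx\log_LR$. The lower bound additionally needs that at least a fixed fraction of the terms with $n\le c_1\log_LR$ remain essentially inside $B_R$.

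\textbf{Step 3: moments and the lower tail.} For the $q$-th moments, write
\[
\|\Pi_{B_R}\varphi_{\rm stat}\|_{L^2}^2=\sum_{n,m}\langle\cdot,\cdot\rangle
\]
and treat it as a sum of weakly dependent block variables. I would group the indices into blocks of length $K\approx C\log L$ (in units of cycles) so that inter-block correlations are $L^{-cK}$-small. Then I would apply a martingale-difference decomposition with respect to the phase filtration, with Burkholder's inequality for the upper bound. For the lower bound and the tail \eqref{eq:batchelorLowerTailIntro}, I would use a Paley--Zygmund-type argument: each block contributes at least $c$ with conditional probability $1-L^{-c}$, and conditional independence across blocks then yields the tail bound.

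\textbf{Main obstacle.} I expect the hardest step to be Step 1's frequency localization in the quantitative form needed for the lower bound. We need that $g_n$ carries mass at least $c$ below the frequency $L^{Cn}$ and does not escape to frequencies above $R$ too early, with probability $1-L^{-c}$ and not merely on average. This requires a cone-type description of the frequency growth: the large-shear dynamics push frequencies into a narrow cone and grow them by $\approx L$ per half-step, except near the critical points of $\psi$. Making this precise uses (ND) and the density assumption (P) to control the measure of the near-critical set. I would try first to prove a single-cycle statement: with probability $1-O(L^{-c})$, $\mathcal S^{0,2}$ maps $L^2$ mass at frequency $|k|$ to frequencies in $[L^{a}|k|,L^{b}|k|]$. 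I would then iterate it with a union bound over $O(\log_LR)$ cycles.
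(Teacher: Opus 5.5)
Working directly with the pullback series is a legitimate alternative to the paper's route (finite-time quenched bounds, then weak convergence and uniform integrability). Your upper bound goes through, provided you drop $\Pi_{B_R}$ on the whole recent window, as the paper does, instead of estimating projected pairings of two intermediate-range terms. In that range neither $\Pi_{B_R}g_n$ nor $\Pi_{\Z^2\setminus B_R}g_n$ is small, so your commutator argument does not apply there. Minkowski in $L^q(\Omega)$ then suffices, since each pairing is bounded by $\|f\|_{L^2}^2$ and has a summable first moment. The lower moment bound for $q>1$ follows from $q=1$ by Jensen, so Burkholder is not needed.

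\textbf{The genuine gap is in the lower bound, and it is not where you place it.} The frequency localization of recent terms is deterministic. For every phase sequence, $\|\Pi_{\Z^2\setminus B_R}g_n\|_{L^2}\le R^{-1}(CL)^{2n}\|f\|_{H^1}$. Hence each of the last $\approx\frac18\log_LR$ terms satisfies $\|\Pi_{B_R}g_n\|_{L^2}\geq\frac12\|f\|_{L^2}$ surely. No cone description, single-cycle probabilistic statement, or union bound is needed; this is exactly how the paper bounds the diagonal. What can destroy the lower bound is \emph{interference between consecutive injections of the same deterministic $f$}. You need $\sum_{m\neq n}|\langle\mathcal S^{-2n,-2m}f,f\rangle|$ to be $o(1)$ per $n$, uniformly in $L$, and in particular small at lag $|n-m|=1$.

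Your Step 2 takes this from Theorem \ref{thm:introNegRegMix}, which only gives the second-moment bound $CL^{\mathfrak d-2\mathfrak c|n-m|}$. That is $\geq1$ for $|n-m|\le\mathfrak d/(2\mathfrak c)$. So the claimed $\lesssim L^{-c|n-m|}$ with an $L$-independent constant does not follow. The short-lag cross terms are then bounded only by an $O(\mathfrak d/\mathfrak c)$ multiple of the diagonal, and nothing in your argument stops them from cancelling it. For instance, $\langle\mathcal S^{0,2}f,f\rangle\approx-\tfrac12\|f\|_{L^2}^2$ is not ruled out.

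\textbf{The missing input is a loss-free one-cycle decorrelation estimate from $L^2$ data.} This is \eqref{eq:bootstrapL2Start2}:
$$\bigl(\E\|\mathcal S^{0,2n}g\|_{H^{-s}}^2\bigr)^{1/2}\le CL^{-(2n-1)\mathfrak c}\|g\|_{L^2},$$
which is already small at $n=1$. It comes from the anisotropic norm and Lemma \ref{lem:intialBootstrap2}, ultimately from Lemma \ref{lem:cone-decay}, not from Theorem \ref{thm:introNegRegMix}.

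The paper upgrades this to quenched constants $D_j$ with $L$-uniform moments and $\|\mathcal S^{2j,2r}g\|_{H^{-s}}\le D_jL^{-\alpha(r-j)}\|g\|_{H^1}$. All cross terms involving the new window are then $\lesssim L^{-\alpha}\sum_wD_w$ plus deterministic tail errors. This is $o(\log_LR)$ outside an event of probability $CL^{-c}$. The lower moment bound and the lower tail then follow from Markov, and no Paley--Zygmund or block conditional-independence argument is needed. The latter is doubtful anyway, since all $g_n$ share the terminal propagator. Note also that you only need to pair projected new terms with the rest: the nonnegative terms $\|\Pi_{B_R}\varphi^{\rm mid}\|^2$ and $\|\Pi_{B_R}\varphi^{\rm old}\|^2$ can simply be dropped.

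\textbf{A second gap occurs in Setting \ref{set:contDet}.} The effective source $\int_0^2\mathcal S^{\tau,2}f\,\dif\tau$ is random and rough, with $H^1$ norm $\sim L^2$. The mixing estimates therefore lose powers of $L$ at short lags, and ``plays the role of $f$'' is not enough. The paper needs an extra argument here, Lemma \ref{lem:cont-effective-sources3}(b). There $\mathfrak f_j$ is approximated to $O(L^{-1/3})$ by $\mathcal S^{2j-1,2j}\bar f^{\rm v}+\bar f^{\rm h}$. The common final horizontal shear then cancels in the pairing, and odd-to-odd mixing controls the short-lag correlations.
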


We continue with a statement on sparsity in Setting \ref{set:discDet}. We include the left-limiting and right-limiting laws at even times, as well as phases bounded away from the start of the horizontal shear. 

\begin{letteredtheorem}\label{thmA:sparsity}
    % only setting \ref{set:discDet}.
    Assume Setting \ref{set:discDet}. 
    Let $\vartheta \in [0,1)$, $q \geq 1$ and $\varepsilon_0 \in (0,1]$. Then, there exist $C, L_0 \geq 1$ such that the following holds for all $L \geq L_0, R \geq 2$. Let $\Theta \coloneq L^\vartheta$.

    Below, $\mu$ is stationary for $(\varphi_{2n})$, and $\mu_r$ is as in Remark \ref{rmk:off-timeChain} with $\mu_2, \mu_0 := \mu$, while $\mu_-$ is stationary for $(\varphi_{2n-})$. 
    \begin{itemize}
        \item[(a)] Let $\varphi_{\rm stat, +}$ be distributed like $\mu_-$ or like $\mu_r$ for $r \in [1 + \varepsilon_0, 2)$. Then, 
        \begin{align}\label{eq:goodConeConf1}
            \E \| \Pi_{\Cc_\Theta \cap B_R} \varphi_{\rm stat, +} \|_{L^2}^{2 q} \leq C L^{- \frac{1 - \vartheta}{8} } \left(\log_L R\right)^q  \,. 
        \end{align}
        \item[(b)] Let $\varphi_{\rm stat,0}$ be distributed like $\mu = \mu_0$. Then, 
        \begin{align}
            \E \| \Pi_{\Cc_\Theta \cap B_R} \varphi_{\rm stat,0} \|_{L^2}^{2 q} \leq C \left(L^{- \frac{1 - \vartheta}{8} } (\log_L R)^q + 1\right) \,. 
        \end{align}
    \end{itemize}

    % and let $\varphi_{\rm stat}$ be a random mean-free scalar distributed like $\mu$. Then, 
    % \begin{align}
    %     \E \| \Pi_{\Cc_\Theta \cap B_R} \varphi \|^{2 q} \leq C \left(L^{- \frac{1 - \vartheta}{8} } \log_L R + 1\right) \,. 
    % \end{align}
\end{letteredtheorem}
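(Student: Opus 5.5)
We plan to write the stationary scalar as a Duhamel sum and bound the cone-projected mass term by term, using the geometry of the two alternating shears. In Setting \ref{set:discDet}, stationarity (Proposition \ref{prop:statStatesPSA}) lets us represent $\varphi_{\rm stat,-}\sim\mu_-$ as the almost-sure $H^{-s}$ limit of $\sum_{n\ge 1}\mathcal{S}^{0,2n}_{\uo} f$, where $\mathcal{S}^{0,2n}_\uo f$ is $f$ transported through $n$ full cycles. Similarly, for $r\in[1+\varepsilon_0,2)$ we represent $\varphi_{\rm stat,r}$ as the part of the horizontal shear of duration $r-1\ge\varepsilon_0$ applied after the vertical shear. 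The common feature of all the laws in part (a) is that the last action on every summand is a horizontal shear $x\mapsto (x^1+\tau L\psi(x^2+\omega),x^2)$ with $\tau\ge\varepsilon_0$. In Fourier space this pushes mass toward modes $(k,j)$ with $|j|\gtrsim \tau L|\psi'|\,|k|$, away from the cone $\Cc_\Theta$ when $\Theta=L^\vartheta\ll L$. The exception is the region where $\psi'(x^2+\omega)$ is small, which has measure $\lesssim (\Theta/L)^{1/2}$ by the nondegeneracy (ND).

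\textbf{Step 1 (single-summand cone estimate).} Fix $g\in L^2$ with some frequency localization. Let $T_\omega$ denote the horizontal shear of duration $\tau\ge\varepsilon_0$. We will show that
\[
\E_\omega\|\Pi_{\Cc_\Theta} (g\circ T_\omega^{-1})\|_{L^2}^2\lesssim (\Theta/L)^{1/4}\|g\|_{L^2}^2 + \text{(low-frequency error)}.
\]
The plan is to smoothly partition $x^2$ into the set where $|\psi'|\gtrsim \delta$ and its complement. On the good set, we use stationary-phase and nonstationary-phase integration by parts in $x^2$: the phase $e^{2\pi i(k\tau L\psi(x^2+\omega)+ jx^2)}$ has derivative $\gtrsim |k|\delta L$ when $|j|<\Theta|k|\ll \delta L|k|$, so the cone mass is small. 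The bad set is controlled by its measure $\lesssim\delta$, using (ND) since zeros of $\psi'$ are nondegenerate where $\psi''\ne0$, with (ND) handling the cubic case. Averaging over the phase $\omega$ with bounded density $p$ turns pointwise-in-$x$ smallness into an $L^2$ bound. Optimizing $\delta=(\Theta/L)^{1/4}$ is what we expect to produce an exponent like $(1-\vartheta)/8$ after the square-root losses.

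\textbf{Step 2 (summing over $n$).} We write $\Pi_{\Cc_\Theta\cap B_R}\varphi_{\rm stat,+}=\sum_n \Pi_{\Cc_\Theta\cap B_R}T_{\omega}(\cdot)\,g_n$, where $g_n$ is the pre-horizontal-shear image of $f$ after $n-1$ cycles. By Theorem \ref{thm:introNegRegMix}, the terms with $n\gtrsim C\log_L R$ contribute $\lesssim R^{2s}L^{-cn}$ to the $B_R$-projection, which is summable. Only $O(\log_L R)$ terms matter, and each carries $L^2$ mass $\|f\|_{L^2}$ but cone mass at most $L^{-(1-\vartheta)/8}$ by Step 1, applied conditionally on the earlier phases, since the last phase is independent. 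The cross terms are the delicate part: a naive triangle inequality gives $(\log_L R)^2$. Instead we reuse the near-orthogonality mechanism behind the upper bound in Theorem \ref{thmA:batch}. Its proof (presumably via decorrelation of $\langle \mathcal{S}^{0,2n}f,\mathcal{S}^{0,2m}f\rangle$ in $B_R$ at rate $L^{-c|n-m|}$) should adapt with the extra cone projection inserted, giving the bound $L^{-(1-\vartheta)/8}\log_L R$ for the second moment. Higher moments $q$ follow as in Theorem \ref{thmA:batch}, via a moment bound on sums of weakly dependent terms.

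\textbf{Step 3 (part (b)).} For $\mu_0=\mu$, we have $\varphi_{\rm stat,0}=\varphi_{\rm stat,-}+f$ in law, so Minkowski's inequality and part (a) give the stated bound with the added constant coming from $\|f\|_{L^2}^{2q}$. The main obstacle is Step 1 combined with the cross terms in Step 2. The oscillatory estimate must hold uniformly over $g_n$ that are themselves highly oscillatory, typically concentrated near the vertical direction after the preceding vertical shear. So we will need to track that $g_n$ is mostly outside $\Cc_\Theta$ rotated. If this is difficult, we would first try to exploit only the phase average over $\omega$ (an $L^\infty$ density) together with the measure bound of the sublevel set $\{|\psi'|<\delta\}$, rather than detailed frequency information on $g_n$.
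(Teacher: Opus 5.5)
\textbf{The gap is in Step 1.} The overall skeleton is the right one: an old/recent split, the fact that the final shear's phase is independent of everything before it, and $\varphi_{\rm stat,0}=\varphi_{\rm stat,-}+f$ for part (b). But the single-shear cone estimate in Step 1, which you correctly flag as the main obstacle, does not work as designed, and neither of your remedies closes it.

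For a general input $g$, the coefficient of the sheared function at a cone mode $(k,j)$ is $\sum_\ell \hat g(k,j-\ell)\,a_\omega(\ell)$, where $a_\omega(\ell)=\int_\T e^{-2\pi i(\ell y+k\tau L\psi(y+\omega))}\dif y$. This phase is stationary wherever $\psi'(y+\omega)=-\ell/(k\tau L)$. Since the input $x^2$-frequency $j-\ell$ is arbitrary, such stationary points occur throughout $\T$, not only near $\{\psi'=0\}$. This is exactly the ``twisting back into the cone'' mechanism. For fixed $\omega$ it cannot be excluded at all: $g(x)=e^{2\pi i k x^1}e^{2\pi i k\tau L\psi(x^2+\omega)}$ is mapped by the shear exactly onto the mode $(k,0)\in\Cc_\Theta$.

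So excising $\{|\psi'|<\delta\}$ and integrating by parts only controls inputs with $x^2$-frequencies $\ll \delta\tau L|k|$. Tracking such localization for $g_n$ only pushes the same difficulty back to the preceding vertical shear. Using the phase average only through the measure of $\{|\psi'|<\delta\}$ does not touch stationary points on the good set.

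What is needed is two ingredients:
\begin{itemize}
\item[(i)] Averaging over the final phase kills all cross terms between distinct input modes, by orthogonality of $\omega\mapsto e^{2\pi i\ell\omega}$. This gives $\int|\widehat{(\cdot)}(k,j)|^2\dif\omega=\sum_\ell|\hat g(k,j-\ell)|^2\int|a_\omega(\ell)|^2\dif\omega$.
\item[(ii)] After excising $\varepsilon$-neighborhoods of $\{\psi''=0\}$ (not of $\{\psi'=0\}$), the second-derivative van der Corput bound gives $|a(\ell)|^2\lesssim \varepsilon^{-3}(|k|L)^{-1}$ uniformly in $\ell$, stationary points included.
\end{itemize}
Summing over the $\lesssim\Theta|k|$ cone modes in each row gives $\Theta\varepsilon^{-3}L^{-1}$. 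The excised region costs $\varepsilon$ after phase averaging. Choosing $\varepsilon=(\Theta/L)^{1/4}$ yields $\E_\omega\|\Pi_{\Cc_\Theta}(\text{shear})\,g\|_2^2\lesssim(\Theta/L)^{1/4}\|g\|_2^2$ for \emph{every} $g\in L^2$, with no frequency information. This is Lemma \ref{lem:cone-decay} with $s=0$.

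\textbf{Consequences for Step 2 and smaller fixes.} With this estimate, Step 2 should not be done summand by summand. Apply the estimate by linearity to the whole recent sum $G$ (all sources from the last $K\sim\log_L R$ cycles, transported up to the final shear), conditionally on the phases preceding the final shear. The cross terms you worried about then never carry a cone projection. Near-orthogonality enters only through the unprojected bound $\|G\|_2^2\lesssim K(1+L^{-\alpha}M)$ from the Batchelor upper bound, where $M$ is a maximal ergodic average of the quenched mixing constants with bounded $L^{2q}$ norm.

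Higher moments need no weak-dependence machinery. With $Y:=\|\Pi_{\Cc_\Theta}(\text{shear})\,G\|_2^2\le\|G\|_2^2$, one has $\E[Y^q\mid\mathcal F]\le\|G\|_2^{2q-2}\E[Y\mid\mathcal F]\lesssim L^{-(1-\vartheta)/4}\|G\|_2^{2q}$.

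Keep the full exponent $(1-\vartheta)/4$ rather than aiming for $(1-\vartheta)/8$ directly. The claim is for all $R\geq 2$, so $\log_L R$ can be as small as $\log 2/\log L$, and half the exponent is spent converting $(1+\log_L R)^q$ into $(\log_L R)^q$.

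Finally, $\sum_{n\geq1}\mathcal S^{0,2n}_\uo f$ does not represent $\mu_-$. Its summands share the \emph{first} map rather than the last, so it is not the pullback limit, and for non-commuting propagators its law is not $\mu_-$ in general. Use $\sum_{j\leq -1}\mathcal S^{2j,0}f$ over two-sided phase sequences, which is what your Step 2 implicitly uses. Alternatively, prove the bound at finite time for zero data and pass to the stationary laws by weak convergence with truncation, as the paper does.
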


% {\color{red}  note: teh following is OK so far in Setting \ref{set:discDet}. havent checked for \ref{set:contDet}. 

% address left-limit separately? in a remark for instance? } 

% consequence: dividing through by $\log_L R$ gives small numbers. 

Theorem \ref{thmA:sparsity} is proved at the end of Section \ref{subsec:contSparsity4}. 
Combining~\eqref{eq:batchelorLowerTailIntro} and Theorem~\ref{thmA:sparsity}, we obtain the following consequence for the proportion of $L^2$ mass of $\varphi_{\rm stat}$, $\varphi_{\rm stat,-}$ in the cone $\Cc_\Theta$.

\begin{letteredcorollary}\label{corA:sparsityDisc}
{Assume the setting of Theorem \ref{thmA:sparsity} and let $\zeta$ be as in Theorem \ref{thmA:batch}. There exists $C \geq 1, c := C^{-1}, L_0 \geq 1$, for which the following holds for any $R \geq L^\zeta$, $L \geq L_0$.

\begin{itemize}
    \item[(a)] Let $\varphi_{\rm stat, +}$ be as in Theorem \ref{thmA:sparsity}(a). With probability $\geq 1 - L^{-c}$, it holds that 
    \begin{align}\label{eq:tightQuenched1}\frac{\| \Pi_{\Cc_\Theta \cap B_R} \varphi_{\rm stat,+}\|_{L^2}^2}{\| \Pi_{B_R} \varphi_{\rm stat, +}\|_{L^2}^2} \leq C L^{- \frac{1 - \vartheta}{16}} \,. \end{align}
    \item[(b)] Let $\varphi_{\rm stat,0}$ be as in Theorem \ref{thmA:sparsity}(b). With probability $\geq 1 - L^{-c}$, it holds that 
    \[\frac{\| \Pi_{\Cc_\Theta \cap B_R} \varphi_{\rm stat,0}\|_{L^2}^2}{\| \Pi_{B_R} \varphi_{\rm stat,0}\|_{L^2}^2} \leq C \left(L^{- \frac{1 - \vartheta}{16}} + \frac{1}{ \log_L R}\right) \,. \]
\end{itemize}

% Let $R \geq L^\zeta$. Then, there is an event $\mathcal{G}$ with proba $\geq 1 - L^{-c}$ such that, conditioned on $\mathcal{G}$, one has
%     \begin{align}
%         \frac{\| \Pi_{\Cc_\Theta \cap B_R} \varphi_{\rm stat,-}\|_{L^2}^2}{\| \Pi_{B_R} \varphi_{\rm stat}\|_{L^2}^2} \leq C L^{- \frac{1 - \vartheta}{8}} \,. 
%     \end{align}

%     \begin{align}
%         \frac{\| \Pi_{\Cc_\Theta \cap B_R} \varphi_{\rm stat}\|_{L^2}^2}{\| \Pi_{B_R} \varphi_{\rm stat}\|_{L^2}^2} \leq C \left(L^{- \frac{1 - \vartheta}{8}} + \frac{1}{ \log_L R}\right) \,. 
%     \end{align}
}\end{letteredcorollary}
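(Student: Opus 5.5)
The plan is to combine the lower-tail estimate \eqref{eq:batchelorLowerTailIntro} of Theorem \ref{thmA:batch} for the denominator with Markov's inequality applied to the moment bound of Theorem \ref{thmA:sparsity} for the numerator, and then take a union bound. Throughout, $R \geq L^\zeta$, so that Theorem \ref{thmA:batch} applies and $\log_L R \geq \zeta > 0$.

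For part (a) we first control the numerator. Set $X := \|\Pi_{\Cc_\Theta \cap B_R}\varphi_{\rm stat,+}\|_{L^2}^2$ and apply Theorem \ref{thmA:sparsity}(a) with $q=1$, which gives $\E X \leq C L^{-\frac{1-\vartheta}{8}} \log_L R$. Markov's inequality then yields
\[
\P\Big\{ X > L^{-\frac{1-\vartheta}{16}} \log_L R \Big\} \leq C L^{-\frac{1-\vartheta}{16}} .
\]
Next we control the denominator. Set $Y := \|\Pi_{B_R}\varphi_{\rm stat,+}\|_{L^2}^2$. By \eqref{eq:batchelorLowerTailIntro}, which holds both for $\mu_r$ and for $\mu_-$, we have $\P\{Y < c\log_L R\} \leq C L^{-c}$. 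On the complement of the union of these two bad events, the ratio satisfies $X/Y \leq c^{-1} L^{-\frac{1-\vartheta}{16}}$. The total failure probability is at most $C L^{-\frac{1-\vartheta}{16}} + C L^{-c}$. Since $\vartheta<1$ is fixed, this is at most $L^{-c'}$ for some smaller $c'>0$ once $L \geq L_0$, because the constant $C$ is absorbed by slightly shrinking the exponent. Renaming constants gives (a).

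Part (b) runs the same argument using Theorem \ref{thmA:sparsity}(b) with $q=1$, which gives $\E X \leq C\big(L^{-\frac{1-\vartheta}{8}}\log_L R + 1\big)$. We apply Markov's inequality at the threshold $\lambda := L^{-\frac{1-\vartheta}{16}}\big(L^{-\frac{1-\vartheta}{16}}\log_L R + 1\big)^{-1}\cdot\E X$ scaled appropriately; concretely, with $a := L^{-\frac{1-\vartheta}{8}}\log_L R + 1$,
\[
\P\Big\{X > L^{\frac{1-\vartheta}{16}} a\Big\} \leq C L^{-\frac{1-\vartheta}{16}} .
\]
On the good event, combined with $Y \geq c \log_L R$, this gives
\[
\frac{X}{Y} \leq C\,\frac{L^{-\frac{1-\vartheta}{16}}\log_L R + L^{\frac{1-\vartheta}{16}}}{\log_L R} .
\]
The last term is $L^{\frac{1-\vartheta}{16}}/\log_L R$, which is worse than the claimed $1/\log_L R$. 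This loss is the main obstacle, and there are two ways to remove it.

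The first option is to apply Markov's inequality separately to the two pieces of the bound. A cleaner way is to use Theorem \ref{thmA:sparsity}(b) with larger $q$ and Chebyshev's inequality at the threshold $L^{-\frac{1-\vartheta}{16}}\log_L R + M$. Then $\E X^q \leq C\big(L^{-q\frac{1-\vartheta}{8}}(\log_L R)^q + 1\big)$, and its first piece contributes probability at most $CL^{-q\frac{1-\vartheta}{16}}$. The second piece contributes probability at most $C M^{-q}$, which still does not decay in $L$ when $M$ is a constant.

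The second option resolves this. I expect that the "$+1$" in Theorem \ref{thmA:sparsity}(b) comes from the freshly injected forcing $f$ at time $2n$, which is deterministic. I would therefore split $\varphi_{\rm stat,0} = \varphi_{\rm stat,-} + f$ using Remark \ref{rmk:rightContinuousIntro}, since $\mu_0$ is the law of $\varphi_{2n-}+f$ in stationarity. Then
\[
\|\Pi_{\Cc_\Theta\cap B_R}\varphi_{\rm stat,0}\|^2 \leq 2\|\Pi_{\Cc_\Theta\cap B_R}\varphi_{\rm stat,-}\|^2 + 2\|f\|_{L^2}^2 .
\]
The first term is handled by part (a), and the second is a deterministic constant. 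Dividing by $Y \geq c\log_L R$ gives exactly the bound $C\big(L^{-\frac{1-\vartheta}{16}} + 1/\log_L R\big)$, with failure probability at most $L^{-c}$ from the lower-tail event for $\mu_0$ together with the part (a) event for $\mu_-$. This decomposition is the step I would carry out for (b).
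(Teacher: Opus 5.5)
Your proposal is correct and follows the paper's proof. For (a) you combine Markov's inequality on the $q=1$ moment bound of Theorem \ref{thmA:sparsity}(a) with the lower-tail estimate \eqref{eq:batchelorLowerTailIntro} and a union bound; for (b) you use the coupling $\varphi_{\rm stat,0}=\varphi_{\rm stat,-}+f$ (the paper justifies it by stationarity and uniqueness, since the law of $\varphi_{\rm stat,-}+f$ is $\mathcal T_{\vec\omega}$-invariant), the inequality $\|\Pi_{\Cc_\Theta\cap B_R}\varphi_{\rm stat,0}\|_{L^2}^2\leq 2\|\Pi_{\Cc_\Theta\cap B_R}\varphi_{\rm stat,-}\|_{L^2}^2+2\|f\|_{L^2}^2$, and the lower tail for $\mu_0$, which is exactly the paper's argument (the abandoned direct attempts for (b) can simply be dropped).
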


\begin{remark}[Temporal phase and cone confinement]\label{rmk:phaseConfinement1} \ 
% Some comments are in order. 
    \begin{itemize}
        \item[(a)] Since the velocity field $u$ alternates between horizontal and vertical shears, versions of the `tight' estimates \eqref{eq:goodConeConf1}, \eqref{eq:tightQuenched1} hold for $r \in [\varepsilon_0, 1]$ with $\mathcal{C}_\Theta$ replaced by the flipped cone $\tilde{\mathcal{C}}_\Theta := \{ (k,j) \in \Z^2 : |k| < \Theta |j| \}$. 
        \item[(b)] In Setting~\ref{set:discDet}, estimates at right-limiting even times (distribution $\mu$) are worse since in this case $\varphi_{\rm stat}$ incorporates a fresh, unadvected scalar whose mass may well lie entirely in $\mathcal{C}_\Theta$. 
        \item[(c)] Since we alternate between confinement to the narrow complements of $\tilde{\mathcal{C}}_\Theta$ and $\mathcal{C}_\Theta$ depending on the phase, the corresponding confinement estimates need not hold during the transitional windows $[0, \varepsilon_0]$ and $[1, 1 + \varepsilon_0]$. 
        {While we do not address these phases in detail in the present manuscript, it is likely that at these times the typical scalar mass is confined to a (possibly much wider) cone \emph{depending on the temporal phase $r$.} } 
    \end{itemize}

    % We note that the estimate is much tighter for the left-limiting law $\mu_-$ or for the ``good'' phases $[1 + \varepsilon_0, 2]$. At these times,  $\varphi_{\rm stat,good}$ has enjoyed at least $\varepsilon_0$-time of the strong vertical shear, driving scalar $L^2$ mass out of $\mathcal{C}_\Theta$. In contrast, the right-limiting law $\mu = \mu_0$ incorporates a `fresh', unadvected scalar term, while $\mu_r, r \in [1, 1 + \varepsilon_0)$ has not yet experienced 

    % since the right-limiting scalar $\varphi_{\rm stat}$ incorporates a `fresh', unadvected scalar term. 
\end{remark}

% {\color{red}  add almost-sure statements?  } 

% \begin{letteredcorollary}
%     Batchelor's law in the IID forcing {\color{red}  bother to include?  } 
% \end{letteredcorollary}

% \begin{itemize}
%     \item not much additional work to handle IID scalar driving. handled in a variety of papers -- BBPS4 and BK, and opted not to here for brevity and to focus on what is new. 
% \end{itemize}

% {\color{red}  to do: set up some of the ``statistically stationary scalar for driven process is unique'' from BFPS. Set up that if you have $T$-independent estimates, those pass to the stat stationary scalar. note that for every $r \in [0,2)$, you have a law $\mu^{\rm stat}_r$ on $H^{-}$ with respect to which you have stationarity for phase $r$ process. moment estimates are a little funny - you have much better estimates for $\mu^{\rm stat}_{0-}$ which does not account for the `fresh' scalar driving. phase parameter $r$ a pecularity of the fact that pierrehumbert is 2-periodic in law. relation among the $\mu^{\rm stat}_r$'s. } 

% various generalizations in setting possible -- see {\color{red}  ??? } 

Next we turn to sparsity for Setting \ref{set:contDet}, which is somewhat less subtle to state due to the continuity of $t \mapsto \varphi_t$. 

\begin{letteredtheorem}\label{thmA:contSparsity1}
    % {\color{red}statement for setting \ref{set:contDet}}
    Assume Setting \ref{set:contDet}. Let $\vartheta \in [0,1), \varepsilon_0 \in (0,1)$ and $q \geq 1$. Then, there exist $C, L_0 \geq 1$ such that the following holds for all $L \geq L_0, R \geq 2$. 
    Fix $r \in [1+\varepsilon_0, 2]$ and let $\varphi_{\rm stat, r}$ be distributed like $\mu_r$ (see Remark \ref{rmk:off-timeChain}). Then,
        \begin{equation*}
            \E \| \Pi_{\Cc_\Theta \cap B_R} \varphi_{\rm stat, r} \|_{L^2}^{2 q} \leq C L^{- \frac{1 - \vartheta}{8} } \left(\log_L R\right)^q  \,,
        \end{equation*}
        where $\Theta \coloneq L^\vartheta$.
%         \begin{align}
%             \E \| \Pi_{\mathcal{C}_\Theta \cap B_R} \varphi_{\rm stat, r}\|^{2q} \leq C L^{- \frac{1 - \vartheta}{8}} \left(\log_L R\right)^q \,. 
%         \end{align}
%     \item[(b)] Fix $r \in [1,2]$ and let $\varphi_{\rm stat, r}$ be distributed like $\mu_r$. Then, 
%         \begin{align}
%             \E \| \Pi_{\mathcal{C}_\Theta \cap B_R} \varphi_{\rm stat, r}\|^{2q} \leq C L^{- \frac{1 - \vartheta}{8}} \left(\left(\log_L R\right)^q + 1\right) \,. 
%         \end{align}
% \end{itemize}
\end{letteredtheorem}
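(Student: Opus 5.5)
We prove Theorem~\ref{thmA:contSparsity1} by writing the stationary scalar at phase $r$ as a Duhamel sum of forcing contributions, using that each contribution's projection onto the cone $\Cc_\Theta$ has been shrunk by the vertical shear over $[1,r]$, and then summing these pieces using the moment bounds behind Theorem~\ref{thmA:batch}. By Remark~\ref{rmk:off-timeChain} and Proposition~\ref{prop:statStatesPSA}(c), $\varphi_{\rm stat,r}$ is the limit in law of $\varphi_{2n+r}$ started from $\varphi_0=0$. We represent
\[
\varphi_{2n+r}=\int_0^{2n+r}\mathcal S^{\tau,2n+r}f\dif\tau=\sum_{m=0}^{n}\mathcal S^{2(n-m)+r,\,2n+r}\,g_m,
\qquad g_m:=\int_{2(n-m)-2+r}^{2(n-m)+r}\mathcal S^{\tau,2(n-m)+r}f\dif\tau,
\]
with $g_m$ the forcing absorbed during the cycle ending $m$ cycles before the observation time (the first block is truncated at $0$). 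Each $g_m$ is a single-cycle object, so $\|g_m\|_{L^2}\le 2\|f\|_{L^2}$ by incompressibility. Its $H^1$ norm is at most polynomial in $L$.

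The key deterministic-in-structure input is a cone-depletion estimate at the final phase. The observation time sits at phase $r\in[1+\varepsilon_0,2]$, so every term has just experienced at least time $\varepsilon_0$ of the horizontal shear $u=(L\psi(x^2+\omega^2),0)$. That shear maps $e_{(k,j)}$ to a function concentrated near frequencies $(k,\,j-Lt\,k\,\psi'(x^2+\omega))$. Localized to regions where $|\psi'|\gtrsim L^{-(1-\vartheta)/4}$, a mode with $k\ne0$ therefore lands at $|j|\gtrsim \varepsilon_0 L^{1-(1-\vartheta)/4}|k|\gg\Theta|k|$, i.e.\ outside $\Cc_\Theta$. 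The mass left inside comes from two sources:
\begin{itemize}
\item[(i)] the set where $|\psi'|$ is small, whose measure is $\lesssim L^{-(1-\vartheta)/4}$ by (ND);
\item[(ii)] the modes with $k=0$, i.e.\ shear-invariant functions of $x^2$.
\end{itemize}
I would prove, via a stationary-phase/non-stationary-phase bound for $\Pi_{\Cc_\Theta}$ applied to $h\circ(\text{shear})^{-1}$, that
\[
\E_{\omega}\|\Pi_{\Cc_\Theta}\mathcal S^{1,r}h\|_{L^2}^2\lesssim L^{-(1-\vartheta)/4}\|h\|_{L^2}^2+(\text{tail in }H^{-s}\text{ or high Sobolev of }h)
\]
whenever $h$ has no $x^2$-only component. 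Square-rooting this estimate is what produces the exponent $(1-\vartheta)/8$.

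For the $k=0$ part, note that $\mathcal S^{1,r}$ preserves $x^1$-averages. Older contributions are mixed, so their $x^1$-averages are small in $H^{-s}$. For the latest contribution, the $x^1$-average of the forcing absorbed during the horizontal phase is $\int_{x^1} f$, a fixed profile; it lies in $B_R\cap\Cc_\Theta$ but has $O(1)$ mass. I expect this to be the main obstacle: one must show it is offset, or more likely argue that the statement tolerates it through the $(\log_LR)^q$ factor (since $R\ge2$ only, this needs care). I would first check whether the time-$[1,r]$ forcing integral is in fact what the authors intend to handle. Failing that, I would absorb it using $\log_L R\gtrsim$ const only for $R\ge L^\zeta$, and treat small $R$ by the trivial bound.

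To finish, split the Duhamel sum into recent terms $m\le M\approx C\log_L R$ and old terms $m>M$.
\begin{itemize}
\item For the recent terms, apply the cone estimate to each term together with the $H^{-s}$ mixing of Theorem~\ref{thm:introNegRegMix} for cross-term decorrelation. The approximately orthogonal cross terms are controlled by mixing, which yields $\E\|\cdot\|^2\lesssim L^{-(1-\vartheta)/8}\,M$.
\item For the old terms, Theorem~\ref{thm:introNegRegMix} puts their mass at frequencies above $R$, so they are negligible after $\Pi_{B_R}$.
\end{itemize}
Higher moments $q$ follow as in Theorem~\ref{thmA:batch}, using Corollary~\ref{cor:introNegRegMixQuenched} to control the random constants, and the bound passes to $\mu_r$ by weak-$*$ lower semicontinuity.
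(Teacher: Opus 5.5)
You have a genuine gap: the obstacle you single out does not exist, and the real one is not addressed. A wavenumber $(0,j)$ never satisfies $|j|<\Theta\cdot 0$. So every function of $x^2$ alone, in particular $\int_\T f\,\dif x^1$ and everything invariant under the horizontal shear, is annihilated by $\Pi_{\Cc_\Theta}$; there is no $O(1)$ shear-invariant mass in the cone. Your fallbacks could not have absorbed such a term anyway: an $O(1)$ quantity is not bounded by $CL^{-(1-\vartheta)/8}(\log_LR)^q$ for moderate $R$, and a ``trivial bound'' at small $R$ carries no factor $L^{-(1-\vartheta)/8}$.

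The genuine difficulty is the forcing injected during the final horizontal shear. Your claim that every term ``has just experienced at least time $\varepsilon_0$ of the horizontal shear'' is false for $\eta^{\rm h}_t:=\int_{2N+1}^{t}\mathcal S^{\tau,t}f\,\dif\tau$. Forcing injected at time $t-\rho$ has been sheared only with amplitude $\rho L$, and for $\rho L\lesssim\Theta$ its $k\neq0$ modes are still in $\Cc_\Theta$. The paper handles this by applying Lemma~\ref{lem:cone-decay} with $s=0$ at amplitude $\rho L$, which gives $\E\|\Pi_{\Cc_\Theta}\mathcal S^{t-\rho,t}f\|_{L^2}^{2}\le C\min\{1,L^{-(1-\vartheta)/4}\rho^{-1/4}\}\|f\|_{L^2}^2$. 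It then integrates the integrable singularity $\rho^{-1/4}$ over $\rho\in[0,r-1]$, using Jensen in $\rho$ for $2q$-th moments. Alternatively, the time-averaging bound from the proof of Lemma~\ref{lem:cont-effective-sources3} gives $\eta^{\rm h}_t=(r-1)\int_\T f\,\dif x^1+O(L^{-1/3})$ in $L^2$, uniformly in the phases. The leading term lies outside $\Cc_\Theta$, so this is how your instinct about the $x^1$-average becomes a proof.

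Your summation of the recent terms also does not work as written. After projecting each term separately you must control $\langle\Pi_{\Cc_\Theta}Y_m,\Pi_{\Cc_\Theta}Y_{m'}\rangle$. The mixing estimates only control the unprojected correlations $\langle Y_m,Y_{m'}\rangle$, and Cauchy--Schwarz yields $M^2L^{-(1-\vartheta)/4}$, i.e.\ $(\log_LR)^2$ rather than $\log_LR$.

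Use linearity instead. All forcing injected in $[2A,2N+1]$ forms a single function $h_{N,R}$ at time $t_1=2N+1$. It is independent of $\omega^2_{N+1}$ and satisfies $\|h_{N,R}\|_{L^2}^2\le C(1+K(1+\widetilde M_N))$ by \eqref{eq:recContribContinuous3}; this is where mixing enters. The cone lemma is then applied once, conditionally on the past, to $h_{N,R}$. Since $Y:=\|\Pi_{\Cc_\Theta}\mathcal S^{t_1,t}h_{N,R}\|_{L^2}^2\le\|h_{N,R}\|_{L^2}^2$, one has $\E[Y^q\mid\widehat{\mathcal F}]\le\|h_{N,R}\|_{L^2}^{2q-2}\E[Y\mid\widehat{\mathcal F}]$ as in \eqref{eq:conditional-cone4}, which settles all $q\geq1$ at once.

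Your exponent bookkeeping is also off. You need not derive a cone estimate from a sublevel-set heuristic; (ND) would not support your linear sublevel bound for $|\psi'|$ anyway, since it allows degenerate critical points of $\psi$. Lemma~\ref{lem:cone-decay} with $s=0$, $\theta=L^\vartheta$ already bounds the expected squared cone mass by $CL^{-(1-\vartheta)/4}\|h\|_{L^2}^2$ for every mean-zero $h$, with no tail term, and nothing is square-rooted. The exponent $(1-\vartheta)/8$ arises from spending half of this gain on small radii: $L^{-(1-\vartheta)/4}(1+\log_LR)^q\le CL^{-(1-\vartheta)/8}(\log_LR)^q$ for all $R\geq2$ (cf.\ \eqref{eq:sparsity-small-radius4}). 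For this, the recent window must contain an additive constant number of cycles, not just $\approx C\log_LR$, so that the old part is $O(L^{-(1-\vartheta)/(8q)})$ even when $\log_LR$ is tiny.
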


% \begin{remark} \ 
%     \begin{itemize}
%         \item[(a)] Analogous statements for $r \in [\varepsilon_0, 1]$ (resp. $r \in [1, 2]$) hold with $\mathcal{C}_\Theta$ replaced by the flipped cone $\tilde{\mathcal{C}}_\Theta := \{ (k,j) \in \Z^2 : |k| < \Theta |j| \}$. 
%         \item[(b)] As with Setting \ref{set:discDet}, not all phases enjoy the same quality of cone confinement. Since we alternate between confinement to $\tilde{\mathcal{C}}_\Theta$ and $\mathcal{C}_\Theta$ depending on the phase, time continuity suggests a breakdown of confinement during the brief transitional windows $[0, \varepsilon_0]$ and $[1, 1 + \varepsilon_0]$ between the two.  
%     \end{itemize}
% \end{remark}

% By time continuity, phases in $[1, 1 + \varepsilon_0]$ or $[0, \varepsilon_0]$ are brief `transitional' windows during which confinement of scalar mass to the cone $\mathcal{C}_\Theta$ (resp. $\tilde{\mathcal{C}}_\Theta$) falters. 

% We close with the analogue of Corollary \ref{corA:sparsityDisc} in Setting \ref{set:contDet}.
\begin{letteredcorollary}\label{corA:contSparsityRatio1}
    Assume the setting of Theorem \ref{thmA:contSparsity1} and let $\zeta$ be as in Theorem \ref{thmA:batch}. There exists $C \geq 1, c := C^{-1}, L_0 \geq 1$, for which the following holds for any $R \geq L^\zeta$, $L \geq L_0$. Let $r \in [1 + \varepsilon_0, 2]$ and let $\varphi_{\rm stat, r}$ be distributed like $\mu_r$. Then, with probability $\geq 1 - L^{-c}$,
    \begin{equation*}\frac{\| \Pi_{\Cc_\Theta \cap B_R} \varphi_{\rm stat,r}\|_{L^2}^2}{\| \Pi_{B_R} \varphi_{\rm stat, r}\|_{L^2}^2} \leq C L^{- \frac{1 - \vartheta}{8}} \,, \end{equation*}
    where $\Theta \coloneq L^\vartheta$.
% \begin{itemize}
%     \item[(a)] Let $r \in [1 + \varepsilon_0, 2]$. With probability $\geq 1 - L^{-c}$, it holds that 
%     \[\frac{\| \Pi_{\Cc_\Theta \cap B_R} \varphi_{\rm stat,r}\|_{L^2}^2}{\| \Pi_{B_R} \varphi_{\rm stat, r}\|_{L^2}^2} \leq C L^{- \frac{1 - \vartheta}{8}} \,. \]
%     \item[(b)] Let $r \in [1,2]$. With probability $\geq 1 - L^{-c}$, it holds that 
%     \[\frac{\| \Pi_{\Cc_\Theta \cap B_R} \varphi_{\rm stat, r}\|_{L^2}^2}{\| \Pi_{B_R} \varphi_{\rm stat, r}\|_{L^2}^2} \leq C \left(L^{- \frac{1 - \vartheta}{8}} + \frac{1}{ \log_L R}\right) \,. \]
% \end{itemize}
\end{letteredcorollary}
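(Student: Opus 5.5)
The plan is to combine the lower tail \eqref{eq:batchelorLowerTailIntro} of Theorem \ref{thmA:batch} with a \emph{pathwise} form of the cone estimate behind Theorem \ref{thmA:contSparsity1}, not with that theorem's moment bound alone. A Chebyshev argument from the statement of Theorem \ref{thmA:contSparsity1} does not give the exponent claimed here. With $a:=\frac{1-\vartheta}{8}$ and $X:=\|\Pi_{\Cc_\Theta\cap B_R}\varphi_{\rm stat,r}\|_{L^2}^2$, Markov's inequality bounds $\P\{X>\lambda\log_L R\}$ by $CL^{-a}\lambda^{-q}$. Taking $\lambda=L^{-a}$ leaves probability $L^{a(q-1)}$, which is useless. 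Taking $\lambda=L^{-a+c}$ yields only a ratio bound of order $L^{-a+c}$. So I would reopen the proof of Theorem \ref{thmA:contSparsity1} and extract a statement where the factor $L^{-a}$ multiplies the total mass realization by realization, off a small bad event.

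\textbf{Step 1 (pathwise decomposition).} Write $\varphi_{\rm stat,r}$ as a limit of the Duhamel sum $\int_0^t\mathcal S^{\tau,t}f\dif\tau$, sampled at phase $r\in[1+\varepsilon_0,2]$. Group it into contributions of the successive $2$-cycles going back in time. The mechanism behind Theorem \ref{thmA:contSparsity1} is that each piece has experienced at least $\varepsilon_0$ units of vertical shear of amplitude $L$. This pushes its Fourier mass to frequencies $(k,j)$ with $|j|\gtrsim L|k|\gg\Theta|k|$. The only exceptions are the $x^1$-regions near critical points of $\psi$, which by (ND) have measure $\lesssim L^{-1/2}$ at resolution $L^{-(1-\vartheta)}$, whence the $L^{-(1-\vartheta)/8}$ after the $L^2$/Cauchy--Schwarz bookkeeping. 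I would show the following. There is an event $\mathcal G$, depending only on finitely many recent phase shifts together with the quenched mixing random variable $D$ of Corollary \ref{cor:introNegRegMixQuenched}, such that $\P(\mathcal G^c)\le L^{-c}$ and, on $\mathcal G$,
\[
X\le C L^{-a}\,\bigl(1+\|\Pi_{B_R}\varphi_{\rm stat,r}\|_{L^2}^2\bigr).
\]
Concretely, the cone mass of each cycle's piece is bounded by $L^{-a}$ times that piece's own $L^2$ mass in $B_R$, plus cross terms. The cross terms between different cycles are controlled by the quenched $H^{-s}$ decay on $\mathcal G$. Since the pieces concentrate in the common complement of $\Cc_\Theta$, I expect to sum them with a constant-factor loss.

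\textbf{Step 2 (combine with the lower tail).} Intersect $\mathcal G$ with the event $\{\|\Pi_{B_R}\varphi_{\rm stat,r}\|_{L^2}^2\ge c\log_LR\}$, whose complement has probability $\le CL^{-c}$ by \eqref{eq:batchelorLowerTailIntro}. For $R\ge L^\zeta$ we have $\log_L R\ge\zeta$, so the additive $1$ is absorbed into the total mass. On this intersection the ratio is therefore at most $CL^{-a}$, and the failure probability is $\le 2CL^{-c}$. Shrinking $c$ and enlarging $L_0$ gives the claimed $1-L^{-c}$.

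\textbf{Main obstacle.} The difficulty is Step 1: replacing the averaged estimate of Theorem \ref{thmA:contSparsity1} by a realization-wise comparison of cone mass with total mass in $B_R$. In particular, the off-diagonal (cross-cycle) terms must be shown not to spoil the $L^{-a}$ factor. My first attempt would be to bound them via Corollary \ref{cor:introNegRegMixQuenched}: pieces from cycles $m\gtrsim\log L$ back carry mass $\lesssim DL^{-\mathfrak c' m}$ at frequencies $\le R$ in the relevant sense. The remaining $O(1)$ recent cycles would be handled by a union bound over the bad phase-shift sets, each of probability $\lesssim L^{-c}$.
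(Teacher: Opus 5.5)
Your diagnosis is correct. The stated moment bound of Theorem \ref{thmA:contSparsity1} carries the same exponent $a=\frac{1-\vartheta}{8}$ as the target ratio. From it, Markov's inequality cannot give the ratio $CL^{-a}$ off an event of probability $L^{-c}$, so the proof has to be reopened. But what the proof supplies, and what your proposal never identifies, is a stronger \emph{first moment}, not a pathwise comparison. Lemma \ref{lem:cone-decay} with $s=0$ and $\theta=\Theta=L^{\vartheta}$ gives the averaged cone fraction $\theta^{1/4}L^{-1/4}=L^{-(1-\vartheta)/4}=L^{-2a}$. This is the \emph{square} of the target. Every piece of the proof of Theorem \ref{thm:cont-sparsity} carries this factor:
\begin{itemize}
\item the old term, through the choice of $K$;
\item the aggregate recent input $h_{N,R}$, which is independent of the final phase $\omega^2_{N+1}$, so the conditional estimate applies to it as a whole;
\item the fresh forcing $\eta^{\rm h}_t$.
\end{itemize}
This is \eqref{eq:continuous-sparsity-strong4}. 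With $q=1$, and passed to $\mu_r$ by weak convergence plus truncation, it reads $\E X\le CL^{-2a}\log_LR$. Markov then gives $\P\{X>L^{-a}\log_LR\}\le CL^{-a}$. Intersecting with the lower-tail event of \eqref{eq:batchelorLowerTailIntro} bounds the ratio by $CL^{-a}$ with probability at least $1-CL^{-a}-CL^{-\rho}$, and that is the whole proof. Your heuristic accounting (``measure $\lesssim L^{-1/2}$ \dots whence $L^{-(1-\vartheta)/8}$'') misreads the available one-step decay, and this is the missing quantitative point. If the averaged lemma only delivered $L^{-a}$ in expectation, Markov at level $CL^{-a}$ would leave a failure probability of order one, not $L^{-c}$. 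The room for the tail comes precisely from the second factor of $L^{-a}$.

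Step 1 as you sketch it would also fail on its own terms.
\begin{itemize}
\item \textbf{Too many recent cycles.} The pieces carrying order-one mass in $B_R$ are not $O(1)$ recent cycles but $\sim\log_LR$ of them. Quenched decay only makes a piece from $m$ cycles back negligible in $B_R$ once $R^{s}L^{-\mathfrak c'm}$ is small, i.e.\ once $m\gtrsim\log_LR$; this is why the paper's cutoff is $K\approx K_0\log_LR$. A union bound over per-cycle bad events therefore costs a factor $\log_LR$ and is not uniform in $R\ge L^\zeta$.
\item \textbf{Shared final phase.} All recent pieces are expelled from $\Cc_\Theta$ by the \emph{same} final shear phase, and Lemma \ref{lem:cone-decay} is an average over that one phase. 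Per-piece high-probability events are the wrong structure.
\item \textbf{Summing pieces.} Bounding the cone mass of the sum by the sum of the pieces' cone masses loses a factor equal to the number of pieces. The fix is to apply the lemma, by linearity, to the aggregate pre-final-shear scalar.
\end{itemize}
Finally, you do not need to compare $X$ pathwise with $\|\Pi_{B_R}\varphi_{\rm stat,r}\|_{L^2}^2$. Comparing $X$ with $\log_LR$, and treating the denominator separately via the lower tail, suffices.
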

The comments of Remark \ref{rmk:phaseConfinement1}(a) and (c) apply to Setting \ref{set:contDet} as well.

\begin{remark}
    In this manuscript, we have opted for the sake of length and clarity of focus to omit treating diffusivity in \eqref{eq:introDrivenPassiveScalar}, instead focusing on `ideal', undamped stationary laws \cite{bedrossianBatchelorSpectrumPassive2020}. While unaddressed, the authors are reasonably confident the details remain much the same when diffusivity is accounted for. Similarly, we have made the decision to omit treating random source terms, noting that random sources are technically easier to work with and are treated elsewhere in the literature, e.g., in \cite{bedrossianBatchelorSpectrumPassive2020,blumenthal2026sparsity}. 
    % \begin{itemize}
    %     \item Skipping diffusivity. Comment: not so different with or without. for sake of simplicity we have omitted it but likely it could be achieved. 
        % \item skipping random source. handled in other papers. 
    % \end{itemize}
\end{remark}

\subsection{Discussion and Literature Review}\label{subsec:discussion1}

Batchelor's law was originally proposed in his 1959 paper \cite{batchelor1959small}; see also the early works \cite{gibson1968fine,kraichnan1968small}. We emphasize that we work in a special subset of Batchelor-regime passive scalar turbulence. In particular, we consider a spatially regular velocity field at a fixed Reynolds number and do not address, for example, the Obukhov--Corrsin correction \cite{obukhov1949structure, corrsin1951spectrum, rowan2025obukhov}. Passive scalar turbulence provides a toy model of small-scale formation in hydrodynamic turbulence \cite{frisch1995turbulence}. For surveys, see \cite{shraiman2000scalar, warhaft2000passive, dimotakis2005turbulent, falkovich2001particles}.

\paragraph{Mixing properties of incompressible flows.}
A cumulative version of Batchelor's law (as in Theorem \ref{thmA:batch} above) was first established for solutions to \eqref{eq:introDrivenPassiveScalar} for a broad class of randomly-driven velocity fields, including solutions to stochastic Navier--Stokes, in \cite{bedrossianBatchelorSpectrumPassive2020}. This result was the culmination of the works \cite{bedrossianLagrangianChaosScalar2018,bedrossianAlmostsureExponentialMixing2019,bedrossianAlmostsureEnhancedDissipation2021}, which among other things established positivity of the Lyapunov exponent (see \eqref{eq:lyapExponentIntro} below) of the Lagrangian flow and almost-sure $H^{-s}$ decay (Corollary \ref{cor:introNegRegMixQuenched}). The works \cite{cooperman2024exponential, nersesyan2024chaotic} establish similar results under weaker assumptions on the noise applied to the Navier--Stokes component. 
In summary, and combined with the results of \cite{cooperman2026fourier}, the state of the art is that for white-in-time scalar driving, cumulative and radial versions of Batchelor's law as in \eqref{eq:radialBatchelor1} hold in a wide variety of settings. 

All these versions of Batchelor's law rely on establishing almost-sure $H^{-s}$ decay of solutions of the initial value problem for
\begin{align}\label{eq:advecDiff1}
    \partial_t \varphi + u \cdot \nabla \varphi = \kappa \Delta \varphi
\end{align}
at an exponential rate independent of $\kappa$ \cite{bedrossianAlmostsureEnhancedDissipation2021}.  This is a strong form of so-called \emph{enhanced dissipation}.  For alternative approaches to enhanced dissipation for randomly-driven fluids, see, e.g.,  \cite{cooperman2025harris, NavarroSeis2026}; for more dynamics-based approaches for certain classes of deterministic flows, see \cite{tao2025optimal, elgindi2025optimal}; and finally see \cite{zelati2024mixing} for an overview of the subject. 

An even stronger mixing property is $H^{-s}$ decay on average: for the undriven problem \eqref{eq:advecDiff1} with $\kappa = 0$,
\begin{align}\label{eq:HminSdec1}
    \E \| \varphi_t\|_{H^{-s}}^2 \lesssim e^{- \gamma t}\|\varphi_0\|_{H^{-s}}^2
\end{align}
for $\gamma > 0$. This property was established for a broad class of passive scalar problems in \cite{bedrossian2024negative}, using tools from microlocal analysis together with some of the twisted Feynman--Kac semigroup technology from \cite{bedrossianAlmostsureExponentialMixing2019}. Among other consequences, \eqref{eq:HminSdec1} implies almost-sure $H^{-s}$ decay for the problem with $\kappa = 0$~\cite[Corollary 1.13]{bedrossian2024negative}.
While the proofs are substantially different, there is a sense in which the present manuscript (particularly Proposition \ref{thm:neg-reg-mix}) documents a ``hands-on'', quantitative approach to the primary conclusions of \cite{bedrossian2024negative}.

A feature of our approach in this paper is a novel, self-contained, and direct proof of a version of \eqref{eq:HminSdec1}, using a weighted negative-regularity norm that penalizes a wide cone of frequencies around the $x^1$ axis (Proposition \ref{thm:neg-reg-mix} below). In particular, we avoid using any of the aforementioned technology -- Lyapunov exponents and twisted semigroups. In addition to establishing almost-sure exponential mixing and Batchelor's law, this approach yields uniqueness of the stationary measure for the driven scalar chain (Proposition \ref{prop:statStatesPSA}), as in \cite{bedrossian2024negative}.

Another interesting byproduct of our study is to exhibit $H^{-s}$ decay at a strong, \emph{quantitative} exponential rate improving with the shear amplitude $L$. While in principle it is possible (as shown in \cite{son2025quantitative, liu2026quantitative}) to extract effective $H^{-s}$ decay rates following roughly the path laid out in previous works such as \cite{blumenthal2023exponential}, the rates extracted slow down with the shear amplitude, contrary to the intuition that stronger alternating shearing should improve mixing (cf. \cite{blumenthal2018lyapunov}). The Fourier-based method presented in this paper is well-suited to extracting mixing rates confirming this intuition. 

\paragraph{Nonuniform hyperbolicity and the role of noise.}
Shear-straining corresponds to dynamical hyperbolicity of the Lagrangian flow: stretching and compression of phase space. Hyperbolicity is measured by the Lyapunov exponent
\begin{align}\label{eq:lyapExponentIntro}
    \lambda(x) = \limsup_{t \to \infty} \frac{1}{t} \log | D \Phi^{0,t}(x)| \, .
\end{align}
The condition $\lambda > 0$ implies an exponential rate of shear-straining,
which leads to blowup in $H^1$ for typical scalars supported over hyperbolic points, a first step toward establishing $H^{-1}$ decay, which is a priori a stronger property.
One difficulty is that, in physical systems, hyperbolicity typically holds only in some parts of state space and fails in others, where $\lambda(x) = 0$.
Such behavior appears in fluids, for example as coherent vortices.
Although shearing within a vortex does produce some small-scale formation (see, e.g., \cite{bedrossian2017enhanced}), it does so at a qualitatively slower rate than hyperbolic flows. 

Even at large shear amplitude, vortices and the hyperbolic set are hopelessly intermingled. This mechanism \cite{duarte1994plenty} is known for the Chirikov standard map, a related model \cite{chirikov1979universal, chirikov2008chirikov}, which itself serves as a useful toy model of Lagrangian flow -- for more on this comparison, see, e.g., \cite{crisanti2012products}.  
Proving that $\lambda(x) > 0$ on a set of positive area -- let alone $H^{-s}$ decay -- for models with `mixed' phase space, such as the standard map, is a notoriously difficult open problem in smooth ergodic theory \cite{crovisier2019problem}. 

In ours and in many recent works on $H^{-s}$ decay of passive scalars, the role of noise is to smooth out the long-time statistics of tangent directions. A folklore theorem following Furstenberg \cite{furstenberg1963noncommuting} states that the standard map with absolutely continuous phase shifts has positive Lyapunov exponents. In the presence of noise, almost-sure $H^{-1}$ decay for the Pierrehumbert flow was established in \cite{blumenthal2023exponential, cooperman2023exponential}. Even a small amount of noise `unlocks' the full strength of the exponent, resulting in $\lambda \approx \log L$ \cite{blumenthal2017lyapunov} and in rapid mixing of the associated Markov semigroup on passive particles \cite{blumenthal2018lyapunov}. 

For additional treatments of almost-sure statistical properties of random dynamical systems, see, e.g., \cite{dewitt2024expanding, dewitt2025conservative}. 

\paragraph{Challenges handled in this manuscript.}
Even when the Lyapunov exponent is large, for any fixed phase shift there is always a portion of state space on which hyperbolicity is weak, an issue which must be addressed when proving mixing properties for systems of this type; see, for example, the dynamical works \cite{blumenthal2020diffusion, blumenthal2021statistical, blumenthal2018lyapunov}. This does not pose a problem for the Lyapunov exponent or for mixing, but it could imperil \emph{quantitative} $H^{-s}$ decay or cone confinement as in Theorem \ref{thmA:sparsity}, since Fourier mass could be `twisted' into what should be the sparse cone of modes roughly parallel to the $x^1$ axis. 
Our resolution is to work with the full mean-free $L^2$ scalar and show that, with very high probability, one period of the Pierrehumbert flow carries the vast majority of its $L^2$ mass out of $\mathcal C_\Theta$ (Lemma \ref{lem:cone-decay}). The approach is Eulerian, in contrast to much of the mixing literature (though not all; see e.g.~\cite{NavarroSeis2026}) which proves scalar mixing by tracking (Lagrangian) trajectories of the two-point motion.

% \paragraph{Deterministic scalar driving.}
Treating a deterministic source term is another point of interest and technical difficulty. To our knowledge, it has otherwise been handled only in the very recent work \cite{liss2026batchelor} for a deterministic wedge flow, using the theory of anisotropic Banach spaces, very different from the approach in the present manuscript. The tradeoff is that \cite{liss2026batchelor} works with nonsmooth wedge flows, for which one must contend with the possible accumulation of discontinuities in the gradient of the Lagrangian flow -- a zero-Lebesgue-area version of the nonuniform hyperbolicity exhibited by smooth models such as Pierrehumbert flow with smooth shear profiles. In turn, \cite{liss2026batchelor} can treat deterministic flows, whereas our paper relies critically on randomized phase shifts. 

In both papers, treating a deterministic source term requires very strong short-time mixing. Roughly speaking,  the scalar source is rapidly conveyed to sufficiently small scales to avoid catastrophic interference -- through either cancellation or reinforcement -- with fresh, not-yet-advected scalar source. It remains open to study Batchelor's law in any setting where the deterministic scalar forcing varies in time at a frequency commensurate with the strength of the velocity field.

\subsection*{Plan for the paper}

Section \ref{sec:negRegMix} contains the proofs of Theorem \ref{thm:introNegRegMix} and Corollary \ref{cor:introNegRegMixQuenched}, and develops our novel technique, a hands-on negative regularity mixing estimate using an anisotropic negative regularity norm penalizing `roughly horizontal' wavenumbers. Section \ref{sec:batchLaw} addresses Batchelor's law (Theorem \ref{thmA:batch}), and finally Section \ref{sec:sparsity} addresses sparsity in Batchelor's law (Theorems \ref{thmA:sparsity}, \ref{thmA:contSparsity1} and Corollaries \ref{corA:sparsityDisc}, \ref{corA:contSparsityRatio1}). 

\subsection*{Acknowledgements}

The authors would like to recognize Manh Khang Huynh for helpful conversations and discussion at the beginning of the work in this manuscript. 

\subsection*{Acknowledgement of AI Use}

Large Language Models\footnote{ChatGPT 5.4 Pro and ChatGPT 5.6 Sol} (LLMs) were used in the preparation of this manuscript in several places, namely in pinpointing use of (i) the van der Corput estimate in the proof of Lemma \ref{lem:cone-decay}; and (ii) the $L^p$ version of the maximal ergodic theorem in the proof of Claim \ref{cla:boundWNMN3}. Beyond this, LLMs were used at times for writing rough first drafts, finding minor mathematical errors, and cleaning up and polishing exposition.

% {\color{red}  

% Section 2: proofs of Theorem \ref{thm:introNegRegMix} and Corollary \ref{cor:introNegRegMixQuenched}. tool introduced: mixing estimates using an anisotropic negative regularity penalizing `roughly horizontal' wavenumbers

% section 3: batchelor's law in all settings

% Section 4: sparsity in all settings

% } 

\section{Negative Regularity Mixing}\label{sec:negRegMix}
    %!TEX root = main.tex

\newcommand{\llambda}{L}

The purpose of this section is to document quantitative negative-regularity decay of solutions of the transport equation under the Pierrehumbert model laid out previously. 

\paragraph{Notation.} 
% We begin by setting down some notation and conventions to be used throughout the paper. 
We let $L_0 \geq 1$, to be specified sufficiently large as we go along, while throughout we assume $L \geq L_0$. We reserve the value $C \geq 1$ for a constant (possibly different line-to-line) depending on various data such as the shear profile $\psi$ and the density $p$, as well as on auxiliary parameters, but \emph{never} on the value of $L \in [L_0, \infty)$ and never on the phase shift sequence $\uo = (\omega_i^j)$. The shear profile $\psi$ and the phase distribution $p$ are as in Assumptions (ND) and (P) from Section \ref{subsec:setting1}.

\paragraph{Even-time realization.} Throughout the paper, but particularly in this section, we will use the following discrete-time realization of the Lagrangian flow $\Phi^{2n}$ at even-integer times. 
For $\omega \in \T$, we define $T_{\omega, L} : \T^2 \to \T^2$ to be the mapping 
\[ x \mapsto T_{\omega, L}(x) := (x^2 + L \psi(x^1 + \omega), x^1) \, . \]
The mapping $T_{\omega, L}$ can be thought of as a composition: first,  applying a time-1 vertical shear with profile $\psi$ and phase shift $\omega$, and second, flipping the $x^1, x^2$ axes. For mean-zero $\varphi : \T^2 \to \R$ we define
\begin{gather*}\Lc_{\omega, L} \varphi := \varphi \circ (T_{\omega, L})^{-1} \, , \\ 
\Lc_{\omega, L} \varphi(x^1, x^2) = \varphi(x^2, x^1 - L \psi(x^2 + \omega))
\end{gather*}
to be the associated \emph{transfer operator}, a discrete-time analogue of the propagators $\mathcal{S}^{r,t}$ for the original linear transport equation.

% The following is straightforward to check. 
\begin{lemma}
  For any $n \geq 1$ and phase shift sequence $\uo = (\omega^i_j) \in \Omega$, it holds that 
  \begin{gather}
    \Phi^{2n}_\uo = T_{\omega_n^2, L} \circ T_{\omega_n^1, L} \circ \dots \circ T_{\omega_1^2, L} \circ T_{\omega_1^1, L} \,, \quad \text{and} \\ 
    \mathcal{S}^{0,2n}_\uo = \Lc_{\omega_n^2, L} \circ \Lc_{\omega_n^1,L} \circ \dots \circ \Lc_{\omega_1^2,L} \circ \Lc_{\omega_1^1, L} \, .
  \end{gather}
\end{lemma}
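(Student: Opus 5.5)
The proof is a direct computation on one cycle, followed by induction on $n$ using the composition law $\Phi^{2n+2m}_\uo=\Phi^{2n}_{\sigma^m\uo}\circ\Phi^{2m}_\uo$.

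\emph{Step 1: one cycle of the flow.} On $t\in[0,1)$ the field is $(0,L\psi(x^1+\omega_1^1))$. Here $x^1$ is constant along trajectories, so the time-one map is the exact shear
\[
V(x)=(x^1,\;x^2+L\psi(x^1+\omega_1^1)).
\]
Likewise, on $[1,2)$ the time-one map is
\[
H(y)=(y^1+L\psi(y^2+\omega_1^2),\;y^2).
\]
Hence
\[
\Phi^2_\uo(x)=\bigl(x^1+L\psi(x^2+L\psi(x^1+\omega_1^1)+\omega_1^2),\;x^2+L\psi(x^1+\omega_1^1)\bigr).
\]
Now compute the product of the two maps $T$. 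First,
\[
y:=T_{\omega_1^1,L}(x)=(x^2+L\psi(x^1+\omega_1^1),\,x^1).
\]
Then
\[
T_{\omega_1^2,L}(y)=(y^2+L\psi(y^1+\omega_1^2),\,y^1)=\bigl(x^1+L\psi(x^2+L\psi(x^1+\omega_1^1)+\omega_1^2),\;x^2+L\psi(x^1+\omega_1^1)\bigr).
\]
This equals $\Phi^2_\uo(x)$. The two coordinate flips built into the $T$'s cancel over a full cycle. This is the only real point to check, and it is a one-line verification.

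\emph{Step 2: induction.} Suppose the formula holds for $n$, for every $\uo$. Apply the composition law with $m=1$:
\[
\Phi^{2(n+1)}_\uo=\Phi^{2n}_{\sigma\uo}\circ\Phi^2_\uo .
\]
Since $(\sigma\uo)^i_j=\omega^i_{j+1}$, the induction hypothesis applied to $\sigma\uo$ gives $\Phi^{2n}_{\sigma\uo}$ as the composition of $T_{\omega^1_j,L}$ and $T_{\omega^2_j,L}$ for $j=2,\dots,n+1$, in the stated order. Composing on the right with Step 1 gives the first identity.

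\emph{Step 3: the transfer operators.} We have $\mathcal S^{0,2n}_\uo\varphi=\varphi\circ(\Phi^{2n}_\uo)^{-1}$. Inverting the composition reverses the order of the factors $T^{-1}$. Then precomposing $\varphi$ with $T_{a}^{-1}\circ T_{b}^{-1}\circ\cdots$ is the same as applying $\Lc_{b}$ first and $\Lc_a$ afterwards. Consequently the operators $\Lc$ appear in exactly the same left-to-right order as the maps $T$, which gives the second identity.

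As a consistency check, $T_{\omega,L}^{-1}(y)=(y^2,\,y^1-L\psi(y^2+\omega))$, which confirms the displayed explicit formula for $\Lc_{\omega,L}$. No step poses a genuine obstacle; the only care needed is the bookkeeping of composition order.
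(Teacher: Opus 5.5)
Your proposal is correct and takes the same approach as the paper, which likewise reduces everything to the one-cycle identity $\Phi^2 = T_{\omega^2,L}\circ T_{\omega^1,L}$; you additionally spell out the induction via the composition law and the passage to transfer operators, both of which the paper leaves implicit. One small slip in Step 3: $\varphi\circ T_a^{-1}\circ T_b^{-1} = \Lc_b\Lc_a\varphi$, so $\Lc_a$ is applied first, not $\Lc_b$ as you wrote; your concluding claim that the $\Lc$'s appear in the same left-to-right order as the $T$'s is nonetheless correct.
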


\begin{proof}
  Let $\Phi^2$ denote the time-2 Lagrangian flow with phase shift $(\omega^1, \omega^2)$, and observe that $\Phi^2$ corresponds to a composition of the vertical shear step
  \[\begin{pmatrix} x^1 \\ x^2 \end{pmatrix} \mapsto \begin{pmatrix}
                x^1 \\ x^2 + L \psi(x^1 + \omega^1)
            \end{pmatrix} \, , \]
  corresponding to the time interval $[0,1)$, with the horizontal shear step 
  \[\begin{pmatrix}
    x^1 \\ x^2 
  \end{pmatrix} \mapsto \begin{pmatrix}
                x^1 + L \psi(x^2 + \omega^2) \\ x^2
            \end{pmatrix}
  \]
  corresponding to the time interval $[1,2)$. 
  The proof now amounts to the algebraic step of checking that $\Phi^2 = T_{\omega^2, L} \circ T_{\omega^1, L}$. 
\end{proof}

% \[T^n_{\uo, L} = T_{\omega_n, L} \circ \dots \circ T_{\omega_1, L} \, . \]
% Write $\Lc = \Lc_{\omega, L}$ or $\Lc_{\omega}$, for short, for the \emph{transfer operator} associated with $T = T_{\omega, L}$, which sends
% \[\Lc \psi := \psi \circ T^{-1} \,,  \] 
% noting {\color{red}  connection between compositions of $\Lc$ and solution to transport } 
% we define the sequence of iterated transfer operators
% \[\Lc^n_{\uo, L} \varphi := \varphi \circ (T^n_{\uo, L})^{-1} = \Lc_{\omega_n, L} \circ \dots \circ \Lc_{\omega_1, L} \varphi \,. \]

\paragraph{Plan for Section \ref{sec:negRegMix}} We begin in Section \ref{subsec:prelim2} with some preliminaries on the phase-averaged distribution of Fourier mass after an iterate $\Lc_{\omega, L}$ of the discrete-time propagator defined above. Section \ref{subsc:negRegMix2} concerns the main result of this section, Proposition \ref{thm:neg-reg-mix}, on a version of $H^{-s}$ decay-on-average for the discrete-time maps $T_{\omega, L}$ in a suitably-defined weighted negative regularity (Definition \ref{defn:negAnisotropicNorm2}). Finally, in Section \ref{subsec:quenchedNegRegMix} we prove Theorem \ref{thm:introNegRegMix} and Corollary \ref{cor:introNegRegMixQuenched}. 

\subsection{Preliminaries}\label{subsec:prelim2}

The following Lemma is the primary objective of Section \ref{subsec:prelim2}. 

\begin{lemma}[Main Lemma]\label{lem:cone-decay}
%   Let $p \in L^\infty(\T)$ be a bounded probability density.
  Let $s \in [0,1/2]$. Then, there exists $C = C(\psi) > 0$ such that for any $\theta \geq 1$ and $\varphi \in H^{-s}(\T^2)$ with $\int \varphi = 0$,
  \begin{equation}\label{eq:cone-decay}
    \int_\T \|\Pi_{\mathcal{C}_\theta} \LL_{\omega, \llambda} \varphi\|_{H^{-s}}^2 p(\omega) \dif \omega \leq C\|p\|_{L^\infty} \theta^{\frac{1-2s}{4-2s}}\llambda^{2s - \frac{1-2s}{4-2s}}\|\varphi\|_{H^{-s}}^2.
  \end{equation}
\end{lemma}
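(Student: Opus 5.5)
The plan is to compute the Fourier coefficients of $\LL_{\omega,\llambda}\varphi$ explicitly, average over $\omega$ to obtain orthogonality, and then reduce \eqref{eq:cone-decay} to a one-dimensional oscillatory-integral estimate that is uniform in the frequencies.

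\textbf{Step 1: Fourier coefficients.} Write $\varphi=\sum_{(a,b)}\hat\varphi(a,b)e_{(a,b)}$. Then
\[
\LL_{\omega,\llambda}\varphi(x^1,x^2)=\sum_{a,b}\hat\varphi(a,b)\,e^{2\pi i(b x^1+a x^2-b\llambda\psi(x^2+\omega))}.
\]
Reading off the coefficient at $(k,j)$ forces $b=k$. Substituting $y=x^2+\omega$ gives
\[
\widehat{\LL_{\omega,\llambda}\varphi}(k,j)=\sum_{a}\hat\varphi(a,k)\,e^{-2\pi i(j-a)\omega}\,I_k(j-a),
\qquad
I_k(m):=\int_\T e^{2\pi i(my-k\llambda\psi(y))}\dif y .
\]
For $k=0$ we have $I_0(m)=\delta_{m0}$, and the modes $(0,j)$ never lie in $\Cc_\theta$. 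So only $k\neq0$ contributes. Since $p\le\|p\|_{L^\infty}$ and the $e^{2\pi i a\omega}$ are orthonormal on $\T$, Parseval in $\omega$ gives
\[
\int_\T|\widehat{\LL_{\omega,\llambda}\varphi}(k,j)|^2p(\omega)\dif\omega\le\|p\|_{L^\infty}\sum_a|\hat\varphi(a,k)|^2|I_k(j-a)|^2 .
\]
This is where the randomness is used: it kills all interference between different input modes $a$.

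\textbf{Step 2: reduction to a per-mode bound.} By Step 1 it suffices to show, uniformly in $k\ne0$ and $a\in\Z$, that
\[
\sum_{|j|<\theta|k|}\frac{\langle (a,k)\rangle^{2s}}{\langle (k,j)\rangle^{2s}}\,|I_k(j-a)|^2\;\le\; C\,\theta^{\alpha}\llambda^{2s-\alpha},\qquad \alpha=\tfrac{1-2s}{4-2s}.
\]
The weight is at most $C(1+|a|/|k|)^{2s}$, and I would split according to the size of $|a|$.

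\emph{Large input frequencies.} If $|a|\ge C_0|k|\llambda$ with $C_0>2\|\psi'\|_\infty+\theta/\llambda$, the phase $my-k\llambda\psi(y)$ has derivative $\gtrsim|m|$ on the whole window $m=j-a$. Integrating by parts two or three times (using $\psi\in C^3$) makes $|I_k(m)|^2\lesssim(|k|\llambda)^{2}|m|^{-4}$ or better. This beats the weight $(|a|/|k|)^{2s}$, since $s\le1/2$.

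\emph{Small input frequencies.} If $|a|\le C_0|k|\llambda$, the weight is $\lesssim\min\{M,\llambda\}^{2s}$ whenever $|a|\le M|k|$. It remains to bound the $\ell^2$ mass of $I_k$ on a window $J$ of length $2\theta|k|$. For this I would use two estimates.
\begin{itemize}
\item[(i)] The van der Corput bound $|I_k(m)|\lesssim(|k|\llambda)^{-1/3}$ uniformly in $m$. This is available by (ND): $\psi''$ and $\psi'''$ never vanish simultaneously, so on each piece of a finite partition either $|\psi''|$ or $|\psi'''|$ is bounded below. This gives a window bound $\lesssim\theta|k|(|k|\llambda)^{-2/3}$.
\item[(ii)] A localized Plancherel bound. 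Choose a smooth partition of unity in $y$ at scale $\delta$, and on each piece compare $I_k$ restricted to $J$ with the mass of $\{y:k\llambda\psi'(y)\in J+O(|k|\llambda\delta)\}$. Near a zero of $\psi''$, where $\psi'''\ne0$, this set has measure $\lesssim(\theta/\llambda+\delta)^{1/2}$; away from such zeros it has measure $\lesssim\theta/\llambda+\delta$. The tails of the non-stationary pieces cost a power of $(|k|\llambda\delta)^{-1}$.
\end{itemize}
Interpolating (i) and (ii), multiplying by the weight $\min\{M,\llambda\}^{2s}$, and optimizing over $\delta$ and $M$ should produce the exponent $\theta^{\alpha}\llambda^{2s-\alpha}$. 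The denominators $4-2s$ typically come from balancing a $\delta^{1/2}$-type gain against the weight $M^{2s}$ together with the (i)-type tail.

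\textbf{Main obstacle.} I expect the hardest step to be (ii) above: an $\ell^2$-in-$m$ window estimate for $I_k$ that is uniform in $|k|$. The pointwise van der Corput bound alone degenerates for large $|k|$, because the window length $\theta|k|$ grows. What must be shown is that only a $\theta/\llambda$-fraction (up to the square-root loss at inflection points) of the "frequency mass" $k\llambda\psi'(y)$ lands in a window of length $\theta|k|$. I would first try a $T T^*$ argument: write the window sum as $\iint\hat\chi_J(y-y')\,e^{-2\pi ik\llambda(\psi(y)-\psi(y'))}\dif y\dif y'$, with $\hat\chi_J$ a Fejér-type kernel of width $(\theta|k|)^{-1}$. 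Then apply van der Corput in $y$ to the difference phase, and track the exponents carefully to recover $\alpha$.
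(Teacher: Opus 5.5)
Your Step 1 and the reduction at the start of Step 2 are sound. After the $\omega$-orthogonality, it does suffice to bound, uniformly in $k\neq0$ and $a$, the weighted $\ell^2$ mass of $I_k$ over the window $\{j-a:|j|<\theta|k|\}$. For $|a|\gg|k|L$, non-stationary phase handles this. For $\theta\gtrsim L$ your $C_0$ is no longer bounded, but there the deterministic bound $\|\LL_{\omega,L}\|_{H^{-s}\to H^{-s}}\lesssim L^{s}$ already gives \eqref{eq:cone-decay}, as in the paper.

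The gap is the regime $|a|\lesssim|k|L$, which is the entire content of the lemma: you never establish a window bound that is uniform in $k$. Estimate (i) only gives $\theta|k|^{1/3}L^{-2/3}$. Estimate (ii) is a heuristic (wave packets at scale $\delta$, or a $TT^*$ argument) whose tail bounds, interactions between pieces, and final optimization are left at ``should produce''. The exponent $\frac{1-2s}{4-2s}$ also does not arise the way you guess.

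The missing idea is that the degeneration in (i) comes only from the inflection points of $\psi$, so you should excise them rather than use the third-derivative estimate globally. Take $\rho^\varepsilon$ vanishing on the $\varepsilon/2$-neighborhoods of $\{\psi''=0\}$. Then (ND) gives $|\psi''|\gtrsim\varepsilon$ on $\supp\rho^\varepsilon$, and the \emph{second}-derivative van der Corput bound gives $\bigl|\int_\T\rho^\varepsilon(y)e^{2\pi i(my-kL\psi(y))}\dif y\bigr|^2\lesssim\varepsilon^{-3}(|k|L)^{-1}$ uniformly in $m$. Its factor $|k|^{-1}$ exactly cancels the window length $\theta|k|$, giving $\theta\varepsilon^{-3}L^{2s-1}$ after the weight; this is the paper's Step 1.

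The paper then treats the excised part at the operator level. It combines the averaged masking estimate of Lemma \ref{lem:neg-ortho}, which costs $\varepsilon^{1-2s}$, with the deterministic $L^{2s}$ bound. Balancing $L^{2s}\varepsilon^{1-2s}$ against $\theta\varepsilon^{-3}L^{2s-1}$ is what produces $4-2s$.

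In your per-input-mode framework the excised part is even simpler, and that framework is a real advantage over the paper's $\sup_\ell\Gamma_{j,k,\ell}$ route. For $|a|\lesssim|k|L$ the weight is $\lesssim L^{2s}$. By Plancherel, the excised piece has total $\ell^2$ mass $\|1-\rho^\varepsilon\|_{L^2}^2\lesssim\varepsilon$. So the per-mode sum is $\lesssim L^{2s}(\varepsilon+\theta\varepsilon^{-3}L^{-1})$. Choosing $\varepsilon=(\theta/L)^{1/4}$, which is admissible for $\theta\leq cL$, gives $L^{2s}(\theta/L)^{1/4}\leq\theta^{\alpha}L^{2s-\alpha}$, since $\alpha\leq\frac14$. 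Either way, no $TT^*$, wave-packet, or third-derivative analysis is needed.
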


The moral of Lemma \ref{lem:cone-decay} is as follows: for a given mean-zero $L^2$ function $\varphi$, a single iterate of $T_\omega$ transmits, with high probability, the vast majority of the Fourier mass of $\varphi$ out of the horizontal cone $\Cc_\theta$. 

% Moral: for a given $L^2$ function $\psi$, a single iterate of $T_\omega$ transfers, with high probability, the bulk of Fourier mass out of the cone $\mathcal{C}_\theta$. 

The proof of Lemma \ref{lem:cone-decay} will use the following two auxiliary results. The first is a standard stationary phase argument for oscillatory integrals -- see, e.g., \cite[Chapter VIII, Proposition 2]{stein1993harmonic}.

\begin{lemma}[Local van der Corput estimate]\label{lem:osc-int}
  Let $I = [a,b] \subset \R$ with $a<b$, let $\rho \in W^{1,1}(I)$, and let $\xi \in C^2(I)$. Let 
  \[
    \mu := \inf_{y \in I} |\xi''(y)| \, , \qquad 
    \nu := \inf_{y \in I} |\xi'(y)| \, . 
  \]
  % Then
  % \begin{equation}\label{eq:vdc-second}
  %   \left|\int_I \rho(y)e^{i\xi(y)}\,\dif y\right|
  %   \leq C\left(1+|I|^{-1}\right)\|\rho\|_{W^{1,1}(I)}\mu^{-1/2},
  % \end{equation}
  % where $C > 0$ is a universal constant. If, in addition,
  % \[
  %   \nu := \inf_{y \in I} |\xi'(y)| > 0,
  % \]
  % then
  % \begin{equation}\label{eq:vdc-first}
  %   \left|\int_I \rho(y)e^{i\xi(y)}\,\dif y\right|
  %   \leq C\left(1+|I|^{-1}\right)\|\rho\|_{W^{1,1}(I)}\nu^{-1}.
  % \end{equation}
  % Consequently,
  Assume that $\mu > 0$. Then, 
  \begin{equation}\label{eq:vdc-min}
    \left|\int_I \rho(y)e^{i\xi(y)}\,dy\right|
    \leq C\left(1+|I|^{-1}\right)\|\rho\|_{W^{1,1}(I)}\min\{\mu^{-1/2},\nu^{-1}\}\, \, .
  \end{equation}
  % whenever both $\mu,\nu > 0$.
\end{lemma}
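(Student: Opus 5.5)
The plan is the classical two-step route: first bound the pure oscillatory integral with amplitude $1$ uniformly over subintervals of $I$, then insert the amplitude $\rho$ by one integration by parts. A preliminary remark is used throughout. Since $\xi''$ is continuous on $I$ with $|\xi''|\geq\mu>0$, it has a constant sign, so $\xi'$ is strictly monotone on $I$. Similarly, if $\nu>0$ then $\xi'$ has a constant sign on $I$.

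\emph{Step 1 (first-derivative bound).} Assume $\nu>0$ and let $J=[\alpha,\beta]\subseteq I$. Write $e^{i\xi}=\frac{1}{i\xi'}\frac{d}{dy}e^{i\xi}$ and integrate by parts. The boundary terms contribute at most $2/\nu$. The remaining term is bounded by $\int_J |(1/\xi')'|\,dy$. Because $\xi'$ is monotone and of constant sign, $1/\xi'$ is monotone, so this integral equals $|1/\xi'(\beta)-1/\xi'(\alpha)|\leq 1/\nu$. Hence
\[
\sup_{J\subseteq I}\Bigl|\int_J e^{i\xi}\,dy\Bigr|\leq 3\nu^{-1}.
\]

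\emph{Step 2 (second-derivative bound).} Since $\xi'$ is strictly monotone, it vanishes at most at one point $y_0\in J$; if it does not vanish, take $y_0$ to be the endpoint of $J$ where $|\xi'|$ is smallest. For $\delta>0$ split $J$ into $J\cap\{|y-y_0|<\delta\}$ and the complement. The first piece has length at most $2\delta$. The complement consists of at most two intervals, and by the mean value theorem $|\xi'|\geq\mu\delta$ on each of them, so Step 1 applies to each. This gives a total bound of $2\delta+6/(\mu\delta)$. Choosing $\delta=\mu^{-1/2}$ yields
\[
\sup_{J\subseteq I}\Bigl|\int_J e^{i\xi}\,dy\Bigr|\leq 8\mu^{-1/2}.
\]
Combining with Step 1 (when $\nu=0$ the minimum is simply $\mu^{-1/2}$), we get $\sup_{y\in I}|F(y)|\leq 8\min\{\mu^{-1/2},\nu^{-1}\}$, where $F(y):=\int_a^y e^{i\xi}$.

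\emph{Step 3 (amplitude).} Since $\rho\in W^{1,1}(I)$ is absolutely continuous, we may integrate by parts:
\[
\int_I\rho\, e^{i\xi}\,dy=\rho(b)F(b)-\int_I\rho'F\,dy .
\]
Its modulus is at most $\sup|F|\,\bigl(\|\rho\|_{L^\infty}+\|\rho'\|_{L^1}\bigr)$. Finally, $\|\rho\|_{L^\infty}\leq |I|^{-1}\|\rho\|_{L^1}+\|\rho'\|_{L^1}$: write $\rho(y)=\rho(z)+\int_z^y\rho'$ and average over $z\in I$. Therefore
\[
\|\rho\|_{L^\infty}+\|\rho'\|_{L^1}\leq 2(1+|I|^{-1})\|\rho\|_{W^{1,1}(I)},
\]
which gives \eqref{eq:vdc-min} with a universal $C$ (e.g.\ $C=16$).

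The only mildly delicate point is Step 2. There one must use that the constant sign of $\xi''$ makes $\xi'$ monotone, so that Step 1 applies on each of the at most two pieces away from $y_0$ and the bound is uniform over all subintervals $J$. This uniformity is exactly what Step 3 needs.
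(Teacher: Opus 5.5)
Your proof is correct and is the standard van der Corput argument the paper relies on; the paper gives no proof of its own but cites Stein, Chapter VIII, Proposition 2 and its corollary. The ingredients match: the first-derivative bound via monotonicity of $\xi'$ (guaranteed by $\mu>0$), the split at scale $\mu^{-1/2}$ around the minimum of $|\xi'|$, and a final integration by parts in the amplitude using $\|\rho\|_{L^\infty}\leq |I|^{-1}\|\rho\|_{L^1}+\|\rho'\|_{L^1}$. One small point to make explicit is that Step 1 is a local statement: in Step 2 you apply it on each piece with the local lower bound $|\xi'|\geq\mu\delta$, not the global $\nu$.
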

Above, we follow the convention that $0^{-1} = \infty$.

% \cite[Chapter VIII, Proposition 2]{stein-harmonic-analysis}

% \begin{proof}[Proof sketch] {\color{blue}  bother including? } 
%     The estimate is the standard one-dimensional van der Corput argument. Since $|\xi''|\geq \mu$, $\xi'$ is monotone. For unit amplitude, split at the unique critical point, or at the endpoint where $|\xi'|$ is smallest, on the scale $\mu^{-1/2}$ and integrate by parts off this set; the term $\int |\xi''|/|\xi'|^2$ is the variation of $1/\Phi'$, giving $\sup_{J\subset I}|\int_J e^{i\xi}|\lesssim \mu^{-1/2}$. If $|\xi'|\geq \nu$, the same integration by parts gives the $\nu^{-1}$ bound. A final integration by parts in the amplitude gives the factor $\|\rho\|_\infty+\|\rho'\|_1$, and the displayed $W^{1,1}$ form follows from $\|\rho\|_\infty\lesssim (1+|I|^{-1})\|\rho\|_{W^{1,1}(I)}$.
% \end{proof}

The next lemma will be used for controlling the effect on Fourier mass of masking by a cutoff function. 
For $\chi : \T \to \R$ and $\omega \in \T$ we define $\chi_\omega : \T^2 \to \R$ by
\[\chi_\omega(x) = \chi(x^1 + \omega) \, .\] 

\begin{lemma}\label{lem:neg-ortho}
  Let $\varphi \in H^{-s}(\T^2)$ for $0 \leq s \leq \frac{1}{2}$.
  Assume $\chi \in C^\infty(\T)$ with $\|\chi\|_{L^2}^2 \leq \varepsilon$ and $\|\chi\|_{H^{1/2}} \leq 1$. 
  Then
  \begin{equation*} 
    \int_\T \|\chi_\omega \varphi\|_{H^{-s}}^2 \dif \omega \leq C \varepsilon^{1-2s}\|\varphi\|_{H^{-s}}^2,
  \end{equation*}
% %   where, for each $\omega \in \T$, $\chi_\omega$ is regarded as the smooth function on $\T^2$ depending only on the first coordinate,
%   \begin{equation*}
%     \chi_\omega(x) := \chi(x^1 + \omega),
%   \end{equation*}
%   and $\chi_\omega\varphi$ denotes multiplication of the distribution $\varphi$ by this smooth function. 
  % Here $\dif \omega$ is normalized Haar measure on $\T$, the Sobolev norms are inhomogeneous, and 
  Here, $C$ is independent of $\chi$, $\varphi$, and $\varepsilon$.
\end{lemma}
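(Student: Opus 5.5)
The plan is to interpolate between $s=0$ and $s=1/2$, both handled in Fourier variables, with averaging over $\omega$ supplying orthogonality in the $x^1$-frequency.

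\emph{The endpoint $s=0$.} Here the bound is immediate without averaging: $\|\chi_\omega\varphi\|_{L^2}^2\le \|\chi\|_{L^\infty}^2\|\varphi\|_{L^2}^2$. That is not good enough, since we want $\varepsilon$ and not $\|\chi\|_\infty^2$. So we average instead. Since $\chi_\omega$ depends only on $x^1$, we get
\[\int_\T\!\int_{\T^2}|\chi(x^1+\omega)|^2|\varphi(x)|^2\,\dif x\,\dif\omega=\|\chi\|_{L^2}^2\|\varphi\|_{L^2}^2\le\varepsilon\|\varphi\|_{L^2}^2\]
by Fubini.

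\emph{The endpoint $s=1/2$.} We aim for $\int\|\chi_\omega\varphi\|_{H^{-1/2}}^2\dif\omega\le C\|\varphi\|_{H^{-1/2}}^2$, uniformly in $\varepsilon$, using only $\|\chi\|_{H^{1/2}}\le1$. Write $\widehat{\chi_\omega\varphi}(k,j)=\sum_m \hat\chi(m)e^{2\pi i m\omega}\hat\varphi(k-m,j)$. Averaging in $\omega$ is Parseval in $m$, which gives
\[\int\|\chi_\omega\varphi\|_{H^{-1/2}}^2\dif\omega=\sum_{k,j,m}\langle(k,j)\rangle^{-1}|\hat\chi(m)|^2|\hat\varphi(k-m,j)|^2 .\]
Substitute $k'=k-m$. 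It then suffices to bound, for each $(k',j)$,
\[\sum_m |\hat\chi(m)|^2\langle (k'+m,j)\rangle^{-1}\le C\langle(k',j)\rangle^{-1}.\]
Use the Peetre-type inequality $\langle a+b\rangle^{-1}\le 2\langle a\rangle^{-1}\langle b\rangle$ with $b=(m,0)$. The sum is then at most $2\langle(k',j)\rangle^{-1}\sum_m\langle m\rangle|\hat\chi(m)|^2\le 2\langle(k',j)\rangle^{-1}\|\chi\|_{H^{1/2}}^2$, which is the required bound.

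\emph{Interpolation.} The averaged operator $\varphi\mapsto(\omega\mapsto\chi_\omega\varphi)$ maps $H^{0}\to L^2(\dif\omega;H^{0})$ with norm $\le\varepsilon^{1/2}$, and $H^{-1/2}\to L^2(\dif\omega;H^{-1/2})$ with norm $\le C$. Both spaces are weighted $\ell^2$ spaces on the Fourier side (with an extra $L^2_\omega$ factor), so complex interpolation (Stein–Weiss) applies. It gives norm $\le C\varepsilon^{(1-2s)/2}$ on $H^{-s}$, since $\theta=2s$ is the weight on the $s=1/2$ endpoint. Squaring gives $C\varepsilon^{1-2s}$.

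Alternatively, the interpolation can be avoided by a direct Hölder argument. Split the $m$-sum above according to whether $\langle m\rangle$ is below or above a threshold, and use $\sum|\hat\chi(m)|^2\le\varepsilon$ together with $\sum\langle m\rangle|\hat\chi|^2\le1$. One can also insert $\langle(k'+m,j)\rangle^{-2s}\le C\langle(k',j)\rangle^{-2s}\langle m\rangle^{2s}$ and then apply Hölder in the form
\[\sum|\hat\chi|^2\langle m\rangle^{2s}\le\Big(\sum|\hat\chi|^2\Big)^{1-2s}\Big(\sum\langle m\rangle|\hat\chi|^2\Big)^{2s}\le\varepsilon^{1-2s}.\]
This is a cleaner one-line route, and it is the one I would actually write.

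\emph{Main obstacle.} The only delicate point is justifying the manipulations for distributions $\varphi\in H^{-s}$. The product is well defined because $\chi$ is smooth, and the series converge absolutely after the Cauchy–Schwarz/Parseval step. If needed, one can first prove the bound for trigonometric polynomials $\varphi$ and then pass to the limit by density.
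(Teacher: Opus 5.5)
Your proposal is correct, and the route you say you would actually write is essentially the paper's proof: expand in Fourier series, use orthogonality of the characters $e^{2\pi i m\omega}$ to remove cross terms, apply Peetre's inequality to extract $\sum_m \langle m\rangle^{2s}|\hat\chi(m)|^2=\|\chi\|_{H^s}^2$, and bound this by $\varepsilon^{1-2s}$ via H\"older between the $L^2$ and $H^{1/2}$ norms, which the paper calls interpolation between $H^{1/2}$ and $L^2$. Your endpoint-interpolation route is also valid; after averaging in $\omega$ the quantity is a diagonal quadratic form in $\hat\varphi$, so that interpolation reduces to the same pointwise H\"older step in $m$.
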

\begin{proof}
  By density, it suffices to prove the estimate when $\varphi$ is a trigonometric polynomial.
  %  the general case then follows by completing in $H^{-s}$, since the estimate below is uniform.
  For $(j,k) \in \Z^2$ we have
  \begin{align*}
    \widehat{\chi_\omega\varphi}(j, k) & = \sum_{\ell \in \Z} \widehat{\chi_\omega}(j-\ell, 0) \widehat{\varphi}(\ell, k) \\ 
    & = \sum_{\ell \in \Z} 
       e^{2\pi i (j-\ell)\omega}\widehat{\chi}(j-\ell) \widehat{\varphi}(\ell, k) \, ,
  \end{align*}
  % where $\hat{\chi_\omega}(m, 0) = e^{2\pi i m\omega}\hat{\chi}(m)$. 
  % In particular, the product 
  % $\hat{\chi_\omega}(j-\ell,0)\overline{\hat{\chi_\omega}(j-\ell',0)}$ contains the character $e^{2\pi i(\ell'-\ell)\omega}$, and
  hence
  \begin{align*}
    \int_\T |\widehat{\chi_\omega\varphi}(j, k)|^2 \dif \omega &= \sum_{\ell, \ell' \in \Z} \left(\int_\T \widehat{\chi_\omega}(j-\ell, 0) \overline{\widehat{\chi_\omega}(j-\ell', 0)} \dif \omega\right) \widehat{\varphi}(\ell, k) \overline{\widehat{\varphi}(\ell', k)}\\
    &= \sum_{\ell, \ell' \in \Z} \left(\int_\T e^{2\pi i (\ell' - \ell) \omega} \dif \omega\right) \widehat{\chi_0}(j-\ell, 0) \overline{\widehat{\chi_0}(j-\ell', 0)} \widehat{\varphi}(\ell, k) \overline{\widehat{\varphi}(\ell', k)}\\
    &= \sum_{\ell \in \Z}  |\widehat{\chi}(j-\ell)|^2  |\widehat{\varphi}(\ell, k)|^2,
  \end{align*}
  by orthogonality of the characters in the $\omega$ variable.

  Summing over $(j, k)$, we have
  \begin{align*}
    \int_\T \|\chi_\omega \varphi \|_{H^{-s}}^2 \dif \omega &= \sum_{j,k,\ell \in \Z} (1 + j^2 + k^2)^{-s}|\widehat{\chi}(j-\ell)|^2  |\widehat{\varphi}(\ell, k)|^2\\
    &= \sum_{\ell,k \in \Z} \frac{|\widehat{\varphi}(\ell, k)|^2}{(1 + \ell^2 + k^2)^s} \sum_{j \in \Z} \frac{(1 + \ell^2 + k^2)^s}{(1 + j^2 + k^2)^s} |\widehat{\chi}(j-\ell)|^2\\
    &\leq C \|\varphi\|_{H^{-s}}^2 \sup_{\ell ,k \in \Z} \sum_{j \in \Z} \left(1 + (j-\ell)^{2}\right)^s |\widehat {\chi}(j-\ell)|^2 \\
    &\leq C \|\varphi\|_{H^{-s}}^2 \|\chi\|_{H^{s}}^2\\
    &\leq C \|\varphi\|_{H^{-s}}^2 \varepsilon^{1-2s}.
  \end{align*}
  Here, from the second to the third line we used the inequality
  \begin{equation*}
    1 + \ell^2 + k^2 \leq C (1 + j^2 + k^2)(1 + (j-\ell)^2) \, , 
  \end{equation*}
  % and hence
  % \begin{equation*}
  %   \frac{(1 + \ell^2 + k^2)^s}{(1 + j^2 + k^2)^s} \leq C_s(1 + (j-\ell)^2)^s.
  % \end{equation*}
  while from the fourth to fifth lines we interpolated between $H^{1/2}$ and $L^2$.
  %  and used our assumptions on $\| \chi\|_{L^2}, \| \chi\|_{H^{1/2}}$. 
  % Finally, interpolation in the inhomogeneous Sobolev scale gives
  % \begin{equation*}
  %   \|\chi\|_{H^s}^2 \leq \|\chi\|_{L^2}^{2(1-2s)} \|\chi\|_{H^{1/2}}^{4s} \leq \varepsilon^{1-2s}. 
  % \end{equation*}
\end{proof}

\subsubsection*{Proof of Lemma \ref{lem:cone-decay}}

First, we reduce to $p \equiv 1$ by bounding \[\int_\T \| \Pi_{\mathcal{C}_\theta} \mathcal{L}_{\omega, L} \varphi\|_{H^{-s}}^2 p(\omega) \dif \omega \leq \| p\|_{L^\infty} \int_\T \| \Pi_{\mathcal{C}_\theta} \mathcal{L}_{\omega, L} \varphi\|_{H^{-s}}^2 \dif \omega \, . \]  
  % Indeed, by H\"older's inequality, the $p \equiv 1$ case implies the general case.

  Next, let $\varepsilon \in (0,1/2)$, to be taken small as we go along.
  In particular, let $4\varepsilon$ be smaller than the minimum distance between two zeros of $\psi''$.
  Fix an even $\chi \in C^\infty(\R)$ satisfying $0 \leq \chi \leq 1$, $\chi(r) = 0$ for $|r| \leq \frac{1}{2}$, and $\chi(r) = 1$ for $|r| \geq 1$, and set
  \begin{equation*}
    \rho^\varepsilon(y) \coloneq \prod_{\psi''(z) = 0}\chi\left(\frac{\dist_\T(y, z)}{\varepsilon}\right) \, .
  \end{equation*}
  % where we have abused notation somewhat and viewed $\psi : \R \to \R$ by periodic extension. 
  As in Lemma~\ref{lem:neg-ortho}, for $\omega \in \T$ we write
    $\rho^\varepsilon_\omega(x) \coloneq \rho^\varepsilon(x^1+\omega)$
  and view $\rho^\varepsilon_\omega$ as a function on $\T^2$ depending only on the first coordinate.

  Define the restricted operator by
  \begin{equation*}
    \LL_{\omega,\llambda}^\varepsilon\varphi \coloneq \LL_{\omega, \llambda} \left(\rho^\varepsilon_\omega \varphi \right)\,,
  \end{equation*}
  which masks $\varphi$ away from the zeros of $\psi''$.
  We will treat the restricted and unrestricted contributions separately. 

  \paragraph*{Step 1: Restricted contribution.} We begin by estimating $\int \| \Pi_{\Cc_\theta} \Lc_{\omega, L}^\varepsilon \varphi\|^2_{H^{-s}} \dif \omega$. 
  By definition, $\LL_{\omega, \llambda} \varphi(x^1, x^2) = \varphi(x^2, x^1 - \llambda\psi(x^2 + \omega))$. Making this change of variables in Fourier space,
  \begin{align*}
    \LL_{\omega, \llambda}^\varepsilon \varphi(x^1, x^2) &= \sum_{j,k \in \Z} e^{2 \pi i (j x^2 + k(x^1 - \llambda\psi(x^2 + \omega)))}\rho^\varepsilon(x^2+\omega)\widehat{\varphi}(j, k)\\
                         &= \sum_{j, k, \ell \in \Z} e^{2 \pi i ((j+\ell)x^2 + k x^1)}a_{\omega, k\llambda}^\varepsilon(\ell)\widehat{\varphi}(j, k)\\
                         &= \sum_{j,k,\ell \in \Z} e^{2 \pi i (j x^2 + k x^1)}a_{\omega, k\llambda}^\varepsilon(\ell) \widehat{\varphi}(j-\ell, k),
  \end{align*}
  where for $ \eta \in \R$ we set
  \begin{align*} 
    a_{\omega, \eta}^\varepsilon(\ell) & = \int e^{- 2 \pi i \ell y } \rho^{\varepsilon}(y + \omega) e^{- 2 \pi i \eta \psi(y + \omega)} \dif y \\ 
    & = e^{2 \pi i \ell \omega} \left(\rho^\varepsilon e^{- 2 \pi i \eta \psi(\cdot)}\right)^{\wedge}(\ell) 
  \end{align*}
  so that 
  \begin{equation*}
  \rho^\varepsilon(y+\omega)e^{-2 \pi i \eta \psi(y + \omega)} =: \sum_{\ell \in \Z} a_{\omega, \eta}^\varepsilon(\ell)e^{2 \pi i \ell y} \, . 
  \end{equation*}
  For the individual Fourier coefficient at $(k,j)$, we have
  \begin{equation*}
    \widehat{\LL_{\omega, \llambda}^\varepsilon \varphi}(k, j) = \sum_{\ell \in \Z} a_{\omega, k\llambda}^\varepsilon(\ell)\widehat{\varphi}(j-\ell, k).
  \end{equation*}
  Squaring and averaging in $\omega \in \T$, and noting that off-diagonal terms vanish due to orthogonality of the characters $\omega \mapsto e^{2 \pi i \ell \omega}$, we obtain
  \[\int_\T |\widehat{\LL^\varepsilon_{\omega, \llambda} \varphi}(k, j)|^2 \dif \omega = \sum_{\ell \in \Z} | \widehat \varphi(j-\ell, k)|^2 \int_\T |a^\varepsilon_{\omega, kL}(\ell)|^2 \dif \omega \,.\]
  % \begin{equation*}
  %   \int_\T |\widehat{\LL^\varepsilon_{\omega, \llambda} \varphi}(k, j)|^2 \dif \omega = \sum_{\ell, \ell' \in \Z} \hat{\varphi}(j-\ell, k)\overline{\hat{\varphi}(j-\ell', k)}\int_\T a^\varepsilon_{\omega, k\llambda}(\ell)\overline{a^\varepsilon_{\omega, k\llambda}(\ell')} \dif \omega.
  % \end{equation*}
  % We now observe that the off-diagonal terms of the sum vanish: for $\ell \neq \ell'$ we compute 
  % \begin{align}
  %   \int_\T a^\varepsilon_{\omega, k\llambda}(\ell)\overline{a^\varepsilon_{\omega, k\llambda}(\ell')} \dif \omega &= \int_\T \int_\T \int_\T \rho^\varepsilon(y+\omega)\rho^\varepsilon(y'+\omega)e^{- 2 \pi i \left[(\ell y - \ell' y') + k\llambda (\psi(y + \omega) - \psi(y' + \omega))\right]} \dif y \dif y' \dif \omega\label{eq:off-diag-cancel}\\
  %                                                        &= \int_\T \int_\T \int_\T \rho^\varepsilon(y)\rho^\varepsilon(y')e^{- 2 \pi i \left[(\ell y - \ell' y') + (\ell'-\ell)\omega + k\llambda (\psi(y) - \psi(y'))\right]} \dif y \dif y' \dif \omega\nonumber\\
  %                                                        &= \int_\T e^{-2\pi i (\ell'-\ell)\omega} \dif \omega \int_\T \int_\T \rho^\varepsilon(y)\rho^\varepsilon(y')e^{- 2 \pi i \left[(\ell y - \ell' y') + k\llambda (\psi(y) - \psi(y'))\right]} \dif y \dif y'\nonumber\\
  %                                                        &= 0 \, . 
  % \end{align}

  Incorporating the $H^{-s}$ Sobolev weight at $(j,k)$, 
  %  now the average of $\| \Lc^\varepsilon_{\omega, L} \varphi\|_{H^{-s}}^2$, we have
  \begin{align}
    \int_\T \frac{|\widehat{\LL^\varepsilon_{\omega, \llambda} \varphi}(k, j)|^2}{(1 + j^2 + k^2)^s} \dif \omega &= \sum_{\ell \in \Z} \frac{|\widehat{\varphi}(j-\ell, k)|^2}{(1 + j^2 + k^2)^s}\int_\T |a^\varepsilon_{\omega, k\llambda}(\ell)|^2 \dif \omega \nonumber\\
    &= \sum_{\ell \in \Z} \frac{|\widehat{\varphi}(j-\ell, k)|^2}{(1 + (j-\ell)^2 + k^2)^s} \\ 
    & \qquad \cdot \underbrace{\frac{(1 + (j-\ell)^2 + k^2)^s}{(1 + j^2 + k^2)^s} 
     \int_\T 
    %  \left|\int_\T \rho^\varepsilon(y+\omega)e^{-2 \pi i \left[ \ell y + k\llambda \psi(y + \omega)\right]} \dif y\right|^2 
    |a^\varepsilon_{\omega, k L}(\ell)|^2
     \dif \omega}_{=: \Gamma_{j, k, \ell}} \nonumber\\
    &\leq \left(\sup_{\ell \in \Z} \Gamma_{j, k, \ell}\right)\sum_{\ell \in \Z} \frac{|\widehat{\varphi}(j-\ell, k)|^2}{(1 + (j-\ell)^2 + k^2)^s} \nonumber\\
    &= \left(\sup_{\ell \in \Z} \Gamma_{j, k, \ell}\right) \sum_{\ell \in \Z} \frac{|\widehat{\varphi}(\ell, k)|^2}{(1 + \ell^2 + k^2)^s}  \, . \label{eq:phi-linfty-bound}
  \end{align}
%   where we write $\Gamma_{j,k,\ell}$ to denote the integral above.

  We now estimate 
  $\sup_{\ell} \Gamma_{j, k, \ell}$. 
  Fix $\beta > 0$, to be taken sufficiently large in the following estimates. We consider two cases. Below, $(k,j) \in \Cc_\theta$ is fixed (hence $k \neq 0$), and $\ell \in \Z$ is arbitrary.   

	  \subparagraph{Case a.} Suppose that $(j-\ell)^2 \leq \beta(j^2 + k^2)$.
	  Recalling that $\supp(\rho^\epsilon)$ is distance at least $\varepsilon/2$ from $\{ \psi'' = 0\}$, and recalling our nondegeneracy assumption $\{\psi'' = 0\} \cap \{\psi''' = 0\} = \emptyset$, it holds that $|\psi''(\cdot+\omega)| \geq c\varepsilon$ on the support of $\rho^\varepsilon_\omega$ for some $c > 0$ depending only on $\psi$. 
	  
      We seek now to apply Lemma~\ref{lem:osc-int} to 
      \begin{align}
        |a^{\varepsilon}_{\omega, k L}(\ell)| = 
        \left|\int e^{- 2 \pi i \ell y} \rho^\varepsilon(y) e^{- 2 \pi i k L \psi(y)} \dif y\right| \, , 
      \end{align}
      with phase \[\xi(y) :=  - 2 \pi \left( \ell y + k\llambda \psi(y) \right)\] and amplitude \[\rho(y) \mapsto \rho^\epsilon(y ) \, . \] 
      For this, split the support of $\rho^\varepsilon(\cdot)$ into intervals $I$ left after removing the $\varepsilon/2$-neighborhoods of the zeros of $\psi''(\cdot)$; there are  finitely-many such $I$, each of which has length  bounded from below in terms of $\psi$. On each such interval $I$, apply Lemma~\ref{lem:osc-int}; since 
	  \begin{equation*}
	    \inf_I |\xi''| \geq 2 \pi c |k|\llambda\varepsilon
	  \end{equation*}
	  and $\|\rho^\varepsilon(\cdot)\|_{W^{1,1}(\T)} \leq C\varepsilon^{-1}$, the second-derivative estimate gives
		  \begin{equation*}
		    \left|\int_\T \rho^\varepsilon(y)e^{i\xi(y)}\dif y\right|
		    \leq C\varepsilon^{-3/2}(|k|\llambda)^{-1/2} \, .
		    % \leq C\varepsilon^{-2}(|k|\llambda)^{-1/2}.
		  \end{equation*}
      
      For the multiplier in the definition of $\Gamma_{j, k, \ell}$ we bound
          \[\frac{(1 + (j-\ell)^2 + k^2)^s}{(1 + j^2 + k^2)^s} \leq \frac{(1 + (\beta + 1) (j^2 + k^2) )^s}{(1 + j^2 + k^2)^s} \lesssim \beta^s\]
          if $\beta$ is sufficiently large. In all, we obtain
	  \begin{equation*}
	    \Gamma_{j,k,\ell} \leq C \varepsilon^{-3} \beta^s (|k|\llambda)^{-1}.
	  \end{equation*}

  \subparagraph{Case b.} Suppose that $(j-\ell)^2 > \beta(j^2 + k^2)$. Note in this case that $\ell \neq 0$ since $(k, j) \in \mathcal{C}_\theta$. 
  Then $|\xi'| \gtrsim  |\ell| - |k| \llambda \|\psi\|_{C^1}$, while from our assumption we have $2 \ell^2 > (\beta - 2) j^2 + \beta k^2 \geq \beta k^2$ so long as $\beta \geq 2$. Plugging these in gives the lower bound
	  \begin{equation}\label{eq:lowerBoundXi}
	    |\xi'| \gtrsim |\ell| \left(1 - 2\beta^{-1/2}\llambda\|\psi\|_{C^1}\right) \gtrsim |\ell| \, , 
      % = |\ell|\left(1 - C \beta^{-1/2} \llambda \|\psi\|_{C^1}\right).
	  \end{equation}
    where we have specified
    \[\beta =C  \llambda^2 \, ,\] with $C$ chosen appropriately depending on $\psi$.
    
      Applying Lemma \ref{lem:osc-int} as before, separately along the intervals $I$ comprising the support of $\rho^\epsilon(y )$ and this time using the first-derivative lower bound \eqref{eq:lowerBoundXi}, we obtain
	  \begin{equation}\label{eq:caseb}
	    \left|\int_\T \rho^\varepsilon(y)e^{i\xi(y)}\dif y\right|
	    \leq C\|\rho^\varepsilon\|_{W^{1,1}(I)}
	    |\ell|^{-1} \leq C \varepsilon^{-1} |\ell|^{-1} \,. 
	  \end{equation}
    %  so that the RHS of \eqref{eq:caseb} is bounded 
    %     $\leq C \varepsilon^{-1} |\ell|^{-1}$. 

        % Choose $\beta = C \llambda^2\left(1 + \|\psi\|_{C^1}\right)^2$, with $C$ set so that the RHS of \eqref{eq:caseb} is bounded 
        % $\leq C \varepsilon^{-1} |\ell|^{-1}$. 
        Incorporating the Sobolev weight, we have shown thus far that
        \[
        \Gamma_{j,k,\ell} \leq C \frac{(1 + (j-\ell)^2 + k^2)^s}{(1 + j^2 + k^2)^s}|\ell|^{-2} \varepsilon^{-2} \,.
        \]
        % we seek to bound
	      %   \begin{equation*}
	      %     \frac{(1 + (j-\ell)^2 + k^2)^s}{(1 + j^2 + k^2)^s}|\ell|^{-2}.
	      %   \end{equation*}
	        In Case b, $|j|+|k| \leq C\beta^{-1/2}|j-\ell|$ implies $|j-\ell|+|k| \leq C|\ell|$. Therefore
	        \begin{equation*}
	          \frac{(1 + (j-\ell)^2 + k^2)^s}{(1 + j^2 + k^2)^s}|\ell|^{-2s}
	          \leq C \, , 
	        \end{equation*}
	        from which it follows that
	        \begin{equation*}
	          \Gamma_{j,k,\ell} \leq C\varepsilon^{-2}|\ell|^{2s-2} \,. 
	        \end{equation*}
	        Since Case b also gives $|\ell| \geq c \beta^{1/2}|k|$ and $2s-2<0$, we obtain
	        \begin{equation*}
	          \Gamma_{j,k,\ell}
	          \leq C\varepsilon^{-2}\beta^{s-1}|k|^{2s-2}
	          \leq C\varepsilon^{-2}\llambda^{2s-2}|k|^{2s-2} \, .
	          % \leq C\varepsilon^{-2}\llambda^{2s-2}|k|^{-1}.
	        \end{equation*}

% 	    \subparagraph{Consolidating Cases a, b.}  
%         In either case,         
%   \begin{equation*}
%     \Gamma_{j,k,\ell} \leq C\varepsilon^{-3} \llambda^{2s-1} |k|^{-1}.
%   \end{equation*}
% 	  Indeed, Case a gives $\Gamma_{j,k,\ell} \leq C\varepsilon^{-3}\llambda^{-1}|k|^{-1}$, which is bounded by the right-hand side above since $\varepsilon < 1$, $\llambda \geq 1$, and $s \geq 0$. In Case b, this choice of $\beta$ makes the last factor in the preceding display uniformly bounded, and the Case b assumption implies $|\ell|^2 \geq c\beta |k|^2 \geq c\llambda^2|k|^2$. Thus, for $k \neq 0$,
% 	  \begin{equation*}
% 	    \varepsilon^{-2}|\ell|^{-2}
% 	    \leq C\varepsilon^{-2}\llambda^{-2}|k|^{-2}
% 	    \leq C\varepsilon^{-4}\llambda^{2s-1}|k|^{-1},
% 	  \end{equation*}
% 	  which is the required common bound for the modes used in the cone sum below.

\subparagraph{Completing Step 1.}

With the above choice of $\beta$, Case a gives
\begin{equation*}
  \Gamma_{j,k,\ell} \leq C\varepsilon^{-3}\llambda^{2s-1}|k|^{-1}.
\end{equation*}
Case b gives the stronger bound
\begin{equation*}
  \Gamma_{j,k,\ell} \leq C\varepsilon^{-2}\llambda^{2s-2}|k|^{-1}.
\end{equation*}
Since $\varepsilon < 1$, $\llambda \geq 1$, and $s \geq 0$, both cases imply the common estimate
\begin{equation}\label{eq:gamma-common}
  \sup_{\ell \in \Z}\Gamma_{j,k,\ell} \leq C\varepsilon^{-3}\llambda^{2s-1}|k|^{-1}.
\end{equation}
% In particular, we may also use the weaker bound
% \begin{equation*}
%   \sup_{\ell \in \Z}\Gamma_{j,k,\ell} \leq C\varepsilon^{-4}\llambda^{2s-1}|k|^{-1}.
% \end{equation*}
We now sum over $(k,j) \in \mathcal{C}_\theta$, recalling $|j| < \theta |k|$ along this cone. 
  \begin{align}
    \int_\T \|\Pi_{\mathcal{C}_\theta} \LL^\varepsilon_{\omega, \llambda} \varphi\|_{H^{-s}}^2 \dif \omega  &= \int_\T \sum_{(k,j) \in \mathcal{C}_\theta} \frac{|\widehat{\LL^\varepsilon_{\omega, \llambda} \varphi}(k, j)|^2}{(1 + j^2 + k^2)^s} \dif \omega \nonumber\\
    &\leq C \sum_{(k, j) \in \mathcal{C}_\theta} \varepsilon^{-3}\llambda^{2s-1}|k|^{-1}\sum_{\ell \in \Z} \frac{|\widehat{\varphi}(\ell, k)|^2}{(1 + \ell^2 + k^2)^s} \nonumber\\
    &\leq C \theta \varepsilon^{-3} \llambda^{2s-1}\|\varphi\|_{H^{-s}}^2.\label{eq:phi-l1-bound}
	  \end{align}
      In the last line, we have noted that for each fixed $k$ we are summing over $O(\theta |k|)$ values of $j$, exactly cancelling the $|k|^{-1}$.

	%   The last inequality does not lose an infinite factor in the $j$ summation. Indeed, for each fixed $k \neq 0$,
	%   \begin{equation*}
	%     \#\{j \in \Z : (k,j) \in \mathcal{C}_\theta\}
	%     = \#\{j \in \Z : |j| < \theta |k|\}
	%     \leq C\theta |k|,
	%   \end{equation*}
	%   since $\theta \geq 1$ and $|k| \geq 1$. Therefore, writing
	%   \begin{equation*}
	%     A_k \coloneq \sum_{\ell \in \Z}\frac{|\hat{\varphi}(\ell,k)|^2}{(1+\ell^2+k^2)^s},
	%   \end{equation*}
	%   we have
	%   \begin{equation*}
	%     \sum_{(k,j)\in\mathcal{C}_\theta}|k|^{-1}A_k
	%     = \sum_{k\neq 0}|k|^{-1}A_k\#\{j : (k,j)\in\mathcal{C}_\theta\}
	%     \leq C\theta\sum_{k\in\Z}A_k
	%     \leq C\theta\|\varphi\|_{H^{-s}}^2.
	%   \end{equation*}
	%   We note also that $(k, j) \in \mathcal{C}_\theta$ implies $k \neq 0$, so the $|k|^{-1}$ factor above is finite.

% {{\color{red}  QUIT HERE } }

  \paragraph{Step 2.} We turn now to bounding $\E \| \Lc_{\omega, L} \varphi - \Lc^\varepsilon_{\omega, L} \varphi \|_{H^{-s}}^2$, averaging over $\omega$ uniformly distributed in $\T$. 
  % We prove, for any given $\varphi \in H^{-s}$, that the restricted operator $\LL_{\omega, \llambda}^\varepsilon$ and the original operator $\LL_{\omega, \llambda}$ act similarly on $\varphi$, for typical choices of $\omega$.

  First, observe that
	  \begin{equation}\label{eq:phi-no-grow}
	    \|\LL_{\omega, \llambda} f \|_{H^{-s}} \leq C\llambda^{s}\|f\|_{H^{-s}} \, ; 
	  \end{equation}
    in the $s = 0$ case this is immediate from incompressibility, while if $s > 0$ it follows 
	  by interpolation between $L^2(\T^2)$ and $H^{-1}(\T^2)$, using that $\LL_{\omega, L}$ is an $L^2$ isometry and the cheap upper bound $\| \LL_{\omega, L} \|_{H^{-1}} \lesssim L$. 
	  This implies 
  \begin{equation}\label{eq:phi-dif-no-grow}
    \int_\T \|\LL_{\omega, \llambda} \varphi - \LL_{\omega, \llambda}^\varepsilon \varphi\|_{H^{-s}}^2 \dif \omega \leq C\llambda^{2s}\int_\T \|\varphi - \rho^\varepsilon_\omega \varphi\|_{H^{-s}}^2 \dif \omega.
  \end{equation}
  By Lemma~\ref{lem:neg-ortho} with $\chi = 1 - \rho^\varepsilon, \chi_\omega = 1 - \rho^\varepsilon_\omega$ and using that $\|1-\rho^\varepsilon_\omega\|_{L^2}^2 \leq C\varepsilon$ and $\|1-\rho^\varepsilon_\omega\|_{H^{1/2}} \leq C$, we have
  \begin{equation}\label{eq:apply-neg-ortho}
    \int_\T \|\varphi - \rho^\varepsilon_\omega\varphi\|_{H^{-s}}^2 \dif \omega \leq C\varepsilon^{1-2s}\|\varphi\|_{H^{-s}}^2.
  \end{equation}

  Combining~\eqref{eq:phi-dif-no-grow} and~\eqref{eq:apply-neg-ortho} with~\eqref{eq:phi-l1-bound},
  \begin{equation*}
    \int_\T \|\Pi_{\mathcal{C}_\theta} \LL_{\omega, \llambda} \varphi\|_{H^{-s}}^2 \dif \omega \leq C\left(\llambda^{2s}\varepsilon^{1-2s} + \theta \varepsilon^{-3}\llambda^{2s-1}\right)\|\varphi\|_{H^{-s}}^2.
  \end{equation*}
  If $\llambda \geq 16 \theta$ then we choose $\varepsilon \approx \left(\frac{\theta}{\llambda}\right)^{1/(4-2s)}$. This balances the two terms since
  \begin{equation*}
    \llambda^{2s}\varepsilon^{1-2s}
    \approx
    \theta \varepsilon^{-3}\llambda^{2s-1}
    \approx
    \theta^{\frac{1-2s}{4-2s}}\llambda^{2s-\frac{1-2s}{4-2s}}.
  \end{equation*}
  If $L < 16 \theta$, then \eqref{eq:cone-decay} is immediate from the deterministic bound \eqref{eq:phi-no-grow}.
% \end{proof}

We have completed the proof of Lemma \ref{lem:cone-decay}.

\subsection{Negative regularity mixing}\label{subsc:negRegMix2}

The following weighted negative-regularity norm is a crucial tool in our approach to showing negative-regularity mixing. 

\begin{definition}\label{defn:negAnisotropicNorm2}
  For $\varphi \in H^{-s}(\T^2)$, $s\in\R$ and $\gamma>0$, define
\begin{equation*}
  \|\varphi\|_{H^{-s}_{\theta, \gamma}}^2 \coloneq \gamma \|\Pi_{\mathcal{C}_\theta} \varphi\|_{H^{-s}}^2 + \|\Pi_{\Z^2 \setminus \mathcal{C}_\theta} \varphi\|_{H^{-s}}^2 \, . 
\end{equation*}  
\end{definition}

The following states that, for suitably large $\gamma$ and $\theta$, the $H^{-s}_{\theta, \gamma}$ norm of a scalar contracts dramatically in a single iterate. 
% We also abbreviate
% \begin{equation*}
%   \|\varphi\|_{H^{-s}_\llambda} \coloneq \|\varphi\|_{H^{-s}_{\llambda^{3/4}, \llambda^{4s}}}
% \end{equation*}
% as a convenient norm that satisfies the next theorem with $\varepsilon = 1/2$.

\begin{proposition}[Negative-regularity mixing]\label{thm:neg-reg-mix}
  % Let $p \in L^\infty(\T)$ be a bounded probability density.
  There is a constant $C = C(\psi, p) > 0$ such that for any $\varepsilon \in (0, 1)$ there is $s_0 = s_0(\varepsilon) > 0$ such that for any $s \in [0,s_0]$ and any $\varphi \in H^{-s}(\T^2)$ with mean zero,
  \begin{equation}\label{eq:negRegMix22}
    \int \|\LL_{\omega, \llambda} \varphi\|_{H^{-s}_{\theta, \gamma}}^2 p(\omega) \dif \omega \leq C  \llambda^{(\varepsilon-2)s}\|\varphi\|_{H^{-s}_{\theta, \gamma}}^2 \, , 
  \end{equation}
  where 
  \begin{align}\label{eq:setGammaTheta}
    \text{$\gamma \coloneq \llambda^{4s}$ and $\theta \coloneq \llambda^{1-\varepsilon/2}$.}
  \end{align}
\end{proposition}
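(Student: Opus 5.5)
The plan is to split $\varphi = \varphi_C + \varphi_N$, where $\varphi_C := \Pi_{\Cc_\theta}\varphi$ and $\varphi_N := \Pi_{\Z^2\setminus\Cc_\theta}\varphi$. Both pieces are mean zero, and
\[
\|\varphi\|^2_{H^{-s}_{\theta,\gamma}} = \gamma\|\varphi_C\|_{H^{-s}}^2 + \|\varphi_N\|_{H^{-s}}^2 .
\]
I will bound separately the two parts of the output norm, namely $\gamma\|\Pi_{\Cc_\theta}\Lc_{\omega,L}\varphi\|^2_{H^{-s}}$ and $\|\Pi_{\Z^2\setminus\Cc_\theta}\Lc_{\omega,L}\varphi\|^2_{H^{-s}}$. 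In each I use $|a+b|^2\le 2|a|^2+2|b|^2$ to treat the contributions of $\varphi_C$ and $\varphi_N$ separately. The target is that each of the four resulting terms is at most $C L^{(\varepsilon-2)s}$ times the corresponding piece of the input norm.

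\emph{Cone part of the output.} Apply Lemma \ref{lem:cone-decay} to $\varphi_C$ and to $\varphi_N$. With $\theta = L^{1-\varepsilon/2}$ and $a := \frac{1-2s}{4-2s}$, the factor $\theta^a L^{2s-a}$ equals $L^{2s-\varepsilon a/2}$. Hence
\[
\gamma\int\|\Pi_{\Cc_\theta}\Lc_{\omega,L}\varphi_C\|^2_{H^{-s}}\,p\,\dif\omega \le C\|p\|_\infty L^{2s-\varepsilon a/2}\cdot\gamma\|\varphi_C\|^2_{H^{-s}},
\]
and
\[
\gamma\int\|\Pi_{\Cc_\theta}\Lc_{\omega,L}\varphi_N\|^2_{H^{-s}}\,p\,\dif\omega \le C L^{6s-\varepsilon a/2}\|\varphi_N\|^2_{H^{-s}} .
\]
Since $a \ge 1/8$ for $s\le 1/2$, both exponents are at most $(\varepsilon-2)s$ once $8s - \varepsilon s \le \varepsilon/16$. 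This fixes the choice of $s_0(\varepsilon)$.

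\emph{Complement part of the output, $\varphi_C$ contribution.} Here I use the crude bound \eqref{eq:phi-no-grow}:
\[
\|\Lc_{\omega,L}\varphi_C\|^2_{H^{-s}} \le CL^{2s}\|\varphi_C\|^2_{H^{-s}} = CL^{-2s}\cdot\gamma\|\varphi_C\|^2_{H^{-s}} .
\]
This is where the choice $\gamma = L^{4s}$ enters, and it gives a contraction by $L^{-2s}\le L^{(\varepsilon-2)s}$.

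\emph{Complement part of the output, $\varphi_N$ contribution.} This is the step I expect to need the most care, because there is no averaging to help; it should be a deterministic Fourier bookkeeping argument. From the formula $\Lc_{\omega,L}\varphi(x^1,x^2) = \varphi(x^2, x^1 - L\psi(x^2+\omega))$, an input mode $(j,k)$ is sent into output modes $(k, j+\ell)$, $\ell\in\Z$. Moreover, for each fixed $k$, the map $\hat\varphi(\cdot,k)\mapsto \widehat{\Lc\varphi}(k,\cdot)$ is a convolution in the second variable with the coefficients of the unimodular function $e^{-2\pi i kL\psi(\cdot+\omega)}$, hence an $\ell^2$-isometry.

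An output mode $(k,j')\notin\Cc_\theta$ satisfies $|j'|\ge\theta|k|$, so $1+k^2+j'^2 \ge c\,\theta^2(1+k^2)$. An input mode $(j,k)\notin\Cc_\theta$ satisfies $|k|\ge\theta|j|\ge|j|$ (since $\theta\ge1$), so $1+k^2 \ge \tfrac12(1+j^2+k^2)$. Combining the weight comparison with the fiberwise isometry gives
\[
\|\Pi_{\Z^2\setminus\Cc_\theta}\Lc_{\omega,L}\varphi_N\|^2_{H^{-s}} \le C\theta^{-2s}\sum_{k}(1+k^2)^{-s}\sum_j|\hat\varphi_N(j,k)|^2 \le C L^{-(2-\varepsilon)s}\|\varphi_N\|^2_{H^{-s}} .
\]
This uses $\theta^{-2s} = L^{-(2-\varepsilon)s}$ and is exactly the claimed rate. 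I will double-check the index conventions, in particular that the output's first Fourier index equals the input's second index, since the whole gain hinges on that.

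Summing the four bounds and integrating against $p$ (using $\|p\|_\infty$ where needed) yields \eqref{eq:negRegMix22} with $C = C(\psi,p)$.
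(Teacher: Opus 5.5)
Your proposal is correct and is essentially the paper's proof: the same four-way split, Lemma~\ref{lem:cone-decay} (with $\theta^{a}L^{2s-a}=L^{2s-\varepsilon a/2}$) for mass landing in $\Cc_\theta$, the crude bound \eqref{eq:phi-no-grow} absorbed by $\gamma=L^{4s}$ for cone-to-complement mass, and the fiberwise Parseval identity for complement-to-complement mass, leading to the same smallness condition $8s-\varepsilon s\le \varepsilon a/2$ that the paper writes as $4s+\alpha_s\le(\varepsilon-2)s$. Two small fixes are needed: $a\ge 1/8$ holds only for $s\le 2/7$ (harmless, since $s_0$ is tiny), and your inequality $1+k^2+j'^2\ge c\,\theta^2(1+k^2)$ fails at $k=0$. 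So you should note, as the paper does, that the $k=0$ fiber of $\varphi_N$ vanishes (mean zero plus $|k|\ge\theta|j|$), hence so does that of $\Lc_{\omega,L}\varphi_N$.
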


\begin{proof}[Proof of Proposition \ref{thm:neg-reg-mix}]
  As in the beginning of the proof of Lemma \ref{lem:cone-decay}, it suffices to prove the result for $p \equiv 1$. If $s = 0$ the bound is trivial from incompressibility; from here on, let $s \in (0,1/2)$ be fixed, with a tighter upper bound on $s$ to be specified later on. By density, it suffices to consider the case when $\varphi$ is a trigonometric polynomial.
  %  In the coming proof, we emphasize that $C$ constants only ever depend on $p$ and $\psi$, and not on $\theta$. 

  First, we record the following deterministic bound for the mass originating in $\mathcal{C}_\theta$ (equation \eqref{eq:phi-no-grow} with $\varphi \mapsto \Pi_{\Cc_\theta} \varphi$): 
  \begin{align}\label{eq:startInsideCone}
    \|\LL_{\omega, \llambda} \Pi_{\mathcal{C}_\theta} \varphi\|_{H^{-s}}^2 \leq C\llambda^{2s} \|\Pi_{\mathcal{C}_\theta} \varphi\|_{H^{-s}}^2 \,. 
  \end{align}
  Next, we use Lemma~\ref{lem:cone-decay} to bound the mass that ends inside $\mathcal{C}_\theta$ by
  \begin{equation*}
    \int_\T \|\Pi_{\mathcal{C}_\theta} \LL_{\omega, \llambda} \varphi\|_{H^{-s}}^2 \dif \omega \leq C \theta^{\frac{1-2s}{4-2s}}\llambda^{(2s - \frac{1-2s}{4-2s})}\|\varphi\|_{H^{-s}}^2 = C L^{\alpha_s} \| \varphi \|_{H^{-s}}^2
  \end{equation*}
  on applying the replacement {$\theta = L^{1 - \varepsilon/2}$} and setting \[\alpha_s := 2 s - \frac{\varepsilon(1 - 2 s)}{8 - 4 s} \, . \]

  It remains to estimate $\|\Pi_{\Z^2 \setminus \mathcal{C}_\theta} \LL_{\omega, \llambda} \Pi_{\Z^2 \setminus \mathcal{C}_\theta} \varphi\|_{H^{-s}}^2$, capturing Fourier mass starting and ending away from $\Cc_\theta$. 
  
  \begin{claim}
    For fixed $k \in \Z$, we have the Parseval-type identity
    % Next, we observe from the form of $T_\omega$ that for each fixed $k \in \Z$,
  \begin{equation}\label{eq:parsevalType}
    \sum_{j \in \Z} |\widehat{\varphi}(j, k)|^2 = \sum_{j \in \Z} |\widehat{\LL_{\omega, \llambda}\varphi}(k, j)|^2 \, .
  \end{equation}
  \end{claim}
  \begin{proof}[Proof of Claim]
    Expanding the Fourier series of $\varphi$ in the given expression $\Lc_{\omega, L} \varphi(x^1, x^2) = \varphi(x^2, x^1 - L \psi(x^2 + \omega))$ yields
    \begin{align}
      \Lc_{\omega, L} \varphi & = \sum_k e^{2 \pi i k (x^1 - L \psi(x^2 + \omega))} \sum_j \widehat \varphi(j, k) e^{2 \pi i j x^2} \, , 
    \end{align}
    from which we obtain that for fixed $k$, 
    \begin{gather}
      \sum_j \widehat{\Lc_{\omega, L} \varphi}(k, j) e^{2 \pi i j x^2}= f_k(x^2) \, , \\ 
      f_k(x^2) := e^{- 2 \pi i k L \psi(x^2 + \omega)} \sum_j \widehat \varphi(j, k) e^{2 \pi i j x^2} \,. 
    \end{gather}
    The desired identity follows from Parseval's theorem applied to $y \mapsto f_k(y)$. 
  \end{proof}
  
  % To start, we note that for fixed $k \in \Z$, we have the Parseval-type identity
  %   % Next, we observe from the form of $T_\omega$ that for each fixed $k \in \Z$,
  % \begin{equation}\label{eq:parsevalType}
  %   \sum_{j \in \Z} |\hat{\varphi}(j, k)|^2 = \sum_{j \in \Z} |\widehat{\LL_{\omega, \llambda}\varphi}(k, j)|^2 \, .
  % \end{equation}
  % Equation \eqref{eq:parsevalType} follows from the form of $T_{\omega, L}$ and the observation that the Fourier coefficients of $\mathcal L_{\omega,L}\varphi$ with first
  % frequency $k$ are precisely the one-dimensional Fourier coefficients, in the $x^2$ variable, of
  % \begin{equation*}
  %   x^2 \mapsto e^{-2\pi i kL\psi(x^2+\omega)} f_k(x^2)  \, , \qquad f_k(y) := \sum_{j \in \Z} \hat \varphi(j, k) e^{2 \pi i j y }\, .
  % \end{equation*}
  
  Write $f = \Pi_{\Z^2 \setminus \mathcal{C}_\theta} \varphi$ for short. Note that for modes $(k, j) \in \Z^2 \setminus \mathcal{C}_\theta$, one has $|j| \geq \theta |k|$, hence $1 + j^2 + k^2 \geq 1 + (1 + \theta^2) k^2$ and so for each fixed $k \in \Z$, 
  \begin{align}\label{eq:singleKWeightedEst} \sum_{\substack{j\in\Z\\ (k,j)\notin\mathcal C_\theta}}
  \frac{\left|\widehat{\mathcal L_{\omega,L}f}(k,j)\right|^2}
       {(1+j^2+k^2)^s} & \leq
  \left(1+(1+\theta^2)k^2\right)^{-s}
  \sum_{j\in\Z}\left|\widehat{\mathcal L_{\omega,L}f}(k,j)\right|^2 \nonumber \\ 
& = \left(1+(1+\theta^2)k^2\right)^{-s}
  \sum_{j \in \Z} |\widehat{f}(j, k)|^2 \, . 
\end{align}

Recall that the Fourier support of $f$ is contained in $\Z^2 \setminus \mathcal{C}_\theta$; for $(j,k)$ in this support, we have the bound $|j| \leq \theta^{-1} |k|$.
Thus, at $k = 0$ one has $\sum_{j \in \Z} |\widehat{f}(j, k)|^2 = 0$ since $f$ is mean-free. For $k \neq 0$, $1 + j^2 + k^2 \leq 1 + (1 + \theta^{-2}) k^2$, hence the RHS of \eqref{eq:singleKWeightedEst} is bounded
\begin{align*}
  &\leq \left(\frac{1 + (1 + \theta^{-2}) k^2}{1 + (1 + \theta^2) k^2}\right)^s \sum_{j \in \Z} \frac{|\widehat{f}(j, k)|^2 }{(1 + j^2 + k^2)^s}  \\ 
    &\lesssim \left(\frac{1 + \theta^{-2}}{1 + \theta^2}\right)^s \sum_{j\in\Z}\frac{|\widehat{f}(j, k)|^2 }{(1 + j^2 + k^2)^s}
\end{align*}
for all $k \in \Z$. Summing over $k \in \Z$ gives
\begin{equation*}
    \|\Pi_{\Z^2 \setminus \mathcal{C}_\theta} \LL_{\omega, \llambda} \Pi_{\Z^2 \setminus \mathcal{C}_\theta} \varphi\|_{H^{-s}}^2 \leq C \left(\frac{1 + \theta^{-2}}{1 + \theta^2}\right)^s \|\Pi_{\Z^2 \setminus \mathcal{C}_\theta} \varphi\|_{H^{-s}}^2 \, .
  \end{equation*}

  Combining the above with \eqref{eq:startInsideCone}, we conclude the following (deterministic) estimate for all mass that ends outside $\mathcal{C}_\theta$: 
  \begin{align*}
    \|\Pi_{\Z^2 \setminus \mathcal{C}_\theta} \LL_{\omega, \llambda} \varphi\|_{H^{-s}}^2 &\lesssim \left(\|\Pi_{\Z^2 \setminus \mathcal{C}_\theta} \LL_{\omega, \llambda} \Pi_{\mathcal{C}_\theta} \varphi\|_{H^{-s}}^2 + \|\Pi_{\Z^2 \setminus \mathcal{C}_\theta} \LL_{\omega, \llambda} \Pi_{\Z^2 \setminus \mathcal{C}_\theta} \varphi\|_{H^{-s}}^2\right) \\
    &\lesssim  \llambda^{2s}\|\Pi_{\mathcal{C}_\theta} \varphi\|_{H^{-s}}^2 + \left(\frac{1 + \theta^{-2}}{1 + \theta^2}\right)^s\|\Pi_{\Z^2 \setminus \mathcal{C}_\theta} \varphi\|_{H^{-s}}^2 \,.
  \end{align*}

  % Grouping terms and choosing $\theta$ such that $\theta^2 = \llambda^{2-\varepsilon}$ we find
  Plugging in the above estimates and using that $\left(\frac{1 + \theta^{-2}}{1 + \theta^2}\right)^s \lesssim L^{s (\varepsilon - 2)}$, we obtain
  \begin{align*}
  \int_\T
  \left\|\mathcal L_{\omega,L}\varphi
  \right\|_{H^{-s}_{\theta,\gamma}}^2\,d\omega
  &\lesssim
  \gamma L^{\alpha_s}
  \left(
    \left\|\Pi_{\mathcal C_\theta}\varphi\right\|_{H^{-s}}^2
    +
    \left\|\Pi_{\Z^2\setminus\mathcal C_\theta}\varphi\right\|_{H^{-s}}^2
  \right) \\
  &\qquad
  +
    L^{2s}\left\|\Pi_{\mathcal C_\theta}\varphi\right\|_{H^{-s}}^2 
  +L^{(\varepsilon-2)s}
  \left\|\Pi_{\Z^2\setminus\mathcal C_\theta}\varphi\right\|_{H^{-s}}^2 \, . 
\end{align*}
  Grouping terms on the RHS gives the expression
  \begin{gather*}
     \left(\gamma L^{\alpha_s} + L^{2s}\right) \| \Pi_{\mathcal{C}_\theta} \varphi\|_{H^{-s}}^2 + (\gamma L^{\alpha_s} + L^{(\varepsilon - 2) s}) \| \Pi_{\Z^2 \setminus \mathcal{C}_\theta} \varphi\|_{H^{-s}}^2 \\ 
     \leq \max\{ L^{\alpha_s} + \gamma^{-1} L^{2s} , \gamma L^{\alpha_s} + L^{(\varepsilon - 2) s}\} \| \varphi \|^2_{H^{-s}_{\theta, \gamma}} \,. 
  \end{gather*}
	Plug in $\gamma = L^{4s}$. We now specify $s$ sufficiently small so that 
	\[4 s + \alpha_s \leq (\varepsilon - 2) s \, , \]
	which is possible since the above LHS evaluates as $- \varepsilon / 8$ at $s = 0$, and conclude.
\end{proof}

Later on, we will iterate \eqref{eq:negRegMix22}. Since the $H^{-s}_{\theta, \gamma}$ norm used above is badly comparable to $H^{-s}$, the following variant of Proposition \ref{thm:neg-reg-mix} will be useful for `rectifying' an initial $\varphi \in L^2$. 

\begin{lemma}\label{lem:intialBootstrap2}
  Assume the setting and notation of Proposition \ref{thm:neg-reg-mix}. Then, for any mean-zero $\varphi \in L^2$, one has 
  \begin{align}\label{eq:initialBootstrap2}
    \int \| \mathcal{L}_{\omega, L} \varphi \|_{H^{-s}_{\theta, \gamma}}^2 p(\omega) \dif \omega \leq C \| \varphi\|_{L^2}^2 \,. 
  \end{align}
\end{lemma}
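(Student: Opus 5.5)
We will prove \eqref{eq:initialBootstrap2} by splitting the weighted norm into its two pieces and treating each with an estimate we already have, this time at regularity $0$. Reduce first to $p \equiv 1$, paying a factor $\|p\|_{L^\infty}$, as at the start of the proof of Lemma \ref{lem:cone-decay}. By Definition \ref{defn:negAnisotropicNorm2},
\[
\int_\T \|\Lc_{\omega,L}\varphi\|_{H^{-s}_{\theta,\gamma}}^2 \dif\omega = \gamma \int_\T \|\Pi_{\Cc_\theta}\Lc_{\omega,L}\varphi\|_{H^{-s}}^2\dif\omega + \int_\T \|\Pi_{\Z^2\setminus\Cc_\theta}\Lc_{\omega,L}\varphi\|_{H^{-s}}^2\dif\omega .
\]

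The second term is easy. Since $\|\cdot\|_{H^{-s}}\le\|\cdot\|_{L^2}$, and $\Lc_{\omega,L}$ is an $L^2$ isometry by incompressibility, this term is at most $\|\varphi\|_{L^2}^2$.

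The first term carries the dangerous weight $\gamma = L^{4s}$, so we must gain a compensating power of $L$. We will again use $\|\cdot\|_{H^{-s}}\le\|\cdot\|_{L^2}$, and then apply Lemma \ref{lem:cone-decay} with $s=0$ (it allows $s\in[0,1/2]$) and $\theta = L^{1-\varepsilon/2}$. This gives
\[
\int_\T \|\Pi_{\Cc_\theta}\Lc_{\omega,L}\varphi\|_{L^2}^2\dif\omega \le C\,\theta^{1/4}L^{-1/4}\|\varphi\|_{L^2}^2 = C L^{-\varepsilon/8}\|\varphi\|_{L^2}^2 .
\]
The first term is therefore bounded by $C L^{4s-\varepsilon/8}\|\varphi\|_{L^2}^2$.

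The only real constraint, and the point needing care, is the exponent $4s-\varepsilon/8$: it must be nonpositive. This holds provided $s\le \varepsilon/32$. The smallness condition already imposed in Proposition \ref{thm:neg-reg-mix}, namely $4s+\alpha_s\le(\varepsilon-2)s$, forces $4s\le \varepsilon(1-2s)/(8-4s) + (\varepsilon-2)s<\varepsilon/8$. So no further restriction on $s_0$ is needed; if one prefers not to check this, one can simply shrink $s_0(\varepsilon)$ so that $s_0\le\varepsilon/32$. Adding the two terms gives \eqref{eq:initialBootstrap2} with a constant depending only on $\psi$ and $p$.
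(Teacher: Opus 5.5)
Your proof is correct and follows the paper's argument: the same two-term split of the weighted norm, volume preservation for the piece outside $\mathcal{C}_\theta$, and Lemma \ref{lem:cone-decay} for the $\gamma$-weighted piece inside the cone. The only difference is that you apply Lemma \ref{lem:cone-decay} at regularity $0$ and get $L^{-\varepsilon/8}$, whereas the paper applies it at regularity $s$, gets $L^{\alpha_s}$, and then uses $\gamma L^{\alpha_s}\leq L^{(\varepsilon-2)s}$; your version is marginally sharper and, as you verify, needs only $4s\leq\varepsilon/8$, which the Proposition's smallness condition on $s$ already implies.
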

\begin{proof}
    % We conclude with a summary of the $L^2$ upper bound in \eqref{eq:negRegMix22}.
  To start, we have the easy bound
  \[\| \Pi_{\Z^2 \setminus \mathcal{C}_\theta} \mathcal{L}_{\omega, L } \varphi \|_{H^{-s}}^2 \leq \| \varphi \|_{L^2}^2\]
  by volume-preservation of $T_{\omega, L}$. 
  Next, for the $\mathcal{C}_\theta$ contribution of $\mathcal{L}_{\omega, L} \varphi$ we use Lemma \ref{lem:cone-decay}: 
  \[\int \| \Pi_{\mathcal{C}_\theta} \mathcal{L}_{\omega, L} \varphi \|_{H^{-s}}^2 \dif \omega \leq C L^{\alpha_s} \| \varphi\|_{H^{-s}}^2 \leq C L^{\alpha_s} \| \varphi\|_{L^2}^2 \, .\]
  We have already arranged for $s$ to be taken sufficiently small so that $\gamma L^{\alpha_s} \leq L^{s (\varepsilon - 2)} \lesssim 1$, hence the weighted $H^{-s}_{\theta, \gamma}$-norm is bounded $\lesssim \| \varphi\|_{L^2}^2$. This completes the proof. 
\end{proof}

\subsection{Annealed and quenched exponential mixing}\label{subsec:quenchedNegRegMix}

% Proofs of Theorem \ref{thm:introNegRegMix} and Corollary \ref{cor:introNegRegMixQuenched}

We turn now to time-iterated annealed and quenched negative-regularity estimates. The forms we present below easily imply Theorem \ref{thm:introNegRegMix} and Corollary \ref{cor:introNegRegMixQuenched}, but are  more detailed to keep track of exponents in a way we will use in the remainder of the paper. 

Throughout Section \ref{subsec:quenchedNegRegMix}, $(\varphi_t)$ refers to the \emph{undriven} passively transported scalar from 
\begin{align}\label{eq:PSAundriven2}
  \partial_t \varphi_t + u \cdot \nabla \varphi_t = 0 \,, \qquad \varphi(0, \cdot) = \varphi_0
\end{align}
for some fixed mean-free $\varphi_0$ (possibly a negative-regularity distribution), where $u = u_\uo(t, x)$ is as in \eqref{eq:velField1}, and where $\psi$ and $\uo$ are as in assumptions (ND) and (P), respectively. 

% We first record exponent-exposed versions of the annealed and quenched
% % negative-regularity estimates.  These are the forms used later in the
% sample-wise Batchelor argument; the statements in the introduction follow by
% specializing parameters.

% {\color{red}Shift convention to fold into notation later: for the full
% passive-scalar phase sequence $\uo=(\omega_i^\ell)_{i\geq1,\ell=1,2}$, write
% $\theta\uo=(\omega_{i+1}^\ell)_{i\geq1,\ell=1,2}$.  Thus
% $\Sc^{2j,2j+2m}_{\uo}=\Sc^{0,2m}_{\theta^j\uo}$.  This shift notation is
% unrelated to the cone/aniso parameter also denoted $\theta$.

% (done in section 1 -- will use $\sigma$ for this notation)
% }

We begin with Theorem \ref{thm:introNegRegMix}, restated below. 

\begin{theorem}[Decay on average, unforced passive scalar]\label{thm:neg-reg-flexible}
  There exist universal $s, \mathfrak{c}, \mathfrak{d} > 0$ and constants $C = C(\psi, p), L_0 = L_0(\psi, p) \geq 1$ such that the following hold for all $L \geq L_0$, mean-free $\varphi_0 \in H^{-s}$ and $t \geq 0$: 
  \begin{align}\label{eq:timeTest2}
    \left(\E \| \varphi_t \|_{H^{-s}}^2\right)^{1/2} \leq C L^{\mathfrak{d} - t \mathfrak{c}} \| \varphi_0\|_{H^{-s}}
  \end{align}
\end{theorem}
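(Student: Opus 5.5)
We first handle even times $t=2n$ and then fill in the remaining times. Fix once and for all some $\varepsilon\in(0,1)$, say $\varepsilon=1/2$. Take $s\in(0,s_0(\varepsilon)]$ as in Proposition \ref{thm:neg-reg-mix}, and set $\gamma=L^{4s}$, $\theta=L^{1-\varepsilon/2}$.

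\textbf{Even times.} By the Lemma expressing $\mathcal{S}^{0,2n}_\uo$ as a composition, we have $\varphi_{2n}=\Lc_{\omega^2_n,L}\circ\Lc_{\omega^1_n,L}\circ\cdots\circ\Lc_{\omega^1_1,L}\varphi_0$. This is a product of $2n$ transfer operators driven by IID phases with density $p$. Condition successively on the $\sigma$-algebra generated by all earlier phases. The current scalar is measurable with respect to that $\sigma$-algebra and is independent of the next phase. Applying \eqref{eq:negRegMix22} at each step and using the tower property, induction gives
\[
\E\|\varphi_{2n}\|_{H^{-s}_{\theta,\gamma}}^2\le \left(C L^{(\varepsilon-2)s}\right)^{2n}\|\varphi_0\|_{H^{-s}_{\theta,\gamma}}^2 .
\]
Next we convert between the weighted norm and $H^{-s}$. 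Since $\gamma\ge1$,
\[
\|\cdot\|_{H^{-s}}^2\le\|\cdot\|_{H^{-s}_{\theta,\gamma}}^2\le\gamma\|\cdot\|_{H^{-s}}^2=L^{4s}\|\cdot\|_{H^{-s}}^2 .
\]
Therefore
\[
\E\|\varphi_{2n}\|_{H^{-s}}^2\le L^{4s}\,C^{2n}L^{-2n(2-\varepsilon)s}\|\varphi_0\|_{H^{-s}}^2 .
\]
Take $L_0$ large enough that $C\le L^{(2-\varepsilon)s/2}$ for all $L\ge L_0$. The bound then becomes $L^{4s-n(2-\varepsilon)s}\|\varphi_0\|^2_{H^{-s}}$. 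Taking square roots yields \eqref{eq:timeTest2} at $t=2n$, with $\mathfrak c$ of order $(2-\varepsilon)s/4$ and $\mathfrak d$ of order $2s$. The constants are universal because $s$ depends only on the universal choice of $\varepsilon$: by its proof, $s_0$ is defined through the inequality $4s+\alpha_s\le(\varepsilon-2)s$, which involves only $\varepsilon$ and $s$.

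\textbf{General times.} Write $t=2n+r$ with $r\in[0,2)$. Then $\varphi_t=\mathcal S^{2n,t}\varphi_{2n}$. The map $\mathcal S^{2n,t}$ is composition with a partial vertical shear, possibly followed by a partial horizontal shear, each of amplitude at most $L$. The argument giving \eqref{eq:phi-no-grow} applies verbatim to such maps: they are $L^2$ isometries and have $H^{-1}\to H^{-1}$ norm $\lesssim L$, so interpolation gives the deterministic bound $\|\mathcal S^{2n,t}g\|_{H^{-s}}\le CL^{s}\cdot L^{s}\|g\|_{H^{-s}}$ (one factor per shear). This costs at most a factor $L^{2s}$. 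It is absorbed by enlarging $\mathfrak d$, together with replacing $L^{-2n\mathfrak c}$ by $L^{2\mathfrak c}L^{-t\mathfrak c}$.

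\textbf{Main obstacle.} The delicate point is bookkeeping rather than analysis. The constant $C=C(\psi,p)$ in \eqref{eq:negRegMix22} must not spoil the exponential rate. This is why we require $L\ge L_0(\psi,p)$, so that $C$ is dominated by half the gain $L^{(2-\varepsilon)s}$. We must also check that the measurability and independence structure allows iterating the averaged bound. It does, because \eqref{eq:negRegMix22} holds for every fixed input $\varphi$, and the input at each step is independent of the next phase.
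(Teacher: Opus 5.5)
Your proposal is correct and follows the paper's proof essentially step for step. You iterate the one-step bound of Proposition~\ref{thm:neg-reg-mix} over the $2n$ IID transfer operators, absorb the constant $C(\psi,p)$ into part of the gain by enlarging $L_0$, convert norms via $\|\cdot\|_{H^{-s}}\le\|\cdot\|_{H^{-s}_{\theta,\gamma}}\le L^{2s}\|\cdot\|_{H^{-s}}$, and cover $t\in[2n,2n+2)$ with the deterministic $CL^{2s}$ bound, which gives $\mathfrak d=4s+2\mathfrak c$. The only differences are cosmetic: the paper takes $\varepsilon=99/100$ and gives up a factor $L^{\delta s}$ with $\delta=1/100$, obtaining $\mathfrak c=(2-\varepsilon-\delta)s/2$, whereas you give up half the gain and obtain $\mathfrak c=(2-\varepsilon)s/4$.
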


\begin{remark}\label{rmk:explicitConsts2}
  We briefly note that it is straightforward to check from the coming proof that one can take the explicit values 
  \begin{align}
    s=\frac1{60},\qquad
      \mathfrak c=\frac1{120},\qquad
      \mathfrak d=4s+2\mathfrak c=\frac1{12} \,. 
  \end{align}
  This fact is not used elsewhere. 
\end{remark}

% {\color{blue}  to add: can take $\kappa$ like 0.008 or so from this proof. likely highly nonoptimal. } 

% {\color{blue}  to do: replace $\kappa$ with something else. will need kappa for other purposes later on, like diffusivity } 

\begin{proof}[Proof of Theorem \ref{thm:neg-reg-flexible}]
  Fix $\varepsilon = \frac{99}{100}, \delta = \frac{1}{100}$ and let $s$ be sufficiently small as in Proposition \ref{thm:neg-reg-mix}. 
  % \[
  %   \varepsilon=\frac{99}{100},\qquad
  %   \delta=\frac1{100},\qquad
  %   s=\frac1{60}.
  % \]
  % These choices are admissible in Proposition~\ref{thm:neg-reg-mix}.
  % Indeed, the only additional smallness condition imposed on $s$ at the end
  % of its proof is
  % \[
  %   4s+\alpha_s\leq(\varepsilon-2)s,
  %   \qquad
  %   \alpha_s=2s-\frac{\varepsilon(1-2s)}{8-4s}.
  % \]
  % For the choices above,
  % \[
  %   4s+\alpha_s-(\varepsilon-2)s
  %   =-\frac{2711}{714000}<0.
  % \]
  Set
  \[
    \theta=L^{1-\varepsilon/2},\qquad
    \gamma=L^{4s},\qquad
    \mathfrak c:=\frac{(2-\varepsilon-\delta)s}{2} \,. 
    % =\frac1{120},
  \]
  % and abbreviate
  % $\|\cdot\|_A=\|\cdot\|_{H^{-s}_{\theta,\gamma}}$.

  Let $C_0=C_0(\psi,p)$ be the constant in
  Proposition~\ref{thm:neg-reg-mix}.  After increasing
  $L_0=L_0(\psi,p)$, we may assume that
  $C_0L^{-\delta s}\leq1$ for every $L\geq L_0$.  Proposition
  \ref{thm:neg-reg-mix} then gives
  \begin{equation}\label{eq:one-step-explicit-intro}
    \int_\T\|\Lc_{\omega,L}h\|_{H^{-s}_{\theta, \gamma}}^2p(\omega)\dif\omega
    \leq
    L^{(\varepsilon-2+\delta)s}\|h\|_{H^{-s}_{\theta, \gamma}}^2
    =L^{-2\mathfrak c}\|h\|_{H^{-s}_{\theta, \gamma}}^2.
  \end{equation}
  Iterating, and using that $\mathcal{S}^{0,2n} = \Lc_{\omega_n^2, L} \circ \Lc_{\omega_n^1, L} \circ \dots \circ \Lc_{\omega_1^2, L} \circ \Lc_{\omega_1^1, L}$, $\mathcal{S}^{0,2n} \varphi_0 = \varphi_{2n}$, we obtain
  % To iterate this estimate, flatten the phase sequence by writing
  % $\zeta_{2j-1}=\omega_j^1$ and $\zeta_{2j}=\omega_j^2$, and set
  % \[
  %   X_m:=\Lc_{\zeta_m,L}\circ\cdots\circ\Lc_{\zeta_1,L}\varphi_0,
  %   \qquad
  %   \mathcal F_m:=\sigma(\zeta_1,\ldots,\zeta_m).
  % \]
  % Since $X_{m-1}$ is $\mathcal F_{m-1}$-measurable and $\zeta_m$ is
  % independent of $\mathcal F_{m-1}$, applying
  % \eqref{eq:one-step-explicit-intro} conditionally gives
  % \[
  %   \E\!\left(\left.\|X_m\|_A^2\,\right|\mathcal F_{m-1}\right)
  %   \leq L^{-2\mathfrak c}\|X_{m-1}\|_A^2.
  % \]
  % Repeated conditioning, together with the even-time realization of the
  % propagator as a composition of $2n$ transfer operators, yields
  \[
    \E\|\varphi_{2n}\|_{H^{-s}_{\theta, \gamma}}^2
    \leq L^{-4n\mathfrak c}\|\varphi_0\|_{H^{-s}_{\theta, \gamma}}^2 \, , 
  \]
  % Since $\gamma=L^{4s}\geq1$, the weighted and unweighted norms satisfy
  % \[
  %   \|h\|_{H^{-s}}
  %   \leq\|h\|_A
  %   \leq L^{2s}\|h\|_{H^{-s}}.
  % \]
  hence
  \begin{equation}\label{eq:even-time-explicit-intro}
    \left(\E\|\varphi_{2n}\|_{H^{-s}}^2\right)^{1/2}
    \leq
    L^{2s-2n\mathfrak c}\|\varphi_0\|_{H^{-s}}
  \end{equation}
  on using that $\|h\|_{H^{-s}} \leq\|h\|_{H^{-s}_{\theta, \gamma}} \leq L^{2s}\|h\|_{H^{-s}}$.  
  It remains to pass from even to arbitrary times, which we do with the deterministic estimate
  \begin{align} \label{eq:deterministFracTime2}
    \|\Sc^{2n,t}h\|_{H^{-s}}
    \leq C L^{2s}\|h\|_{H^{-s}} \, ,
\end{align}
  which holds for $t \in [2n, 2n+2)$. The proof of Theorem \ref{thm:neg-reg-flexible} is complete on taking $\mathfrak{d} = 4s + 2 \mathfrak{c}$. 

  Since we will use it shortly, we briefly record the following variant of \eqref{eq:timeTest2}, giving a tighter bound at integer times at the cost of initializing \eqref{eq:PSAundriven2} at a mean-free $\varphi_0 \in L^2$: 
  \begin{align}\label{eq:bootstrapL2Start2}
    \left(\E \| \varphi_{2n}\|_{H^{-s}}^2\right)^{1/2} \leq C L^{- (2n-1)\mathfrak{c}} \| \varphi_0\|_{L^2} \,. 
  \end{align}
  Equation \eqref{eq:bootstrapL2Start2} follows from the same iteration as before, except that when estimating $\| \mathcal{L}_{\omega, L} \varphi_0\|_{H^{-s}_{\theta, \gamma}}$ one uses the $L^2$ upper bound from \eqref{eq:initialBootstrap2} in Lemma \ref{lem:intialBootstrap2}, at the cost of one less iterate of $T_{\omega, L}$.
  This also proves Theorem \ref{thm:introNegRegMix} with the same
  $\mathfrak c,\mathfrak d$: for $n\geq1$ and $2n\leq t<2n+2$,
  \eqref{eq:deterministFracTime2} and \eqref{eq:bootstrapL2Start2} give
  \[
    \E\|\varphi_t\|_{H^{-s}}^2
    \leq CL^{4s-2(2n-1)\mathfrak c}\|\varphi_0\|_2^2
    \leq CL^{\mathfrak d-t\mathfrak c}\|\varphi_0\|_2^2.
  \]
  For $0\leq t<2$, use $L^2$ isometry. \qedhere

  % \begin{align*}
  %   \left(\E\|\varphi_t\|_{H^{-s}}^2\right)^{1/2}
  %   &\leq
  %   C L^{2s}
  %   \left(\E\|\varphi_{2n}\|_{H^{-s}}^2\right)^{1/2}\\
  %   &\leq
  %   C L^{4s-2n\mathfrak c}\|\varphi_0\|_{H^{-s}}\\
  %   &\leq
  %   C L^{4s+2\mathfrak c-t\mathfrak c}
  %   \|\varphi_0\|_{H^{-s}}.
  % \end{align*}
  % Thus the theorem holds with the explicit universal values
  % \[
  %   \boxed{
  %     s=\frac1{60},\qquad
  %     \mathfrak c=\frac1{120},\qquad
  %     \mathfrak d=4s+2\mathfrak c=\frac1{12}.}
  % \]
\end{proof}

% \begin{theorem}[annealed mixing, exponent-exposed]%\label{thm:neg-reg-flexible}
%   Fix $\varepsilon\in(0,1)$ and $0<\delta<2-\varepsilon$.  Let
%   $s_0=s_0(\varepsilon)$ be as in Proposition \ref{thm:neg-reg-mix}.  For
%   $0<s\leq s_0$, set
%   \[
%     \theta=L^{1-\varepsilon/2},\qquad
%     \gamma=L^{4s},\qquad
%     \kappa=\frac{(2-\varepsilon-\delta)s}{2}.
%   \]
%   There are constants $C=C(\psi,p,\delta)>0$ and
%   $L_0=L_0(\psi,p,\delta,s)>0$ such that for all $L\geq L_0$, all mean-zero
%   $\varphi_0\in L^2$, and all integers $n\geq1$,
%   \begin{equation}\label{eq:flexible-L2-annealed}
%     \left(\E\|\Sc^{0,2n}\varphi_0\|_{H^{-s}_{\theta,\gamma}}^2\right)^{1/2}
%     \leq
%     C L^{-(2n-1)\kappa}\|\varphi_0\|_{L^2}.
%   \end{equation}
%   In particular, for every $k\in\Z^2\setminus\{0\}$,
%   \begin{equation}\label{eq:flexible-fourier-mode}
%     \E\|\Sc^{0,2n}e_k\|_{H^{-s}}
%     \leq
%     C L^{-(2n-1)\kappa}.
%   \end{equation}
%   Also, for all mean-zero $\varphi_0\in H^{-s}$,
%   \begin{equation}\label{eq:flexible-Hminus-annealed}
%     \E\|\Sc^{0,2n}\varphi_0\|_{H^{-s}}
%     \leq
%     C L^{2s}L^{-2n\kappa}\|\varphi_0\|_{H^{-s}}.
%   \end{equation}
%   For $t=2n+q$ with $n\in\N$ and $q\in[0,2)$, one has the corresponding
%   continuous-time bound
%   \begin{equation}\label{eq:flexible-Hminus-continuous}
%     \E\|\Sc^{0,t}\varphi_0\|_{H^{-s}}
%     \leq
%     C L^{4s+2\kappa}L^{-\kappa t}\|\varphi_0\|_{H^{-s}}.
%   \end{equation}
% \end{theorem}

We turn now to quenched mixing estimates. The following is a version of the developments in \cite[Section 7]{bedrossianAlmostsureExponentialMixing2019}; see also the more general framework presented in \cite{dewitt2025equivalence}.

\begin{corollary}\label{cor:quenchedMix2}
  Assume the setting and notation of Theorem \ref{thm:neg-reg-flexible}. Let $q \geq 1$ and fix 
  \[{0 < \mathfrak{c}' < \frac{\mathfrak{c}}{6q} }\,, \quad \mathfrak{d}' = 2s+2\mathfrak c' \, . \]
  Then, there exist $C = C(\psi, p, q, \mathfrak{c}')$ and a random variable $D = D_{\mathfrak{c}'} : \Omega \to [1,\infty)$ with
  \[\| D\|_{L^q(\Omega)} \leq C \]
  such that for all mean-zero $\varphi_0 \in H^1$ and $t \geq 0$,
  \begin{align}
    \| \varphi_t\|_{H^{-s}} \leq D L^{\mathfrak{d}'  - \mathfrak{c}' t} \| \varphi_0\|_{H^1} \, . 
  \end{align}
\end{corollary}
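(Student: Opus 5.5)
We follow the Borel--Cantelli / summation-over-Fourier-modes strategy of \cite[Section 7]{bedrossianAlmostsureExponentialMixing2019}. The key is that the annealed bound \eqref{eq:bootstrapL2Start2} applies to each Fourier mode $e_k$ individually, and the $H^1$ norm of $\varphi_0$ pays for summing over $k$.

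For $k\in\Z^2\setminus\{0\}$ and $n\geq 1$, \eqref{eq:bootstrapL2Start2} with $\varphi_0=e_k$ gives $\E\|\Sc^{0,2n}e_k\|_{H^{-s}}^2\leq CL^{-2(2n-1)\mathfrak c}$. Interpolating between this and the deterministic bound $\|\Sc^{0,2n}e_k\|_{H^{-s}}\leq 1$ (by $L^2$ isometry), we get $\E\|\Sc^{0,2n}e_k\|_{H^{-s}}^{q'}\leq CL^{-(2n-1)\mathfrak c}$ for every $q'\geq 2$.

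We then define
\[
D:=\max\Big\{1,\ \sup_{k\neq0}\sup_{n\geq0}\sup_{t\in[2n,2n+2)}|k|^{-1}L^{-\mathfrak d'+\mathfrak c' t}\|\Sc^{0,t}e_k\|_{H^{-s}}\Big\}.
\]
For $t\in[2n,2n+2)$, \eqref{eq:deterministFracTime2} gives $\|\Sc^{0,t}e_k\|_{H^{-s}}\leq CL^{2s}\|\Sc^{0,2n}e_k\|_{H^{-s}}$, and $L^{\mathfrak c' t}\leq L^{2\mathfrak c'}L^{2n\mathfrak c'}$. This is why $\mathfrak d'=2s+2\mathfrak c'$ is exactly what absorbs these two losses, so the $t$-supremum reduces to the even times $2n$. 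The case $n=0$ is handled directly by the deterministic bound. To bound $\|D\|_{L^q}$, we replace suprema by sums:
\[
\E D^q\leq 1+C\sum_{k\neq0}|k|^{-q}\sum_{n\geq0}L^{2nq\mathfrak c'}\,\E\|\Sc^{0,2n}e_k\|_{H^{-s}}^q .
\]
Here $\sum_k|k|^{-q}$ diverges for $q\leq2$ in two dimensions. Our first fix is to use a weight $|k|^{-1}$ in the definition of $D$ but bound the moment of the $\ell^\infty_k$ quantity by an $\ell^{p}_k$ sum with a larger exponent, which costs a factor of the $k$-sum exponent; that is presumably the source of the constraint $\mathfrak c'<\mathfrak c/(6q)$. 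Concretely, we control $\E D^q$ by $(\E D^{3q})^{1/3}$ and use the moment $q'=3q$, so that $\sum_k|k|^{-3q}<\infty$. The time series then becomes $\sum_n L^{6nq\mathfrak c'}L^{-(2n-1)\mathfrak c}$, which converges uniformly in $L\geq L_0$ precisely when $6q\mathfrak c'<2\mathfrak c$; the stated condition $\mathfrak c'<\mathfrak c/(6q)$ is comfortably sufficient, and the constant is independent of $L$ after pulling out $L^{\mathfrak c}$. That stray factor $L^{\mathfrak c}$ must be absorbed either into $\mathfrak d'$ or by starting the bound from the $n=1$ estimate. Handling it is bookkeeping, but it is the place where I expect exponent care to be needed.

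Finally, for general mean-zero $\varphi_0=\sum_k\hat\varphi_0(k)e_k$, linearity and the triangle inequality give
\[
\|\varphi_t\|_{H^{-s}}\leq DL^{\mathfrak d'-\mathfrak c' t}\sum_{k\neq0}|k|\,|\hat\varphi_0(k)|.
\]
This is the main obstacle: $\sum|k||\hat\varphi_0(k)|$ is not bounded by $\|\varphi_0\|_{H^1}$ in two dimensions. The first remedy to try is to use the weight $|k|^{-a}$ with $a<0$, i.e. to put $|k|^{b}$ with $b<1-1$... Concretely, we define $D$ with weight $|k|^{-(1+\eta)}\cdot|k|^{?}$. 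Cauchy--Schwarz gives $\sum_k|k|^{b}|\hat\varphi_0(k)|\leq\|\varphi_0\|_{H^1}\big(\sum_k|k|^{2b-2}\big)^{1/2}$, which requires $b<0$; that is, $D$ may be defined with the weight $|k|^{+\eta}$, the \emph{opposite} direction. The moment series $\sum_k|k|^{3q\eta}$ then diverges, so we cannot get convergence in $k$ from uniform bounds alone. The real fix is to exploit the $H^{-s}$-smallness only up to a $k$-dependent time: for $|k|$ large, use the deterministic bound $\|\Sc^{0,t}e_k\|_{H^{-s}}\leq1$ at times $t\lesssim\log_L|k|$. The trade-off is to split $\varphi_0$ at frequency $N\sim L^{\mathfrak c't}$, bounding the high part by $\|\varphi_0\|_{L^2}$ restricted to high frequencies, which is at most $N^{-1}\|\varphi_0\|_{H^1}$, and the low part by the per-mode $D$ with a $\log$-summable weight. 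I expect this frequency truncation, balanced against the decay rate, to be the delicate step.
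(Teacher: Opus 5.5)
Your architecture (per-mode annealed bound from \eqref{eq:bootstrapL2Start2}, a random constant obtained by replacing suprema over $k$ and $n$ by sums, a frequency truncation to reach $H^1$ data, and \eqref{eq:deterministFracTime2} for partial cycles) is the paper's. Your per-mode constant, with weight $|k|^{-1}$ and controlled through $3q$-th moments since $\sum_{k\neq0}|k|^{-3q}<\infty$, is fine.

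The gap is the step you leave open, which is where the proof actually lives, and the choices you do state would not give the corollary. With your constant, $\|\Sc^{0,t}e_k\|_{H^{-s}}\le D|k|L^{\mathfrak d'-\mathfrak c't}$, so Cauchy--Schwarz on the low modes gives
\[
  \|\Sc^{0,t}\Pi_{B_N}\varphi_0\|_{H^{-s}}
  \le DL^{\mathfrak d'-\mathfrak c't}\sum_{0<|k|<N}|k|\,|\widehat{\varphi_0}(k)|
  \le CDNL^{\mathfrak d'-\mathfrak c't}\|\varphi_0\|_{H^1},
\]
while the high part is at most $N^{-1}\|\varphi_0\|_{H^1}$. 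Truncating at $N\sim L^{\mathfrak c't}$, as you propose, leaves no decay at all. The optimal $N\sim L^{\mathfrak c't/2}$ yields only the rate $\mathfrak c'/2$. Summing moment bounds that are uniform in $k$ forces the per-mode factor (your $|k|$) to grow polynomially, so the truncation always costs a fixed fraction of the rate; a logarithmic cost is not available.

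The missing idea is to build the per-mode constant at a strictly faster rate than the target. The paper does this with $a=3\mathfrak c'$ and the summable weight $(1+|k|^2)^{-3/2}$. Then $D$ is an $\ell^1$ sum in $k$ needing only $q$-th moments, and $\E X^q\le\E X\le CL^{-(2m-1)\mathfrak c}$ for $X=\|\Sc^{0,2m}e_k\|_{H^{-s}}\in[0,1]$. The hypothesis $\mathfrak c'<\mathfrak c/(6q)$ is exactly $a<\mathfrak c/(2q)$, which keeps $\sum_{m\ge1}L^{2ma}L^{-(2m-1)\mathfrak c/q}$ bounded uniformly in $L$. One gets $\|\Sc^{0,2m}g\|_{H^{-s}}\le DL^{-2ma}\|g\|_{H^3}$. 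Truncating at $M=L^{2ma/3}$ gives $\|\Sc^{0,2m}g\|_{H^{-s}}\le(2D+1)L^{-2m\mathfrak c'}\|g\|_{H^1}$, after which \eqref{eq:deterministFracTime2} handles $t\in[2m,2m+2)$. So the $6$ is $2\cdot 3$ (the $m=1$ term, and the $H^3\to H^1$ loss), not a by-product of $3q$-th moments.

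Your own route can be completed the same way. Take per-mode rate $2\mathfrak c'$ with weight $|k|^{-1}$ at even times, truncate at $N=L^{2n\mathfrak c'}$ when $t=2n$, and only then apply \eqref{eq:deterministFracTime2}, which keeps the prefactor at $L^{\mathfrak d'}$. But your moment bookkeeping is then too lossy. With $\E X^{3q}\le CL^{-(2n-1)\mathfrak c}$, the $n=1$ term in $\E D^{3q}$ is $L^{12q\mathfrak c'-\mathfrak c}$, which restricts you to $\mathfrak c'\le\mathfrak c/(12q)$, half the claimed range. You need the sharper $\E X^{3q}\le\E X^2\le CL^{-2(2n-1)\mathfrak c}$, which \eqref{eq:bootstrapL2Start2} actually provides. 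The $n=1$ term then becomes $L^{12q\mathfrak c'-2\mathfrak c}$, bounded precisely when $\mathfrak c'\le\mathfrak c/(6q)$.
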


% {\color{blue}  mention BBPSII, dolgopyat-dewitt short paper... } 

% \cite{dewitt2025equivalence}

\begin{proof}
  Recall that for $k \in \Z^2$, $e_k(x) = e^{2 \pi i k \cdot x}$ is the standard Fourier basis element. We begin by noting that for $m \geq 1$, equation \eqref{eq:bootstrapL2Start2} with $\varphi_0 \mapsto e_k, k \neq 0$ implies
  \begin{align}
    \E \| \mathcal{S}^{0,2m} e_k \|_{H^{-s}} \leq \left(\E \| \mathcal{S}^{0,2m} e_k\|_{H^{-s}}^2\right)^{1/2} \leq C L^{-(2m-1) \mathfrak{c} } \, .
  \end{align}
  Combined with the fact that $ 0\leq \|\Sc^{0,2m}_{\uo}e_k\|_{H^{-s}}\leq \|e_k\|_{L^2}=1$, it follows that 
    \begin{align}\label{eq:modeByModeLqEst2}
    \left\|\|\Sc^{0,2m}_{\uo}e_k\|_{H^{-s}}\right\|_{L^q(\Omega)}
    \leq
    \left(\E\|\Sc^{0,2m}_{\uo}e_k\|_{H^{-s}}\right)^{1/q}
    \leq
    C L^{-(2m-1) \mathfrak{c} / q} \,. 
\end{align}

  Define $a = 3 \mathfrak{c'}$, noting $0 < a < \mathfrak{c}/2q$, and 
  \begin{equation}\label{eq:two-step-random-constant}
    D(\uo)
    :=
    1+
    \sup_{m\geq1}
    L^{2ma}
    \sum_{k\in\Z^2\setminus\{0\}}
    (1+|k|^2)^{-3/2}
    \|\Sc^{0,2m}_{\uo}e_k\|_{H^{-s}}.
  \end{equation}
  From \eqref{eq:modeByModeLqEst2} it follows that 
  \begin{align*}
    \left\|D-1\right\|_{L^q(\Omega)}
    &\leq
    \sum_{m\geq1}
    L^{2ma}
    \sum_{k\in\Z^2\setminus\{0\}}
    (1+|k|^2)^{-3/2}
    \left\|\|\Sc^{0,2m}_{\uo}e_k\|_{H^{-s}}\right\|_{L^q(\Omega)} \\ 
    & \leq C \sum_{m\geq1}
    L^{2ma}
    \sum_{k\in\Z^2\setminus\{0\}}
    (1+|k|^2)^{-3/2} L^{-(2m-1)\mathfrak{c} / q} \\ 
    & \leq C L^{ \mathfrak{c} / q} \sum_{m\geq1}
    L^{2m (a - \mathfrak{c} / q)} \\ 
    & \leq C L^{2 a - \mathfrak{c}/q} \, ,
    \end{align*}
  hence $\| D \|_{L^q(\Omega)} \leq C$ uniformly in $L$. 

  Next, we observe that for mean-free $g \in H^3$, $| \widehat g(k)| \leq (1 + |k|^2)^{-3/2} \| g \|_{H^3}$ for all $k$, hence
  \[\| \mathcal{S}^{0,2m} g\|_{H^{-s}} \leq D L^{-2ma} \| g \|_{H^3} \,. \]
  A standard argument (e.g., by mollification) implies now that 
  \[\| \mathcal{S}^{0,2m} g \|_{H^{-s}} \leq C D L^{-2m(a/3)} \| g \|_{H^1} \, . \]
  Absorbing the above constant into the definition of $D$ and recalling $\mathfrak{c}' = a/3$ completes the proof at even times $t = 2m, m \geq 1$. For $t\in[2m,2m+2)$, \eqref{eq:deterministFracTime2} gives the factor
  $L^{2s-2m\mathfrak c'}\leq L^{2s+2\mathfrak c'-t\mathfrak c'}$.
  The interval $0\leq t<2$ follows from $L^2$ isometry, proving the stated choice of $\mathfrak d'$.

\end{proof}

\begin{remark}
  Since it is used elsewhere, we record the following somewhat stronger estimate at even integer times: for mean-free $g \in H^1$ and $m \geq 1$, it holds that 
\begin{align}\label{eq:betterQuenched2}
  \| \mathcal{S}^{0,2m}_\uo g \|_{H^{-s}} \leq D(\uo) L^{-2m \mathfrak{c}'} \| g \|_{H^1} \,. 
\end{align}
\end{remark}

\begin{proof}[Proof of Corollary \ref{cor:introNegRegMixQuenched}]
To retain the larger rate range $0<\mathfrak c'<\mathfrak c/2$ in the
introduction, use a Fourier truncation directly in $H^1$. Put
$M_m:=\lceil L^{m\mathfrak c}\rceil$. Cauchy--Schwarz on the modes
$0<|k|\leq M_m$ and the $L^2$ isometry on the remaining modes give
\[
  A_m:=\|\mathcal S^{0,2m}\|_{H^1_0\to H^{-s}}
  \leq\left(\sum_{0<|k|\leq M_m}
       \frac{\|\mathcal S^{0,2m}e_k\|_{H^{-s}}^2}{1+|k|^2}\right)^{1/2}
       +M_m^{-1}.
\]
By \eqref{eq:bootstrapL2Start2} and Jensen's inequality,
\[
  \E A_m\leq C(1+m\log L)^{1/2}L^{-(2m-1)\mathfrak c}
        +L^{-m\mathfrak c}.
\]
Consequently $D_0:=1+\sum_{m\geq1}L^{2m\mathfrak c'}A_m$ satisfies
$\E D_0\leq C_{\mathfrak c'}$ uniformly in $L\geq L_0$, since
$2\mathfrak c'<\mathfrak c$. The partial-cycle estimate
\eqref{eq:deterministFracTime2}, together with $L^2$ isometry for $t<2$,
proves the corollary with $D=CD_0L^{2s+2\mathfrak c'}$.
\end{proof}

\section{Batchelor's Law}\label{sec:batchLaw}
    %!TEX root = main.tex

We turn now to Batchelor's law for the driven passive scalar equation
\begin{align}\label{eq:PSA3}
  \partial_t \varphi_t + u \cdot \nabla \varphi_t = S \, , \quad \varphi_0 = 0 \,, 
\end{align}
where $S$ is a discrete-time forcing as in Setting~\ref{set:discDet}. 
% We focus mainly on the deterministic driving
% ().
%  deferring random / IID driving
% (Setting~\ref{set:discRand}) to {\color{red} later.}
Since it costs us little to do so, we will work in this section with the following sequential-type generalization of Setting \ref{set:discDet}: 

\begin{setting}\label{set:generalDiscDet}
  We have that 
  \begin{align}
    S(t, x) = \sum_{n = 1}^\infty \delta_{2n}(t) f_n \, , 
  \end{align}
  where $(f_n)_{n \geq 1}$ is a sequence of mean-free, $H^1$ functions satisfying 
  \begin{align}
    F^{-1} \leq \| f_n\|_{L^2} \leq \| f_n\|_{H^1} \leq F
  \end{align}
  for some fixed $F \geq 1$. 
\end{setting}

In Section \ref{subsec:detailedBatch3} we present a detailed version of the cumulative upper and lower bounds on $\| \Pi_{B_R} \varphi\|_{L^2}$ in Setting \ref{set:generalDiscDet}. 
We present in Section \ref{subsec:batchCont} a similar version in the continuous time setting (Setting \ref{set:contDet}). Section \ref{subsec:wrapup3} bridges between the statements of Section \ref{sec:batchLaw} and those in the introduction.

% Analogous sparsity statements addressing Fourier mass in the cone $\mathcal{C}_\theta$ are given in Section \ref{subsec:sparsityDisc3}. Finally, in Section \ref{subsec:wrapup3} we bridge these statements with those in Section \ref{sec:intro} ({\color{red}Theorems \ref{??}}), as well as briefly addressing changes necessary in the case of IID scalar driving (Setting \ref{set:discRand}). 

% {\color{red}  alteration to teh plan: will now be presenting batchelor's law here and sparsity section 4 } 

\subsection{Batchelor's law, discrete-time deterministic scalar driving}\label{subsec:detailedBatch3}

\begin{theorem}\label{thm:detailedBatch3}
  Let $q \geq 1$. There exist constants $C,L_0\geq1$, $c:=C^{-1}$,
  and $\zeta\geq1$, depending on $\psi,p,q,F$, with the following properties
  for all $L\geq L_0$.
  \begin{itemize}
    \item[(a)] There is a sequence of random variables $\mathfrak{U}_N : \Omega \to [1,\infty), N \geq 1$  with 
    \[\| \mathfrak{U}_N \|_{L^q(\Omega)} \leq C\]
    such that for any $R \geq L$ and for any $N \geq 1$, $t \in [2N, 2N+2)$, it holds that
    \begin{align}
      \| \Pi_{B_R} \varphi_{t} \|_{L^2}^2 \leq \mathfrak{U}_N(\uo) \log_L R \,. 
    \end{align}
    \item[(b)] There is a sequence of events $\mathcal{G}_N \subset \Omega$ satisfying 
    \[\P \left(\left(\mathcal{G}_N\right)^c\right) \leq C L^{-c}\] 
    such that for all $R \geq L^\zeta, N \geq C \log_L R, \uo \in \Gc_N$ and $t \in [2N, 2N+2)$, one has the lower bound 
    \[\| \Pi_{B_R} \varphi_{t}\|_{L^2}^2 \geq c \log_L R \,. \]
  \end{itemize}
\end{theorem}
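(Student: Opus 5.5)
The plan is to expand $\varphi_t$ by Duhamel and sort the terms by their age. With $\varphi_0=0$ and $t\in[2N,2N+2)$ we have
\[
\varphi_t=\mathcal S^{2N,t}\sum_{n=1}^N g_n,\qquad g_n:=\mathcal S^{2n,2N}f_n,
\]
so that $g_n$ has age $m=N-n$ cycles. Everything rests on one decorrelation estimate. Measure preservation gives $\langle g_n,g_{n'}\rangle=\langle \mathcal S^{2n,2n'}f_n,f_{n'}\rangle$ for $n<n'$. By duality and \eqref{eq:betterQuenched2} applied to the shifted sequence $\sigma^n\uo$,
\[
|\langle g_n,g_{n'}\rangle|\le \|\mathcal S^{2n,2n'}f_n\|_{H^{-s}}\|f_{n'}\|_{H^{s}}\le F^2\,D(\sigma^n\uo)\,L^{-2(n'-n)\mathfrak c'} .
\]
Summing over $n'>n$ gives the almost-orthogonality bound
\[
\Bigl|\sum_{n<n'}\langle g_n,g_{n'}\rangle\Bigr|\le CF^2L^{-2\mathfrak c'}\sum_n D(\sigma^n\uo).
\]
Since the $D\circ\sigma^n$ are identically distributed, Minkowski gives $\|\frac1M\sum_{n\in I}D\circ\sigma^n\|_{L^q}\le\|D\|_{L^q}\le C$ for any window $I$ of length $M$. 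The same bound for $\E$ of the average is what Markov's inequality will use in (b).

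For (a), fix $M:=\lceil K\log_L R\rceil$, with $K$ large depending on $s,\mathfrak c'$; note $M\ge1$ since $R\ge L$. I would split $\sum_n g_n=Y+O$, where $Y$ collects the young terms (age $<M$) and $O$ the old ones (age $\ge M$).
\begin{itemize}
\item[(i)] For the young part, $L^2$ isometry of $\mathcal S^{2N,t}$ and $\Pi_{B_R}$ reduces to $\|Y\|_{L^2}^2\le MF^2+2|\text{cross terms}|$. By the decorrelation bound the cross terms are at most $CF^2\sum_{n\in I}D\circ\sigma^n$, which is $M$ times an $L^q$-bounded average.
\item[(ii)] For the old part, use $\|\Pi_{B_R}h\|_{L^2}\le CR^{s}\|h\|_{H^{-s}}$, the deterministic bound \eqref{eq:deterministFracTime2} (a factor $CL^{2s}$), and \eqref{eq:betterQuenched2}. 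Each term is then at most $CR^sL^{2s}D(\sigma^n\uo)L^{-2(N-n)\mathfrak c'}F$. Since $L^{2M\mathfrak c'}\ge R^{s}L^{2s}$ once $K$ is large, the sum is bounded by a geometrically weighted sum of $D\circ\sigma^n$, which has $L^q$ norm $O(1)$.
\end{itemize}
Setting $\mathfrak U_N$ to be $C$ times the resulting random prefactor, normalized by $\log_L R\ge1$, gives (a).

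For (b), take a short young window of $M':=\lceil c_1\log_LR\rceil$ terms, with $c_1$ small, and require $N\ge M'$. The first point is that young terms are smooth. Each transfer operator multiplies $H^1$ norms by at most $CL$, so $\|g_n\|_{H^1}\le (CL)^{2m}F$ and hence $\|(I-\Pi_{B_R})\mathcal S^{2N,t}g_n\|_{L^2}\le R^{-1}(CL)^{2m+2}F$. For $c_1$ small and $R\ge L^{\zeta}$ this is at most $R^{-1/2}$, so $\Pi_{B_R}$ acts essentially as the identity on $Y':=\sum_{\text{age}<M'}g_n$. Next, $\|Y'\|_{L^2}^2\ge M'F^{-2}-CF^2L^{-2\mathfrak c'}\sum D\circ\sigma^n$. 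Let $\mathcal G_N$ be the event that the average of $D\circ\sigma^n$ over the window, and the analogous averages used below, are at most $L^{\mathfrak c'}$. By Markov's inequality $\P(\mathcal G_N^c)\le CL^{-\mathfrak c'}$, and on $\mathcal G_N$ we get $\|Y'\|^2\ge \tfrac12M'F^{-2}$ for $L$ large.

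Finally, I would show that the remainder $Z=\sum_{\text{age}\ge M'}g_n$ cannot cancel $Y'$. Write
\[
\|\Pi_{B_R}(Y'+Z)\|^2\ge\|\Pi_{B_R}Y'\|^2+2\,\mathrm{Re}\langle Y',\Pi_{B_R}Z\rangle-2\|(I-\Pi_{B_R})Y'\|\,\|\Pi_{B_R}Z\| .
\]
The last term is small because of the smoothness step and the part (a) bound $\|\Pi_{B_R}Z\|\lesssim(\mathfrak U\log_LR)^{1/2}$, the latter controlled on $\mathcal G_N$ after enlarging it by another Markov cutoff. Since $\langle Y',\Pi_{B_R}Z\rangle=\langle Y',Z\rangle-\langle Y',(I-\Pi_{B_R})Z\rangle$, the correction $\langle Y',(I-\Pi_{B_R})Z\rangle=\langle (I-\Pi_{B_R})Y',Z\rangle$ is handled by the same smoothness step. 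The main term $\langle Y',Z\rangle$ consists of cross pairs between distinct ages, and by the decorrelation bound it is at most $CF^2L^{-2\mathfrak c'}$ times window sums of $D\circ\sigma^n$. These are $\ll M'$ on $\mathcal G_N$.

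The hard part is this last step: the old mass is not small in $L^2$ at all (it is about $\log_LR$), so everything rests on the quenched decorrelation making every cross pair at least $L^{-2\mathfrak c'}$ small. That in turn requires uniform control of the shifted constants $D\circ\sigma^n$ along windows of length comparable to $\log_LR$. If plain Markov bounds on the window averages prove too lossy, I would first try a maximal ergodic inequality for the shift $\sigma$ to control the running averages of $D\circ\sigma^n$ uniformly.
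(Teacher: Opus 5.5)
Your overall architecture matches the paper's: quenched decorrelation through the shifted constants $D_n:=D\circ\sigma^n$, an age split, smoothness of recent sources from the $H^1$ growth $(CL)^{2m}$, and a Markov good event. Two steps, however, do not close as written.

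\textbf{The correction term in (b).} The first problem is the correction $\langle (I-\Pi_{B_R})Y',Z\rangle$. You put \emph{every} source older than $M'$ into $Z$, and your smoothness step only controls $\|(I-\Pi_{B_R})Y'\|_{L^2}\lesssim M'R^{-1/2}$. But $Z$ has no $L^2$ bound uniform in $N$: by the same near-orthogonality you use for $Y'$, $\|Z\|_{L^2}^2$ is typically of order $(N-M')F^{-2}$. Cauchy--Schwarz in $L^2$ therefore gives about $M'R^{-1/2}\sqrt{N}$. This is not small compared with $M'$ once $N\gtrsim R$, while (b) must hold for all $N\geq C\log_L R$.

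You have two ways to fix this. One is to pair $H^{s}$ against $H^{-s}$:
\[
|\langle (I-\Pi_{B_R})Y',Z\rangle|\leq R^{s-1}\|Y'\|_{H^1}\,\|Z\|_{H^{-s}},\qquad \|Z\|_{H^{-s}}\leq CL^{2s}F\sum_{n\leq N-M'}D_nL^{-2\mathfrak c'(N-n)} .
\]
Here the first factor is $O(R^{-1/2})$ by your smoothness step, and quenched mixing controls $Z$. The other is what the paper does. It splits $Z$ into a middle block of $O(\log_L R)$ sources, whose $L^2$ norm is $O(\log_L R)$, so termwise Cauchy--Schwarz against the $R^{-1/2}$ tails is harmless. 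The remaining old block, of age $\geq K_0\log_L R$, is paired as $\langle\Pi_{B_R}\varphi^{\rm old},\varphi^{\rm mid}+\varphi^{\rm new}\rangle$. Then only $\|\Pi_{B_R}\varphi^{\rm old}\|_{L^2}\leq CR^s\|\varphi^{\rm old}\|_{H^{-s}}$, which is $O(R^{-1}L^{\mathfrak c_*})$ on the good event, enters. Separately, your extra term $-2\|(I-\Pi_{B_R})Y'\|\,\|\Pi_{B_R}Z\|$ is unnecessary, since $\langle\Pi_{B_R}Y',\Pi_{B_R}Z\rangle=\langle Y',\Pi_{B_R}Z\rangle$.

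\textbf{Uniformity in $R$.} The statement asks for a single $\mathfrak U_N$ and a single $\mathcal G_N$ that work for all $R$ and all $t\in[2N,2N+2)$. Your prefactors instead move with $R$:
\begin{itemize}
\item they are averages of $D_n$ over windows of length $\lceil K\log_LR\rceil$ or $\lceil c_1\log_LR\rceil$;
\item your bound on $\langle Y',Z\rangle$ is a geometric sum of the $D_n$ centered at $N-M'$.
\end{itemize}
Minkowski controls each fixed window, not the supremum over window lengths, and that supremum is exactly what one random variable for all $R$ requires.

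The paper uses $W_N=\sum_jD_jL^{-\mathfrak c_*(N-j)}$ together with the running maximal average $M_N=\max_{1\leq k\leq N}\frac1k\sum_{j=N-k+1}^ND_j$. The uniform $L^{2q}$ bound on $M_N$ comes from the $L^p$ maximal ergodic theorem for the inverse shift on two-sided sequences. That theorem needs $p>1$, which is why $D$ is taken in $L^{2q}$ (Corollary \ref{cor:quenchedMix2} with $q\mapsto 2q$). You need $L^{2q}$ anyway, because the old-part prefactor enters $\mathfrak U_N$ squared, so the $L^q$ control you claim for it is not enough.

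So the maximal inequality you list as a fallback is not optional. With it, each window sum in your argument is at most its length times $M_N$. The geometric sum in $\langle Y',Z\rangle$ becomes at most $CL^{-2\mathfrak c'}(M'+1)M_N$ after summation by parts. And $\mathcal G_N=\{W_N+M_N\leq L^{\mathfrak c_*}\}$ is independent of $R$.
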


\begin{remark} \ 
\begin{itemize}
  \item[(i)]  The events $\mathcal{G}_N$ and random variables $\mathfrak{U}_N$ depend on
    $N$, but satisfy estimates uniform in $N$.
    Indeed, for fixed $N$, the
    time-$2N$ scalar from \eqref{eq:PSA3} in
    Setting~\ref{set:generalDiscDet} satisfies
    \begin{align}
      \varphi_{2N} = \sum_{n=1}^{N}\mathcal{S}^{2n,2N}f_n \, .
    \end{align}
    The distribution of Fourier mass of $\varphi_{2N}$ therefore depends on
    the behavior of the Lagrangian flow on the \emph{terminal} time segments
    $[2n,2N]$, hence on the phase shifts $\omega_j^i$ for $n\leq j\leq N$,
    as reflected in the shifted mixing constants used below.
    \item[(ii)] We emphasize that the upper bound (Theorem \ref{thm:detailedBatch3} (a)) holds for all times $N$. The lower bound of part (b) only holds for $N \gtrsim \log_L R$, which is necessary to allow sufficient time to populate spatial scales from $1$ to $R^{-1}$.
    \item[(iii)] The upper bound requires only $R\geq L$, a natural starting point of the inertial range (since the velocity has amplitude of order $L$). For the lower bound, two successive shears can transport even fresh forcing to frequencies of order $L^2$ before the next piece of forcing is injected, leaving $o(1)$ expected Fourier mass in $B_L$ as $L\to\infty$; thus we need to strengthen the assumption on $R$. Rather than attempting to pin down the precise sharp assumption on $R$, we assume for the lower bound that $R \geq L^\zeta$ for some constant $\zeta \geq 1$.
\end{itemize}
  % This is why the random variables and events depend on the terminal index
  % $N$.
\end{remark}

\subsubsection*{Proof of Theorem \ref{thm:detailedBatch3}}

% \begin{proof}[Proof of Theorem \ref{thm:detailedBatch3}]

  Let $s$ and $\mathfrak c$ be as in Theorem~\ref{thm:neg-reg-flexible}. Define 
  \[
    \mathfrak c_*:=\frac{\mathfrak c}{24q},
    \qquad
    \alpha:=2\mathfrak c_* \,. 
  \]
  Apply Corollary \ref{cor:quenchedMix2} with $\mathfrak{c}' = \mathfrak{c}_*$ and under the replacement $q \mapsto 2q$, yielding $D \in L^{2q}(\Omega)$ with $\| D\|_{L^{2q}(\Omega)} \leq \bar C$ such that for mean-free $g \in H^1$, 
  \begin{equation}
    \|\Sc^{0,2m}_{\uo}g\|_{H^{-s}}
    \leq
    D(\uo)L^{-\alpha m}\|g\|_{H^1} 
  \end{equation}
  (see estimate~\eqref{eq:betterQuenched2}). 
  Here and throughout the rest of the proof, we reserve the notation
  \[\bar C = \bar C(\psi, p, q, F, s, \mathfrak{c}) \, , \quad \bar c = \bar C^{-1} \, , \]
  the precise value of which may change line-to-line, 
  while the constants $C, c = C^{-1}$ as in the statement of Theorem \ref{thm:detailedBatch3} will be specified at the end of the proof. We first prove both bounds for $R \geq L^\zeta$ and $L \geq L_0$, with $\zeta\geq1$ and $L_0$ to be specified, then extend the upper bound to $R\geq L$ by monotonicity.
  % $\bar C$ will denote a sufficiently large constant depending only on $\psi, p, q, F$ and $s, \mathfrak{c}$, while the constants
  
  % \begin{remark}

  %   {\color{blue}  blah blah avoid circularity } 
    
  % \end{remark}
  
  Define 
\begin{align}\label{eq:shiftedDconst3}    D_j(\uo):=D(\sigma^j\uo),\qquad j\geq0 \, , \end{align}
  where we recall that $\sigma : \Omega \circlearrowleft$ is the leftward shift on phase shift sequences $\uo = (\omega^j_i)$, i.e., $\sigma \uo = (\omega^j_{i + 1})$. We observe that 
  \begin{equation}\label{eq:detailedBatch3-shifted-mixing}
    \|\Sc^{2j,2r}_{\uo}g\|_{H^{-s}}
    \leq
    D_j(\uo)L^{-\alpha(r-j)}\|g\|_{H^1} \, , 
  \end{equation}
  and that since $\P$ is $\sigma$-invariant ($\P = \P \circ \sigma^{-1}$), it holds that 
  \begin{equation}\label{eq:unifMomentEst3}
    \sup_{j\geq0}\|D_j\|_{L^{2q}(\Omega)}\leq \bar C \, .
  \end{equation}
  In particular, for $w<r$,
  \begin{equation}\label{eq:detailedBatch3-correlation}
    \big|\langle \Sc^{2w,2r}f_w,f_r\rangle\big|
    \leq
    \bar C D_wL^{-\alpha(r-w)}.
  \end{equation}

  \paragraph{Preliminaries: Random multiplicative constants.}
  Define
  \begin{align}\label{eq:defnRandMultConsts3}
    W_N:=\sum_{j=1}^{N}D_jL^{-\mathfrak c_*(N-j)}, 
    \qquad 
    M_N:=\max_{1\leq k\leq N}
      \frac1k\sum_{j=N-k+1}^{N}D_j.
  \end{align}
  \begin{claim}\label{cla:boundWNMN3}
    It holds that 
    \begin{align}
      \sup_N \| W_N\|_{L^{2q}(\Omega)}, \quad \sup_N \| M_N\|_{L^{2q}(\Omega)} \leq \bar C \,. 
    \end{align}
  \end{claim}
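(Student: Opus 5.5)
For $W_N$ we use Minkowski's inequality in $L^{2q}(\Omega)$ together with the uniform moment bound \eqref{eq:unifMomentEst3}:
\[
\|W_N\|_{L^{2q}(\Omega)} \leq \sum_{j=1}^N \|D_j\|_{L^{2q}(\Omega)} L^{-\mathfrak c_*(N-j)} \leq \bar C \sum_{m\geq 0} L^{-\mathfrak c_* m} \leq \bar C\,(1-L^{-\mathfrak c_*})^{-1}.
\]
For $L\geq L_0\geq 2$ this is bounded uniformly in $N$ and $L$, since $L^{-\mathfrak c_*}\leq 2^{-\mathfrak c_*}<1$ and $\mathfrak c_*$ depends only on $q$ and $\mathfrak c$. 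So the bound on $W_N$ is immediate.

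For $M_N$, Minkowski alone would give a bound growing like $\log N$ or worse, so we use stationarity and a maximal inequality instead. Reversing the order of summation,
\[
M_N=\max_{1\leq k\leq N}\frac1k\sum_{i=0}^{k-1}D_{N-i}.
\]
This is a maximal function of backward ergodic averages along the sequence $(D_{N-i})_{i\geq 0}$. The plan is to show that this finite sequence has the same joint law as $(D\circ\tau^i)_{i\geq 0}$ for some measure-preserving map $\tau$ on a probability space, and then apply the $L^p$ maximal ergodic theorem (Hopf--Dunford--Schwartz / Wiener) with $p=2q>1$:
\[
\Big\|\sup_{k\geq1}\frac1k\sum_{i=0}^{k-1}D\circ\tau^i\Big\|_{L^{p}}\leq \frac{p}{p-1}\,\|D\|_{L^{p}}.
\]
Combined with $\|D\|_{L^{2q}}\leq\bar C$, this gives $\|M_N\|_{L^{2q}}\leq \frac{2q}{2q-1}\bar C$, uniformly in $N$.

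To get the needed stationarity we realize the sequence on a two-sided shift. Since $D_j=D\circ\sigma^j$ and $\P$ is $\sigma$-invariant, the process $(D_j)_{j\geq0}$ is stationary. Hence $(D_N,D_{N-1},\dots,D_1)$ has the same law as the time-reversed block $(D_{N-1},\dots,D_0)$ of a stationary sequence. By Kolmogorov extension, a stationary process extends to a two-sided stationary process $(\tilde D_j)_{j\in\Z}$ on the space of sequences with the shift $\tilde\sigma$, which is invertible. The backward sequence $(\tilde D_{-i})_{i\geq0}=(\tilde D_0\circ\tilde\sigma^{-i})$ is then an orbit of the measure-preserving map $\tau=\tilde\sigma^{-1}$. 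We apply the maximal ergodic theorem to $\tau$, truncating $k\leq N$, which only lowers the supremum. An elementary alternative is to rerun the proof of the maximal inequality directly for finite stationary sequences, since that proof (the rising-sun / Garsia lemma) only uses stationarity of finite blocks.

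The main obstacle is this reversal-of-time step: the averages in $M_N$ run backward from $N$, while $\sigma$ is only a forward, non-invertible shift. The two-sided stationary extension (or, equivalently, the fact that the maximal inequality depends only on the joint law of a finite stationary block) resolves it. Everything else is Minkowski's inequality and the moment bound on $D$.
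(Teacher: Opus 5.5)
Your proposal is correct and follows essentially the same route as the paper: Minkowski plus the uniform moment bound \eqref{eq:unifMomentEst3} for $W_N$, and for $M_N$ rewriting it as a maximal function of backward averages and applying the $L^{2q}$ maximal ergodic theorem to the inverse of an invertible two-sided shift. The only cosmetic difference is how the invertible extension is built. The paper takes the two-sided product space $\Omega_0^{\Z}$ with $\hat\sigma^{-1}$ directly, so that $M_N\circ\hat\sigma^{-N}$ is exactly the backward maximal average. You instead pass through a Kolmogorov extension of the stationary process $(D_j)$ and an equality in law, which is equally valid.
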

  \begin{proof}[Proof of Claim]
    Minkowski's inequality gives
  \[
    \sup_{N\geq1}\|W_N\|_{L^{2q}(\Omega)}
    \leq
    \sup_{j\geq0}\|D_j\|_{L^{2q}(\Omega)}
    \sum_{m\geq0}L^{-\mathfrak c_*m}
    \leq \bar C \,. 
  \]
    For the uniform $L^{2q}$ estimate for $M_N$, we use an argument based on the following version of the maximal ergodic theorem, recalled below. 
    \begin{theorem}[{\cite[Theorem 6.3]{krengel2011ergodic}}]\label{thm:LpMaxErgThm} Let $(X, \mathcal{F}, \mu)$ be a probability space and let $T : X \to X$ be a measure-preserving transformation, i.e., $\mu \circ T^{-1} = \mu$. Let $f \in L^p(\mu), p > 1$, and let 
      \[f^* = \sup_{N \geq 1} \frac{1}{N} \sum_{i = 0}^{N-1} f \circ T^i \,. \]
      Then, 
      \[\| f^*\|_{L^p(X)} \leq \frac{p}{p-1} \| f\|_{L^p(X)}\]
      % Then, for any $\lambda > 0$ it holds that 
      % \[\mu \{ f^* > \lambda\} \leq \frac{1}{\lambda} \int_{f^* > \lambda} f d \mu \,. \]
    \end{theorem}
    We apply the maximal ergodic theorem to $X = \hat \Omega := \Omega_0^\Z$, the space of two-sided phase shift sequences $\hat \uo = (\omega_i^j)_{j = 1,2, i \in \Z}$. Let $\hat \sigma : \hat \Omega \circlearrowleft$ denote the leftward shift on $\hat \uo$'s, equipped with the natural probability $\hat \P = \P_0^\Z$. Viewing $D_j$'s and $M_N$'s as functions on $\hat \Omega$, we see that 
    \[M_N \circ \hat \sigma^{-N} = \max_{1 \leq k \leq N} \frac{1}{k} \sum_{j=0}^{k-1} D \circ \hat \sigma^{-j} \, . \]
    Applying Theorem \ref{thm:LpMaxErgThm} with $p = 2q > 1$ and $f = D, T = \hat \sigma^{-1}$, we see that 
    \begin{align*}
      \| M_N\|_{L^{2q}(\Omega)} & = \| M_N \circ \hat \sigma^{-N} \|_{L^{2q}(\hat \Omega)} \\ 
      & \leq \| f^*\|_{L^{2q}(\hat \Omega)} \leq \bar C \| D\|_{L^{2q}(\Omega)} \, , 
    \end{align*}
    hence $\| M_N\|_{L^{2q}(\Omega)} \leq \bar C$ as desired. 
  \end{proof}
  % To control $M_N$, pass to the natural two-sided extension of the product
  % phase space.  For fixed $N$, the averages in $M_N$ are backward ergodic
  % averages of $D\circ\sigma^N$.  The maximal ergodic inequality for the
  % inverse shift, used with exponent $2q>1$, therefore yields
  % \begin{equation}\label{eq:detailedBatch3-maximal}
  %   \sup_{N\geq1}\|M_N\|_{L^{2q}(\Omega)}\leq C.
  % \end{equation}
  % Notice that this also covers $q=1$, since the maximal inequality is being
  % used in $L^2$, not at its $L^1$ endpoint.

  \paragraph{Upper bound, $t = 2N$.}
  Let $N\geq1$ and $\ell_R:=\log_LR\geq1$. We will specify $K_0\geq1$ below and set
  \[
    K:=\lceil K_0\ell_R\rceil,
    \qquad
    A:=(N-K)_+=\max\{N-K,0\} \, .
  \]
  Thus $N-A=\min\{N,K\}\leq K\leq(K_0+1)\ell_R$.
  
  We decompose 
  \[
    \varphi_{2N}=\varphi^{\rm old}+\varphi^{\rm rec} 
  \]
  into `old' and `recent' contributions, 
  where
  \[
    \varphi^{\rm old}
    :=\sum_{j=1}^{A}\Sc^{2j,2N}f_j,
    \qquad
    \varphi^{\rm rec}
    :=\sum_{j=A+1}^{N}\Sc^{2j,2N}f_j.
  \]
  If $N\leq K$, then $A=0$ and the old contribution is empty.
  Since $N-j\geq K$ for $j\leq A$ and
  $\alpha=2\mathfrak c_*$,
  \[
    \sum_{j=1}^{A}D_jL^{-\alpha(N-j)}
    \leq
    L^{-\mathfrak c_*K}
    \sum_{j=1}^{A}D_jL^{-\mathfrak c_*(N-j)}
    \leq L^{-\mathfrak c_*K}W_N.
  \]
  Equation \eqref{eq:detailedBatch3-shifted-mixing}
  therefore gives
  \begin{align}
    \|\Pi_{B_R}\varphi^{\rm old}\|_{L^2}
    &\leq \bar C R^s\|\varphi^{\rm old}\|_{H^{-s}} \nonumber\\
    &\leq \bar C R^sL^{-\mathfrak c_*K}W_N \\ 
    \label{eq:detailedBatch3-upper-old}
    & \leq \bar C R^{s - \mathfrak{c}_* K_0} W_N
  \end{align}
  % {\color{green}Choose $K_0$ so that $\mathfrak{c}_* K_0 \geq s$; this bounds the RHS of \eqref{eq:detailedBatch3-upper-old} from above $\leq C W_N$, which we bound again from above $\leq C W_N \log_L R$ for $R \geq L^\zeta$, taking $\zeta \geq \frac{1}{10}$ and enlarging $L_0$. }

For the recent contribution $\varphi^{\rm rec}$ we use the lossy bound $\| \Pi_{B_R} \varphi^{\rm rec}\|_{L^2}^2 \leq \| \varphi^{\rm rec}\|_{L^2}^2$, which in turn is bounded
\[ \leq \sum_{w = A+1}^N \| \mathcal{S}^{2w,2N} f_w\|_{L^2}^2 + 2\sum_{A + 1 \leq w < r \leq N}\left| \langle \mathcal{S}^{2w, 2N} f_w , \mathcal{S}^{2r, 2N} f_r \rangle\right| \, . \]
The diagonal terms are bounded $\leq \bar C(N-A)\leq\bar C K$, while for the off-diagonal terms,
  \eqref{eq:detailedBatch3-correlation} implies
  \[
    \big|\langle \Sc^{2w,2N}f_w,\Sc^{2r,2N}f_r\rangle\big|
    =
    \big|\langle \Sc^{2w,2r}f_w,f_r\rangle\big|
    \leq \bar C D_wL^{-\alpha(r-w)}.
  \]
  Collecting, 
  \begin{align}
    \|\Pi_{B_R} \varphi^{\rm rec}\|_{L^2}^2
    &\leq
    \bar C(N-A)+\bar C\sum_{A+1\leq w<r\leq N}
      D_wL^{-\alpha(r-w)} \nonumber\\
    &\leq
    \bar C(N-A)+\bar CL^{-\alpha}\sum_{w=A+1}^{N}D_w \nonumber\\
    &\leq \bar C(N-A)(1+L^{-\alpha}M_N)
    \leq \bar C K (1 + L^{-\alpha}  M_N) \,.
    % \leq C (1 + M_N) \ell_R 
    \label{eq:detailedBatch3-upper-recent}
  \end{align}

  Combining the old and recent contributions gives
  \begin{align*}
    \|\Pi_{B_R}\varphi_{2N}\|_{L^2}^2 & 
    \leq 2 \left(\| \Pi_{B_R} \varphi^{\rm old}\|_{L^2}^2 + \| \Pi_{B_R} \varphi^{\rm rec}\|_{L^2}^2 \right) \\ 
    & \leq \bar C R^{2(s - \mathfrak{c}_* K_0)} W_N^2 + \bar C K_0 \ell_R (1 + L^{-\alpha} M_N)  \\ 
    & \leq \bar C \max\{ R^{2 (s - \mathfrak{c}_* K_0)} , K_0 \ell_R\} {(1 + W_N^2 + M_N)}
  \end{align*}
  At this point, we fix
  \[K_0 = \frac{1 + s}{\mathfrak{c}_*} \,  ,\]
  so our upper bound can be written in the form
  $\mathfrak{U}_N \ell_R$
  with 
  \[\mathfrak{U}_N = \bar C K_0 (1 + W_N^2 + M_N) \, .\]
  Note that \[\|{\mathfrak{U}}_N\|_{L^{q}(\Omega)} \leq \bar C\] by Claim \ref{cla:boundWNMN3}. 
  % where $\tilde{\mathfrak{U}}_N = \tilde{\mathfrak{U}}_N(\uo)$ will be enlarged to the desired function $\mathfrak{U}_N$ by a multiplicative constant to be specified  later in the proof. For now we note that \[\|\tilde{\mathfrak{U}}_N\|_{L^{q}(\Omega)} \leq \bar C\] by Claim \ref{cla:boundWNMN3}. 

  % \[\|\Pi_{B_R}\varphi_{2N}\|_{L^2}^2
  %   \leq \mathfrak U_N(\uo)\ell_R \]
  % where 
  %   \[
  %   \mathfrak U_N
  %   :=C\left(
  %     1+W_N^2+M_N
  %   \right) \,, 
  % \]
  % $C$ sufficiently large. That $\| \mathfrak{U}_N\|_{L^q}$ is bounded uniformly in $N$ follows from Claim \ref{cla:boundWNMN3}. 

  % After increasing $\zeta$, we have $\ell_R\geq1$ and
  % $K\leq C\ell_R$.  It follows from
  % \eqref{eq:detailedBatch3-upper-old}--\eqref{eq:detailedBatch3-upper-recent}
  % that
  % \[
  %   \|\Pi_{B_R}\varphi_{2N}\|_{L^2}^2
  %   \leq \mathfrak U_N(\uo)\ell_R,
  % \]
  % where the $R$-independent random variable
  % \[
  %   \mathfrak U_N
  %   :=C\left(
  %     1+W_N^2+M_N
  %   \right)
  % \]
  % satisfies
  % \[
  %   \sup_{N\geq1}\|\mathfrak U_N\|_{L^q(\Omega)}\leq C
  % \]
  % by \eqref{eq:detailedBatch3-maximal} and
  % the bound on $W_N$.  This proves the upper bound at even times.

  \paragraph{Lower bound.}
  Only for the lower bound, assume $N\geq C_N\ell_R$, where $C_N:=K_0+2$. Then $K\leq N$ and $A=N-K$. At the end, choose the theorem's constant $C\geq C_N$.
  In what follows, we will specify a sufficiently small constant
  $0 < k_0 \leq K_0$ and set
  \[
    k:=\lfloor k_0\ell_R\rfloor,
    \qquad
    B:=N-k \, . 
  \]
  % For now, we enforce $\zeta \geq 1$, so that moving forward $R \geq L$ and $\ell_R
  Later on, we will arrange so that $\ell_R$ is large, so that $k \geq 1$, but further details will wait till we specify $L_0$ and $\zeta$ further down the argument.
  % Increasing $\zeta$, we may assume
  % { \[
  %   1\leq k<K\leq N,
  %   \qquad
  %   k\simeq\ell_R,
  %   \qquad
  %   \frac{K}{k}\leq C.
  % \]  } 

  We write 
  \[
    \varphi_{2N}
    =\varphi^{\rm old}+\varphi^{\rm mid}+\varphi^{\rm new},
  \]
  where we have further decomposed $\varphi^{\rm rec}$ into 
  \[
    \varphi^{\rm mid}:=\sum_{j=A+1}^{B}\Sc^{2j,2N}f_j \, , 
    \qquad
    \varphi^{\rm new}:=\sum_{j=B+1}^{N}\Sc^{2j,2N}f_j \, .
  \]
  Expanding the square gives
  \begin{align}
    \| \Pi_{B_R} \varphi_{2N} \|_{L^2}^2 \geq &  \| \Pi_{B_R} \varphi^{\rm new}\|_{L^2}^2 \\  & - 2 \left|\langle \Pi_{B_R} \varphi^{\rm new}, \varphi^{\rm mid}\rangle \right| - 2 \left|\langle \Pi_{B_R} \varphi^{\rm old}, \varphi^{\rm new} + \varphi^{\rm mid} \rangle\right| \, , \label{eq:errorTermsDiscBatch3}
  \end{align}
  where we have discarded the nonnegative terms
  $\|\Pi_{B_R}\varphi^{\rm mid}\|_{L^2}^2$ and
  $\|\Pi_{B_R}\varphi^{\rm old}\|_{L^2}^2$ and bounded the cross terms by their absolute values.
  We will bound the three terms on the right-hand side separately on the event
  \[
    \mathcal G_N
    :=\{W_N+M_N\leq L^{\mathfrak{c}_*}\}.
  \]
  Markov's inequality, the preceding bound on $W_N$, and
  Claim \ref{cla:boundWNMN3} yield
  \begin{equation}\label{eq:detailedBatch3-good-probability}
    \P(\mathcal G_N^c)\leq \bar C L^{-2q\mathfrak{c}_*} = \bar C L^{- \mathfrak{c}/12} \,.
  \end{equation}

  We start with the pairing of $\varphi^{\rm new}$ with itself; with $g_j:=\Sc^{2j,2N}f_j$ this expands as
  \[\| \Pi_{B_R} \varphi^{\rm new}\|_{L^2}^2 = \sum_{j = B + 1}^N \| \Pi_{B_R} g_j\|_{L^2}^2 + 2 \sum_{B + 1 \leq w < r \leq N} \langle g_w, \Pi_{B_R} g_r\rangle \, .\]
  Increase $L_0$ so that the $H^1$ operator norm of each full or partial
  shear is at most $C_\psi L\leq L^2$. Thus the $2(N-j)$ shears
  between times $2j$ and $2N$ give, for $B<j\leq N$,
  \begin{align*}
    \|\Pi_{\Z^2\setminus B_R}g_j\|_{L^2}
    & \leq \bar C R^{-1}\| g_j\|_{H^1} \\ 
    &\leq \bar C R^{-1}L^{4(N-j)} \\ 
    & \leq \bar C R^{-1}L^{4k} 
    \leq \bar C R^{4 k_0 - 1} \, , 
  \end{align*}
  hence 
  \begin{align*}
    \|\Pi_{B_R}g_j\|_{L^2}
    \geq \bar{c} - \bar C R^{4 k_0 - 1} \, . 
  \end{align*}
  % Shrinking $k_0$, it follows that $\| \Pi_{B_R} g_j\|_{L^2}^2 \geq c$ for each $B < j \leq N$. 
  %  There is then a constant
  % $\delta_{\rm tail}>0$ such that, on setting
  % $\delta_0:=\min\{\delta_{\rm old},\delta_{\rm tail}\}$,
  % \begin{equation}\label{eq:detailedBatch3-tail-choice}
  %   R^{-1}L^{Ck}K\leq L^{-\delta_0}.
  % \end{equation}
  % In particular, each diagonal term
  % $\|\Pi_{B_R}g_j\|_{L^2}^2$ is bounded below by a fixed positive constant.
  For the off-diagonal pieces, 
  \begin{align}\label{eq:offDiagonal3}
    \begin{aligned}
      |\langle g_w,\Pi_{B_R}g_r\rangle|
      &\leq
      |\langle\Sc^{2w,2r}f_w,f_r\rangle|
      +\bar C R^{4 k_0-1}\\
      &\leq
      \bar C D_wL^{-\alpha(r-w)}+\bar C R^{4 k_0-1}.
    \end{aligned}
  \end{align}
  On $\mathcal G_N$, geometric summation and the definition of $M_N$ give
  \begin{align*}
    \sum_{B<w<r\leq N}|\langle\Pi_{B_R}g_w,g_r\rangle|
    % & \leq \sum_{B<w<r\leq N} |\langle g_w,g_r\rangle| + \| \Pi_{B_R} g_w\|_{L^2} \| g_r\|_{L^2} \\ 
    &\leq
    \bar C L^{-\alpha}\left(\sum_{w=B+1}^{N}D_w\right)
      +\bar Ck^2R^{4 k_0-1}\\
    &\leq
    \bar C k \left(L^{-\mathfrak{c}_*}  + k R^{4 k_0-1}\right)  \,. 
    % L^{- (\alpha}}
    % C\left(L^{\beta-\alpha}+L^{-\delta_0}\right)k.
  \end{align*}
  % {\color{green} Enforcing $\zeta \geq 2 + \frac{1}{10}$, $L_0$ large and $k_0$ small, the above is $\ll \log_L R$, hence $\|\Pi_{B_R}\varphi^{\rm new}\|_{L^2}^2 \geq c \log_L R$ along $\mathcal{G}_N$, as desired.  } 
  In total, 
  \begin{align*}
    \| \Pi_{B_R} \varphi^{\rm new}\|_{L^2}^2 \geq k (\bar c - \bar C k R^{4k_0 -1} - \bar C L^{- \mathfrak{c}_*}) \, . 
  \end{align*}
  % \begin{equation}\label{eq:detailedBatch3-new-lower}
  %   \|\Pi_{B_R}\varphi^{\rm new}\|_{L^2}^2
  %   \geq
  %   \left(c-CL^{\beta-\alpha}-CL^{-\delta_0}\right)k.
  % \end{equation}

  We address now the middle and old pieces (RHS of equation \eqref{eq:errorTermsDiscBatch3}). Starting with the pairing of $\varphi^{\rm mid}$ and $\varphi^{\rm new}$, for
  $A<w\leq B<r\leq N$, equation \eqref{eq:offDiagonal3} gives 
  \[
    |\langle\Pi_{B_R}g_r,g_w\rangle|
    \leq
    \bar C D_wL^{-\alpha(r-w)}+\bar C  R^{4k_0 -1} \, .
  \]
  Hence, on $\mathcal G_N$,
  \begin{align*}
    |\langle\Pi_{B_R}\varphi^{\rm new},\varphi^{\rm mid}\rangle|
    &\leq
    \bar C L^{-\alpha}\sum_{w=A+1}^{B}D_w
      +\bar CkK R^{4 k_0 -1}\\
    &\leq
    \bar CK L^{-\mathfrak{c}_*}+ \bar CK k R^{4k_0 -1}  \\ 
    & \leq \bar C \ell_R \big(L^{- \mathfrak{c}_*} + k R^{4k_0 -1}\big)
  \end{align*}
  % This contribution is $\ll \log_L R$ on taking $\zeta$ sufficiently large. 
  % Here we used
  % $\sum_{w=A+1}^{B}D_w\leq KM_N$ and $K/k\leq C$.

  For the pairing of $\varphi^{\rm old}$ with $\varphi^{\rm rec} = \varphi^{\rm mid} + \varphi^{\rm new}$, 
  Equation \eqref{eq:detailedBatch3-upper-old} along $\mathcal{G}_N$ gives
  \begin{equation}\label{eq:detailedBatch3-old-good}
    \|\Pi_{B_R}\varphi^{\rm old}\|_{L^2}
    \leq \bar C R^{s - \mathfrak{c}_* K_0}L^{\mathfrak{c}_* } \leq \bar C R^{-1} L^{\mathfrak{c}_*}
  \end{equation}
  while for $\varphi^{\rm rec} = \varphi^{\rm mid} + \varphi^{\rm new}$ we apply the crude bound
  \[
    \|\varphi^{\rm rec}\|_{L^2}\leq \bar CK \leq \bar C \ell_R
  \]
  from our choice of $K_0$. 
  Combining, 
  \[
    |\langle\Pi_{B_R}\varphi^{\rm old},
      \varphi^{\rm mid}+\varphi^{\rm new}\rangle|
    \leq \bar C  R^{-1} L^{\mathfrak{c}_*} \ell_R \,. 
  \]
  % When $K_0$ is large enough, the above is $\ll \log_L R$ for $R \geq L^\zeta$, $\zeta \geq 2 + \frac{1}{10}$. 
  
  Piecing back together the components in our lower bound of $\| \Pi_{B_R} \varphi_{2N}\|_{L^2}^2$ gives, along $\mathcal{G}_N$, 
  \begin{align*}
    \| \Pi_{B_R} \varphi_{2N} \|_{L^2}^2 & \geq k (\bar c - \bar C k R^{4k_0 -1} - \bar C L^{- \mathfrak{c}_*}) \\ 
    & - \bar C \ell_R\big( R^{-1} L^{\mathfrak{c}_*} +  L^{-\mathfrak{c}_*} +  k R^{4k_0 -1} \big) \, . 
  \end{align*}
  Fix $k_0=1/8$ and $\zeta\geq16$, so $4k_0-1=-1/2$ and
  $k\geq\ell_R/16$. Keep the product $\ell_RR^{-1/2}$ intact: for $L\geq2$,
  \[
    \sup_{R\geq L^\zeta}(\log_LR)R^{-1/2}
    =\zeta L^{-\zeta/2}.
  \]
  Consequently, on $\mathcal G_N$,
  \[
    \|\Pi_{B_R}\varphi_{2N}\|_{L^2}^2
    \geq \ell_R\bigl(\bar c-\bar C\zeta L^{-\zeta/2}
      -\bar C L^{-\mathfrak c_*}-\bar C L^{\mathfrak c_*-\zeta}\bigr).
  \]
  Taking $L_0$ sufficiently large establishes the lower bound uniformly in $R$.
  
  % completes the desired lower bound along $\mathcal{G}_{N}$. 
  
  % It follows that
  % \begin{align*}
  %   \|\Pi_{B_R}\varphi_{2N}\|_{L^2}^2
  %   &\geq
  %   \|\Pi_{B_R}\varphi^{\rm new}\|_{L^2}^2
  %   -2|\langle\Pi_{B_R}\varphi^{\rm new},\varphi^{\rm mid}\rangle|\\
  %   &\quad
  %   -2|\langle\Pi_{B_R}\varphi^{\rm old},
  %     \varphi^{\rm mid}+\varphi^{\rm new}\rangle|\\
  %   &\geq
  %   \left(c-CL^{\beta-\alpha}-CL^{-\delta_0}\right)k.
  % \end{align*}
  % Since $\beta<\alpha$ and $k\simeq\ell_R$, increasing $L_0$ proves
  % \[
  %   \|\Pi_{B_R}\varphi_{2N}\|_{L^2}^2\geq c\log_LR
  %   \qquad\text{on }\mathcal G_N.
  % \]

% {\color{red}  QUIT HERE } 

  \paragraph{Passage to $t\in[2N,2N+2)$.}

  Recall that our right-limit convention for $\varphi_t$ implies that no additional scalar driving occurs for $t \in [2N,2N+2)$. For any $N\geq1$, use the upper-bound decomposition to write
  \[
    \varphi_t
    =\underbrace{\mathcal S^{2N,t}\varphi^{\rm old}}_{=:\varphi^{\rm old}_t}
    +\underbrace{\mathcal S^{2N,t}\varphi^{\rm rec}}_{=:\varphi^{\rm rec}_t}.
  \]

  \subparagraph{Upper bound.} 
  The upper bound on the old contribution changes to \[\|\Pi_{B_R}\varphi^{\rm old}_t\|_{L^2}
    \leq
    \bar C R^sL^{2s}\|\varphi^{\rm old}\|_{H^{-s}}
    \leq \bar C L^{2s}R^{-1}W_N
    \leq \bar C W_N \, ,
  \]
  where we take $\zeta\geq\max\{1,2s\}$. The upper bound on the recent contribution does not change at all, since for this term we merely bound
    \[\| \Pi_{B_R} \varphi^{\rm rec}_t\|_{L^2} \leq \| \varphi^{\rm rec}_t\|_{L^2} = \| \varphi^{\rm rec}\|_{L^2}\]
    using that $\mathcal{S}^{2N, t}$ is an isometry. Thus the upper bound holds for every $N\geq1$, uniformly over $t\in[2N,2N+2)$, after enlarging the constant in $\mathfrak U_N$ if necessary.

    To extend the upper bound to $L\leq R\leq L^\zeta$, use
    \[
      \|\Pi_{B_R}\varphi_t\|_{L^2}^2
      \leq \|\Pi_{B_{L^\zeta}}\varphi_t\|_{L^2}^2
      \leq \zeta\mathfrak U_N
      \leq \zeta\mathfrak U_N\log_LR.
    \]
    Replacing $\mathfrak U_N$ by $\zeta\mathfrak U_N$ proves part (a) for all $R\geq L$, with the same uniform moment control.
    % \item $\| \pi_{B_R} \varphi^{\rm old}_t\|_{L^2}$ changes.  Have $ \|\Pi_{B_R}\varphi^{\rm old}_t\|_{L^2}
    % \leq
    % C R^sL^{2s}\|\varphi^{\rm old}\|_{H^{-s}}.$ absorb multiplicative factors by taking $K_0$ larger. 

    \subparagraph{Lower bound.}
    Return to $R\geq L^\zeta$ and $N\geq C_N\ell_R$, and write
    $\varphi^{\rm rec}_t=\varphi^{\rm mid}_t+\varphi^{\rm new}_t$, where
    $\varphi^{\rm mid}_t=\mathcal S^{2N,t}\varphi^{\rm mid}$ and
    $\varphi^{\rm new}_t=\mathcal S^{2N,t}\varphi^{\rm new}$.
    % $\| \Pi_{B_R} \varphi^{\rm rec}\|_{L^2}$ does not change at all, since we just bound $\leq \| \varphi^{\rm rec}_t\|_{L^2} = \| \varphi^{\rm rec}\|_{L^2}$. 

    Set $g_j(t)=\mathcal S^{2j,t}f_j$. For $B<j\leq N$, we have
    $N-j\leq k-1$, and the final evolution adds at most two shears. Hence
    \begin{align}\label{eq:smallScaleContTime3}
      \|\Pi_{\Z^2\setminus B_R}g_j(t)\|_{L^2}
      \leq\bar C R^{-1}L^{4k}\leq\bar C R^{-1/2}.
    \end{align}
    The diagonal, off-diagonal, and middle/new estimates therefore hold
    with the same bounds as above. The old/recent interaction is at most
    $\bar C\ell_R L^{2s+\mathfrak c_*}R^{-1}$, which is uniformly small
    relative to $\ell_R$ after taking $\zeta>2s+\mathfrak c_*$ and $L_0$ large.
    Thus the same random variables $\mathfrak U_N$ and events $\mathcal G_N$
    work uniformly over $t\in[2N,2N+2)$.

\subsection{Batchelor's law, continuous-time deterministic scalar driving}\label{subsec:batchCont}

We now turn to scalar driving that is constant in time (Setting \ref{set:contDet}):
\[
  S(t,x) \equiv f(x) \qquad \text{ for all } \quad t \geq 0 \,. 
\]
The velocity field $u = u_\uo$ and phase sequences $\uo \in \Omega$ are the same as in the previous
sections. The source $f$ is real-valued and mean-zero. By the assumptions in
Setting \ref{set:contDet}, we may fix $F\geq 1$ such that
\begin{equation}\label{eq:cont-force-assumptions3}
  \|f\|_{H^1}+\|f\|_{L^\infty}\leq F,
  \qquad
  \left\|\int_\T f(x^1,x^2)\dif x^1\right\|_{L^2_{x^2}} + \left\|\int_\T f(x^1,x^2)\dif x^2\right\|_{L^2_{x^1}}\geq F^{-1}.
\end{equation}
The scalar solves
\begin{equation}\label{eq:cont-scalar3}
  \partial_t\varphi+u_\uo\cdot\nabla\varphi=f,
  \qquad
  \varphi_0=0.
\end{equation}
By linearity and incompressibility,
\begin{equation}\label{eq:cont-duhamel3}
  \varphi_T
  =
  \int_0^T\Sc^{r,T}_{\uo}f\,\dif r.
\end{equation}

\begin{theorem}\label{thm:contBatchDetailed3}
  Let $q \geq 1$. Then, there exist constants $C, L_0 \geq 1, c = C^{-1}$ and $\zeta > 0$, depending only on $\psi, p, q, F$, with the following properties for all $L \geq L_0$. 

  \begin{itemize}
    \item[(a)] There is a sequence of random variables
    $\mathfrak{U}^{\rm cont}_N:\Omega\to[1,\infty)$ satisfying
    \[
      \sup_{N\geq1}
      \|\mathfrak{U}^{\rm cont}_N\|_{L^q(\Omega)}
      \leq C
    \]
    such that for any $R \geq L^\zeta$ and
    $N \geq C \log_L R$, $t \in [2N, 2N + 2)$, one has
    \begin{align}
      \| \Pi_{B_R} \varphi_t \|_{L^2}^2
      \leq \mathfrak{U}^{\rm cont}_N(\uo) \log_L R \,.
    \end{align}
    \item[(b)] There is a sequence of events
    $\mathcal{G}^{\rm cont}_N\subset\Omega$ satisfying
    \[
      \P\big((\mathcal{G}^{\rm cont}_N)^c\big)
      \leq C L^{-c}
    \]
    such that, with $N,R$ and $t$ as in item (a), and for
    $\uo \in \mathcal{G}^{\rm cont}_N$, one has the lower bound
    \begin{align}
      \| \Pi_{B_R} \varphi_t \|_{L^2}^2 \geq c \log_L R \,. 
    \end{align} 
  \end{itemize}
\end{theorem}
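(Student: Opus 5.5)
The plan is to reduce Theorem \ref{thm:contBatchDetailed3} to the proof of Theorem \ref{thm:detailedBatch3}. We do this by grouping the Duhamel integral \eqref{eq:cont-duhamel3} into one-period blocks. For $j\geq 1$ set
\[
  F_j := \int_{2j-2}^{2j}\Sc^{r,2j}_\uo f\,\dif r ,
\]
so that $\varphi_{2N}=\sum_{j=1}^N \Sc^{2j,2N}F_j$. This has exactly the form of Setting~\ref{set:generalDiscDet}, except that the ``forcing'' $F_j$ is now random.

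The crucial point is that $F_j$ depends only on the phases $(\omega^1_j,\omega^2_j)$ of the $j$-th cycle. Since the phases of different cycles are independent, the shifted mixing bound \eqref{eq:detailedBatch3-shifted-mixing} still applies to $\Sc^{2w,2r}F_w$, and the quantities $D_j$, $W_N$, $M_N$ and Claim \ref{cla:boundWNMN3} carry over verbatim. The block sizes are controlled as follows:
\begin{itemize}
  \item $\|F_j\|_{L^2}\leq 2F$, by $L^2$-isometry;
  \item $\|F_j\|_{H^1}\leq C L^2 F$, since at most two shears act, each costing a factor of order $L$.
\end{itemize}
The polynomial losses in $L$ enter only in the correlation bound \eqref{eq:detailedBatch3-correlation} and in the small-scale tail estimate $\|\Pi_{\Z^2\setminus B_R}\Sc^{2j,t}F_j\|_{L^2}\leq \bar C R^{-1}L^{4k+2}$. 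They are absorbed by enlarging $K_0$, shrinking $k_0$, and taking $\zeta$ and $L_0$ larger. With these adjustments, the upper bound (a) follows exactly as in the discrete case.

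The main obstacle is the lower bound (b), since the diagonal terms now require $\|F_j\|_{L^2}\geq c$ with high probability. In the discrete case this was an assumption; here it must come from the nondegeneracy condition \eqref{eq:cont-force-assumptions3}. I would split $F_j=F_j^{\rm h}+F_j^{\rm v}$ according to whether $r$ lies in the vertical or the horizontal half-period.

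\emph{Horizontal half.} For $r$ in the horizontal half-period, $\Sc^{r,2j}f(x)=f(x^1-\tau L\psi(x^2+\omega^2_j),x^2)$, where $\tau=2j-r\in(0,1]$. Averaging over $\tau$ gives an average of $f$ along horizontal segments of length $L|\psi(x^2+\omega_j^2)|$. By a Fourier computation in $x^1$, mode $m\neq0$ is damped by $\min\{1,(L|m\psi|)^{-1}\}$. Hence, away from the set where $|\psi(\cdot+\omega_j^2)|\leq L^{-1/2}$, whose measure is small by (ND), this gives
\[
  F_j^{\rm h}=\bar f^{\,2}(x^2)+O_{L^2}\bigl(L^{-c}\bigr),\qquad \bar f^{\,2}(x^2):=\int_\T f(x^1,x^2)\,\dif x^1 .
\]

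\emph{Vertical half.} Similarly, $F_j^{\rm v}=\Sc^{2j-1,2j}\bigl(\bar f^{\,1}+O_{L^2}(L^{-c})\bigr)$, where $\bar f^{\,1}(x^1)=\int_\T f\,\dif x^2$.

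\emph{Orthogonality.} The function $\bar f^{\,1}(x^1)$ is then transported by the horizontal shear to $\bar f^{\,1}(x^1-L\psi(x^2+\omega))$, and
\[
  \bigl\langle \bar f^{\,2}(x^2),\,\bar f^{\,1}(x^1-L\psi(x^2+\omega))\bigr\rangle=\int \bar f^{\,2}(x^2)\Bigl(\int_\T\bar f^{\,1}\Bigr)\dif x^2=0,
\]
because $\bar f^{\,1}$ is mean-zero. Therefore
\[
  \|F_j\|_{L^2}^2\geq \|\bar f^{\,1}\|_{L^2}^2+\|\bar f^{\,2}\|_{L^2}^2-CL^{-c}\geq c F^{-2}
\]
for $L\geq L_0$. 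Here $\|f\|_{L^\infty}$ controls the contribution of the small bad set, and the bound is deterministic up to an $O(L^{-c})$ error. If I cannot make the error uniform in $\omega$, I would instead bound its expectation and fold a Markov-inequality event into $\mathcal G^{\rm cont}_N$.

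Given this, the decomposition of $\varphi_{2N}$ into old, mid and new pieces and the estimate \eqref{eq:errorTermsDiscBatch3} go through as before, with $\bar c$ now depending on $F$.

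For $t\in[2N,2N+2)$ there is an additional partial block $P_t:=\int_{2N}^t\Sc^{r,t}f\,\dif r$ with $\|P_t\|_{L^2}\leq 2F$. The remaining terms are $\Sc^{2N,t}\varphi_{2N}$, handled as in the passage to $t\in[2N,2N+2)$ in the discrete case. For the upper bound, $\|\Pi_{B_R}\varphi_t\|^2\leq 2\|\Pi_{B_R}\Sc^{2N,t}\varphi_{2N}\|^2+8F^2$, and the constant $8F^2$ is absorbed since $\log_LR\geq\zeta$. For the lower bound, the cross term is at most $4F\,(\mathfrak U_N\log_LR)^{1/2}$. On $\mathcal G^{\rm cont}_N$, after adding $\{\mathfrak U_N\leq L^{c}\}$ to the event, this is $o(\log_L R)$ once $\zeta$ is large enough, which gives $\|\Pi_{B_R}\varphi_t\|_{L^2}^2\geq c\log_LR$.
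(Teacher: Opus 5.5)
\textbf{The gap: short-lag correlations between consecutive effective sources.} Your block decomposition and your diagonal analysis (the uniform $L^2$ lower bound on $F_j$ via $\bar f^{\,1}$, $\bar f^{\,2}$ and orthogonality) are correct and match the paper's Lemma \ref{lem:cont-effective-sources3}(a). The gap is in the off-diagonal terms. You claim the polynomial losses in the correlation bound \eqref{eq:detailedBatch3-correlation} are absorbed by enlarging $K_0$, shrinking $k_0$, and enlarging $\zeta,L_0$. They are not. With $\|F_w\|_{H^1}\lesssim L^2$ and $\|F_r\|_{H^s}\lesssim L^{2s}$, the quenched mixing estimate only gives
\[
  |\langle \Sc^{2w,2r}F_w,F_r\rangle|\leq \bar C D_w L^{2+2s-\alpha(r-w)} .
\]
Since $\alpha=2\mathfrak c_*$ is a small fixed exponent, this is worse than the trivial bound $4F^2$ for every lag $r-w\leq 2/\alpha$. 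None of $K_0,k_0,\zeta,L_0$ changes that ratio.

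\textbf{Effect on (a) and (b).} For the upper bound (a) this is repairable: bound lags up to $m_0\approx 8/\alpha$ trivially, at a cost of $\bar C m_0K$. For the lower bound (b) it is fatal. In \eqref{eq:errorTermsDiscBatch3}, the off-diagonal part of $\|\Pi_{B_R}\varphi^{\rm new}\|_{L^2}^2$ must be $o(k)$, or it swamps the diagonal contribution $\bar c k$ with $\bar c\sim F^{-2}$. That requires lag-one correlations to be $o(1)$ as $L\to\infty$, whereas the trivial bound only gives $O(F^2k/\alpha)$. Nor is this smallness automatic: $\bar f^{\,1}$ and $\bar f^{\,2}$ are each invariant under one of the two shears, so decorrelation of consecutive blocks is a statement about a full period acting on these components.

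\textbf{The missing idea.} This is the paper's Lemma \ref{lem:cont-effective-sources3}(b), and it reuses the approximation you already have. The horizontal shear fixes $\bar f^{\,2}$, so your approximant is $g_j=\Sc^{2j-1,2j}\hat g$ with $\hat g:=\bar f^{\,1}+\bar f^{\,2}$. By the $L^2$ isometry the common final horizontal shear cancels, giving $\langle\Sc^{2w,2r}g_w,g_r\rangle=\langle\Sc^{2w-1,2r-1}\hat g,\hat g\rangle$. Now $\hat g$ is bounded in $H^1$ independently of $L$. So the odd-to-odd version of Corollary \ref{cor:quenchedMix2} (coordinates exchanged) gives $\bar C D^{\rm o}_{w-1}L^{-\alpha(r-w)}+\bar C L^{-c}$ for lags $r-w\leq m_0$. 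Lags $r-w>m_0\geq 8/\alpha$ use the direct bound above. Together these give $\bar C\widetilde D_wL^{-\alpha_0(r-w)}$ with $\alpha_0=\min\{\alpha/2,c/m_0\}$. After that, the discrete argument runs with $\alpha_0$ in place of $\alpha$.

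\textbf{A smaller slip.} When passing to $t\in[2N,2N+2)$, you control the cross term by $4F(\mathfrak U_N\log_LR)^{1/2}$ on $\{\mathfrak U_N\leq L^{c}\}$. Making this $o(\log_LR)$ needs $\log_LR\gg L^{c}$, which a fixed $\zeta$ cannot ensure. Use instead the reverse triangle inequality $\|\Pi_{B_R}\varphi_t\|_{L^2}\geq\|\Pi_{B_R}\Sc^{2N,t}\varphi_{2N}\|_{L^2}-2F$, which needs no upper bound on the first term.
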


With one notable exception, the proof of Theorem \ref{thm:contBatchDetailed3} largely mirrors that of the discrete-time version Theorem \ref{thm:detailedBatch3}. For brevity, in the coming proof, we emphasize the changes and leave further details to the reader. 

% {\color{red}  since many of the details are from the proof of theorem \ref{thm:detailedBatch3} we just give a summary of what changes below.  } 

\paragraph{Setup.} Let $s,\mathfrak c$ be as in
Theorem~\ref{thm:neg-reg-flexible}.  As in the proof of
Theorem~\ref{thm:detailedBatch3}, apply
Corollary~\ref{cor:quenchedMix2} with moment order $2q$ and denote the
resulting shifted constants by $D_j := D \circ \sigma^j$. Let $\mathfrak{c}_* = \mathfrak{c} / 24q, \alpha = 2 \mathfrak{c}_*$ be as in the setup in the proof of Theorem \ref{thm:detailedBatch3}, and note that equation \eqref{eq:detailedBatch3-shifted-mixing} holds in our setting. We continue the practice of writing $\bar C \geq 1, \bar c = \bar C^{-1}$ for constants depending only on $\psi, p, q, F, s, \mathfrak{c}$, while leaving specification of $C, c^{-1}$ as in the statement of Theorem \ref{thm:contBatchDetailed3} to the end.

% Thus, after possibly decreasing the
% decay exponent during the proof,
% \[
%   \|\Sc^{2j,2r}h\|_{H^{-s}}
%   \leq
%   D_jL^{-\alpha_0(r-j)}\|h\|_{H^1},
%   \qquad j<r,
% \]
% where $\sup_j\|D_j\|_{L^{2q}(\Omega)}\leq\bar C$.  We continue to use the
% constant convention $\bar C,\bar c=\bar C^{-1}$ from the discrete proof.

\subsubsection*{Preliminary Lemma}

The main point is to reduce the continuous source to a sequence of `effective' discrete-time sources. For $j \geq 1$ define 
\begin{align}
  \mathfrak{f}_j = \int_{2(j-1)}^{2j} \mathcal{S}^{\tau, 2j} f \dif \tau \,. 
\end{align}
Then, Duhamel's formula implies the exact identity
\begin{align}\label{eq:contTimeEvens3}
  \varphi_{2N} = \sum_{j = 1}^{N} \mathcal{S}^{2j,2N} \mathfrak{f}_j \, , 
\end{align}
reminiscent of the setup for Theorem \ref{thm:detailedBatch3} (Setting \ref{set:generalDiscDet}). The following checks that the hypotheses of our setting imply that the $(\mathfrak{f}_j)$ are more-or-less compatible with the proof presented there, i.e., (i) the $\mathfrak{f}_j$ do not `collapse' to zero and (ii) are controlled in $H^1$. 

Below, for $j \geq 1$ we decompose 
\begin{equation}\label{eq:cont-effective-source3}
  \mathfrak f_j
  :=
  \Sc^{2j-1,2j}u_{j-1}^{\rm v}+u_{j-1}^{\rm h},
\end{equation}
where 
  \begin{align}
    u_j^{\rm v}
    :=
    \int_{2j}^{2j+1}\Sc^{r,2j+1}f\,\dif r,
    \qquad 
    u_j^{\rm h}
    :=
    \int_{2j+1}^{2j+2}\Sc^{r,2j+2}f\,\dif r \,. 
    \label{eq:cont-unit-sources3}
    % \mathfrak f_j
    % &:=
    % \Sc^{2j-1,2j}u_{j-1}^{\rm v}+u_{j-1}^{\rm h},
    % && j\geq1.
    % \label{eq:cont-effective-source3}
  \end{align}

% We reduce the continuous source to a sequence of effective sources delivered
% at even times.  The only point not already contained in the proof of
% Theorem~\ref{thm:detailedBatch3} is the following lemma.  Besides giving a
% uniform lower bound for an effective source, the lemma rules out cancellation
% between sources accumulated during neighboring unit intervals.

\begin{lemma}[Effective sources for constant-in-time driving]
\label{lem:cont-effective-sources3}
  There are constants $\bar C,\bar c,\alpha_0>0$, and a sequence of random variables
  $\widetilde D_j:\Omega\to[1,\infty)$ such that
  \[
    \sup_{j\geq1}\|\widetilde D_j\|_{L^{2q}(\Omega)}
    \leq \bar C,
  \]
  with the following properties.  
  \begin{itemize}
    \item[(a)] For every $j \geq 1$, 
      \begin{equation}\label{eq:cont-effective-source-size3}
    \bar c
    \leq \|\mathfrak f_j\|_{L^2}
    \leq \bar C,
    \qquad
    \|\mathfrak f_j\|_{H^1}\leq \bar C L^2.
  \end{equation}
    \item[(b)] For integers $1\leq w<r$,
  \begin{equation}\label{eq:cont-effective-correlation3}
    \left|
      \left\langle
        \Sc^{2w,2r}\mathfrak f_w,\mathfrak f_r
      \right\rangle
    \right|
    \leq
    \bar C\widetilde D_wL^{-\alpha_0(r-w)} \, . 
  \end{equation}
  \end{itemize}

  % Then
  % \begin{equation}\label{eq:cont-effective-source-size3}
  %   \bar c
  %   \leq \|\mathfrak f_j\|_{L^2}
  %   \leq \bar C,
  %   \qquad
  %   \|\mathfrak f_j\|_{H^1}\leq \bar C L^2.
  % \end{equation}

\end{lemma}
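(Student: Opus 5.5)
The plan is to treat the three bounds in (a) deterministically and to obtain (b) from the mixing estimates of Section~\ref{sec:negRegMix}, routed through an $L^2$-initialized annealed bound rather than the $H^1$-initialized quenched bound. Part (b) is where I expect the real difficulty.

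\emph{Upper bounds in (a).} Each $\Sc^{\tau,2j}$ is an $L^2$ isometry, so $\|\mathfrak f_j\|_{L^2}\leq 2\|f\|_{L^2}\leq 2F$. For $\tau\in[2j-2,2j]$, $\Sc^{\tau,2j}$ is a composition of at most two (possibly partial) shears of amplitude at most $L$. Each such shear has $H^1\to H^1$ norm $\leq C_\psi L$, so $\|\mathfrak f_j\|_{H^1}\leq \bar C L^2 F$.

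\emph{Lower bound in (a).} I would use the splitting \eqref{eq:cont-effective-source3} and compare $u^{\rm v}_{j-1}$ with the $x^2$-average $g_1(x^1):=\int_\T f(x^1,y)\dif y$. On $[2j-2,2j-1)$ only the vertical shear acts, so
\[u^{\rm v}_{j-1}(x)=\int_0^1 f\bigl(x^1,\,x^2-(1-r)L\psi(x^1+\omega)\bigr)\dif r .\]
This is the average of $f(x^1,\cdot)$ over an interval of length $|L\psi(x^1+\omega)|$. Writing that interval as whole periods plus a remainder gives the pointwise bound
\[|u^{\rm v}_{j-1}(x)-g_1(x^1)|\leq 2\|f\|_{L^\infty}\min\{1,|L\psi(x^1+\omega)|^{-1}\}.\]
Because $\{\psi=0\}\cap\{\psi'=0\}=\emptyset$ (assumption (ND)), the set $\{|\psi|<\delta\}$ has measure $O(\delta)$. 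Taking $\delta=L^{-1/2}$ then gives $\|u^{\rm v}_{j-1}-g_1\|_{L^2}\leq \bar C L^{-1/4}$. Symmetrically, $\|u^{\rm h}_{j-1}-g_2\|_{L^2}\leq \bar CL^{-1/4}$ with $g_2(x^2):=\int_\T f(y,x^2)\dif y$.

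The key observation is that $\Sc^{2j-1,2j}g_1=g_1(x^1-L\psi(x^2+\omega^2_j))$ is exactly $L^2$-orthogonal to $g_2(x^2)$. Indeed, for each fixed $x^2$ the inner integral in $x^1$ is $\int g_1=0$. Hence
\[\|\mathfrak f_j\|_{L^2}^2\geq \|g_1\|_{L^2}^2+\|g_2\|_{L^2}^2-\bar CL^{-1/4}\geq \tfrac12F^{-2}\]
for $L\geq L_0$, using \eqref{eq:cont-force-assumptions3}.

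\emph{Correlation bound (b).} Write $d=r-w$. I would pair through $H^{-s}$ against $H^{s}$, but split in frequency to avoid paying $\|\mathfrak f_r\|_{H^s}\lesssim L^{2s}$ directly. For a cutoff $M$,
\[|\langle \Sc^{2w,2r}\mathfrak f_w,\mathfrak f_r\rangle|\leq M^{s}\|\Sc^{2w,2r}\mathfrak f_w\|_{H^{-s}}\|\mathfrak f_r\|_{L^2}+\|\Pi_{\Z^2\setminus B_M}\mathfrak f_r\|_{L^2}\|\mathfrak f_w\|_{L^2},\]
and the second term is $\leq \bar C L^2/M$. For the first term I would not use the $H^1$-based quenched estimate, which costs $\|\mathfrak f_w\|_{H^1}\sim L^2$. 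Instead I would use the $L^2$-initialized annealed bound \eqref{eq:bootstrapL2Start2}, applied conditionally: $\mathfrak f_w$ depends only on the phases up to cycle $w$, and these are independent of the phases driving $\Sc^{2w,2r}$. This gives $\bigl(\E\|\Sc^{2w,2r}\mathfrak f_w\|_{H^{-s}}^2\bigr)^{1/2}\leq \bar C L^{-(2d-1)\mathfrak c}$.

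I would then define
\[\widetilde D_w:=1+\sup_{m\geq1}L^{2\alpha_0 m}\|\Sc^{2w,2w+2m}\mathfrak f_w\|_{H^{-s}},\]
with $\alpha_0\ll \mathfrak c/q$. The $L^{2q}$ bound on $\widetilde D_w$, uniform in $w$ and $L$, follows exactly as in the proof of Corollary~\ref{cor:quenchedMix2}: the $H^{-s}$ norms are bounded by $2F$, so $2q$-th moments are dominated by second moments, and the sum over $m$ is geometric. Finally I would choose $M=L^{\beta d}$ with $\beta$ small relative to $\alpha_0/s$ but with $\beta d>2+\alpha_0 d$, which makes both terms $\leq \bar C\widetilde D_wL^{-\alpha_0 d}$.

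\emph{Main obstacle.} The obstacle is the short-range regime $d\leq d_0$, where the polynomial losses ($L^2$ from the $H^1$ tail of $\mathfrak f_r$, or $L^{2s}$ from $H^{s}$) are not yet beaten by the exponential gain. There the trivial Cauchy--Schwarz bound $\bar C$ is the only thing available, and it is not of the form $\bar C\widetilde D_wL^{-\alpha_0 d}$ with $\widetilde D_w$ bounded uniformly in $L$. I would first try to gain smallness directly from the structure of $\mathfrak f_r$. By the approximation in (a), $\mathfrak f_r\approx \Sc g_1+g_2$ up to $O(L^{-1/4})$, and $g_1,g_2$ are $H^1$ functions of a single variable, so their high-frequency tails are controlled by $F$ rather than by $L^2$. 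This would replace the $L^2/M$ term by $F/M+L^{-1/4}$, which can be made $\lesssim L^{-\alpha_0 d}$ for all $d\geq1$ once $\alpha_0<1/4$. If that fails, the fallback is to verify that the proof of Theorem~\ref{thm:detailedBatch3} only uses \eqref{eq:cont-effective-correlation3} for $d\geq d_0$, with the finitely many near-diagonal correlations absorbed into the constants via the $L^{-1/4}$ near-orthogonality computed in (a).
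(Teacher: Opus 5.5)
Your part (a) is correct and follows the paper's argument: comparison with the coordinate averages, plus exact orthogonality of $\Sc^{2j-1,2j}g_1$ and $g_2$. The pointwise $L^\infty$ averaging bound is a harmless variant of the paper's Fourier computation. Your long-range argument in (b) is also sound for $d=r-w\geq d_0$. It is a reasonable alternative to the paper's, which in that range simply pays the factor $L^4$ from $\|\mathfrak f_w\|_{H^1}\|\mathfrak f_r\|_{H^1}$ and waits $m_0\geq 8/\alpha$ cycles.

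The genuine gap is the short-range regime $1\leq d<d_0$, which you flag but do not close. Your proposed fix rests on the claim that the high-frequency tail of the approximant $\Sc^{2r-1,2r}g_1+g_2$ is controlled by $F$. That is false. The function $\Sc^{2r-1,2r}g_1=g_1\bigl(x^1-L\psi(x^2+\omega_r^2)\bigr)$ has $x^2$-derivative of size $L$, so its $L^2$ mass outside $B_M$ is of order $L/M$, and essentially all of it if $M\ll L$. At $d=1$ you are therefore forced to take $M\gtrsim L^{1+\alpha_0}$, which costs $M^s\gtrsim L^{s}$. The only gain available at $d=1$ is $L^{-\mathfrak c}$ from \eqref{eq:bootstrapL2Start2} (and only $L^{-2\alpha_0}$, with $\alpha_0\ll\mathfrak c/q$, through your $\widetilde D_w$), and $\mathfrak c=s/2<s$. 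So no choice of $M$ closes.

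The fallback does not prove the lemma as stated, and it would not rescue Theorem~\ref{thm:contBatchDetailed3} either. That theorem's lower bound needs the near-diagonal correlations to be small compared with the diagonal terms $\|\mathfrak f_j\|_{L^2}^2\geq\bar c^2$. Part (a) only gives orthogonality \emph{inside} a single effective source, not between $\Sc^{2w,2r}\mathfrak f_w$ and $\mathfrak f_r$.

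The missing idea is to cancel the common final horizontal shear, rather than fight the tail of $\mathfrak f_r$. Set $\widehat g:=g_1+g_2$. Your approximant is exactly $G_j:=\Sc^{2j-1,2j}\widehat g$, since the horizontal shear fixes $g_2$. Because $\Sc^{2w,2r}\Sc^{2w-1,2w}=\Sc^{2r-1,2r}\Sc^{2w-1,2r-1}$ and $\Sc^{2r-1,2r}$ is unitary on $L^2$,
\[
  \langle\Sc^{2w,2r}G_w,G_r\rangle=\langle\Sc^{2w-1,2r-1}\widehat g,\widehat g\rangle .
\]
Now $\widehat g$ is deterministic with $\|\widehat g\|_{H^1}\leq 2F$, uniformly in $L$. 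The odd-to-odd mixing estimate (Corollary~\ref{cor:quenchedMix2} after exchanging the axes) therefore bounds this pairing by $\bar C D^{\rm o}_{w-1}L^{-\alpha d}$. Here $D^{\rm o}_{w-1}$ is uniformly bounded in $L^{2q}$, and there is no polynomial loss in $L$. Combined with $\|\mathfrak f_j-G_j\|_{L^2}\leq\bar CL^{-1/4}$ from (a), this gives
\[
  |\langle\Sc^{2w,2r}\mathfrak f_w,\mathfrak f_r\rangle|\leq\bar C D^{\rm o}_{w-1}L^{-\alpha d}+\bar C L^{-1/4}.
\]

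To avoid circularity in the exponents, proceed in this order:
\begin{enumerate}
  \item Build your $\widetilde D_w$ at a fixed rate $\alpha_1\ll\mathfrak c/q$; this fixes $d_0$.
  \item Set $\alpha_0:=\min\{\alpha_1,\alpha,1/(4d_0)\}$, so that $L^{-1/4}\leq L^{-\alpha_0 d}$ for $d<d_0$.
  \item Replace $\widetilde D_w$ by $\widetilde D_w+D^{\rm o}_{w-1}$, which then covers all $d\geq1$.
\end{enumerate}
This is precisely the paper's route: near-diagonal pairs are handled by the shear cancellation plus the approximation error, and far pairs by crude mixing, with $\alpha_0=\min\{\alpha/2,1/(3m_0)\}$.
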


\begin{remark}
  We note that in part (a), the main difference is that the effective time-$2j$ scalar driving $\mathfrak{f}_j$ can be large in $H^1$. The purpose of part (b) is to ensure that this does not interfere too badly with the correlations between different length-2 periods. 
\end{remark}

% {\color{blue}  part (a) estimate: main difference now is that the effective time-$2j$ force can be large in $H^1$. in part (b) we check that this situation does not jeopardize cross correlations between different epochs. } 

\begin{proof}[Proof of Lemma \ref{lem:cont-effective-sources3}]
  For part (a), it suffices to write our estimates for $\mathfrak{f} = \mathfrak{f}_1, u^{\rm v} = u^{\rm v}_0, u^{\rm h} = u^{\rm h}_0$ ($j = 1$). 
  % Write $(\omega^1, \omega^2)$ for the phase shifts in the time interval $[0,2)$. 
  The upper $L^2$ and $H^1$ bounds are evident; the bulk of the work is in checking the $L^2$ lower bound. To this end, 
  introduce the coordinate averages
  \[
    \bar f^{\rm v}(x^1)
    :=\int_\T f(x^1,x^2)\,\dif x^2,
    \qquad
    \bar f^{\rm h}(x^2)
    :=\int_\T f(x^1,x^2)\,\dif x^1 \, .
    % \qquad
    % g:=\bar f^{\rm v}+\bar f^{\rm h}.
  \]
  Since $f$ is mean-free, $\bar f^{\rm v}$ and $\bar f^{\rm h}$ are
  orthogonal in $L^2(\T^2)$, as are $\mathcal{S}^{1,2}\bar f^{\rm v}$ and $\bar f^{\rm h}$ (noting $\bar f^{\rm h} \circ \Phi^{1,2} = \bar f^{\rm h}$). 
  Set
  \[g := \mathcal{S}^{1,2} \bar f^{\rm v} + \bar f^{\rm h} \, .\]
  By orthogonality, the $L^2$ isometry, and \eqref{eq:cont-force-assumptions3},
  \begin{equation}\label{eq:cont-coordinate-average-lower3}
    \begin{aligned}
      \|g\|_{L^2}^2
      &=\|\bar f^{\rm v}\|_{L^2}^2+\|\bar f^{\rm h}\|_{L^2}^2 \\
      &\geq \frac12\left(\|\bar f^{\rm v}\|_{L^2}+\|\bar f^{\rm h}\|_{L^2}\right)^2
      \geq \frac1{2F^2}.
    \end{aligned}
  \end{equation}
  Our lower bound on $\mathfrak{f}$ in $L^2$ will follow on checking that $\mathfrak{f}$ and $g$ are close in $L^2$, which in turn is implied by estimating
  $\| u^{\rm v} - \bar f^{\rm v}\|_{L^2}$ and $\| u^{\rm h} - \bar f^{\rm h}\|_{L^2}$. 
  In the coming proof, we focus on the `vertical' estimate $\| u^{\rm v} - \bar f^{\rm v}\|_{L^2}$; the horizontal estimate follows similarly and is omitted.

  % We first record the effect of averaging the source over one shear
  % interval.  On a vertical interval, write
  Write
  \[
    f(x^1,x^2)
    =
    \bar f^{\rm v}(x^1)
    +
    \sum_{\ell\neq0}
      f_\ell(x^1)e^{2\pi i\ell x^2}.
  \]
  Write $\omega = \omega^1_1$ for the phase shift on the interval $[0,1)$ and set
  \[
    A(z):=\int_0^1e^{-2\pi i\tau z}\,\dif\tau \,. 
  \]
  Up to an immaterial choice of sign,
  \[
    u^{\rm v}-\bar f^{\rm v}
    =
    \sum_{\ell\neq0}
      f_\ell(x^1)e^{2\pi i\ell x^2}
      A\big(\ell L\psi(x^1+\omega)\big).
  \]
  We have $|A(z)|\leq \min\{1,C|z|^{-1}\}$.  Assumption (ND) implies the
  uniform sublevel estimate
  \[
    \big|\{x\in\T:|\psi(x+\omega)|\leq\eta\}\big|
    \leq C_\psi\eta,
    \qquad 0<\eta<1.
  \]
  Splitting into this sublevel set and its complement, and using
  $H^1(\T)\hookrightarrow L^\infty(\T)$, gives
  \[
    \int_\T |f_\ell(x^1)|^2
      \left|A\big(\ell L\psi(x^1+\omega)\big)\right|^2\,\dif x^1
    \leq
    C_\psi
    \left(
      \eta+\frac1{|\ell|^2L^2\eta^2}
    \right)
    \|f_\ell\|_{H^1(\T)}^2.
  \]
  Taking $\eta=(|\ell|L)^{-2/3}$ and summing in $\ell\neq0$ yields
  \begin{equation}\label{eq:cont-one-shear-average3}
    \|u^{\rm v}-\bar f^{\rm v}\|_{L^2}
    % +
    % \|u_j^{\rm h}-\bar f^{\rm h}\|_{L^2}
    \leq \bar C L^{-1/3}.
  \end{equation}
  The horizontal estimate follows by exchanging the two coordinates.
  Since $\mathfrak f=\mathcal S^{1,2}u^{\rm v}+u^{\rm h}$, the $L^2$
  isometry gives
  \[
    \|\mathfrak f-g\|_{L^2}
    \leq \|u^{\rm v}-\bar f^{\rm v}\|_{L^2}
       +\|u^{\rm h}-\bar f^{\rm h}\|_{L^2}
    \leq \bar C L^{-1/3}.
  \]
  Increase $L_0$ so that $\bar C L_0^{-1/3}\leq(2\sqrt2 F)^{-1}$.
  Then \eqref{eq:cont-coordinate-average-lower3} implies
  \[
    \|\mathfrak f\|_{L^2}
    \geq \|g\|_{L^2}-\|\mathfrak f-g\|_{L^2}
    \geq \frac1{2\sqrt2 F}.
  \]
  This bound is uniform in the phase shifts. Applying the same argument
  after a time shift proves part (a) for every $j\geq1$.

  % {\color{red}  QUIT HERE }
  For part (b), let
  \[
    \mathcal H_j:=\Sc^{2j+1,2j+2},
    \qquad
    \widehat g:=\bar f^{\rm v}+\bar f^{\rm h},
  \]
  noting $\mathcal{H}_j$ corresponds to the \emph{horizontal} shear, fixing $x^2$. 
  Define the approximant to $\mathfrak f_j$ by
  \[
    g_j
    :=
    \Sc^{2j-1,2j}\bar f^{\rm v}+\bar f^{\rm h}
    =
    \mathcal H_{j-1}\widehat g.
  \]
  % Here the last equality uses the fact that $\mathcal H_{j-1}$ fixes
  % functions of $x^2$.  
  The estimates from part (a), applied after a time
  shift, give
  \begin{equation}\label{eq:cont-effective-source-approx3}
    \| \mathfrak f_j - g_j\|_{L^2} \leq\bar C L^{-1/3},
    \qquad j\geq1.
  \end{equation}
  
  The proof of
  Corollary~\ref{cor:quenchedMix2}, after exchanging $x^1$ and $x^2$, applies
  equally to the odd-to-odd evolution, yielding 
  a sequence $D_j^{\rm o}$ of random variables with uniformly bounded $L^{2q}$ norm such that
  \begin{equation}\label{eq:cont-odd-mixing3}
    \|\Sc^{2j+1,2r+1}h\|_{H^{-s}}
    \leq
    D_j^{\rm o}L^{-\alpha(r-j)}\|h\|_{H^1},
    \qquad j<r \,. 
  \end{equation}
  In a moment, we will apply this to the approximants $g_w$ and $g_r$, using that the common
  final horizontal shear cancels:
  \begin{equation}\label{eq:cont-leading-correlation3}
    \left\langle
      \Sc^{2w,2r}g_w,g_r
    \right\rangle
    =
    \left\langle
      \Sc^{2w-1,2r-1}
      \widehat g,\widehat g
    \right\rangle \, . 
  \end{equation}
  
  With these tools in place, let $m_0$ be an integer, to be specified shortly. 
  For $r-w\leq m_0$, equations
  \eqref{eq:cont-effective-source-approx3}--\eqref{eq:cont-leading-correlation3},
  the $L^2$ isometry, and \eqref{eq:cont-odd-mixing3} give
  \[
    \left|
      \left\langle
        \Sc^{2w,2r}\mathfrak f_w,\mathfrak f_r
      \right\rangle
    \right|
    \leq
    \bar C D_{w-1}^{\rm o}L^{-\alpha(r-w)}
    +\bar C L^{-1/3}.
  \]
  For $r-w>m_0$, we instead apply
  \eqref{eq:detailedBatch3-shifted-mixing} directly to the exact effective
  sources.  By \eqref{eq:cont-effective-source-size3},
  \[
    \left|
      \left\langle
        \Sc^{2w,2r}\mathfrak f_w,\mathfrak f_r
      \right\rangle
    \right|
    \leq
    \bar C D_wL^{4-\alpha(r-w)}.
  \]
  Choose $m_0\geq 8/\alpha$ and set
  \[
    \alpha_0:=\min\left\{\frac{\alpha}{2},\frac{1}{3m_0}\right\}.
  \]
  If $r-w\leq m_0$, then
  $L^{-1/3}\leq L^{-\alpha_0(r-w)}$; if $r-w>m_0$, then
  $L^{4-\alpha(r-w)}\leq L^{-\alpha_0(r-w)}$.  Thus, after enlarging
  $1+D_w+D_{w-1}^{\rm o}$ to a random variable
  $\widetilde D_w$ with uniformly bounded $L^{2q}$ norm, both estimates
  give \eqref{eq:cont-effective-correlation3}.
\end{proof}

\subsubsection*{Proof sketch of Theorem \ref{thm:contBatchDetailed3}}

% {\color{red}  actually QUIT HERE } 

% {\color{red}  real difference in upper and lower bounds at times $t = 2N$: effective discrete scalar driving $\mathfrak{f}_j$ is bounded in $H^1$ by $L^2$. cope by increasing $K_0$ } 

% We now explain how Lemma~\ref{lem:cont-effective-sources3} reduces the
% remaining proof to ideas already presented in the proof of Theorem~\ref{thm:detailedBatch3}. 

% % At even times,

With $\alpha_0$ and $\widetilde{D}_j$ as in Lemma \ref{lem:cont-effective-sources3}, set $\mathfrak{c}_* = \alpha_0 / 2$ and
% \eqref{eq:cont-duhamel3} and the composition law give the exact identity
% \begin{equation}\label{eq:cont-discrete-reduction3}
%   \varphi_{2N}
%   = 
%   \sum_{j=1}^{N}\Sc^{2j,2N}\mathfrak f_j.
% \end{equation}
% After decreasing $\alpha_0$ once more if necessary, set
% $\alpha:=\alpha_0=2\mathfrak c_*$, and define
\[
  \widetilde W_N
  :=
  \sum_{j=1}^{N}\widetilde D_j
    L^{-\mathfrak c_*(N-j)} \, , 
  \qquad
  \widetilde M_N
  :=
  \max_{1\leq k\leq N}
  \frac1k\sum_{j=N-k+1}^{N}\widetilde D_j \, . 
\]
The proof of Claim~\ref{cla:boundWNMN3} applies verbatim and gives
\begin{equation}\label{eq:cont-WM-moments3}
  \sup_{N\geq1}
  \left(
    \|\widetilde W_N\|_{L^{2q}(\Omega)}
    +
    \|\widetilde M_N\|_{L^{2q}(\Omega)}
  \right)
  \leq \bar C.
\end{equation}

% \eqref{eq:contTimeEvens3}

% {\color{red}  QUIT HERE } 

\paragraph{Upper bound at $t=2N$.}
Use the old/recent decomposition from the proof of
Theorem~\ref{thm:detailedBatch3}, with
$K=\lceil K_0\log_LR\rceil$ and $A=(N-K)_+$, applied to
\eqref{eq:contTimeEvens3}.  The old-piece estimate acquires only
the fixed factor $L^2$ from
\eqref{eq:cont-effective-source-size3}; it is absorbed by increasing $K_0$.
For the recent part, \eqref{eq:cont-effective-correlation3} gives
\begin{align}\label{eq:recContribContinuous3}
  \|\varphi^{\rm rec}\|_{L^2}^2
  \leq
  \bar C K
  +
  \bar C L^{-\alpha_0}
    \sum_{j=A+1}^{N}\widetilde D_j
  \leq
  \bar C K(1+\widetilde M_N).
\end{align}
Thus the argument leading to
\eqref{eq:detailedBatch3-upper-recent} proves the desired upper bound with
\[
  \mathfrak U_N^{\rm cont}
  :=
  \bar C
  \left(
    1+\widetilde W_N^2+\widetilde M_N
  \right).
\]
Its uniform $L^q$ bound follows from
\eqref{eq:cont-WM-moments3}.

\paragraph{Lower bound at $t=2N$.}
For the lower bound, take the theorem's constant $C\geq K_0+2$, so $K\leq N$. Retain $A$ and set
$k=\lfloor k_0\log_LR\rfloor$, $B=N-k$, as in the discrete proof.  Define
old, middle, and new pieces of \eqref{eq:contTimeEvens3} by the
same index ranges, and put
\[
  \mathcal G_N^{\rm cont}
  :=
  \{
    \widetilde W_N+\widetilde M_N
    \leq L^{\mathfrak c_*}
  \}.
\]
Markov's inequality and \eqref{eq:cont-WM-moments3} give
\[
  \P\big((\mathcal G_N^{\rm cont})^c\big)
  \leq \bar C L^{-2q\mathfrak c_*} = \bar C L^{-q\alpha_0}.
\]

For $B<j\leq N$, write
$g_j:=\Sc^{2j,2N}\mathfrak f_j$.  The finite-time $H^1$ estimate and
\eqref{eq:cont-effective-source-size3} give
\begin{equation}\label{eq:cont-high-tail3}
  \|\Pi_{\Z^2\setminus B_R}g_j\|_{L^2}
  \leq
  \bar C R^{-1}L^{4k+2}.
\end{equation}
By the lower bound in \eqref{eq:cont-effective-source-size3}, the diagonal
terms $\|\Pi_{B_R}g_j\|_{L^2}^2$ are therefore bounded below by a fixed
positive constant once $k_0$ is sufficiently small.  For $B<w<r\leq N$,
one estimates as in 
\eqref{eq:offDiagonal3}:
\begin{align}
  |\langle g_w,\Pi_{B_R}g_r\rangle|
  &\leq
  |\langle\Sc^{2w,2r}\mathfrak f_w,\mathfrak f_r\rangle|
  +
  \bar C R^{-1}L^{4k+2}
  \nonumber\\
  &\leq
  \bar C\widetilde D_wL^{-\alpha_0(r-w)}
  +
  \bar C R^{-1}L^{4k+2}.
  \label{eq:cont-projected-correlation3}
\end{align}
Summing \eqref{eq:cont-projected-correlation3} on
$\mathcal G_N^{\rm cont}$ gives
\[
  \|\Pi_{B_R}\varphi^{\rm new}\|_{L^2}^2
  \geq
  \bar c k
  -
  \bar C k
    \left(
      L^{-\mathfrak c_*}
      +kR^{-1}L^{4k+2}
    \right).
\]

The middle/new interaction is treated similarly to \eqref{eq:cont-projected-correlation3}, now with
$A<w\leq B<r\leq N$.  
% Notice that the projector cannot simply be dropped:
% One writes the projected pairing as the unprojected correlation minus its
% high-frequency tail, as in
% \eqref{eq:cont-projected-correlation3}.  Geometric summation gives
\[
  |\langle\Pi_{B_R}\varphi^{\rm new},\varphi^{\rm mid}\rangle|
  \leq
  \bar C\log_LR
  \left(
    L^{-\mathfrak c_*}
    +kR^{-1}L^{4k+2}
  \right).
\]
Finally, the old/recent interaction is estimated as in
\eqref{eq:detailedBatch3-old-good}; the
fixed $L^2$ loss in the $H^1$ norm of an effective source is absorbed by
increasing $K_0$. Take $k_0=1/8$ as above. The tail errors are uniform in $R$ since
\[
  kR^{-1}L^{4k+2}\leq k_0 L^2(\log_LR)R^{-1/2}
  \leq k_0\zeta L^{2-\zeta/2}.
\]
Taking $\zeta\geq16$ and then $L_0$ large gives
\[
  \|\Pi_{B_R}\varphi_{2N}\|_{L^2}^2
  \geq \bar c\log_LR
  \qquad\text{on }\mathcal G_N^{\rm cont}.
\]

\paragraph{Passage to $t\in[2N,2N+2)$.}
Write
\[
  \varphi_t
  =
  \Sc^{2N,t}\varphi_{2N}
  +
  \eta_t,
  \qquad
  \eta_t
  :=
  \int_{2N}^{t}\Sc^{r,t}f\,\dif r,
  \qquad
  \|\eta_t\|_{L^2}\leq2F.
\]
For the past contribution $\Sc^{2N,t}\varphi_{2N}$, unprojected
correlations are unchanged. The old estimate gains a factor $\bar C L^{2s}$,
absorbed by increasing $K_0$. For $j>N-k$, the effective source has $H^1$
norm at most $\bar C L^2$ and undergoes at most $2k$ further shears, so
\eqref{eq:cont-high-tail3} with $L^{4k+2}$ remains valid at time $t$.
Thus, uniformly in $t$, the past contribution satisfies the upper bound and,
on $\mathcal G_N^{\rm cont}$, a lower bound $\bar c\log_LR$.  
% The bounded fresh term $\eta_t$ is harmless for the upper and lower bounds, 
% and increase $\zeta$ so that the resulting $O_F(1)$ loss is absorbed by
% $\log_LR$.  
The fresh term preserves the upper bound since $\|\eta_t\|_2\leq2F$
and $\log_LR\geq\zeta\geq1$. For the lower bound, increase $\zeta$ until
$\sqrt{\bar c\zeta}\geq4F$. The triangle inequality then gives
\[
  \|\Pi_{B_R}\varphi_t\|_2
  \geq\sqrt{\bar c\log_LR}-2F
  \geq\tfrac12\sqrt{\bar c\log_LR}
  \qquad\text{on }\mathcal G_N^{\rm cont}.
\] 

This completes the proof sketch.

\subsection{Statistically stationary driven passive scalars; Theorem \ref{thmA:batch}}\label{subsec:wrapup3}

We turn now to implications for statistically stationary passive scalars, starting with a more detailed description of stationarity. In this section we will work with both Settings \ref{set:discDet}, \ref{set:contDet} at the same time. 
Throughout, we consider the driven passive scalar equation 
\eqref{eq:PSA3}, allowing now for a general initial condition $\varphi_0\in H^{-s}_0$.

\paragraph{Even-time chain as a random dynamical system} 
We begin by recording a way of viewing the time-2 advancement $\varphi_{2n} \mapsto \varphi_{2(n+1)}$ as a \emph{random dynamical system}.  

Recall that $\Omega_0 = \T^2$ is viewed as a space of pairs of phase shifts $\vec{\omega} := (\omega^1, \omega^2)$, equipped with a probability $\P_0$. Given $\vec{\omega} \in \Omega_0$, let us abuse notation and write $\Phi_{\vec{\omega}}^{r,t}, 0 \leq r \leq t \leq 2$ for the Lagrangian flow from time $r$ to time $t$, corresponding to the usual velocity field $u(t, x), 0 \leq t < 2$, with phase shifts given by $\vec{\omega}$. Let $\mathcal{S}^{r,t}_{\vec{\omega}} g = g \circ \left(\Phi^{r,t}_{\vec{\omega}}\right)^{-1}$. 
We consider the time-2 evolution on passive scalars 
\begin{align}
  \mathcal{T}_{\vec{\omega}} g := \mathcal{S}_{\vec{\omega}}^{0,2} g + 
  \begin{cases}    
  f & \text{ in Setting \ref{set:discDet},} \\ 
  \int_0^2 \mathcal{S}^{\tau,2}_{\vec{\omega}} f \dif \tau & \text{ in Setting \ref{set:contDet}.}
  \end{cases}
\end{align}
Observe that, writing $\uo = (\vec{\omega}_i), \vec{\omega}_i = (\omega^1_i, \omega^2_i)$, $i \geq 1$, 
the time-$2n$ solution $\varphi_{2n}$ to \eqref{eq:PSA3} is given as 
\begin{align}
  \varphi_{2n} = \mathcal{T}_{\vec{\omega}_{n}} \circ \dots \circ \mathcal{T}_{\vec{\omega}_1} [\varphi_0] 
\end{align}
% (recalling $\varphi_0 = 0$). 
Since transport preserves spatial means and $f$ has mean zero,
$\mathcal{T}_{\vec{\omega}}$ maps $H^{-s}_0$ into itself. In this sense,
compositions of the (IID) random mappings $\mathcal{T}_{\vec{\omega}_i}$
give rise to the Markov chain $(\varphi_{2n})$ on $H^{-s}_0$.
\begin{definition}
  Let $s \in (0, 1)$ and let $\mu$ be a Borel probability measure on $H^{-s}_0$. We say that $\mu$ is \emph{stationary} if for any bounded Borel function $G : H^{-s}_0 \to \R$, one has that
  \[\int_{H^{-s}_0} \left(\int_{\Omega_0} G(\mathcal{T}_{\vec{\omega}}[\varphi]) \dif \P_0(\vec{\omega})\right)\dif \mu(\varphi) = \int_{H^{-s}_0} G(\varphi) \dif \mu(\varphi) \, . \]
Put another way, $\mu$ is stationary if $\mathcal{T}_{\vec{\omega}}[\varphi]$ has law $\mu$ when $\varphi\sim\mu$ and $\vec{\omega}\sim\P_0$ are independent.
\end{definition}

Turning attention to the left-limiting chain $(\varphi_{2n-})$ in Setting \ref{set:discDet}, define 
\begin{align}
  \hat{\mathcal{T}}_{\vec{\omega}} g := 
    \mathcal{S}^{0,2}_{\vec{\omega}} [g + f] \, . 
    % & \text{ in Setting \ref{set:discDet},} 
\end{align}
Then, the time-$2n$ left-limiting solution $\varphi_{2n-}$ is given as 
\begin{align}
  \varphi_{2n-} = \hat{\mathcal{T}}_{\vec{\omega}_n} \circ \dots \circ \hat{\mathcal{T}}_{\vec{\omega}_1} [\varphi_0 - f] \, , 
\end{align}
having adjusted the time-zero scalar to account for the lack of scalar driving at time 0.
The adjusted initial state $\varphi_0-f$ lies in $H^{-s}_0$, and
$\hat{\mathcal{T}}_{\vec{\omega}}$ also preserves $H^{-s}_0$, so stationarity
for this chain is defined on the same state space.

% {\color{red}  

% conventions for the left-limiting chain: for Setting \ref{set:discDet} we set $\hat{ \mathcal{T}}_{\vec{\omega}} g := \mathcal{S}_{\vec{\omega}}^{0,2} [g + f]$. 

% write $\mu_-$ for corresponding stationary law of chain $(\varphi_{2n-})$ corresponding to left-limiting time-$2n$ process. 

% } 

\paragraph{Stationary states of the even-time chain}

We start by addressing existence and uniqueness of stationary states for the Markov chains $(\varphi_{2n}), (\varphi_{2n-})$. The following is an elaboration of Proposition \ref{prop:statStatesPSA} from Section \ref{sec:intro}. Recall below that we say a sequence of probability measures $(\mu_n)$ on $H^{-s}_0$ converges \emph{weakly} to a probability measure $\mu$ on $H^{-s}_0$ if
\begin{gather*}\int G(\varphi) \dif \mu_n(\varphi) \to \int G(\varphi) \dif \mu(\varphi) \\ \text{for all bounded continuous } G : H^{-s}_0 \to \R \,. \end{gather*}

% {\color{red}  modify to include left-limiting law } 
\begin{proposition}\label{prop:statStates3}
  Let $L$ be sufficiently large and let $s > 0$ be as in Theorem \ref{thm:neg-reg-flexible}. 
  \begin{itemize}
    \item[(a)] There exists a unique stationary measure $\mu$ (resp. $\mu_-$) on $H^{-s}_0$ for the chain $(\varphi_{2n})$ (resp. $(\varphi_{2n-})$).
    \item[(b)] For any fixed initial $\varphi_0 \in H^{-s}_0$, the law of $\varphi_{2n}$ (resp. $\varphi_{2n-}$) converges weakly as $n \to \infty$ to $\mu$ (resp. $\mu_-$).
  \end{itemize}
  
\end{proposition}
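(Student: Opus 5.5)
The plan is to exploit the contraction of differences supplied by Theorem \ref{thm:neg-reg-flexible}. The random maps $\mathcal{T}_{\vec\omega}$ and $\hat{\mathcal{T}}_{\vec\omega}$ are affine, and their linear part is $\mathcal{S}^{0,2}_{\vec\omega}$. Hence for two initial data $\varphi_0,\varphi_0'\in H^{-s}_0$, driven by the same phase sequence $\uo$, the difference satisfies $\varphi_{2n}-\varphi'_{2n}=\mathcal{S}^{0,2n}_\uo(\varphi_0-\varphi_0')$. By \eqref{eq:timeTest2} this gives
\[
\E\|\varphi_{2n}-\varphi_{2n}'\|_{H^{-s}}\leq CL^{\mathfrak d-2n\mathfrak c}\|\varphi_0-\varphi_0'\|_{H^{-s}} .
\]
Consequently the chain contracts in the Wasserstein-1 distance $\mathcal{W}_1$ on probability measures on $H^{-s}_0$ with finite first moment, taken with respect to the metric $\min\{1,\|\cdot\|_{H^{-s}}\}$ or $\|\cdot\|_{H^{-s}}$. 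Indeed, coupling two initial laws optimally and using the same $\uo$ shows that $\mathcal{W}_1(P^{n}\nu,P^{n}\nu')\leq CL^{\mathfrak d-2n\mathfrak c}\mathcal{W}_1(\nu,\nu')$, where $P$ is the Markov operator. Since $L\geq L_0$, some fixed iterate $P^{n_0}$ is a strict contraction.

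For existence, rather than invoking Krylov--Bogoliubov, I will use the pullback construction. Take the two-sided phase space $\hat\Omega$ from the proof of Claim \ref{cla:boundWNMN3}, and define
\[
\Psi:=\sum_{j\geq0}\mathcal{S}^{-2j,0}_{\hat\uo}\,\mathfrak{f}_{-j},
\]
where the source is $f$ in Setting \ref{set:discDet} and the effective source $\mathfrak f$ of Lemma \ref{lem:cont-effective-sources3} in Setting \ref{set:contDet}. The plan is to show that this series converges in $L^1(\hat\Omega;H^{-s})$. Each term has $\E\|\cdot\|_{H^{-s}}\leq CL^{\mathfrak d-2j\mathfrak c}\|\mathfrak f\|_{L^2}$, using \eqref{eq:timeTest2} with $\|\cdot\|_{H^{-s}}\leq\|\cdot\|_{L^2}$ together with shift invariance of $\hat\P$. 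The terms therefore decay geometrically. By construction $\Psi(\hat\sigma\hat\uo)=\mathcal{T}_{\vec\omega_1}\Psi(\hat\uo)$, and $\Psi$ depends only on past phases, which are independent of $\vec\omega_1$. It follows that $\mu:=\operatorname{Law}(\Psi)$ is stationary. For $\mu_-$ the same construction applies with $\hat{\mathcal T}$, i.e.\ with $\mathcal S^{0,2}$ applied after adding $f$.

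Uniqueness and part (b) then follow from the contraction. For any fixed $\varphi_0$, couple the chain started at $\varphi_0$ with the stationary chain started from $\Psi$, which is independent of the future phases, using the same future phases. This gives $\mathcal{W}_1(\operatorname{Law}(\varphi_{2n}),\mu)\leq CL^{\mathfrak d-2n\mathfrak c}\,\E\|\varphi_0-\Psi\|_{H^{-s}}\to0$. Convergence in $\mathcal{W}_1$ for the bounded metric implies weak convergence. Any other stationary $\mu'$ is a fixed point, so contraction of $P^{n_0}$ forces $\mu'=\mu$. To cover measures $\mu'$ without a first moment, I will instead use the bounded metric together with dominated convergence: for $\varphi_0\sim\mu'$ independent of $\uo$, $\E\min\{1,\|\mathcal S^{0,2n}(\varphi_0-\Psi)\|_{H^{-s}}\}\to0$, because conditionally on the initial data the inner expectation decays.

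The main obstacle is \emph{measurability and the continuity of $\mathcal{S}^{0,2}_{\vec\omega}$ on $H^{-s}$}, which is needed to make $P$ a Feller Markov operator on $H^{-s}_0$. This is handled by the deterministic bound \eqref{eq:phi-no-grow}, $\|\Lc_{\omega,L}\|_{H^{-s}\to H^{-s}}\leq CL^{s}$, together with joint continuity in $(\vec\omega,\varphi)$ on the dense set of trigonometric polynomials. One further check is that \eqref{eq:timeTest2} applies to general mean-free $H^{-s}$ data, and not merely to $L^2$ data; the theorem is stated in this generality, so the check goes through.
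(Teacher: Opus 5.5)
Your proposal is correct and follows essentially the same route as the paper: the same pullback series $\sum_{j\le 0}\Sc^{2j,0}b_j$ (with $b_j=f$, or the effective source $\mathfrak f_j$ in the continuous setting) gives existence, and same-noise coupling with the decay of Theorem \ref{thm:neg-reg-flexible}, under the bounded metric $1\wedge\|\cdot\|_{H^{-s}}$ and dominated convergence, gives uniqueness and attraction. Two small points: your synchronous-coupling bound yields a uniform contraction factor only for the unbounded metric $\|\cdot\|_{H^{-s}}$ and not for $1\wedge\|\cdot\|_{H^{-s}}$, which is harmless because your final uniqueness argument uses dominated convergence; and in Setting \ref{set:contDet} the termwise bound also needs $\mathfrak f_{-j}$ to be independent of the later phases driving $\Sc^{-2j,0}$, which holds and is exactly what the paper uses when it conditions on $b_j$.
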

We prove Proposition \ref{prop:statStates3} at the end of Section \ref{subsec:wrapup3}. For now we complete the proof of Theorem \ref{thmA:batch}. 

\begin{proof}[Proof of Theorem \ref{thmA:batch}]
  We give the proof in Setting \ref{set:discDet}; the proof in Setting
  \ref{set:contDet} is the same, with Theorem
  \ref{thm:contBatchDetailed3} in place of Theorem
  \ref{thm:detailedBatch3}.  
  
  Apply Theorem \ref{thm:detailedBatch3} with
  moment order $2q$, and let $(\mathfrak{U}_N)\subset L^{2q}(\Omega)\subset L^q(\Omega)$, $\mathcal{G}_N \subset \Omega$ be as constructed there. Denote the resulting radius exponent by $\zeta_q$; initially take
  \[
    L \geq L_0\, , \qquad 
    R\geq L^{\zeta_q} \, , 
    \qquad
    N\geq C\log_LR \, .
  \]

  Let $\nu_N$ be the law on $H^{-s}_0$ of the time-$2N$ solution initialized
  at $\varphi_0=0$.  Proposition \ref{prop:statStates3}(b) gives that $\nu_N$ converges weakly to $\mu$ as $N \to \infty$. 
  For fixed $R$, define
  \[
    Z_R(h):=\|\Pi_{B_R}h\|_{L^2}^2.
  \]
  Since $B_R\subset\mathbb Z^2$ is finite, the map
  $\Pi_{B_R}:H^{-s}_0\to L^2$ is continuous.  Hence
  $h\mapsto Z_R(h)^q$ is continuous, though unbounded, on $H^{-s}_0$.

  We first show that the theorem follows provided that
  % It is straightforward to check that the proof is complete
  \begin{equation}\label{eq:stat-batch-moment-limit}
    \int Z_R^q \dif \nu_N \to \int Z_R^q \dif \mu \,. 
    % \E\|\Pi_{B_R}\varphi_{\rm stat}\|_{L^2}^{2q}
    % =
    % \int_{H^{-s}}Z_R(h)^q\dif\mu(h)
    % =
    % \lim_{N\to\infty}\int_{H^{-s}}Z_R(h)^q\dif\nu_N(h)
    % =
    % \lim_{N\to\infty}\E Z_R(\varphi_{2N})^q.
  \end{equation}
  The desired upper bound on $\int \| \Pi_{B_R} \varphi \|_{L^2}^{2q} \dif \mu(\varphi)$ 
  follows on noting 
  % $q$ gives
  \begin{equation}\label{eq:stat-batch-finite-upper}
    \E Z_R(\varphi_{2N})^q
    \leq
    \E\big[\mathfrak U_N^q\big](\log_LR)^q
    \leq
    C(\log_LR)^q 
  \end{equation}
  uniformly in $N$, while for the lower bound we have 
  \begin{equation}\label{eq:stat-batch-finite-lower}
    \E Z_R(\varphi_{2N})^q
    \geq
    c^q(\log_LR)^q\P(\mathcal G_N)
    \geq
    \frac{c^q}{2}(\log_LR)^q.
  \end{equation}
  provided $\P(\mathcal{G}_N) \geq \frac12$, which is assured for $L$ large. 
  % On $\mathcal G_N$, the corresponding lower bound gives
  % $Z_R(\varphi_{2N})\geq c\log_LR$, while
  % \[
  %   \P\big(\mathcal G_N\big)
  %   \geq 1-CL^{-c}.
  % \]
  % After increasing $L_0$ so that the last probability is at least $1/2$,
  % \begin{equation}\label{eq:stat-batch-finite-lower}
  %   \E Z_R(\varphi_{2N})^q
  %   \geq
  %   c^q(\log_LR)^q\P(\mathcal G_N)
  %   \geq
  %   \frac{c^q}{2}(\log_LR)^q.
  % \end{equation}
  % Combining \eqref{eq:stat-batch-moment-limit}--\eqref{eq:stat-batch-finite-lower}
  % and renaming the constants proves the claim in Setting
  % \ref{set:discDet}.  Replacing $\mathfrak U_N,\mathcal G_N$ by
  % $\mathfrak U_N^{\rm cont},\mathcal G_N^{\rm cont}$ gives the identical
  % argument in Setting \ref{set:contDet}.

  It remains to check \eqref{eq:stat-batch-moment-limit}; for this we use the
  full strength of the $2q$-order moment bound on $\mathfrak U_N$. To start, note that for $M$ fixed we have 
  \[
    \int (Z_R^q\wedge M)\dif\nu_N
    \longrightarrow
    \int (Z_R^q\wedge M)\dif\mu
  \]
  by weak convergence. To justify exchanging $N \to \infty$ with the limit $M \to \infty$, we use that 
  \[
    \sup_{N\geq C\log_LR}
    \int Z_R^q\mathds{1}_{\{Z_R^q>M\}}\dif\nu_N
    \leq
    \frac{C(\log_LR)^{2q}}{M} \, , 
  \]
  which follows from Markov's inequality and the fact that $\mathfrak{U}_N \in L^{2q}$ with $\| \mathfrak{U}_N\|_{L^{2q}(\Omega)}$ bounded uniformly in $N$. Equation \eqref{eq:stat-batch-moment-limit} now follows from a standard argument. 
  % The upper
  % bound in Theorem \ref{thm:detailedBatch3} gives
  % \[
  %   Z_R(\varphi_{2N})
  %   \leq
  %   \mathfrak U_N\log_LR,
  %   \qquad
  %   \sup_N
  %   \|\mathfrak U_N\|_{L^{2q}(\Omega)}
  %   \leq C,
  % \]
  % and hence
  % \begin{equation}\label{eq:stat-batch-uniform-integrability}
  %   \sup_{N\geq C\log_LR}
  %   \int Z_R^{2q}\dif\nu_N
  %   =
  %   \sup_{N\geq C\log_LR}
  %   \E Z_R(\varphi_{2N})^{2q}
  %   \leq
  %   C(\log_LR)^{2q}.
  % \end{equation}
  % Thus the random variables $Z_R(\varphi_{2N})^q$, whose laws are the
  % pushforwards of $\nu_N$ under $Z_R^q$, are uniformly integrable.

  % For completeness, let $M>0$.  Since $Z_R^q\wedge M$ is bounded and
  % continuous on $H^{-s}$, \eqref{eq:stat-batch-weak-convergence} gives
  % \[
  %   \int (Z_R^q\wedge M)\dif\nu_N
  %   \longrightarrow
  %   \int (Z_R^q\wedge M)\dif\mu.
  % \]
  % Moreover, \eqref{eq:stat-batch-uniform-integrability} and Chebyshev's
  % inequality imply
  % \[
  %   \sup_{N\geq C\log_LR}
  %   \int Z_R^q\mathds{1}_{\{Z_R^q>M\}}\dif\nu_N
  %   \leq
  %   \frac{C(\log_LR)^{2q}}{M}.
  % \]
  % By weak convergence and the lower semicontinuity of $Z_R^{2q}$,
  % \[
  %   \int Z_R^{2q}\dif\mu
  %   \leq
  %   \liminf_{N\to\infty}\int Z_R^{2q}\dif\nu_N
  %   \leq C(\log_LR)^{2q},
  % \]
  % so the same tail estimate, up to the value of $C$, holds with $\mu$ in
  % place of $\nu_N$.  We may therefore first let $N\to\infty$ and then
  % $M\to\infty$ in the truncated convergence above, obtaining
  % \eqref{eq:stat-batch-moment-limit}.  This completes the proof in Setting
  % \ref{set:discDet}; as noted at the outset, the proof in Setting
  % \ref{set:contDet} is identical.

  The same argument applies at each fixed phase $r\in(0,2)$, using
  Corollary \ref{cor:offPhase3}(a) and the bounds uniform over
  $t\in[2N,2N+2)$. For the discrete left limits, take $t\uparrow2N+2$
  in those bounds and use Proposition \ref{prop:statStates3}(b).

  We also need a lower-tail estimate. Apply the detailed lower bound with
  moment order $1$, and fix its radius exponent $\zeta\geq1$, independently
  of $q$. Write $\ell=\log_LR$. For suitable $c_0,\rho>0$, the good events give
  \[
    \E[c_0\ell-Z_R(\varphi_{2N+r})]_+
    \leq C\ell L^{-\rho}.
  \]
  The integrand is bounded and continuous on $H^{-s}_0$, so weak convergence
  passes this estimate to $\mu_r$. The same endpoint argument gives it for
  $\mu_-$. Markov's inequality, with $c_0$ reduced by a factor of two, yields
  \begin{equation}\label{eq:stationary-batch-lower-tail3}
    \P\{Z_R(\varphi_{\rm stat,r})<c_0\log_LR\}
    \leq C L^{-\rho},\qquad R\geq L^\zeta,
  \end{equation}
  also with $\mu_r$ replaced by $\mu_-$. This proves the asserted lower tail
  and the lower moment bound for every $q\geq1$ on this common radius range.
  To extend the upper moment bound to the same range when $\zeta_q>\zeta$,
  use, for $L^\zeta\leq R\leq L^{\zeta_q}$,
  \[
    \E Z_R^q\leq \E Z_{L^{\zeta_q}}^q
    \leq C_q\zeta_q^q
    \leq C_q(\zeta_q/\zeta)^q(\log_LR)^q.
  \]
  Thus the introductory exponent $\zeta$ may be chosen independently of $q$.
\end{proof}

\subsubsection*{Batchelor's law for the off-phase chain}

Due to the 2-periodic structure of the Pierrehumbert flow $u$ we work with, the time-$t$ scalars $\varphi_t$ do not constitute a homogeneous Markov process -- this was the reason we have worked thus far with the time-$2n$ Markov chain. Since this may be of interest, we state here versions of Batchelor's law for the ``off-phase'' chains
\[\varphi_{2n + r} \, , \qquad r \in (0,2) \,. \]

First, we set down the natural time-stationary probabilistic law for $(\varphi_{2n + r})$. In Setting \ref{set:discDet}, for $r \in (0,2)$ we define the law $\mu_r$ to be the law in $H^{-s}_0$ of
\[\mathcal{S}^{0,r}_{\vec{\omega}} \varphi_{\rm stat} \, , \]
while in Setting \ref{set:contDet} we set $\mu_r$ to be the law in $H^{-s}_0$ of
\[\mathcal{S}^{0,r}_{\vec{\omega}} \varphi_{\rm stat} + \int_0^r \mathcal{S}_{\vec \omega}^{\tau, r} f \dif \tau \,. \]
where $\varphi_{\rm stat}\sim\mu$ and $\vec{\omega}\sim\P_0$ are sampled independently.

% The following is an immediate consequence of Proposition \ref{prop:statStates3} and Theorem \ref{thmA:batch}. 
\begin{corollary}\label{cor:offPhase3}
  Assume the setting and notation of Theorem \ref{thmA:batch}. Let $r \in (0,2)$ and $\varphi_0\in H^{-s}_0$.
  \begin{itemize}
    \item[(a)] The marginal laws $\operatorname{Law}(\varphi_{2n+r})$ converge weakly to $\mu_r$ as $n\to\infty$, as probability measures on $H^{-s}_0$.
    \item[(b)] If $\varphi_{{\rm stat}, r}$ is distributed like $\mu_r$, then 
        \[c (\log_L R)^q \leq \E \| \Pi_{B_R} \varphi_{{\rm stat},r} \|_{L^2}^{2q} \leq C (\log_L R)^q \,. \]
  \end{itemize}  
\end{corollary}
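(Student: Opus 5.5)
For part (a), I will represent $\varphi_{2n+r}$ exactly through the even-time chain. Write $\vec\omega_{n+1}$ for the phase pair governing $[2n,2n+2)$. Then
\[\varphi_{2n+r}=\mathcal S^{0,r}_{\vec\omega_{n+1}}\varphi_{2n}\]
in Setting \ref{set:discDet}, where no forcing is delivered in $(2n,2n+r]$ by the right-continuity convention. In Setting \ref{set:contDet} one adds the term $\int_0^r\mathcal S^{\tau,r}_{\vec\omega_{n+1}}f\dif\tau$. The key observation is that $\varphi_{2n}$ is measurable with respect to $\vec\omega_1,\dots,\vec\omega_n$, so it is independent of $\vec\omega_{n+1}\sim\P_0$. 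Hence $\operatorname{Law}(\varphi_{2n+r})$ is the pushforward of $\operatorname{Law}(\varphi_{2n})\otimes\P_0$ under the map
\[\Psi_r:(h,\vec\omega)\mapsto\mathcal S^{0,r}_{\vec\omega}h\ \big(+\textstyle\int_0^r\mathcal S^{\tau,r}_{\vec\omega}f\dif\tau\big),\]
and $\mu_r$ is by definition the pushforward of $\mu\otimes\P_0$ under the same map.

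The product measures converge weakly, $\operatorname{Law}(\varphi_{2n})\otimes\P_0\to\mu\otimes\P_0$ on $H^{-s}_0\times\Omega_0$. This is standard: Proposition \ref{prop:statStates3}(b) gives convergence of the first factor, and the second factor is fixed. It remains to check continuity of $\Psi_r$ so that the continuous mapping theorem applies. Continuity in $h$ is uniform in $\vec\omega$, since \eqref{eq:phi-no-grow} (or \eqref{eq:deterministFracTime2}) gives $\|\mathcal S^{0,r}_{\vec\omega}h\|_{H^{-s}}\le CL^{2s}\|h\|_{H^{-s}}$. For continuity in $\vec\omega$ at fixed $h$, I will first take $h$ smooth, where $\mathcal S^{0,r}_{\vec\omega}h=h\circ(\Phi^{0,r}_{\vec\omega})^{-1}$ depends continuously on $\vec\omega$ in $L^2$. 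I will then pass to general $h$ by density, using the uniform bound. The forcing integral is continuous in $\vec\omega$ by the same reasoning. Joint continuity follows from these two facts.

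The one point needing care is that $\P_0$ has only an $L^\infty$ density, so continuity in $\vec\omega$ must hold everywhere, not just almost everywhere. It does, because the flow $\Phi^{0,r}_{\vec\omega}$ depends continuously on the phases. If this is awkward, an alternative is to test against bounded Lipschitz $G$ and apply dominated convergence in $\vec\omega$: for each fixed $\vec\omega$, the map $h\mapsto G(\Psi_r(h,\vec\omega))$ is bounded and continuous, so $\E G(\Psi_r(\varphi_{2n},\vec\omega))\to\int G(\Psi_r(\cdot,\vec\omega))\dif\mu$ pointwise in $\vec\omega$. Integrating in $\vec\omega$ then gives (a) directly, with no joint continuity needed. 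I will use this second route since it is cleaner.

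For part (b), I will copy the proof of Theorem \ref{thmA:batch} with $t=2N+r$ in place of $2N$. Theorem \ref{thm:detailedBatch3} (resp. Theorem \ref{thm:contBatchDetailed3}) already gives the upper bound $\mathfrak U_N\log_LR$ and the lower bound on $\mathcal G_N$ uniformly over $t\in[2N,2N+2)$, with $N$-uniform moment and probability control. Let $\nu_N^r=\operatorname{Law}(\varphi_{2N+r})$ started from $\varphi_0=0$. Then $\int Z_R^q\dif\nu_N^r\le C(\log_LR)^q$, and $\int Z_R^q\dif\nu_N^r\ge \tfrac{c^q}{2}(\log_LR)^q$ once $N\ge C\log_LR$. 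By part (a), $\nu_N^r\to\mu_r$ weakly. The truncation and uniform-integrability step from the proof of Theorem \ref{thmA:batch}, using the order-$2q$ moment bound on $\mathfrak U_N$, passes both bounds to $\mu_r$. The extension of the radius range to a $q$-independent $\zeta$ also goes through verbatim.

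I expect the main obstacle to be the continuity/measurability point in part (a). Part (b) is bookkeeping once (a) is in hand.
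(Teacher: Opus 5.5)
Your proposal is correct and follows essentially the paper's proof. For part (a), you write $\varphi_{2n+r}$ as the partial-cycle affine map, driven by the fresh independent block $\vec\omega_{n+1}$, applied to $\varphi_{2n}$. You then combine continuity of each such map on $H^{-s}_0$, dominated convergence in $\vec\omega$, and Proposition~\ref{prop:statStates3}(b). The paper phrases this as $P_rG\in C_b(H^{-s}_0)$, while you take the limit pointwise in $\vec\omega$ and then integrate, which is the same argument. For part (b), as in the paper, you rerun the truncation and uniform-integrability step from Theorem~\ref{thmA:batch}, using the finite-time bounds that are uniform over $t\in[2N,2N+2)$.
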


\begin{proof}[Proof sketch of Corollary \ref{cor:offPhase3}]
  In either setting, write $\mathcal T_{\vec\omega,r}\varphi$ for the
  affine evolution in the definition of $\mu_r$. For $G\in C_b(H^{-s}_0)$, let
  \[
    P_rG(\varphi):=\int_{\Omega_0}
      G(\mathcal T_{\vec\omega,r}\varphi)\,\dif\P_0(\vec\omega).
  \]
  Each affine map is continuous on $H^{-s}_0$, and dominated convergence
  therefore gives $P_rG\in C_b(H^{-s}_0)$.

  Let $\nu_{N, r}$ be the law of $\varphi_{2N+r}, N \geq 1$. It is now immediate from Proposition \ref{prop:statStates3}(b) that 
  \[\int G d \nu_{N, r} = \int P_r G d \nu_N \to \int P_r G d \mu = \int G d \mu_r \, , \]
  hence $\nu_{N, r} \to \mu_r$ weakly as $N \to \infty$. The proof of Batchelor's law as in part (b) follows from similar arguments to that in the $r = 0$ case.

  % $P_rG$ be the expectation of $G$ after applying the partial-cycle affine map defining $\mu_r$ (including the Duhamel term in Setting \ref{set:contDet}); the needed Feller property holds because the finite-time propagators act continuously on $H^{-s}$, and therefore, writing $\nu_N$ and $\nu_{N,r}$ for the laws of $\varphi_{2N}$ and $\varphi_{2N+r}$, independence of the next phase block and Proposition \ref{prop:statStates3}(b) give $\int G\dif\nu_{N,r}=\int P_rG\dif\nu_N\to\int P_rG\dif\mu=\int G\dif\mu_r$, proving (a).
  % The bounds in Theorems \ref{thm:detailedBatch3} and \ref{thm:contBatchDetailed3} are uniform for $t\in[2N,2N+2)$ and, when applied with moment order $2q$, give both the required finite-time upper and lower bounds for $Z_R(\varphi_{2N+r})^q$, where $Z_R(h)=\|\Pi_{B_R}h\|_{L^2}^2$, and uniform integrability of these variables; part (a) therefore upgrades to convergence of their expectations, yielding (b).
\end{proof}

\begin{remark}
{We caution that for $ r \in [0,2) \setminus \{ 0, 1\}$, the sequences $(\varphi_{2n +r})$ \emph{need not be} Markovian since, for $n \geq 0$, $\varphi_{2(n+1)+r}$ and $\varphi_{2n + r}$ share dependence on the phase shifts $\vec{\omega}_{n + 1}$ applied along $t \in [2n, 2n+2]$. 
  We can, however, recover the Markov property for the extended chain $((\vec{\omega}_{n+1}, \varphi_{2n+r}))$; augmenting by the IID sequence $(\vec{\omega}_{n+1})$ provides the necessary data to evolve $\varphi_{2n+r}$ all the way to $\varphi_{2(n + 1) + r}$. 
  For the augmented chain, a version of the above arguments implies unique existence of a stationary measure $\tilde{\mu}_r$ for the augmented chain on $\Omega_0 \times H^{-s}_0$, such that
  \begin{align}
    \tilde{\mu}_r(\Omega_0 \times K) & = \mu_r(K) \qquad \text{ for all Borel } \qquad K \subset H^{-s}_0 \, , \text{ and} \\
    \tilde{\mu}_r(E \times H^{-s}_0) & = \P_0(E) \qquad \text{ for all measurable } \qquad E \subset \Omega_0 \,.
  \end{align}
  Here, $\mu_r$ is as in Corollary \ref{cor:offPhase3}. 
  
  We note that for $r = 1$, however, the odd-time chain $(\varphi_{2n + 1})$ does satisfy the Markov property, since here we can cleanly separate the phase shifts: $\varphi_{2(n+1) + 1}$ depends only on $\omega^2_{n + 1}, \omega^1_{n + 2}$ and $\varphi_{2n+1}$, while $\varphi_{2n+1}$ is independent of $\omega^2_{n + 1}, \omega^1_{n + 2}$. 
  } 
  % \[\varphi_{2(n+1)+r} = \mathcal{S}^{2(n+1), 2(n+1) + r} \bigg( \mathcal{S}^{2n+r, 2(n+1)} \varphi_{2n + r}\bigg) \]

  % when $r \in (0,1)$, both $\varphi_{2n +r}$ and $\varphi_{2(n+1)+r}$ depend on $\omega^1_{n + 1}$, while when $r \in (1,2)$ these both depend on $\omega^2_{n + 1}$. For this reason we stop short of referring to $\mu_r$ as a `stationary measure' for $(\varphi_{2n+r})$. 
  % Nevertheless, convergence and Batchelor's law as in Corollary \ref{cor:offPhase3}. 
\end{remark}

    % can get a markov chain for $(\varphi_{2n+1})_{n \geq 0}$. technical point: sequence $(\varphi_{2n +r}), r \in [0,2) \setminus \{ 0, 1\}$ are not actually Markovian. 

\subsubsection*{Proof of Proposition \ref{prop:statStates3}}

% {\color{red}  modify to include left-limiting law.  } 
% \begin{proof}[Proof of Proposition \ref{prop:statStates3}]
We address below the case of the right-limiting chain $(\varphi_{2n})$. 
Statements involving the left-limiting chain $\varphi_{2n-}$ and its stationary measure $\mu_-$ in Setting \ref{set:discDet} follow mutatis mutandis.

  It will be convenient to pass to the two-sided phase space
  \[
    (\hat\Omega,\hat\Fc,\hat\P)
    :=(\Omega_0,\Fc_0,\P_0)^{\Z},
  \]
  and write \(\vec\omega_j=(\omega_j^1,\omega_j^2)\) for the phase block on
  the interval \([2(j-1),2j)\).  Let $\hat \E$ denote expectation w.r.t. $\hat \P$. The velocity field, Lagrangian flow, and
  propagators then extend in the evident way to all \(t\in\R\).  Set
  \[
    b_j:=
    \begin{cases}
      f,
        &\text{in Setting \ref{set:discDet}},\\[1mm]
      \displaystyle
      \int_{2(j-1)}^{2j}\Sc_{\hat\uo}^{\tau,2j}f\,\dif\tau,
        &\text{in Setting \ref{set:contDet}}.
    \end{cases}
  \]
  Each $b_j$ has zero spatial mean, and
  \begin{equation}\label{eq:stationary-increment-bound3}
  \sup_{j\in\Z}\|b_j\|_{H^{-s}}\leq C_L<\infty
  \qquad\text{almost surely}
\end{equation}
  in either setting. Here and below, $C_L$ denotes a finite constant that may depend on $L$ and change from line to line.

  % Thus the evolution over the \(j\)-th phase block is
  % \[
  %   h\longmapsto
  %   \Sc_{\hat\uo}^{2(j-1),2j}h+b_j.
  % \]
  % The same deterministic finite-time composition estimate used in
  % \eqref{eq:deterministFracTime2} gives
  % \begin{equation}\label{eq:stationary-increment-bound3}
  %   \sup_{j\in\Z}\|b_j\|_{H^{-s}}\leq C_L
  %   \qquad\text{almost surely}
  % \end{equation}
  % in either setting.

  To construct a stationary state, define
  \begin{align}\label{eq:stationary-series3}
    \varphi_{\rm stat}
    & :=\sum_{j=-\infty}^{0}\Sc_{\hat\uo}^{2j,0}b_j = 
    \begin{cases}
      \sum_{j=-\infty}^{0}\Sc_{\hat\uo}^{2j,0}f \, ,  & \text{ Setting \ref{set:discDet}} \\ 
    \int_{-\infty}^{0}\Sc_{\hat\uo}^{\tau,0}f\,\dif\tau \, ,
        & \text{ Setting \ref{set:contDet}}
    \end{cases}
  \end{align}
  Ultimately, we will set the stationary measure $\mu$ to be the law of $\varphi_{\rm stat}$. 

  We start by checking that in both settings, the series defining $\varphi_{\rm stat}$ converges $\hat \P$-almost-surely in $H^{-s}_0$.
  For \(j<0\), the random field \(b_j\) depends only on
  \(\vec\omega_j\), whereas \(\Sc_{\hat\uo}^{2j,0}\) depends only on the
  later phase blocks $\vec{\omega}_{j+1}, \dots, \vec{\omega}_0$.  Conditioning on \(b_j\), applying Theorem
  \ref{thm:neg-reg-flexible} at time \(-2j\), and then using
  \eqref{eq:stationary-increment-bound3}, we obtain
  \[
    \left(
      \hat\E\|\Sc_{\hat\uo}^{2j,0}b_j\|_{H^{-s}}^2
    \right)^{1/2}
    \leq C_L L^{2j\mathfrak c}.
  \]
  The right-hand side is summable over \(j\leq 0\).  Hence the series in
  \eqref{eq:stationary-series3} converges in
  \(L^2(\hat\Omega;H^{-s}_0)\); moreover, the sum of the expected norms of
  its terms is finite, so it converges absolutely in \(H^{-s}_0\) almost
  surely. Here the limit remains in $H^{-s}_0$ because every summand has
  zero mean and $H^{-s}_0$ is closed in $H^{-s}$.

  % In Setting \ref{set:discDet} this is precisely
  % \[
  %   \varphi_{\rm stat}
  %   =\sum_{j=-\infty}^{0}\Sc_{\hat\uo}^{2j,0}f,
  % \]
  % while in Setting \ref{set:contDet}, splitting the time integral into
  % two-unit blocks gives
  % \[
  %   \varphi_{\rm stat}
  %   =\int_{-\infty}^{0}\Sc_{\hat\uo}^{\tau,0}f\,\dif\tau.
  % \]

  Let \(\hat\sigma : \hat \Omega \to \hat \Omega\) be the left shift of the two-sided phase sequence. It is straightforward to check from \eqref{eq:stationary-series3} that
  % The cocycle property and \eqref{eq:stationary-series3} give
  \[
    \mathcal T_{\vec\omega_1}
      [\varphi_{\rm stat}(\hat\uo)]
    =\varphi_{\rm stat}(\hat\sigma\hat\uo).
  \]
  Since \(\varphi_{\rm stat}\) depends only on the phase blocks with
  indices at most zero, it is independent of the fresh block
  \(\vec\omega_1\); since \(\hat\P\) is invariant under \(\hat\sigma\),
  the law \(\mu\) of \(\varphi_{\rm stat}\) is stationary.  This proves
  existence in part (a).

  It remains to prove uniqueness and attraction. The argument we present is as in {\cite{bedrossian2024negative}, Theorem 2.6.}  Equip \(H^{-s}_0\) with
  the bounded metric
  \[
    d_s(g,h):=1\wedge\|g-h\|_{H^{-s}}.
  \]
  Given \(g,h\in H^{-s}_0\) and a fixed phase shift sequence $\uo = (\vec{\omega}_1, \vec{\omega}_2, \dots)$, let
  \[g_n = \mathcal{T}_{\vec{\omega}_n} \circ \dots \circ \mathcal{T}_{\vec{\omega}_1} g \, , \qquad h_n = \mathcal{T}_{\vec{\omega}_n} \circ \dots \circ \mathcal{T}_{\vec{\omega}_1} h\]
  and observe that $g_n-h_n=\mathcal{S}^{0,2n}_{\uo}(g-h)$.
  Since $g-h\in H^{-s}_0$, Theorem \ref{thm:neg-reg-flexible} applies and gives
  \begin{equation}\label{eq:stationary-coupling-decay3}
    \E d_s(g_n,h_n)
    \leq
    1\wedge
    \left(
      C L^{\mathfrak d-2n\mathfrak c}
      \|g-h\|_{H^{-s}}
    \right)
    \longrightarrow0.
  \end{equation}

  If \(\mu_1\) and \(\mu_2\) are stationary laws on $H^{-s}_0$, choose
  \(g\sim\mu_1\) and \(h\sim\mu_2\), independently of one another and
  of the future phase blocks $(\vec{\omega}_j)_{j \geq 1}$. 
  Note in our setup that $g_n \sim \mu_1$ and $h_n \sim \mu_2$ for all $n$. 
  Integrating
  \eqref{eq:stationary-coupling-decay3} against
  \(\mu_1(\dif g)\mu_2(\dif h)\) and applying dominated convergence gives
  \[
    \E d_s(g_n,h_n)\longrightarrow0.
  \]
  It follows that for every \(G : H^{-s}_0 \to \R\) that is one-Lipschitz with respect to \(d_s\),
  \[
    \left|\int G\,\dif\mu_1-\int G\,\dif\mu_2\right|
    \leq \E d_s(g_n,h_n)\longrightarrow0.
  \]
  Bounded Lipschitz functions determine Borel probability measures on
  \(H^{-s}_0\), and hence \(\mu_1=\mu_2=\mu\).
  An identical argument, with $g \sim \mu$ and $h \in H^{-s}_0$ arbitrary, allows to conclude part (b).

\section{Sparsity}\label{sec:sparsity}
    %!TEX root = main.tex

We now turn our attention to sparsity in the cumulative Batchelor's law (Theorem \ref{thmA:sparsity}, restated below). Section \ref{subsec:discSparsity4} addresses  discrete-time scalar sources (Setting \ref{set:generalDiscDet} from Section \ref{sec:batchLaw}) while Section \ref{subsec:contSparsity4} addresses  continuous-time scalar source (Setting \ref{set:contDet}). Proofs of theorems from Section \ref{sec:intro} are deferred to the end of Section \ref{subsec:contSparsity4}. The finite-time estimates below use $\varphi_0=0$, as in Section \ref{sec:batchLaw}. 

\subsection{Discrete time case}\label{subsec:discSparsity4}

Recall that for $\theta > 0$, \[\Cc_\theta = \left\{(k,j) \in \Z^2 : |j| < \theta |k|\right\} \]
is the set of roughly-horizontal wavenumbers. 
Our task in this section is to prove the following statement on cone sparsity for the even-time left-limiting process $(\varphi_{2n-})$. 

% {\color{red}  bug: what to do if $L \leq R$? } 
\begin{theorem}\label{thm:batchelor-cone-quenched}
  Assume Setting \ref{set:generalDiscDet}. Fix $q \geq 1, \vartheta \in [0,1)$, and set 
  \begin{align}\label{eq:paramAssumpt4} \eta = \frac{1 - \vartheta}{8q} \, , \qquad \Theta = L^{\vartheta} \,.  \end{align}

  Then, there exist constants $C, L_0$, depending on $q, \vartheta, \eta, p, F, \psi$, such that for all $L \geq L_0$ the following holds. 
  
  There is a family of random variables $\mathfrak{C}_{N, R, \vartheta} : \Omega \to [1,\infty), N \geq1, R\geq2$, such that 
  \[\| \mathfrak{C}_{N, R, \vartheta} \|_{L^q(\Omega)} \leq C \, , \]
  and for any $R \geq 2, N \geq 1$, one has the estimate 
    \begin{align}\label{eq:lefLimEst4}\| \Pi_{B_R \cap \Cc_\Theta} \varphi_{2N-} \|_{L^2}^2 & \leq \mathfrak{C}_{N, R, \vartheta} L^{- \eta} \log_L R\, . 
    \end{align}
    % \item[(b)] For all $t \in [2N, 2N+2)$, one has 
    % \begin{align}\label{eq:allTimeEst4}
    % \| \Pi_{B_R \cap \Cc_\Theta} \varphi_{t} \|_{L^2}^2 & \leq \mathfrak{C}_{N, \vartheta} L^{- \eta} \log_L R + C \,. 
    % \end{align}
  % \end{itemize}
%   \[ \|\Pi_{\Cc_\Theta\cap B_R}\varphi_{2N}\|_{L^2}^2
%     \leq
%     \mathfrak{C}_N \left(1+L^{-\eta}\log_LR\right).
% \]
\end{theorem}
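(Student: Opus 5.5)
The plan is to reuse the old/recent decomposition from the proof of Theorem \ref{thm:detailedBatch3}, and to extract the extra factor $L^{-\eta}$ from a single application of Lemma \ref{lem:cone-decay} (with $s=0$) to the last shear before time $2N$. In Setting \ref{set:generalDiscDet} we have
\[
\varphi_{2N-}=\sum_{j=1}^{N-1}\Sc^{2j,2N}f_j=\Lc_{\omega_N^2,L}\,\psi_N,\qquad \psi_N:=\Lc_{\omega_N^1,L}\,\varphi_{2N-2}.
\]
The key structural point is that $\psi_N$ is measurable with respect to $\{\omega_i^j:i<N\}\cup\{\omega_N^1\}$, so it is independent of $\omega_N^2$.

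\textbf{Decomposition.} With $\ell_R=\log_LR$, $K=\lceil K_0\ell_R\rceil$ and $A=(N-K)_+$, split $\psi_N=\psi^{\rm old}+\psi^{\rm rec}$ according to $j\leq A$ versus $j>A$.

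For the old part, $\Pi_{B_R\cap\Cc_\Theta}$ is dominated by $\Pi_{B_R}$. Exactly as in \eqref{eq:detailedBatch3-upper-old}, together with the partial-cycle bound \eqref{eq:deterministFracTime2}, this gives
\[
\|\Pi_{B_R}\Lc_{\omega_N^2,L}\psi^{\rm old}\|_{L^2}\leq \bar C R^{s}L^{2s}L^{-\mathfrak c_*K}W_N .
\]
Enlarging $K_0$, this is at most $\bar C L^{-1}W_N$, which is far smaller than $L^{-\eta}$.

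For the recent part, $L^2$ isometry and the correlation bound \eqref{eq:detailedBatch3-correlation} give, as in \eqref{eq:detailedBatch3-upper-recent},
\[
\|\psi^{\rm rec}\|_{L^2}^2\leq \bar C K(1+M_N).
\]
Here $W_N,M_N$ are built from $D$ obtained via Corollary \ref{cor:quenchedMix2} at moment order $2q$ (or larger), so Claim \ref{cla:boundWNMN3} controls them in $L^{2q}$.

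\textbf{Cone estimate.} Drop $\Pi_{B_R}$ on the recent part. Conditionally on $\psi^{\rm rec}$, which is independent of $\omega_N^2$, Lemma \ref{lem:cone-decay} with $s=0$ and $\theta=\Theta=L^\vartheta$ gives
\[
\E\big[\|\Pi_{\Cc_\Theta}\Lc_{\omega_N^2,L}\psi^{\rm rec}\|_{L^2}^2\,\big|\,\psi^{\rm rec}\big]\leq C\|p\|_\infty L^{-(1-\vartheta)/4}\|\psi^{\rm rec}\|_{L^2}^2 .
\]
To upgrade this to a $q$-th moment, write $X=\|\Pi_{\Cc_\Theta}\Lc\psi^{\rm rec}\|^2$ and use the trivial bound $X\leq\|\psi^{\rm rec}\|^2$. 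Then
\[
\E X^q\leq \E\big[\|\psi^{\rm rec}\|^{2(q-1)}\,\E[X\mid\psi^{\rm rec}]\big]\leq CL^{-(1-\vartheta)/4}\,\E\|\psi^{\rm rec}\|^{2q}\leq CL^{-(1-\vartheta)/4}K^q .
\]
The last step uses $\E M_N^q\leq C$.

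\textbf{Assembling the random constant.} Set
\[
\mathfrak C_{N,R,\vartheta}:=1+\frac{L^{\eta}}{\ell_R}\Big(2X+2\bar C^2L^{-2}W_N^2\Big).
\]
Its $L^q$ norm is at most
\[
C\big(1+L^{\eta-(1-\vartheta)/(4q)}(K/\ell_R)+L^{\eta-2}\big),
\]
which is bounded because $\eta=(1-\vartheta)/(8q)<(1-\vartheta)/(4q)$. Moreover \eqref{eq:lefLimEst4} holds by construction.

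\textbf{Main obstacle: the range $2\leq R<L$.} The step I expect to be hardest is the range $2\leq R<L$, where $\ell_R<1$ but $K\geq1$. There the recent $L^2$ mass, of order $1$, is not dominated by $\ell_R$. I would handle this by keeping $\Pi_{B_R}$ rather than dropping it, and redoing the Step 1 sum in the proof of Lemma \ref{lem:cone-decay} restricted to $|k|<R$.
\begin{itemize}
\item If $R\leq L^{\vartheta'}$ for a suitable exponent, the count of $(k,j)\in\Cc_\Theta\cap B_R$ is at most $R^2$, while $\Gamma\lesssim\varepsilon^{-3}L^{-1}|k|^{-1}$ still holds. This gives a gain $R\,L^{-1+}$, which is at most $\ell_R L^{-\eta}$ once $R\lesssim L^{c}$.
\item In the intermediate window, I would instead compare with $\ell_R\geq\log2/\log L$ and try to absorb the logarithmic loss into the polynomial room between $(1-\vartheta)/4$ and $\eta$.
\end{itemize}
If this fails, the fallback is to prove \eqref{eq:lefLimEst4} only for $R\geq L$ and treat smaller $R$ by monotonicity in $R$, accepting a constant loss.
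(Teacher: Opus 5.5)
Your main line is the paper's proof: the same old/recent split, and the recent part written as $\Lc_{\omega_N^2,L}$ applied to something independent of $\omega_N^2$. You also use Lemma \ref{lem:cone-decay} at $s=0$ to get the factor $L^{-(1-\vartheta)/4}=L^{-2q\eta}$, the conditional bound $\E[X^q\mid\cdot\,]\le\|\psi^{\rm rec}\|_{L^2}^{2q-2}\E[X\mid\cdot\,]$, and $\mathfrak C=1+\frac{L^\eta}{\ell_R}(\cdots)$.

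\textbf{The gap is at small $R$, and it sits in the old term, not the recent one.} With $K=\lceil K_0\ell_R\rceil$, every $R$ with $K_0\ell_R\le1$ has $K=1$ (e.g.\ $R=2$ and $L\ge2^{K_0}$). There all you have is $\|\Pi_{B_R}\varphi^{\rm old}\|_{L^2}\le\bar CR^sL^{-\mathfrak c_*}W_N$. So the claim ``enlarging $K_0$, this is at most $\bar CL^{-1}W_N$'' is false in that range, for every choice of $K_0$. With only this bound, the old term's contribution to $\mathfrak C$ at $R=2$ has $L^q$ norm of order $L^{\eta-2\mathfrak c_*}\log L$. This is unbounded whenever $2\mathfrak c_*<\eta$, which holds for all $\vartheta$ not extremely close to $1$, because $\mathfrak c_*=\mathfrak c/(24q)$ with $\mathfrak c$ tiny while $\eta=(1-\vartheta)/(8q)$.

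The paper repairs this by adding an $L$-independent constant to the cutoff: $K=\lceil K_0\ell+(b+\eta)/\mathfrak c_*\rceil$ with $b=2+2s$ and $\mathfrak c_*K_0\ge s$. Then $R^sL^{b-\mathfrak c_*K}\le R^{s-\mathfrak c_*K_0}L^{-\eta}\le L^{-\eta}$ for every $R\ge2$, while still $K\le C(1+\ell)$.

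\textbf{The recent part needs no new idea for $2\le R<L$, so the mode counting inside $B_R$ is unnecessary.} Your own estimate already gives
\[
\Big\|\tfrac{L^\eta}{\ell_R}X\Big\|_{L^q}\le CL^{-\eta}\,\frac{K}{\ell_R}\le CL^{-\eta}\Big(K_0+\frac{\log L}{\log2}\Big),
\]
which is bounded uniformly in $L$. This is exactly how the paper finishes. It proves $\E\|\Pi_{\Cc_\Theta\cap B_R}\varphi_{2N-}\|_{L^2}^{2q}\le CL^{-2a}(1+\ell)^q$ with $a=q\eta$. It then uses $L^{-a}((1+\ell)/\ell)^q\le L^{-a}(1+\log L/\log2)^q\le C$.

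\textbf{Your fallback is misdescribed.} Monotonicity from $R=L$ down to $R=2$ costs the factor $\log L/\log R$, up to $\log L/\log2$, not a constant. It works only if you keep the strong moment bound $CL^{-2a}$ at $R=L$ and spend the spare $L^{-a}$ on that logarithm, which is the idea in your second bullet.

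\textbf{Minor points.}
\begin{itemize}
\item $2N-$ is a full-cycle endpoint, so the partial-cycle factor $L^{2s}$ in your old bound is not needed.
\item The normalized old term carries a factor $1/\ell_R$ that you dropped in the $L^q$ bound.
\end{itemize}
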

% Theorem \ref{thmA:sparsity} in Setting \ref{set:discDet} follows immediately on taking $q$-th moments of the left- and right-hand sides of \eqref{eq:lefLimEst4}, \eqref{eq:allTimeEst4} respectively. 

% Theorem \ref{thmA:sparsity} and Corollary \ref{corA:sparsityDisc} will follow relatively quickly from the above more general statement -- see the end of Section \ref{subsec:discSparsity4}. 

\begin{proof}[Proof of Theorem~\ref{thm:batchelor-cone-quenched}]
% The estimate in (b) is immediate from that in (a) on noting that $\varphi_t, t \in [2N, 2N+2)$ incorporates the instantaneous driving term $f_{N}$ applied at time $2N$, giving a contribution of size $\lesssim F^2$, with $F$ as in Setting \ref{set:generalDiscDet}. 

Let $s,\mathfrak c_*,\alpha=2\mathfrak c_*$ and $D_j,W_N,M_N$ be as
in the proof of Theorem \ref{thm:detailedBatch3}. In particular,
\eqref{eq:detailedBatch3-shifted-mixing} holds and $W_N,M_N$ have uniformly
bounded $L^{2q}(\Omega)$ norms. Write
\[
  a:=q\eta=\frac{1-\vartheta}{8},\qquad \ell:=\log_LR,\qquad b:=2+2s,
\]
and set
\[
  K:=\left\lceil K_0\ell+\frac{b+\eta}{\mathfrak c_*}\right\rceil,
  \qquad A:=(N-K)_+,\qquad \mathfrak c_*K_0\geq s.
\]
Thus $K\leq C(1+\ell)$. The case $N=1$ is immediate since
$\varphi_{2-}=0$; assume $N\geq2$. Decompose
\[
  \varphi^{\rm old}:=\sum_{j=1}^A\mathcal S^{2j,2N}f_j,
  \qquad \varphi^{\rm rec}:=\sum_{j=A+1}^{N-1}\mathcal S^{2j,2N}f_j.
\]
Empty sums are zero. The old-term estimate from
\eqref{eq:detailedBatch3-upper-old} gives
\begin{equation}\label{eq:sparsity-old4}
  \|\Pi_{\Cc_\Theta\cap B_R}\varphi^{\rm old}\|_2
  \leq C W_N R^s L^{b-\mathfrak c_*K}
  \leq C W_N L^{-\eta}.
\end{equation}
Here the harmless factor $L^b$ leaves room for the source and partial-cycle
losses needed below. In particular, the $2q$-th moment of this norm is at most
$CL^{-2a}$.

For the recent contribution, use the two-shear factorization to write
\[
  \varphi^{\rm rec}=\mathcal L_{\omega_N^2,L}g_N,
  \qquad
  g_N=\mathcal L_{\omega_N^1,L}
       \sum_{j=A+1}^{N-1}\mathcal S^{2j,2(N-1)}f_j.
\]
The recent-source estimate \eqref{eq:detailedBatch3-upper-recent} gives
\[
  \|g_N\|_2^2
  =\left\|\sum_{j=A+1}^{N-1}\mathcal S^{2j,2(N-1)}f_j\right\|_2^2
  \leq CK(1+L^{-\alpha}M_{N-1}),
  \qquad \E\|g_N\|_2^{2q}\leq CK^q.
\]
Let $\widehat{\mathcal F}$ be generated by the phases through $\omega_N^1$,
excluding $\omega_N^2$, and put
$Y:=\|\Pi_{\Cc_\Theta}\mathcal L_{\omega_N^2,L}g_N\|_2^2$.
Since $g_N$ is $\widehat{\mathcal F}$-measurable, $Y\leq\|g_N\|_2^2$,
and Lemma \ref{lem:cone-decay} with $s=0$ gives
\begin{equation}\label{eq:conditional-cone4}
  \E[Y^q\mid\widehat{\mathcal F}]
  \leq \|g_N\|_2^{2q-2}\E[Y\mid\widehat{\mathcal F}]
  \leq C L^{-2a}\|g_N\|_2^{2q}.
\end{equation}
When $g_N=0$, both sides are zero. Combining the two contributions yields
\begin{equation}\label{eq:sparsity-strong-moment4}
  \E\|\Pi_{\Cc_\Theta\cap B_R}\varphi_{2N-}\|_2^{2q}
  \leq C L^{-2a}(1+\ell)^q.
\end{equation}
To obtain the claimed normalization even for $2\leq R<L$, observe that
$\ell\geq\log2/\log L$ and hence
\begin{equation}\label{eq:sparsity-small-radius4}
  L^{-a}\left(\frac{1+\ell}{\ell}\right)^q
  \leq L^{-a}\left(1+\frac{\log L}{\log2}\right)^q\leq C.
\end{equation}
The random variables
\[
  \mathfrak C_{N,R,\vartheta}
  :=1+\frac{L^\eta}{\ell}
      \|\Pi_{\Cc_\Theta\cap B_R}\varphi_{2N-}\|_2^2
\]
therefore have uniformly bounded $L^q$ norms, since $q\eta=a$,
and satisfy \eqref{eq:lefLimEst4}. 

\end{proof}

\subsection{Sparsity, continuous time}\label{subsec:contSparsity4}

We now turn attention to the case of continuous-time forcing (Setting \ref{set:contDet}). The following amounts to the full statement of Theorem \ref{thmA:contSparsity1} stated in Section \ref{sec:intro}. Along the way, we introduce an idea used in the proof of Theorem \ref{thmA:sparsity}, to be addressed at the end of Section \ref{subsec:contSparsity4}.

% After the following theorem, we will conclude by proving the sparsity theorems of Section \ref{sec:intro}.

\begin{theorem}[Constant forcing, continuous-time sparsity estimate]
\label{thm:cont-sparsity}

Assume Setting \ref{set:contDet}. Fix $q \geq 1, \vartheta \in [0,1)$ and let $\eta, \Theta$ be as in \eqref{eq:paramAssumpt4}. Fix the additional constant $\varepsilon_0 \in (0,1)$. 
Then, there exist constants $C, L_0$, depending on $q, \vartheta, \eta, p, F, \psi, \varepsilon_0$, such that for all $L \geq L_0$ the following holds. 

There is a family of random variables $\mathfrak{C}^{\rm cont}_{N, R, \vartheta, r} : \Omega \to [1,\infty), N \geq 1, R\geq2, r \in [1 + \varepsilon_0, 2]$ satisfying the uniform estimates
\[\| \mathfrak{C}^{\rm cont}_{N, R, \vartheta, r}\|_{L^q(\Omega)} \leq C  \]
such that, for any $R \geq 2, N \geq 1$, one has 
\begin{align}\label{eq:coneEstTimeNCont4}
  \| \Pi_{\mathcal{C}_\Theta \cap B_R} \varphi_{2N + r}\|_{L^2}^2 \leq \mathfrak{C}^{\rm cont}_{N, R, \vartheta, r} L^{- \eta} \log_L R
\end{align}

  % Under \eqref{eq:cont-force-assumptions}, and under the exponent choices of
  % Subsection~\ref{subsec:cont-prelim}, let
  % \[
  %   0<\eta<\frac1{4(2q+1)}
  % \]
  % and let $1\leq\Theta\leq L^{1-4(2q+1)\eta}$.  There are constants
  % \[
  %   C=C(\psi,p,F,\varepsilon,\delta,s,\sigma,q,a,\eta)>0,
  %   \qquad L_0,\zeta>0,
  % \]
  % such that for all $L\geq L_0$, all $R\geq L^\zeta$, and all integers
  % $N\geq C\log_LR$, there is a nonnegative random variable
  % $\mathfrak C_{N,R,\Theta}^{\rm cont}(\uo)$ with
  % \[
  %   \|\mathfrak C_{N,R,\Theta}^{\rm cont}\|_{L^q(\Omega)}\leq C
  % \]
  % such that, almost surely,
  % \begin{equation}\label{eq:cont-sparsity-quenched}
  %   \|\Pi_{\Cc_\Theta\cap B_R}\varphi_{2N}\|_{L^2}^2
  %   \leq
  %   C\left(1+\mathfrak C_{N,R,\Theta}^{\rm cont}(\uo)
  %     L^{-\eta}\log_LR\right).
  % \end{equation}
\end{theorem}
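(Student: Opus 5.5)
I will follow the proof of Theorem~\ref{thm:batchelor-cone-quenched}. The one new ingredient is that a whole cycle of constant forcing is injected, including during the horizontal shear. So the fresh source that has not been pushed out of $\Cc_\Theta$ must be controlled using the time $r-1\geq\varepsilon_0$ of horizontal shearing already elapsed in the current cycle.

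Write $t=2N+r$ and use Duhamel together with \eqref{eq:contTimeEvens3}:
\[
  \varphi_t=\Sc^{2N,t}\varphi_{2N}+\int_{2N}^{2N+1}\Sc^{\tau,t}f\,\dif\tau+\int_{2N+1}^{t}\Sc^{\tau,t}f\,\dif\tau=:I+II+III.
\]
Split $I$ further into old and recent pieces as before, with $K=\lceil K_0\ell+(b+\eta)/\mathfrak c_*\rceil$ and $A=(N-K)_+$. For the old piece, combine \eqref{eq:detailedBatch3-shifted-mixing}, the $L^2$ bound on $\mathfrak f_j$ (the extra $L^2$ from $\|\mathfrak f_j\|_{H^1}$ is absorbed into $b$), and the partial-cycle loss $L^{2s}$ from \eqref{eq:deterministFracTime2}. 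This gives $\|\Pi_{B_R}(\cdot)\|_2\leq C\widetilde W_NL^{-\eta}$, exactly as in \eqref{eq:sparsity-old4}.

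For the recent piece of $I$ and for $II$, the last operation applied is a horizontal shear $\Sc^{2N+1,t}$ of duration $r-1\geq\varepsilon_0$, with phase $\omega^2_{N+1}$. Everything before it is measurable with respect to the phases up to $\omega^1_{N+1}$. A time-$\tau$ horizontal shear is (up to the coordinate flip) the map of Lemma~\ref{lem:cone-decay} with amplitude $\tau L$ in place of $L$. Hence Lemma~\ref{lem:cone-decay} with $s=0$ and amplitude $(r-1)L\geq\varepsilon_0L$ applies. Conditioning on $\widehat{\mathcal F}$ as in \eqref{eq:conditional-cone4}, this gives a factor $C\varepsilon_0^{-1/4}\Theta^{1/4}L^{-1/4}=CL^{-2a}$. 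This factor multiplies the $2q$-th moment of the recent $L^2$ norm, which is bounded by $CK^q(1+\widetilde M_N)^q$ through \eqref{eq:recContribContinuous3} and \eqref{eq:cont-WM-moments3}; the $L^2$ norm of $II$ is at most $F$.

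The term $III=\int_{2N+1}^t\Sc^{\tau,t}f\,\dif\tau$ is the main obstacle. Sources injected at times $\tau$ close to $t$ have not been sheared at all. To handle it, I will split $III$ as $\int_{2N+1}^{t-\delta}+\int_{t-\delta}^{t}$ with $\delta=L^{-2a}$. The second part has $L^2$ norm at most $F\delta$, whose square is negligible.

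For the first part, I will argue as in the proof of Lemma~\ref{lem:cont-effective-sources3}. For each fixed $\tau$, $\Sc^{\tau,t}f$ is a shear of $f$ with amplitude $(t-\tau)L\geq L^{1-2a}$. The non-$x^1$-independent modes $\ell\neq0$ therefore behave like the $A(\cdot)$-averaged terms: integrating in $\tau$ gives $L^2$ size $\leq CL^{-(1-2a)/3}$ off the zero-$\ell$ mode. I still need to check that this is $\leq L^{-2a}$ using $a\leq 1/8$; if it is not, I will optimize $\delta$. The $x^2$-mode of $f$, namely $\bar f^{\rm h}$, is invariant under the shear and lies on the $j$-axis, outside $\Cc_\Theta$. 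For the remaining modes, $\ell\neq0$ with $k$ arbitrary, I will instead apply Lemma~\ref{lem:cone-decay} phase-averaged over $\omega^2_{N+1}$ with amplitude $(t-\tau)L$, and use Minkowski in $\tau$. Either route yields an $L^{-2a}$ bound in $L^q$ after integrating over $\tau\in[2N+1,t-\delta]$.

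Collecting the three terms gives
\[
  \E\|\Pi_{\Cc_\Theta\cap B_R}\varphi_t\|_2^{2q}\leq CL^{-2a}(1+\ell)^q.
\]
For $R<L$, the case $2\leq R<L$ is handled by \eqref{eq:sparsity-small-radius4}. I then define
\[
  \mathfrak C^{\rm cont}_{N,R,\vartheta,r}:=1+\frac{L^\eta}{\ell}\,\|\Pi_{\Cc_\Theta\cap B_R}\varphi_{2N+r}\|_2^2,
\]
and since $q\eta=a$, these random variables have uniformly bounded $L^q$ norms and satisfy \eqref{eq:coneEstTimeNCont4}.
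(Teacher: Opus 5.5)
Your proposal is correct and follows the paper's proof. It uses the same old/recent split, transports the recent mass together with the $[2N,2N+1]$ forcing to $t_1=2N+1$, handles that with the conditional cone estimate for the final horizontal shear of amplitude $(r-1)L\geq\varepsilon_0 L$, and treats the fresh forcing by Lemma~\ref{lem:cone-decay} at amplitude $\rho L$ for each elapsed time $\rho$ followed by Jensen/Minkowski in $\rho$. The only difference is that the paper needs no cutoff $\delta$ near $\tau=t$, because the per-$\rho$ bound $L^{-2a}\rho^{-1/4}$ is integrable in $\rho$. Your alternative averaging route also needs no cutoff: its $L^{-1/3}$ gain comes from oscillation of the time integral at rate $L$, not from a lower bound on the elapsed shear amplitude.
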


As in Remark \ref{rmk:phaseConfinement1}(a), the corresponding estimate
for $r\in[\varepsilon_0,1]$ follows on reversing the axes. The passage to
stationary laws is given at the end of this section.

% {\color{red} 
%  similar estimate for phases $r \in [\varepsilon_0, 1]$ reversing the axes. no similar estimate in $[0,\varepsilon_0)$ or $[1, 1+ \varepsilon_0]$. 

% may scale $\varepsilon_0$ with a small fractional power of $L$ at the expense of $\eta$; details omitted. 
% } 

The old contribution is handled as above; the recent contribution requires separating the forcing injected during the final horizontal shear. 
\begin{proof}[Proof of Theorem~\ref{thm:cont-sparsity}]
Let $\mathfrak c_*:=\alpha_0/2$, with $\alpha_0$ as in
Lemma \ref{lem:cont-effective-sources3}. Use $a,\ell,b,K,A$ as in the discrete
proof, with this value of $\mathfrak c_*$. Write $t=2N+r$, $t_1=2N+1$,
and $r_0=r-1\in[\varepsilon_0,1]$. Decompose
\[
  \varphi^{\rm old}:=\int_0^{2A}\mathcal S^{\tau,t}f\,\dif\tau,
  \qquad \varphi^{\rm rec}:=\varphi_t-\varphi^{\rm old}.
\]
The effective sources have $H^1$ norm at most $CL^2$, and the final
partial cycle has $H^{-s}$ operator norm at most $CL^{2s}$.
Since $\widetilde D_j\geq D_j$ and $\alpha\geq\alpha_0$, the old-term bound is
\[
  \|\Pi_{\Cc_\Theta\cap B_R}\varphi^{\rm old}\|_2
  \leq C\widetilde W_N R^sL^{2+2s-\mathfrak c_*K}
  \leq C\widetilde W_N L^{-\eta}.
\]
Its $2q$-th moment is at most $CL^{-2a}$ by
\eqref{eq:cont-WM-moments3}.

For the recent contribution, define
\[
  h_{N,R}:=\int_{2A}^{t_1}\mathcal S^{\tau,t_1}f\,\dif\tau,
  \qquad
  \eta_t^{\rm h}:=\int_{t_1}^t\mathcal S^{\tau,t}f\,\dif\tau.
\]
Then
\[
  \varphi^{\rm rec}=\mathcal S^{t_1,t}h_{N,R}+\eta_t^{\rm h}.
\]
The input $h_{N,R}$ is independent of $\omega_{N+1}^2$.
By \eqref{eq:recContribContinuous3}, the $L^2$ isometry on $[2N,t_1]$,
and the bound $\|\int_{2N}^{t_1}\mathcal S^{\tau,t_1}f\,\dif\tau\|_2\leq F$,
\[
  \|h_{N,R}\|_2^2\leq C\bigl(1+K(1+\widetilde M_N)\bigr),
  \qquad \E\|h_{N,R}\|_2^{2q}\leq C(1+K)^q.
\]
To apply the transfer-operator notation precisely, put
$Jg(x^1,x^2):=g(x^2,x^1)$ and
$H_{\omega,\lambda}:=\mathcal L_{\omega,\lambda}J$.
Thus $H_{\omega,\lambda}$ is the physical horizontal-shear operator, and
$\mathcal S^{t_1,t}h_{N,R}=H_{\omega_{N+1}^2,r_0L}h_{N,R}$.
Apply \eqref{eq:conditional-cone4} to $Jh_{N,R}$ and amplitude $r_0L$
(taking $L_0\geq\varepsilon_0^{-1}$). This gives
\[
  \E\|\Pi_{\Cc_\Theta}\mathcal S^{t_1,t}h_{N,R}\|_2^{2q}
  \leq C_{\varepsilon_0}L^{-2a}(1+K)^q.
\]
For the fresh forcing, write
\[
  \eta_t^{\rm h}=\int_0^{r_0}H_{\omega_{N+1}^2,\rho L}f\,\dif\rho.
\]
Lemma \ref{lem:cone-decay} and the $L^2$ isometry imply, for $0<\rho\leq1$,
\[
  \E\|\Pi_{\Cc_\Theta}H_{\omega,\rho L}f\|_2^{2q}
  \leq C\min\{1,L^{-2a}\rho^{-1/4}\}\|f\|_2^{2q}.
\]
For $\rho L<1$, use only the isometry; the same bound holds since
$\vartheta\geq0$. Jensen's inequality in $\rho$ now gives
\[
  \E\|\Pi_{\Cc_\Theta}\eta_t^{\rm h}\|_2^{2q}
  \leq r_0^{2q-1}\int_0^{r_0}
       \E\|\Pi_{\Cc_\Theta}H_{\omega_{N+1}^2,\rho L}f\|_2^{2q}\,\dif\rho
  \leq C L^{-2a},
\]
since $\rho^{-1/4}$ is integrable. No independence between forcing times
is needed. Altogether,
\begin{equation}\label{eq:continuous-sparsity-strong4}
  \E\|\Pi_{\Cc_\Theta\cap B_R}\varphi_{2N+r}\|_2^{2q}
  \leq C L^{-2a}(1+\ell)^q.
\end{equation}
Use \eqref{eq:sparsity-small-radius4} and set
\[
  \mathfrak C^{\rm cont}_{N,R,\vartheta,r}
  :=1+\frac{L^\eta}{\ell}
       \|\Pi_{\Cc_\Theta\cap B_R}\varphi_{2N+r}\|_2^2
\]
to conclude the proof.

\end{proof}

% \begin{corollary}[constant forcing, averaged continuous-time sparsity]
% \label{cor:cont-sparsity-averaged}
%   Under the hypotheses of Theorem~\ref{thm:cont-sparsity},
%   \begin{equation}\label{eq:cont-sparsity}
%     \E\|\Pi_{\Cc_\Theta\cap B_R}\varphi_{2N}\|_{L^2}^2
%     \leq
%     C\left(1+L^{-\eta}\log_LR\right).
%   \end{equation}
% \end{corollary}

% \begin{proof}
%   This follows by taking expectations in
%   \eqref{eq:cont-sparsity-quenched} and using
%   $\|\mathfrak C_{N,R,\Theta}^{\rm cont}\|_{L^q(\Omega)}\leq C$.
% \end{proof}

\begin{proof}[Proof of Theorems \ref{thmA:sparsity} and \ref{thmA:contSparsity1}]
First consider zero initial data. For discrete forcing and
$r\in[1+\varepsilon_0,2)$, use the cutoff above at time $2N$, with the
recent sum now running through $j=N$. Transport this sum to $t_1=2N+1$;
its $L^2$ norm has $2q$-th moment at most $CK^q$, and it is independent of
$\omega_{N+1}^2$. The old contribution gains at most $CL^{2s}$, already
allowed for in \eqref{eq:sparsity-old4}. The conditional estimate
\eqref{eq:conditional-cone4} therefore proves
\eqref{eq:sparsity-strong-moment4} also for $\varphi_{2N+r}$.

For fixed $R$, the function
\[
  h\longmapsto \min\{M,\|\Pi_{\Cc_\Theta\cap B_R}h\|_2^{2q}\}
\]
is bounded and continuous on $H^{-s}_0$. Apply Proposition
\ref{prop:statStates3}(b) and Corollary \ref{cor:offPhase3}(a) to the
zero-initial-data process, then let $M\to\infty$. This passes
\eqref{eq:sparsity-strong-moment4} and
\eqref{eq:continuous-sparsity-strong4} to the respective stationary laws.
For the discrete left limits, the initial chain state is $-f$, as in
\eqref{eq:PSA3} and the formula for $\hat{\mathcal T}$ above; for continuous
forcing at $r=2$, the limiting law is $\mu_2=\mu_0=\mu$.
Equation \eqref{eq:sparsity-small-radius4} gives the stated estimates.
Finally, stationarity and uniqueness give the coupling
$\varphi_{\rm stat,0}=\varphi_{\rm stat,-}+f$, which proves
Theorem \ref{thmA:sparsity}(b) by the triangle inequality.
The versions with the flipped cone follow by exchanging the coordinates.
\end{proof}

\begin{proof}[Proof of Corollaries \ref{corA:sparsityDisc} and \ref{corA:contSparsityRatio1}]
Put $\ell=\log_LR\geq\zeta$, taking $\zeta\geq1$. Set
$a=(1-\vartheta)/16$ in Setting \ref{set:discDet} and
$a=(1-\vartheta)/8$ in Setting \ref{set:contDet}.
For discrete forcing, apply Theorem \ref{thmA:sparsity}(a) with $q=1$.
For continuous forcing, use the stationary version of
\eqref{eq:continuous-sparsity-strong4} just proved, with $q=1$ and
$1+\ell\leq2\ell$.
In either case,
\[
  \E X\leq CL^{-2a}\ell,
  \qquad X:=\|\Pi_{\Cc_\Theta\cap B_R}\varphi_{\rm stat,+}\|_2^2,
\]
for each good phase law in either setting. Thus
$\P\{X>L^{-a}\ell\}\leq CL^{-a}$.
The lower-tail estimate \eqref{eq:batchelorLowerTailIntro} gives
$\|\Pi_{B_R}\varphi_{\rm stat,+}\|_2^2\geq c_0\ell$
except on an event of probability at most $CL^{-\rho}$, for suitable
$c_0,\rho>0$.
Combining these events proves the tight ratio bounds.
For the right-limiting discrete law, use the coupling above to obtain
\[
  \|\Pi_{\Cc_\Theta\cap B_R}\varphi_{\rm stat,0}\|_2^2
  \leq2\|\Pi_{\Cc_\Theta\cap B_R}\varphi_{\rm stat,-}\|_2^2+2\|f\|_2^2,
\]
and apply the lower-tail estimate to the denominator with law $\mu_0$.
This gives the additional $C/\ell$ in Corollary \ref{corA:sparsityDisc}(b).
Choosing $0<c<\min\{a,\rho\}$ and enlarging $C=c^{-1}$ and $L_0$
absorbs the constants in the failure probability, yielding $1-L^{-c}$.
\end{proof}

% \begin{proof}[Proof of Theorem \ref{thmA:contSparsity1}]

% \end{proof}

% \section{Batchelor's Law and Sparsity in Continuous Time}\label{sec:contBatchLaw}
%     \input{contTimeBatch.tex}

\bibliographystyle{abbrv}
\bibliography{bibliography}

\end{document}